\numberwithin{equation}{section}
\numberwithin{figure}{section}
\theoremstyle{plain}
\newtheorem{thm}{\protect\theoremname}[section]
  \theoremstyle{plain}
  \newtheorem{prop}[thm]{\protect\propositionname}
  \theoremstyle{remark}
  \newtheorem{rem}[thm]{\protect\remarkname}
  \theoremstyle{definition}
  \newtheorem{defn}[thm]{\protect\definitionname}
  \theoremstyle{plain}
  \newtheorem{assumption}[thm]{\protect\assumptionname}
  \theoremstyle{plain}
  \newtheorem{conjecture}[thm]{\protect\conjecturename}
  \theoremstyle{remark}
  \newtheorem*{rem*}{\protect\remarkname}
  \theoremstyle{plain}
  \newtheorem{lem}[thm]{\protect\lemmaname}
  \theoremstyle{plain}
  \newtheorem{cor}[thm]{\protect\corollaryname}
  \providecommand{\assumptionname}{Assumption}
  \providecommand{\conjecturename}{Conjecture}
  \providecommand{\corollaryname}{Corollary}
  \providecommand{\definitionname}{Definition}
  \providecommand{\lemmaname}{Lemma}
  \providecommand{\propositionname}{Proposition}
  \providecommand{\remarkname}{Remark}
\providecommand{\theoremname}{Theorem}
\begin{document}

\title{Trees in random sparse graphs with a given degree sequence}

\author{Behzad Mehrdad}

\maketitle
\nocite{*}
\begin{abstract}
Let $\mathbb{G}^{D}$ be the set of graphs $G(V,\, E)$ with $\left|V\right|=n$,
and the degree sequence equal to $D=(d_{1},\, d_{2},\,\dots,\, d_{n})$.
In addition, for $\frac{1}{2}<a<1$, we define the set of graphs with
an almost given degree sequence $D$ as follows, 
\[
\mathbb{G}_{a}^{D}:=\cup\,\mathbb{G}^{\bar{D}},
\]
 where the union is over all degree sequences $\bar{D}$ such that,
for $1\leq i\leq n$, we have $\left|d_{i}-\bar{d}_{i}\right|<d_{i}^{a}$.

Now, if we chose random graphs $\mathcal{G}_{\mathbf{g}}\left(D\right)$
and $\mathcal{G}_{\mathbf{a}}\left(D\right)$ uniformly out of the
sets $\mathbb{G}^{D}$ and $\mathbb{G}_{a}^{D}$, respectively, what
do they look like? This has been studied when $\mathcal{G}_{\mathbf{g}}\left(D\right)$
is a dense graph, i.e. $\left|E\right|=\Theta(n^{2})$, in the sense
of graphons, or when $\mathcal{G}_{\mathbf{g}}\left(D\right)$ is
very sparse, i.e. $d_{n}^{2}=o(\left|E\right|)$. In the case of sparse
graphs with an almost given degree sequence, we investigate this question,
and give the finite tree subgraph structure of $\mathcal{G}_{\mathbf{a}}\left(D\right)$
under some mild conditions. For the random graph $\mathcal{G}_{\mathbf{g}}\left(D\right)$
with a given degree sequence, we re-derive the finite tree structure
in dense and very sparse cases to give a continuous picture. 

Moreover, for a pair of vectors $\left(D_{1},D_{2}\right)\in\mathbb{Z}^{n_{1}}\times\mathbb{Z}^{n_{2}}$,
we let $\mathcal{G}_{\mathbf{b}}\left(D_{1},D_{2}\right)$ be the
random bipartite graph that is chosen uniformly out of the set $\mathbb{G}^{D_{1},D_{2}}$,
where $\mathbb{G}^{D_{1},D_{2}}$ is the set of all bipartite graphs
with the degree sequence $\left(D_{1},D_{2}\right)$. We are able
to show the result for $\mathcal{G}_{\mathbf{b}}\left(D_{1},D_{2}\right)$
without any further conditions.
\end{abstract}
\tableofcontents{}

\section{Introduction}

\subsection{Graphs with a given degree sequence.}

Let $D=(d_{1},\, d_{2},\,\dots,\, d_{n})$ be a finite sequence of
positive integers, such that $1\leq d_{1}\leq\cdots\leq d_{n}$, and
$M:=\sum_{i=1}^{n}d_{i}$ is even. In addition, let $\mathbb{G}_{n}$
be the set of all simple graphs (undirected, with no loops or multiple
edges) with $n$ vertices. For $G\in\mathbb{G}_{n}$, $\mathcal{V}(G)$
will be the set of its vertices indexed by $\{v_{1},v_{2},\ldots,v_{n}\}$
and $\mathcal{E}(G)$ is its set of edges $e:=\left\langle v_{i},v_{j}\right\rangle $.
We say that $G$ is a graph with the given degree sequence $D$, if the
degree of a vertex $v_{i}$ is $d_{i}$. It is evident that the total
number of edges $|\mathcal{E}(G)|=\frac{1}{2}M$. We denote the set
of all such simple graphs by $\mathbb{G}^{D}\subset\mathbb{G}_{n}$.

A random graph $\mathcal{G}_{\mathbf{g}}\left(D\right)$ with the
given degree sequence $D$ is the one that is uniformly chosen from
$\mathbb{G}^{D}$. Now, what does the random graph $\mathcal{G}_{\mathbf{g}}\left(D\right)$
look like? Researchers have studied this problem extensively, along
with other properties of graphs with a given degree sequence. Before
we list a couple of them here, we state some notations.

In this paper and for two real functions $f(n)$ and $g(n)$, the
notations $g=\Theta(f)$, $g=O(f)$ and $g=o(f)$, as $n$ goes to
infinity, mean that there exists a real number $C$ such that $\limsup_{n\rightarrow\infty}\frac{\left|f\right|}{\left|g\right|}+\frac{\left|g\right|}{\left|f\right|}\leq C$,
$\limsup_{n\rightarrow\infty}\frac{\left|f\right|}{\left|g\right|}\leq C$
and $\lim_{n\rightarrow\infty}\frac{\left|f\right|}{\left|g\right|}=0$,
respectively.

\subsection{Dense graphs:}

Dense graphs are those graphs $G\in\mathbb{G}_{n}$ that $\left|\mathcal{E}(G)\right|=\Theta\left(n^{2}\right).$
 In \citep{B-H-0/1-matrices}, Barvinok and Hartigan studied the structure
of dense graphs with a given degree sequence. They showed the relation
between the maximum entropy function and the number of such graphs.
Under mild conditions ($\delta$- tameness), they found the asymptotic
behavior of the number of graphs with a given degree sequence $D$. 

Although Barvinok and Hartigan provide an exact formula, it is difficult
to touch it. There are also some other approaches to the counting
problem which only work in certain regimes. In \citep{M-W-high-degree},
Mckay and Wormald considered the case of graphs with nearly constant
degree d, $\left|d_{i}-d\right|=O(n^{1/2})$, using a multidimensional
saddle-point method. The enumeration of graphs with a given degree
sequence may lead to finding the probabilities of subgraphs of random
graph $\mathcal{G}_{\mathbf{g}}\left(D\right)$. Greenhill and Mckay
studied that in \citep{C-M-Counting-loopy-graphs} for various regimes.
Also, see \citet{B.D.Mckay-subgraphs-of-dense} for a detailed survey
of that subject.

Another approach toward graphs with a given degree sequence is through
graph limits. Recently Lo\'{v}asz and Szegedy introduced, in \citep{L-S-limit-of-dense},
a notion of graph limits called graphons. This has been developed
further by Borgs et al \citep{Brogs-1,Brogs-2,Brogs-3}. In regard
to graphs with a given degree sequence, Chatterjee et al \citep{Chat-given-degree}
showed that sequences of such graphs have graph limits, in the sense
of \emph{graphons}, if their degree sequences converge to a degree
function which satisfies the Erdös-Gallai condition for graph limits.

\subsection{Very sparse graphs:}

Different regimes of very sparse graphs, $d_{max}^{2}=d_{n}^{2}=o(M)$,
were studied a long time ago by Mckay, \citep{B.D.M-Subgraphs-of-dense-Old}
and \citep{B.D.-Mckay-Asymptotics-which-we-need!}. The condition
allows us to comput the number of graphs via inclusion-exclusion
and switching method. Mckay and Wormald came back to this problem
in \citep{M-W-graphs-with-degrees-o(sqrt(n))} with a less restrictive
condition. Recently Gao et al \citep{G-S-W-sparse-random-graphs}
investigated the probability of subgraphs of a random graph with a
given degree sequence in this regime. Look at \citep{Gao-spanning-subgraphs}
for more information about subgraphs of random graphs.

\subsection{Bipartite graphs:}

A bipartite graph is a graph with two set of vertices, where there
are no edges with both ends in the same set. The adjacency matrix for 
a simple bipartite graph with a given degree sequence is a matrix with 0-1 entries with
given row and column sums. Barvinok, in \citep{B-contingency,B-H-0/1-matrices},
studied these matrices in the dense case. Barvinok and Hartigan generalized
this in \citep{B-H-maximum-entropy,B-H-asymptotic-formula-4-matrices}
for matrices with non-negative integer entries. Like the case of usual
graphs, they showed the relation of the number of bipartite graphs
to entropy function. Look at \citep{B-prescribed-row-sums} for a
survey on the subject. 

Canfield et al \citep{C-C-M-dense-0-1-matrices} derived a practical
formula for matrices with 0-1 entries and nearly constant row and
column sums. Canfield and Mckay, in \citep{C-M-integer-matrices-with-constant},
took a look at matrices with positive integer entries as well. Also,
see \citep{C-M-X} for similar results in sparse bipartite graphs.

\subsection{A little bit of motivation:}

Following the work of Lo\'{v}asz and Szegedy, many tried to extend
the notion of graphons to the sparse graphs. Look at \citep{B-O-Metrics}
for a survey of attempts to define a notion of limit in the sparse
case. Here, we take another look at the subgraph counting metric.
Let us recall the homomorphism density from page 2 of \citet{L-S-limit-of-dense},
 which is 
\[
t\left(F,G\right):=\frac{\hom\left(F,G\right)}{\left|\mathcal{V}\left(G\right)\right|^{\left|\mathcal{V}\left(F\right)\right|}}.
\]
 In addition, graphs $G_{n}$ are said to be Cauchy in subgraph-counting
metric if the sequence of numbers $t\left(F,G_{n}\right)$ are Cauchy
for every finite graph $F$. So, we use a uniform normalization, i.e.
$\left|\mathcal{V}\left(G\right)\right|^{\left|\mathcal{V}\left(F\right)\right|}$,
for all embeddings of $F$ into $G$. However, we believe that the
normalization should be local and depend on the embedding. We
try to justify that throughout this paper. 

Although we will not provide a metric for sparse graphs, we make a
few observations in sparse random graphs with a given degree sequence.
We adopt the method in \citep{Chat-given-degree} that compares the
random graph $\mathcal{G}_{\mathbf{g}}\left(D\right)$ with a random
graph $\widetilde{\mathcal{G}}\left(D\right)$, with independent Bernoulli
random edges. By extending that work to sparse graphs, we obtain
the correct normalization for counting the subgraph $F$, where $F$
is a tree. We leave the case the counting of subgraphs with loops open,
since this problem has not been completely understood even in the
case of random models with independent edges like Erdös\textendash{}\'{R}enyi
graph. For more discussion, look at \citep{J-R-Upper-tails-for-counting},
\citep{Janson-Poisson-approximation}, and \citep{Chat-missing}).

In this paper, we first introduce a modified version of graphs with
the given degree sequence $D$, which we call graphs with\emph{ }an
almost given degree sequence $D$. Then, under some mild conditions,
we find the distribution of finite trees in this model. Second, we
go back to our original problem and deal with random graph \emph{$\mathcal{G}_{\mathbf{g}}\left(D\right)$.}
In addition, we apply our method to dense, bipartite and very sparse,
i.e. $d_{n}^{2}=o(\left|\mathcal{E}\left(G\right)\right|)$, random
graphs. 

Although we need some mild conditions in most of our theorems, we
show that our results holds in full generality for bipartite random
graphs. So we believe that the same is true for general non-bipartite
graphs with a\emph{ }given degree sequence $D$. In the end, to the
best of our knowledge, the method developed here is new and works
for a wider range of graphs, from very sparse to dense graphs. 

We begin the next section with some notation that is needed for the
rest of the paper.

\section{Main Results\label{sub:main-results}.}

Let us start this section by stating a few notations. Throughout this
section $c_{k}>0$ plays the role of a general constant that only
depends on $k$. Now we provide the definitions of our independent
model and ordered subgraphs.

\subsection{Maximum entropy and the independent ensemble.\label{sub:Max-entropy-independent}}

We let $\mathbb{P}^{D}$ be the set of all positive $x{}_{ij}$ satisfying
\[
\sum_{j:\: j\neq i}^{n}x{}_{ij}=d_{i}.
\]
Therefore, $\mathbb{P}^{D}$ is a polytope in $\mathbb{R}^{N}$, where
$N$ is $\left(\begin{array}{c}
\begin{array}{c}
n\end{array}\\
2
\end{array}\right)$. Now, define the entropy function as follows, 
\begin{equation}
H_{1}(x):=\sum_{i<j}H(x_{ij}),\,\text{where }H(x)=-x\ln(x)-(1-x)\ln(1-x),\label{eq: entropy function!}
\end{equation}
 for $x=(x_{ij})\in\mathbb{R}^{N}$. We state a proposition that describes
the necessary and sufficient condition for $\mathbb{P}^{D}$ to have
a non-empty interior.
\begin{prop}
\label{prop: Strict E-G condition}The polytope $\mathbb{P}^{D}$
has a non-empty interior if, and only if, the degree sequence satisfies
the strict Erdös- Gallai conditions, 

\textup{
\begin{equation}
\sum_{i=n-k+1}^{n}d_{i}<k(k-1)+\sum_{i=1}^{n-k}\min\left\{ k,\, d_{i}\right\} ,\text{ for }1\leq k\leq n.\label{eq: strict Erdo=00030Bs-Gallai}
\end{equation}
}\end{prop}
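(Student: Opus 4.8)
The plan is to establish both implications via a min-cut/max-flow (or Gale–Ryser style) argument, exploiting the fact that $\mathbb{P}^D$ is the set of fractional graphical realizations of $D$. First I would prove the easy direction, namely that a nonempty interior forces the strict inequalities \eqref{eq: strict Erdo=00030Bs-Gallai}. Suppose $x=(x_{ij})$ lies in the interior, so $0<x_{ij}<1$ for all $i<j$ and $\sum_{j\neq i}x_{ij}=d_i$. Fix $k$ and consider the $k$ largest-degree vertices $S=\{n-k+1,\dots,n\}$. Summing the degree constraints over $i\in S$ gives $\sum_{i\in S}d_i = 2\sum_{\{i,j\}\subseteq S}x_{ij} + \sum_{i\in S,\,j\notin S}x_{ij}$. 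Now bound the first sum by $2\binom{k}{2}=k(k-1)$ using $x_{ij}<1$, and bound each cross term $\sum_{i\in S}x_{ij}\le \min\{k,d_j\}$ for $j\notin S$ (the sum over at most $k$ edges incident to $j$, each $<1$, is $<k$, and also $\le d_j$ since all of $j$'s edge-weights are nonnegative and total $d_j$). Because $x_{ij}<1$ strictly and $d_j\ge 1$, at least one of these bounds is strict, yielding \eqref{eq: strict Erdo=00030Bs-Gallai}. (One must check the edge case $k=n$, where the cross terms vanish and the inequality reads $M<n(n-1)$, which follows directly from $x_{ij}<1$.)

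For the converse — strict Erdős–Gallai implies nonempty interior — I would argue that the ordinary (non-strict) Erdős–Gallai conditions guarantee $\mathbb{P}^D\neq\emptyset$, that $\mathbb{P}^D$ is a bounded polytope, and then produce an interior point by a perturbation/averaging argument. Concretely: the classical Erdős–Gallai theorem (its fractional analogue is even easier, via the transportation polytope being nonempty exactly when the obvious cut conditions hold) gives some $x^{(0)}\in\mathbb{P}^D$. To get a point with all coordinates strictly between $0$ and $1$, average $x^{(0)}$ with the "uniform" fractional graph $u_{ij}:=\frac{\text{something}}{\,\cdot\,}$ — but $u$ with all entries equal need not satisfy the degree constraints unless $D$ is regular, so instead I would take a collection of extreme points or near-extreme points of $\mathbb{P}^D$ whose convex hull is full-dimensional. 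The cleanest route: show that the affine hull of $\mathbb{P}^D$ is the entire solution space of the $n$ linear equations $\sum_{j\neq i}x_{ij}=d_i$ (an $(N-n)$-dimensional affine subspace, assuming the constraints are independent, which they are for $n\ge 2$), and that strict Erdős–Gallai lets one move a positive amount along every coordinate direction within this affine hull while staying in $[0,1]^N$. Equivalently, for each edge $\{i,j\}$ one builds feasible $x^+$ with $x^+_{ij}$ bounded away from $1$ and $x^-$ with $x^-_{ij}$ bounded away from $0$; a suitable convex combination of all these is interior.

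The main obstacle — the step I expect to require the most care — is the converse, specifically showing that strictness of \eqref{eq: strict Erdo=00030Bs-Gallai} translates into simultaneous strict feasibility in \emph{every} coordinate, i.e. that no coordinate $x_{ij}$ is pinned to $0$ or $1$ on all of $\mathbb{P}^D$. A coordinate is pinned precisely when some "tight" subset configuration forces it, and the Erdős–Gallai sums are exactly the certificates of such tightness; so the argument is to show that if some Erdős–Gallai inequality is strict then the corresponding forcing fails, with a margin, and to handle the interaction of several inequalities at once. I would organize this as a max-flow computation: model a fractional realization as a feasible flow in a bipartite-like network (vertices vs. potential edges), invoke max-flow/min-cut so that feasibility with prescribed margins is equivalent to all cuts having the requisite slack, and then verify that the minimal cuts correspond exactly to the left-hand sides of \eqref{eq: strict Erdo=00030Bs-Gallai}, so that strictness there is precisely the slack needed. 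Once interior feasibility in each coordinate direction is in hand, averaging finitely many such points produces the desired interior point, and boundedness of $\mathbb{P}^D$ (it sits inside $[0,1]^N$) finishes the argument.
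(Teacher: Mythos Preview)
Your necessity argument (nonempty interior $\Rightarrow$ strict Erd\H{o}s--Gallai) is correct and is essentially the same computation the paper performs: split $\sum_{i\in S}d_i$ into the within-$S$ and cross contributions and bound each piece using $0<x_{ij}<1$. The only difference is that the paper does not work with an arbitrary interior point but first takes the maximum-entropy point $\widetilde p$, derives via Lagrange multipliers the parametrization $\widetilde p_{ij}=\frac{r_ir_j}{1+r_ir_j}$, and then carries out the identical summation with $\widetilde p$ in place of your $x$. That detour is not needed for the proposition itself, but it has a payoff: the $r_i$'s are used throughout the rest of the paper, so the paper is really bundling the proof of necessity with the construction of these variables.

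For sufficiency (strict Erd\H{o}s--Gallai $\Rightarrow$ nonempty interior) the approaches diverge. The paper does not argue this direction at all; it simply invokes Lemma~12.2 of Barvinok--Hartigan. Your max-flow/min-cut plan is a reasonable self-contained substitute, and the identification of tight cuts with tight Erd\H{o}s--Gallai inequalities is the right idea. One point to make explicit in your write-up: the minimal cuts in the flow network correspond to \emph{arbitrary} vertex subsets $S$, not just the top-$k$ sets appearing in \eqref{eq: strict Erdo=00030Bs-Gallai}, so you need the standard rearrangement step showing that among all $|S|=k$ the top-$k$ set maximizes $\sum_{i\in S}d_i - \sum_{j\notin S}\min\{k,d_j\}$. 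With that in hand, strictness for every $k$ gives uniform slack in every cut, hence a feasible flow with all $x_{ij}$ bounded away from $\{0,1\}$, and convexity finishes it. So your sketch is sound, just be sure to include that reduction.
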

\begin{rem}
If we turn the strict inequalities in  \eqref{eq: strict Erdo=00030Bs-Gallai}
into ``less than or equal to'', then we obtain the well-known Erdös-
Gallai criterion \citet{Erdos}. (See \citet{Mahadev} for extensive
discussions.)
\end{rem}
Now, if $\mathbb{P}^{D}$ has a non-empty interior, then the function
$H_{1}(x)$ attains its maximum at a unique point, since it is strictly
concave. Denote that maximum by $\widetilde{\mathbf{p}}=(\widetilde{p}{}_{ij})\in\mathbb{R}^{N}$,
and define $\widetilde{\mathcal{G}}\left(D\right)$ as a random graph
with independent Bernoulli random edges with parameters $\widetilde{p}{}_{ij}$.
From the definition of the $\widetilde{\mathbf{p}}$ and for each
$i\in[n]:=\left\{ 1,\cdots,n\right\} $, we see that the average degree
of vertex $i$ in $\widetilde{\mathcal{G}}\left(D\right)$ is $d_{i}$.
In other words,

\begin{equation}
\sum_{j:\: j\neq i}^{n}E\left[\mathbf{1}_{\left\langle i,j\right\rangle }\left(\widetilde{\mathcal{G}}\left(D\right)\right)\right]=\sum_{j:\: j\neq i}^{n}\widetilde{p}{}_{ij}=d_{i},\label{eq:degree condition}
\end{equation}
where the indicator function $\mathbf{1}_{\left\langle i,j\right\rangle }\left(\widetilde{\mathcal{G}}\left(D\right)\right)$
is $1$ if $\left\langle i,j\right\rangle \in\widetilde{\mathcal{G}}\left(D\right)$
and $0$ otherwise. We also use ``$\sim$'' for parameters of $\widetilde{\mathcal{G}}\left(D\right)$,
wherever possible.
\begin{defn}
\label{def: strict graphic}A vector of positive integers $D$ is\emph{
a strict graphic sequence} if the vector $D$ satisfies the strict
Erdös- Gallai conditions \eqref{eq: strict Erdo=00030Bs-Gallai}.\end{defn}
\begin{rem}
Definition \ref{def: strict graphic} means that $\mathbb{P}^{D}$
has a non-empty interior, which in turn implies that the maximum entropy
$\widetilde{\mathbf{p}}=(\widetilde{p}_{ij})$ exists.
\end{rem}

\subsection{Ordered trees and their B- function\label{sub: weighted-trees}. }

Let $\mathbb{T}^{k}$ be the set of all trees with a finite number
$k$ of vertices. The famous Cayley's Theorem states that there are
$k^{k-2}$ of such trees, and for a proof of it, check \citep{Prufer}.
For a tree $T\in\mathbb{T}^{k}$, we look at maps $s:\mathcal{V}(T)\to\mathcal{V}(G)$, which map the vertices of $T$ into distinct vertices of $\mathbb{G}_{n}$.
There are $n(n-1)\cdots(n-k+1)$ of them, and let us call the set
of all such maps  $\mathbb{S}_{n}^{k}$, i.e. 
\[
\mathbb{S}_{n}^{k}=\{s:[k]\to[n]\vert\: s\text{ is }1-1\}.
\]
In addition, an ordered tree is a pair $\left(s,T\right)\in\mathbb{S}_{n}^{k}\times\mathbb{T}^{k}$.
For instance, if $k=2$, $\mathbb{S}_{n}^{2}\times\mathbb{T}^{2}$
is the set of directed edges on vertices of $\left[n\right]=\left\{ 1,\cdots,n\right\} $.
(We drop the \emph{n} in the index of $\mathbb{S}_{n}^{k}$, whenever
the dependency on \emph{n }is understood.)
\begin{defn}
\label{def: B- function} We let $\left(s,T\right)\in\mathbb{S}_{n}^{k}\times\mathbb{T}^{k}$
be an ordered tree. For each vertex $u\in\mathcal{V}(T)$, its degree
in the tree $T$ is denoted by $b_{u}$. The \emph{B}- function is
defined as 
\begin{equation}
\psi\left(s,T,D\right)=\prod_{u\in V(T)}d_{s(u)}^{b_{u}-1}.\label{eq: Cayley function}
\end{equation}
In addition, we denote $\psi(s,T,D\left(G\right))$ by $\psi(s,T,G)$,
where $D\left(G\right)$ is the degree sequence of $G$.\end{defn}
\begin{rem}
\label{rem:B-is-invariant} Let us consider a permutation $\pi$ on
numbers $1$ through $k$. We observe that 
\[
\psi\left(s,T,D\right)=\psi\left(s\circ\pi^{-1},\pi\circ T,D\right).
\]
Hence, we get $k!$ distinct ordered trees with the same \emph{B}-function. 
\end{rem}
\begin{figure}
\caption{An ordered tree}

\label{fig: ordered tree}

\begin{centering}
\includegraphics[scale=0.5]{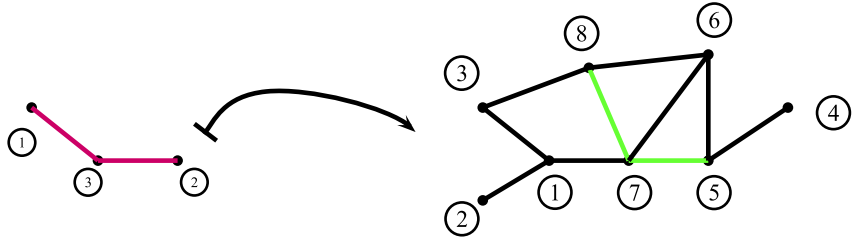}
\par\end{centering}

The left graph is a labeled tree $T\in\mathbb{T}^{3}$ with 2 edges.
The green graph on the right is the image of $T$ under a map $s$
that takes $1,3$ and $2$ to $8,7$ and $5$ respectively. Hence,
the green graph is an ordered tree $\left(s,T\right)$ that sits inside
a graph with degree sequence $D=\left(3,1,2,1,3,3,4,3\right).$ The
corresponding B- function for $\left(s,T\right)$ is $d_{8}^{0}\cdot d_{7}^{2}\cdot d_{5}^{0}=4^{2}$. 
\end{figure}

\subsection{Graphs with an almost given degree sequence\label{sub:Graphs-with-almost}:}

We recall from the beginning of this paper that $D=\left(d_{1},\cdots,d_{n}\right)$,
and $d_{1}\leq\cdots\leq d_{n}$, and $M=\sum_{i=1}^{n}d_{i}$. We
let $S_{k}$ be the sum of biggest $d_{k}$ elements of $D$, or $S_{k}:=\sum d_{i}$,
where the sum is over the set $\{n-d_{k},\cdots,n\}$. In addition,
we define $\ell(D)$ as the maximum positive integer that $S_{\ell}\leq\frac{M}{2}$,
i.e. 
\begin{equation}
\ell(D):=\max\left\{ k\in[n]\Big|S_{k}\leq\frac{M}{2}\right\} .\label{eq: L(D)}
\end{equation}

\begin{assumption}
\label{def: nu-strong}\emph{Let us assume that, for some numbers
$\epsilon>0$ and $\nu>0$}, 
\begin{enumerate}
\item the vector $D$ satisfies the strict Erdös- Gallai conditions \eqref{eq: strict Erdo=00030Bs-Gallai}.
\item the number $M=\sum_{i=1}^{n}d_{i}$ is even and $n^{1+\varepsilon}\leq M$.
\item and, for the function $\ell\left(D\right)$ as in \eqref{eq: L(D)},
\begin{equation}
\sqrt{\frac{n}{M}}\left(d_{n}-d_{\ell(D)}+1\right)<n^{-\nu}.\label{eq: nu-strong}
\end{equation}

\end{enumerate}
\end{assumption}
\begin{defn}
\label{def: type epsilon}In particular, we say that a vector $D$ is a
strict graphic sequence of type \emph{$\left(\varepsilon,\,\nu\right)$,}
if it satisfies all of the above conditions, and is a strict graphic
sequence of type \emph{$\varepsilon$,} if $D$ only satisfies the
first two conditions.\end{defn}
\begin{rem}
\label{rem: examples!} Let us see a couple of examples to understand
the term $d_{n}-d_{\ell(D)}$ and the above conditions. Suppose that
$d_{n}^{2}<\frac{M}{2}$ (particularly, this is particularly the case when $d_{n}^{2}=o(M)$).
Since $d_{n}$ is the maximum element of $D$, $S_{n}=\sum_{i=1}^{d_{n}}d_{n+1-i}$
is less than $d_{n}^{2}$, which means $\ell(D)$ is $n$. Hence,
$d_{n}-d_{\ell(D)}=0$.

In another example, let $D$ be a sequence such that $2n^{\alpha}<d_{i}<n^{\beta}$,
for $1\leq i\leq n$, and $\beta<\frac{1+\alpha}{2}$. Again, $S_{n}$
is less than $d_{n}^{2}$, so $S_{n}<n^{2\beta}<n^{1+\alpha}<M/2$.
In both examples, we get an upper bound of $\sqrt{\frac{n}{M}}$ for
Eq. \eqref{eq: nu-strong}. In particular, our sequence is of type
\emph{$\left(\varepsilon,\,\nu\right)$} for any $\nu$ smaller than
$\varepsilon/2$.
\end{rem}
Let us pick a positive number $a$ such that $\frac{1}{2}<a<1$. Then,
we define the set of graphs with an almost given degree sequence $D$
as follows,

\[
\mathbb{G}_{a}^{D}:=\cup\,\mathbb{G}^{\bar{D}},
\]
 where the union is over all degree sequences $\bar{D}$ such that,
for $1\leq i\leq n$, we have $\left|d_{i}-\bar{d}_{i}\right|<d_{i}^{a}$.
Let a random graph with \emph{an almost} given degree sequence $D$,
$\mathcal{G}_{\mathbf{a}}\left(a,D\right)$, be a random graph that
is uniformly chosen from the set $\mathbb{G}_{a}^{D}$. 
\begin{defn}
\label{def: indicator of a subgraph}We define probabilities $\mathbf{p_{a}}\left(s,T\right)$
and $\mathbf{\widetilde{p}}\left(s,T\right)$ as 
\[
E\left[{\bf 1}_{s}(T,\mathcal{G}_{\mathbf{a}}\left(a,D\right))\right] \textit{ and } E\left[{\bf 1}_{s}(T,\widetilde{\mathcal{G}}\left(D\right))\right],
\]
respectively, where

\[
{\bf 1}_{s}(T,G)=\prod_{\left\langle u_{1},u_{2}\right\rangle \in\mathcal{E}(T)}{\bf 1}_{\left\langle s(u_{1}),s(u_{2})\right\rangle \in\mathcal{E}(G)}(G),
\]
  $\mathcal{G}_{\mathbf{a}}\left(a,D\right)$ is as above, and
$\widetilde{\mathcal{G}}\left(D\right)$ is defined in Section \ref{sub:Max-entropy-independent}.
We dropped the dependency of $\mathbf{p_{a}}\left(s,T\right)$ on
$D$ and $a$, as well as the dependency of $\mathbf{\widetilde{p}}\left(s,T\right)$
on $D$ and $\mathbf{\widetilde{p}}$, the maximum entropy, for the simplicity
of our  notations. \end{defn}
\begin{thm}
\label{thm: main theorem}Suppose that the vector $D$ is a strict
graphic sequence of type \emph{$\left(\varepsilon,\,\nu\right)$}
as in Definition \ref{def: strict graphic}. We define,

\textup{
\begin{equation}
L_{\mathbf{a}}\left(a,k,D\right):=\frac{1}{M}\sum_{\left(s,T\right)\in\mathbb{S}^{k}\times\mathbb{T}^{k}}\frac{1}{\psi\left(s,T,D\right)}\left|\mathbf{p_{a}}\left(s,T\right)-\mathbf{\widetilde{p}}\left(s,T\right)\right|,\label{eq: main err}
\end{equation}
where $\mathbb{T}^{k}$, $\mathbb{S}_{n}^{k}$ and $\psi\left(s,T,D\right)$
are defined in Section \ref{sub: weighted-trees}, and $\mathbf{p_{a}}\left(s,T\right)$
and $\mathbf{\widetilde{p}}\left(s,T\right)$ are the same as in Definition
\ref{def: indicator of a subgraph}.} If $\frac{1}{2}<a<\frac{\nu}{4}+\frac{1}{2}$,
then 
\end{thm}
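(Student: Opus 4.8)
The plan is to pass through the exponential--family description of the maximum--entropy ensemble and then compare $\mathcal G_{\mathbf a}(a,D)$ to $\widetilde{\mathcal G}(D)$ by a switching argument. Recall first that, by the Lagrange conditions for the maximizer of $H_1$ on $\mathbb P^{D}$, there are reals $\beta_1,\dots,\beta_n$ with $\widetilde p_{ij}=e^{\beta_i+\beta_j}/(1+e^{\beta_i+\beta_j})$, whence $\mathbb P(\widetilde{\mathcal G}(D)=G)=\bigl(\prod_{i<j}(1-\widetilde p_{ij})\bigr)e^{\sum_i\beta_i d_i(G)}$ depends on $G$ only through its degree sequence $D(G)$; in particular $\widetilde{\mathbf p}(s,T)=\prod_{\langle u_1,u_2\rangle\in\mathcal E(T)}\widetilde p_{s(u_1)s(u_2)}$ by independence of the edges, and, conditioned on $D(\widetilde{\mathcal G}(D))=\bar D$, the law of $\widetilde{\mathcal G}(D)$ is uniform on $\mathbb G^{\bar D}$. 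On the other hand $\mathcal G_{\mathbf a}(a,D)$ is by definition uniform on the ``degree box'' $\mathbb G_a^{D}=\{G\in\mathbb G_n:|d_i(G)-d_i|<d_i^{a}\text{ for all }i\}$, so $\mathbf p_{\mathbf a}(s,T)=N_a(s,T)/N_a$ with $N_a=|\mathbb G_a^{D}|$ and $N_a(s,T)=|\{G\in\mathbb G_a^{D}: s(T)\subseteq G\}|$.

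The heart of the argument is a switching estimate showing $\mathbf p_{\mathbf a}(s,T)=\widetilde{\mathbf p}(s,T)\bigl(1+\mathrm{err}(s,T)\bigr)$ up to an additive error. For a single ordered edge $\langle i,j\rangle$ one removes it from graphs in $\mathbb G_a^{D}$ that contain it and adds it to those that do not; since $a>\tfrac12$ and $M\ge n^{1+\varepsilon}$, the slack $d_i^{a}$ in the box dominates the typical degree fluctuation $\sqrt{d_i}$ by a polynomial factor, so all but a polynomially small fraction of graphs stay inside $\mathbb G_a^{D}$ under the switch, and a self-consistency (fixed-point) argument identifies the leading term of $\mathbb P(\langle i,j\rangle\in\mathcal G_{\mathbf a}(a,D))$ with the maximum--entropy value $\widetilde p_{ij}$ -- the point being that $\widetilde{\mathbf p}$ is precisely the solution of the degree equations \eqref{eq:degree condition}, not merely the Chung--Lu weight $d_id_j/M$. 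Peeling the $k-1$ edges of $s(T)$ one at a time, so that the residual constraint stays a shifted degree box, and controlling the accumulation of errors, yields the displayed relation; graphs meeting the lower boundary of the box, or placing a tree vertex on a ``heavy'' vertex, are absorbed into the additive error.

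Substituting this into $L_{\mathbf a}(a,k,D)$, one is left with $\frac1M\sum_{(s,T)}\frac{\widetilde{\mathbf p}(s,T)}{\psi(s,T,D)}\,|\mathrm{err}(s,T)|$ plus the total additive error. By Cayley's theorem $|\mathbb T^{k}|=k^{k-2}$, and using $\widetilde p_{ij}\lesssim d_id_j/M$ (a consequence of Assumption~\ref{def: nu-strong}), a short computation gives $\frac1M\sum_{(s,T)}\frac{\widetilde{\mathbf p}(s,T)}{\psi(s,T,D)}=O_k(1)$, so the whole bound reduces to showing that $\mathrm{err}(s,T)$ and the boundary/heavy--vertex errors are $n^{-\delta}$ on the bulk of this weight. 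This is exactly where Assumption~\ref{def: nu-strong}(3), $\sqrt{n/M}\,(d_n-d_{\ell(D)}+1)<n^{-\nu}$, enters: it restricts the number and position of heavy vertices and makes $\widetilde{\mathbf p}$ ``tame'', i.e.\ comparable to the Chung--Lu weights with a power--saving relative error; propagating that error through the $k$-fold peeling and the sum over $\mathbb S^{k}\times\mathbb T^{k}$ produces the threshold $a<\tfrac{\nu}{4}+\tfrac12$ and the asserted estimate for $L_{\mathbf a}(a,k,D)$ (a constant $c_k$ times a negative power of $n$, hence $o(1)$).

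I expect the switching estimate of the second paragraph to be the main obstacle. Two points there are delicate: keeping the switched graphs inside $\mathbb G_a^{D}$ with only polynomially small loss, uniformly over which edge and which vertices are involved, which is where $a>\tfrac12$ and $M\ge n^{1+\varepsilon}$ must be used carefully; and pinning the leading constant to the Fermi--Dirac value $\widetilde p_{ij}$ rather than to $d_id_j/M$, a quantitative comparison that degenerates precisely when $d_id_j$ is of order $M$ -- the regime carved out by Assumption~\ref{def: nu-strong}(3) -- after which carrying the relative error through the iteration over the edges of $T$ and the summation over ordered trees is the remaining, essentially bookkeeping, step.
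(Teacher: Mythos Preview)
Your approach is genuinely different from the paper's, and the central step---the pointwise switching estimate $\mathbf p_{\mathbf a}(s,T)=\widetilde{\mathbf p}(s,T)(1+\mathrm{err}(s,T))$---is a real gap. The switching you describe (``remove the edge from graphs that contain it and add it to those that do not'') does not preserve degree sequences, so it does not set up a bijection between pieces of $\mathbb G_a^{D}$; a legitimate switching would trade a pair of edges for a non-adjacent pair, and carrying that through $k-1$ edges while keeping all graphs inside the box and identifying the leading constant as the Fermi--Dirac value $\widetilde p_{ij}$ (rather than $d_id_j/M$) is exactly the hard part you flag but do not resolve. A pointwise relative error of size $n^{-\delta}$ uniform over $(s,T)$ would in fact be strictly stronger than the $\ell^1$-type statement of the theorem, and the paper does not attempt to prove anything of the sort.

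What the paper actually does avoids pointwise comparison entirely. It splits $\mathbb S^{k}\times\mathbb T^{k}$ into $A^{+}=\{(s,T):\mathbf p_{\mathbf a}(s,T)\ge\widetilde{\mathbf p}(s,T)\}$ and $A^{-}$, uses Theorem~\ref{thm:total sum of probabilities} to show that the two signed sums are nearly equal, and then bounds the $A^{-}$ sum by a change-of-measure plus concentration argument: first, $P(\mathcal G_{\mathbf a}\in A)\le e^{2\mathcal C_2(D)}P(\widetilde{\mathcal G}\in A)/P(\widetilde{\mathcal G}\in\mathbb G_a^{D})$ because the exponential-family weight is nearly constant on the box, with $\mathcal C_2(D)=\sum_i d_i^{a}|\log r_i|$; second, a Janson-type lower-tail inequality (Theorem~\ref{thm:lower inequality}) gives exponential concentration for the \emph{weighted sum} $\sum_{(s,T)\in A^{-}}\psi(s,T,D)^{-1}\mathbf 1_s(T,\widetilde{\mathcal G})$ under $\widetilde{\mathcal G}$. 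Assumption~\ref{def: nu-strong}(3) enters not as a heavy-vertex count but through Lemma~\ref{lem: C(D) -1}, which bounds $|\log r_i|$ by $O((d_n-d_{\ell(D)}+1)\log n)$ and hence $\mathcal C_1(D),\mathcal C_2(D)$ by $O(n^{a-\nu-1/2}M\log n)$; balancing this change-of-measure cost against the concentration rate $c\mu M\epsilon^2$ is what produces the window $\tfrac12<a<\tfrac12+\tfrac{\nu}{4}$. Your sketch would need to either supply the missing switching analysis in full---including the fixed-point identification of $\widetilde p_{ij}$, which is not standard---or be rewritten along these concentration lines.
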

\begin{equation}
L_{\mathbf{a}}\left(a,k,D\right)\leq c_{k}\cdot n^{a-\frac{\nu+1}{2}}=o(1).\label{eq: main result!}
\end{equation}

\begin{rem}
\label{rem:change of notation}Recall from Remark \ref{rem:B-is-invariant}
that $\psi\left(s,T,D\right)$ is invariant under the action of a
permutation on the labels of an ordered tree $\left(s,T\right)$.
In addition, the space $\mathbb{S}^{k}\times\mathbb{T}^{k}$ is a
multi covering of 
\[
\mathcal{T}\left(k,n\right):=\cup_{T\in\mathbb{T}^{k}}\hom\left(T,K_{n}\right)
\]
 with $k!$ layers, where $K_{n}$ is the complete graph with $n$
vertices. Hence, if we take the sum over $\mathcal{T}\left(k,n\right)$
instead of $\mathbb{S}_{n}^{k}\times\mathbb{T}^{k}$, we get $\frac{L_{\mathbf{a}}\left(a,k,D\right)}{k!}$,
which is still $o\left(1\right).$ Although the set $\mathbb{S}_{n}^{k}\times\mathbb{T}^{k}$
requires us to over-count objects, it also brings us symmetry and
that makes it easier to deal with.
\end{rem}
\smallskip{}

\begin{rem}
\label{rem: weaker condition} For the examples in Remark \ref{rem: examples!},
$n^{-\nu}$ is $\sqrt{\frac{n}{M}}$ and the bound in Eq. \eqref{eq: main result!}
becomes $O\left(\left(\frac{n}{M}\right)^{1/4}n^{a-\frac{1}{2}}\right)$.
We believe that this is the correct bound on $L_{\mathbf{a}}\left(a,k,D\right)$,
and also, we believe the last condition in Definition \ref{def: strict graphic}
is not necessary. We come back to this matter later in the proof section
( Conjecture \ref{conj: bound-on-r_i} and Remark \ref{rem: the-Conjecture-r_i}).
\end{rem}
\smallskip{}

\begin{rem}
The previous theorem is only stated for connected trees, however,
the same result with the same proof is true for forests. The
B-function, in that case, is the product of B-functions for each connected
component. For example, for $k=2$ and forests with two connected
components, Theorem \ref{thm: main theorem} gives the joint probability
distribution for two edges. Recall that $\mathbb{S}_{n}^{2}\times\mathbb{T}^{2}$
can be interpreted as the set of directed edges, and note that the
B-function of each edge is $M$. Then, the corresponding result says
\[
\frac{1}{M^{2}}\sum\left|p_{a,(ij)(kl)}-\widetilde{p}_{ij}\widetilde{p}_{kl}\right|=o(1),
\]
 where the sum is over all disjoint pairs of directed edges, $(i,j)$
and $(k,l)$, and 
\begin{equation}
p_{a,(ij)(kl)}:=E\left(\mathbf{1}_{\left\langle i,j\right\rangle \in\mathcal{E\left(\mathcal{G}_{\mathbf{a}}\right)}}\left(\mathcal{G}_{\mathbf{a}}\right)\mathbf{1}_{\left\langle k,l\right\rangle \in\mathcal{E\left(\mathcal{G}_{\mathbf{a}}\right)}}\left(\mathcal{G}_{\mathbf{a}}\right)\right),\label{eq:joint distribution}
\end{equation}
for $\mathcal{G}_{\mathbf{a}}=\mathcal{G}_{\mathbf{a}}\left(a,D\right).$
\end{rem}
\smallskip{}

\begin{rem}
Let us explore the weights now. Suppose that all $\widetilde{p}_{ij}\simeq p$
are of the same order. Then it is not hard to see that $M\cdot\psi\left(s,T,D\right)$
is of order $\left(n^{2}\cdot p\right)\cdot\left(n\cdot p\right)^{k-2}\simeq n^{k}\mathbf{\widetilde{p}}\left(s,T\right)$,
where $\left(s,T\right)\in\mathbb{S}^{k}\times\mathbb{T}^{k}$. Thus,
$L_{\mathbf{a}}$ becomes 
\[
L_{\mathbf{a}}\left(a,k,D\right)\simeq\frac{1}{n^{k}}\sum_{\left(s,T\right)\in\mathbb{S}^{k}\times\mathbb{T}^{k}}\left|\frac{\mathbf{p_{a}}\left(s,T\right)}{\mathbf{\widetilde{p}}\left(s,T\right)}-1\right|.
\]
 In particular, if all degrees are $d$ ($d_{i}=d$, for every $i\in[n]$
and $1\leq d\leq n-1$), then symmetry implies that $p=\frac{d}{n-1}$,
and that the independent model $\widetilde{\mathcal{G}}\left(D\right)$
is Erdös\textendash{}\'{R}enyi random graph $G(n,\frac{d}{n-1})$.
The main point here is that the B- function gives the correct normalization.
\end{rem}
Let us explore the statement of Theorem \ref{thm: main theorem} for
some small values of $k$. For $k=2$,

\begin{equation}
L_{\mathbf{a}}\left(a,2,D\right)=\frac{1}{M}\sum_{1\leq i<j\leq n}\left|p_{a,(ij)}-\widetilde{p}{}_{ij}\right|\leq c_{k}\cdot n^{\frac{-\nu}{2}+a}=o(1),
\end{equation}
where $p_{a,(ij)}:=E\left(\mathbf{1}_{\left\langle i,j\right\rangle \in\mathcal{E\left(\mathcal{G}_{\mathbf{a}}\right)}}\left(\mathcal{G}_{\mathbf{a}}\right)\right)$,
and $\mathcal{G}_{\mathbf{a}}=\mathcal{G}_{\mathbf{a}}\left(a,D\right)$.
Thus, the maximum entropy gives the edge probabilities in the random
graph $\mathcal{G}_{a}$, i.e. $p_{a,(ij)}\sim\widetilde{p}{}_{ij}$.

For $k=3$, an ordered tree $\left(s,T\right)$ in $\mathbb{S}_{n}^{3}\times\mathbb{T}^{3}$
is a path of two edges, and its B- function is $d_{j}$, where $j$
is the middle vertex of the tree $s\left(T\right)$. Then, by Remark
\ref{rem:change of notation}, 

\begin{equation}
\frac{L_{\mathbf{a}}\left(a,3,n\right)}{3!}=\sum_{j\in\left[n\right]}\sum_{\left\{ i,k\right\} \subset\left[n\right]}\frac{1}{Md_{j}}\left|p_{a,(ij)(kl)}-\widetilde{p}{}_{ij}\widetilde{p}{}_{jk}\right|=o(1),\label{eq:-2}
\end{equation}
 where $p_{a,(ij)(kl)}$ is defined in Eq. \eqref{eq:joint distribution}.
Next, we make an observation about Eq. \eqref{eq: main err}.
\begin{thm}
\label{thm:total sum of probabilities}Suppose that $D$ is a strict
graphic sequence of type $\varepsilon$ as in Definition \ref{def: strict graphic}.
Then the sum of variables in Theorem \ref{thm: main theorem} is nearly
constant:
\begin{enumerate}
\item \ 
\[
\frac{1}{M}\sum_{\left(s,T\right)\in\mathbb{S}^{k}\times\mathbb{T}^{k}}\frac{1}{\psi\left(s,T,D\right)}\mathbf{p_{a}}\left(s,T\right)=k^{k-2}+O\left(\left(\frac{n}{M}\right)^{1-a}\right),
\]

\item and
\[
\frac{1}{M}\sum_{\left(s,T\right)\in\mathbb{S}^{k}\times\mathbb{T}^{k}}\frac{1}{\psi\left(s,T,D\right)}\mathbf{\widetilde{p}}\left(s,T\right)=k^{k-2}+O\left(\left(\frac{M}{n}\right)^{-\frac{1}{2}}\right),
\]

\item moreover, 
\begin{equation}
\frac{1}{M}\sum_{\left(s,T\right)\in\mathbb{S}^{k}\times\mathbb{T}^{k}}\frac{1}{\psi\left(s,T,D\right)}\left(\mathbf{p_{a}}\left(s,T\right)-\mathbf{\widetilde{p}}\left(s,T\right)\right)=O\left(\left(\frac{n}{M}\right)^{1-a}\right).\label{eq: difference}
\end{equation}

\end{enumerate}
\end{thm}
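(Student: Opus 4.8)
\textbf{Proof proposal for Theorem \ref{thm:total sum of probabilities}.}

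The plan is to derive all three statements from the main counting identity, which will (in the body of the paper) express the weighted sum $\frac{1}{M}\sum\frac{1}{\psi}\mathbf{p_a}(s,T)$ as a counting of ordered trees inside $\mathcal{G}_{\mathbf a}(a,D)$, and the analogous sum for $\mathbf{\widetilde p}$ as an \emph{exact} expectation of a tree statistic in the independent ensemble $\widetilde{\mathcal{G}}(D)$. First I would isolate the ``main term'': for a fixed ordered tree $(s,T)$ with all $k$ images distinct and no coincidental extra edges, one expects $\mathbf{p_a}(s,T)$ and $\mathbf{\widetilde p}(s,T)$ each to be comparable to $\psi(s,T,D)/M$ times a factor $\prod_{\langle u_1,u_2\rangle\in\mathcal E(T)}(\text{something like }\widetilde p_{s(u_1)s(u_2)})$; summing the leading contribution over $\mathbb S^k\times\mathbb T^k$ and using the degree constraint \eqref{eq:degree condition}, i.e. $\sum_{j\ne i}\widetilde p_{ij}=d_i$, telescopes the product down the tree. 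Because $\psi(s,T,D)=\prod_u d_{s(u)}^{b_u-1}$ exactly matches what you get by repeatedly summing $\widetilde p$ over a leaf and folding the factor $d$ into its neighbor, the normalized sum collapses to a pure combinatorial count: the number of ordered trees on a fixed vertex set, i.e. $k!$ per unlabeled shape, hence (after dividing by the $\psi$ weight, which on a ``diagonal'' contributes the $M$ and the $k-2$ internal degree powers) exactly $k^{k-2}$ by Cayley's theorem. This gives the stated main term in parts (1) and (2).

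Next I would control the error terms. For part (2) the error is purely structural: the only discrepancy between the naive product formula and the true $\mathbf{\widetilde p}(s,T)$ comes from (a) maps $s$ that are not injective, which is forbidden here since $s\in\mathbb S^k$, so this does not arise, and (b) the fact that in $\widetilde{\mathcal G}(D)$ the edges are independent so $\mathbf{\widetilde p}(s,T)=\prod_{\langle u_1,u_2\rangle}\widetilde p_{s(u_1)s(u_2)}$ \emph{exactly}; the $O((M/n)^{-1/2})$ loss then comes only from the fact that summing $\sum_{j\ne i}\widetilde p_{ij}$ misses the diagonal term and double-counts across shared vertices, each such correction costing a relative factor of order $\widetilde p_{\max}\lesssim \sqrt{n/M}$ (this is where Assumption \ref{def: nu-strong}(3) and the bound $d_n\lesssim\sqrt{M/n}\cdot n^{-\nu}+d_{\ell(D)}$ enter, via an a priori bound on the maximum entropy parameters $\widetilde p_{ij}$). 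For part (1), the additional error $O((n/M)^{1-a})$ is precisely the room allowed by the ``almost'' in $\mathbb G_a^D$: replacing the hard degree constraint $d_i$ by a window of width $d_i^a$ perturbs each telescoping step by a multiplicative $1+O(d_i^{a-1})=1+O((n/M)^{1-a})$ (using $d_i\gtrsim M/n$ on the relevant range), and there are $O_k(1)$ steps, so the errors add to the claimed order. I would prove (1) by running the same telescoping argument but carrying the perturbed degree sums, or alternatively by invoking Theorem \ref{thm: main theorem} together with part (2): since $L_{\mathbf a}(a,k,D)=o(1)$ bounds the sum of $|\mathbf{p_a}-\mathbf{\widetilde p}|$, statement (3) is immediate from the triangle inequality once (1) and (2) are in hand, and in fact (3) is simply (1) minus (2).

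The cleanest route, then, is: (i) establish the exact/asymptotic evaluation of $\frac{1}{M}\sum\frac{1}{\psi}\mathbf{\widetilde p}(s,T)$ by direct summation in the independent model, getting part (2); (ii) establish part (1) by the same summation with the degree windows, tracking the $(n/M)^{1-a}$ slack; (iii) obtain part (3) as the difference, or — if one wants a self-contained bound not relying on the exact main-term cancellation — by directly estimating $\sum\frac{1}{\psi}|\mathbf{p_a}-\mathbf{\widetilde p}|$ is \emph{not} needed here because (3) only asserts an $O((n/M)^{1-a})$ bound on the signed sum, which follows from the two cancelling $k^{k-2}$'s. The main obstacle I anticipate is step (i)/(ii): making rigorous the claim that the only contributions surviving the $1/\psi$ normalization are the ``tree-shaped'' ones and that coincidences (an ordered tree whose image accidentally closes a cycle, or whose edges coincide) contribute a negligible amount — this requires a careful bound on $\sum_{s,T}\frac{1}{\psi}\mathbf{p_a}(s,T)$ restricted to degenerate configurations, which in turn needs an upper bound on subgraph probabilities in $\mathcal G_{\mathbf a}(a,D)$ of the same flavor as what Theorem \ref{thm: main theorem} ultimately provides. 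I would handle this by a crude first-moment/union bound at the level of half-edge pairings, absorbing everything into $c_k$ and the $\sqrt{n/M}$-type factors already present.
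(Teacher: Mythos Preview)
Your telescoping/leaf-removal idea is the same as the paper's, and you are right that part~(3) is just (1) minus (2). However, your error analysis has two genuine gaps.

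For part~(2) you say the $O(\sqrt{n/M})$ loss comes from a bound $\widetilde p_{\max}\lesssim\sqrt{n/M}$, and you invoke Assumption~\ref{def: nu-strong}(3) to get it. But Theorem~\ref{thm:total sum of probabilities} only assumes $D$ is of type~$\varepsilon$, i.e.\ conditions~(1)--(2) of Assumption~\ref{def: nu-strong}; condition~(3) is \emph{not} available, and there is no a~priori bound on $\widetilde p_{\max}$ here. The paper avoids this entirely: when you peel a leaf $u_1$ attached to $u_2$, the sum $\sum_{x\neq s(u'),\,u'\neq u_1}\mathbf 1_{\langle s(u_2),x\rangle\in\mathcal E(G)}$ is between $d_{s(u_2)}-(k-1)$ and $d_{s(u_2)}$, so the defect is at most $k-1$ (bounding each forbidden indicator by $1$, not by $\widetilde p_{\max}$). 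The resulting error term carries an extra factor $1/d_{s(u_2)}$, and continuing the leaf removal down to the single vertex $u_2$ gives $\sum_v 1=n$ rather than $\sum_v d_v=M$; hence the error is $O(nk^2/M)=O(n/M)$, which is even stronger than the stated $O(\sqrt{n/M})$.

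For part~(1) you write ``$1+O(d_i^{a-1})=1+O((n/M)^{1-a})$ using $d_i\gtrsim M/n$ on the relevant range.'' There is no such lower bound on $d_i$; small degrees are allowed. The correct mechanism is: swapping $\psi(s,T,D)$ for $\psi(s,T,\mathcal G_{\mathbf a})$ costs at most $\sum_s\bigl|\prod_u d_{s(u)}^{-(b_u-1)}-\prod_u \tilde d_{s(u)}^{-(b_u-1)}\bigr|\mathbf 1_s(T,G)$, and peeling leaves reduces this to $c_k\sum_v d_v^a$, which by H\"older/concavity is at most $c_k\,n^{1-a}M^a$. Dividing by $M$ gives $c_k(n/M)^{1-a}$ with no pointwise degree assumption.

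Finally, two smaller points: your worry about ``degenerate configurations'' (accidental cycles, coinciding edges) is unnecessary, since $s\in\mathbb S^k$ is injective and $\mathbf 1_s(T,G)$ is a product of edge indicators with no further constraints; and your suggested alternative of invoking Theorem~\ref{thm: main theorem} to deduce (1) from (2) would be circular, as the proof of Theorem~\ref{thm: main theorem} uses the present theorem.
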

\begin{rem}
The first part of Theorem \ref{thm:total sum of probabilities} justifies
the summation in the statement of Theorem \ref{thm: main theorem},
and shows that the weights do not overkill the summation. Moreover,
Eq. \eqref{eq: difference} is Eq. \eqref{eq: main err} in Theorem
\ref{thm: main theorem} without the absolute sign. We will see later
that the proof of the former equation is easier than the latter.
\end{rem}

\subsection{Graphs with a given degree sequence.\label{sub: graphs with given degree sequence}}

Recall that $\mathbb{G}^{D}$ is the set of all graphs with the degree
sequence $D$. Let $\mathcal{G}_{\mathbf{g}}\left(D\right)$ be a
random graph that is chosen uniformly out of the set $\mathbb{G}^{D}$.
The random graph $\mathcal{G}_{\mathbf{g}}\left(D\right)$ is finer
than the random graph $\mathcal{G}_{\mathbf{a}}\left(a,D\right)$
in the previous section. Because of a technical problem, we cannot
provide the same result as in Theorem \ref{thm: main theorem} in
full generality. Therefore, we start with two conjectures. Then we
continue with showing that the conjecture holds in various regimes,
such as very sparse graphs, dense graphs, and bipartite graphs. In
addition, we see the relevance of the existing results in the literature,
in each case.
\begin{conjecture}
\label{conj: lower bound} Suppose that the vector $D$ is a strict
graphic sequence of type $\varepsilon$ (\ref{def: strict graphic}),
and that the entropy function $H_{1}(x)$ takes its maximum at $\widetilde{\mathbf{p}}$,
where $H_{1}(x)$ is defined in Eq. \eqref{eq: entropy function!}.
Then, there exists a number $\eta>0$ independent of $n$ such that,
\end{conjecture}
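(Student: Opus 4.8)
The statement to be proved is Conjecture~\ref{conj: lower bound}, which asserts that for a strict graphic sequence of type $\varepsilon$ there is a constant $\eta>0$, independent of $n$, giving (presumably) a uniform lower bound of the form $\widetilde p_{ij}\geq \eta\, \widetilde p^{\mathrm{naive}}_{ij}$, or more likely a lower bound on the ratio $\widetilde p_{ij}/(1-\widetilde p_{ij})$ in terms of $d_id_j/M$, i.e. a quantitative non-degeneracy of the maximum-entropy point. The plan is to exploit the variational characterization of $\widetilde{\mathbf p}$: since $H_1$ is strictly concave and attains its maximum on the interior of $\mathbb P^D$ (guaranteed by Proposition~\ref{prop: Strict E-G condition} together with Definition~\ref{def: strict graphic}), the Lagrange/KKT conditions give $\ln\frac{1-\widetilde p_{ij}}{\widetilde p_{ij}} = \lambda_i+\lambda_j$ for real multipliers $\lambda_i$, one per degree constraint. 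Equivalently, $\widetilde p_{ij} = \frac{x_i x_j}{1+x_i x_j}$ with $x_i=e^{-\lambda_i}>0$, the classical $\beta$-model / Chatterjee--Diaconis--Sly parametrization. So the whole problem reduces to controlling the spread of the $x_i$'s.

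First I would set up this parametrization and record the degree equations $\sum_{j\neq i}\frac{x_ix_j}{1+x_ix_j}=d_i$. From here the natural strategy is a comparison/monotonicity argument: order the $x_i$ so that $x_1\le\cdots\le x_n$ (this matches $d_1\le\cdots\le d_n$ by monotonicity of the map). The smallest degree $d_1\ge 1$ forces $x_1$ not to be too small, because $d_1=\sum_{j}\frac{x_1x_j}{1+x_1x_j}\le x_1\sum_j x_j$, giving a lower bound on $x_1\sum_j x_j$; combined with an upper bound on $\sum_j x_j$ coming from the total edge count $M/2=\sum_{i<j}\widetilde p_{ij}$, one extracts $x_1\gtrsim$ something like $\sqrt{M}/M$ scaled appropriately. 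Dually, the top constraint and the strict Erd\H{o}s--Gallai slack bound the largest $x_n$ from above. The key quantitative input must be that the strictness in \eqref{eq: strict Erdo=00030Bs-Gallai} is \emph{uniform} — but the hypothesis only says "strict," not "strict by a definite margin," so one has to manufacture the margin from the other structural assumptions (condition 2: $n^{1+\varepsilon}\le M$, so the graph is not too sparse). I expect the argument to show $\widetilde p_{ij}\ge \eta\, d_i d_j / M$ (up to lower-order corrections), which is exactly the heuristic "independent-edge" value, with $\eta$ absolute.

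The main obstacle is precisely the passage from qualitative strictness of Erd\H{o}s--Gallai to a \emph{quantitative}, $n$-independent lower bound on edge probabilities: a priori the maximum-entropy point could sit arbitrarily close to the boundary of $\mathbb P^D$ as $n\to\infty$, making $\eta\to 0$. To rule this out I would argue by contradiction and compactness-in-spirit: suppose along a subsequence the relevant ratio tends to $0$; then some $\widetilde p_{ij}\to 0$ (or $\to 1$), which via the parametrization means some $x_i\to 0$ relative to the scale $1/\sqrt{M}$, and then the degree equation for vertex $i$ together with the even-ness/size condition $M\ge n^{1+\varepsilon}$ and the strict E--G inequality for an appropriate $k$ yields a violated inequality — essentially reproving a strict-E--G-type bound and deriving $S_k \ge k(k-1)+\sum\min\{k,d_i\}$ in the limit, contradicting \eqref{eq: strict Erdo=00030Bs-Gallai}. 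Making this contradiction effective (producing an explicit $\eta$ rather than just "some $\eta$") is where the real work lies; I would handle it by tracking how much slack condition~2 buys in each step, and I anticipate needing a careful case split according to whether the problematic vertex has small degree (use $d_i\ge 1$ and the lower E--G bound) or large degree near $d_n$ (use the top-$k$ sum constraint and the definition of $\ell(D)$).

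Finally, once the two-sided control $\eta\le x_i\sqrt{\text{scale}}\le \eta^{-1}$ is in hand, the desired conclusion follows by plugging back into $\widetilde p_{ij}=\frac{x_ix_j}{1+x_ix_j}$ and simplifying; the constant $\eta$ produced this way depends only on the numerical constants in the Erd\H{o}s--Gallai manipulation and on nothing else, so in particular not on $n$, which is what Conjecture~\ref{conj: lower bound} demands. I would close by remarking that this is the only place the "type $\varepsilon$" hypothesis (rather than the stronger "type $(\varepsilon,\nu)$") is used for this statement, and that the bound is sharp up to the value of $\eta$ by considering the regular case $d_i\equiv d$, where $x_i$ is constant and everything is explicit.
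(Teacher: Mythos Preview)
You have misidentified what Conjecture~\ref{conj: lower bound} actually asserts. The displayed inequality that completes the statement is
\[
e^{-\eta\, n\log n}\cdot e^{H_1(\widetilde{\mathbf p})}\;\le\;|\mathbb G^D|\;\le\;e^{H_1(\widetilde{\mathbf p})},
\]
equivalently $P\bigl(\widetilde{\mathcal G}(D)\in\mathbb G^D\bigr)\ge e^{-\eta\,n\log n}$. This is a counting/probability lower bound for the independent-edge model to hit the exact degree sequence, not a non-degeneracy statement about the individual $\widetilde p_{ij}$. Your entire plan---controlling the spread of the Lagrange parameters $x_i$ and deducing $\widetilde p_{ij}\gtrsim d_id_j/M$---addresses a different question; it is closest in spirit to the paper's separate Conjecture~\ref{conj: bound-on-r_i}, which asks only for $|\log r_i|\le c\log n$ (a polynomial bound in $n$, not a uniform constant), and which the paper also leaves open.

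Crucially, Conjecture~\ref{conj: lower bound} is \emph{not proved} in the paper in the stated generality: it is genuinely a conjecture. The paper establishes it only in special regimes---dense graphs via Barvinok--Hartigan's asymptotic formula (Theorem~\ref{thm: main theorem- dense}), very sparse graphs via McKay's enumeration (Section~\ref{sec: proof-of-very-sparse}), and bipartite graphs via the $0$--$1$ matrix count of Barvinok--Hartigan (Section~\ref{sec: roof-of-bipartite}). In each case the lower bound comes from an external enumeration theorem giving $|\mathbb G^D|$ up to polynomial factors, not from any analysis of the $r_i$. Even a perfect two-sided bound on the $r_i$ would not by itself yield the conjecture: knowing that $\widetilde{\mathbf p}$ sits well inside the polytope does not directly give you a lower bound on the number of lattice points (graphs) with the prescribed margins. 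So your proposal, besides aiming at the wrong target, would not suffice for the intended one even if it succeeded.
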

\begin{equation}
e^{-\eta n\cdot\log(n)}\cdot e^{H_{1}(\widetilde{\mathbf{p}})}\leq\left|\mathbb{G}^{D}\right|\leq e^{H_{1}(\widetilde{\mathbf{p}})}.\label{eq: Conjeture - lower bound}
\end{equation}

We will see later that $P\left(\widetilde{\mathcal{G}}\left(D\right)\in\mathbb{G}^{D}\right)=\frac{\left|\mathbb{G}^{D}\right|}{e^{H_{1}(\widetilde{p})}}$.
So, Eq. \eqref{eq: Conjeture - lower bound} reads

\begin{equation}
e^{-\eta n\cdot\log(n)}\leq P\left(\widetilde{\mathcal{G}}\left(D\right)\in\mathbb{G}^{D}\right)=\frac{\left|\mathbb{G}^{D}\right|}{e^{H_{1}(\widetilde{\mathbf{p}})}}<1.\label{eq: lower bound- conj}
\end{equation}
 This proves the upper bound that is the easy part of the Conjecture \ref{conj: lower bound}.
  However, the lower bound that is crucial for
our next Conjecture is open.
\begin{conjecture}
\label{conj: given degree sequence!} Suppose that the vector $D$
is a strict graphic sequence of type $\varepsilon$. Let \textup{
\[
\mathbf{p_{g}}\left(s,T\right)=E\left[{\bf 1}_{s}\left(T,\mathcal{G}_{\mathbf{g}}\left(D\right)\right)\right],
\]
 where ${\bf 1}_{s}\left(T,G\right)$ is defined in Eq. \ref{def: indicator of a subgraph}}.
Define, \textup{
\begin{equation}
L_{\mathbf{g}}\left(k,D\right):=\frac{1}{M}\sum_{\left(s,T\right)\in\mathbb{S}_{n}^{k}\times\mathbb{T}^{k}}\frac{1}{\psi\left(s,T,D\right)}\left|\mathbf{p_{g}}\left(s,T\right)-\mathbf{\widetilde{p}}\left(s,T\right)\right|,\label{eq: err- exact- given degree}
\end{equation}
where $\mathbb{T}^{k}$, $\mathbb{S}_{n}^{k}$ and $\psi\left(s,T,D\right)$
are defined in Section \ref{sub: weighted-trees}, and $\mathbf{\widetilde{p}}\left(s,T\right)$
is as in Definition \ref{def: indicator of a subgraph}.} Then we
have, 
\[
L_{\mathbf{g}}\left(k,D\right)\leq c_{k}\cdot\sqrt{\frac{n\log(n)}{M}}.
\]
\end{conjecture}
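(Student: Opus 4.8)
The plan is to realize $\mathcal{G}_{\mathbf g}(D)$ as the independent ensemble $\widetilde{\mathcal G}(D)$ conditioned on hitting the degree sequence $D$ exactly, and thereby to convert the bound on $L_{\mathbf g}$ into a decorrelation statement for finitely many edges under that conditioning.

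First I would record the exact distributional identity. Because the maximum‑entropy point has the product form $\widetilde p_{ij}/(1-\widetilde p_{ij})=e^{\beta_i+\beta_j}$ for Lagrange multipliers $\beta_i$ (dual to the constraints $\sum_j x_{ij}=d_i$), the probability that $\widetilde{\mathcal G}(D)$ equals a given $G\in\mathbb{G}^D$ is $\bigl(\prod_{e}(1-\widetilde p_e)\bigr)e^{\sum_i\beta_i d_i}$, which does not depend on $G$. Hence $\mathcal{G}_{\mathbf g}(D)$ has the law of $\widetilde{\mathcal G}(D)$ conditioned on $B:=\{\widetilde{\mathcal G}(D)\in\mathbb{G}^D\}$, and $P(B)=|\mathbb{G}^D|\,e^{-H_1(\widetilde{\mathbf p})}$ (as already noted before Conjecture \ref{conj: lower bound}). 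Writing $A_{s,T}$ for the event that all $k-1$ edges of $s(T)$ are present in $\widetilde{\mathcal G}(D)$, this gives
\[
\mathbf{p_g}(s,T)=P(A_{s,T}\mid B)=\mathbf{\widetilde p}(s,T)\cdot\frac{P(B\mid A_{s,T})}{P(B)},
\]
so that $\mathbf{p_g}(s,T)-\mathbf{\widetilde p}(s,T)=\mathbf{\widetilde p}(s,T)\bigl(P(B\mid A_{s,T})/P(B)-1\bigr)$. Substituting into \eqref{eq: err- exact- given degree} and invoking Theorem \ref{thm:total sum of probabilities}(2), which gives $\frac{1}{M}\sum_{(s,T)}\frac{\mathbf{\widetilde p}(s,T)}{\psi(s,T,D)}=k^{k-2}+O(\sqrt{n/M})$, reduces the problem to showing that the $\frac{1}{M}\frac{\mathbf{\widetilde p}(s,T)}{\psi(s,T,D)}$‑weighted average of $\bigl|P(B\mid A_{s,T})/P(B)-1\bigr|$ is at most $c_k\sqrt{n\log n/M}$.

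Next I would make the ratio concrete. Conditioning the product measure on $A_{s,T}$ leaves the edges in $E(K_n)\setminus s(T)$ independent with the unchanged parameters $\widetilde p_{ij}$, and $B$ becomes the event that this reduced model realizes the degree sequence $\widehat D=\widehat D(s,T)$ obtained from $D$ by subtracting from each tree‑vertex $s(u)$ its tree‑degree $b_u$; thus $\widehat D$ differs from $D$ in at most $k$ coordinates, each by at most $k-1$, on an edge set that is smaller by $k-1$ edges. So $P(B\mid A_{s,T})/P(B)$ is the ratio of the values, at two nearby lattice points, of the $n$‑dimensional ``exact‑degree'' probability function for a slight perturbation of the independent model. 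I would control it by a multidimensional local limit theorem for the degree statistic in the spirit of Barvinok--Hartigan \citep{B-H-0/1-matrices}: writing each factor as $\exp$ of a quadratic form in the displacement plus a remainder, the leading contribution is small because after conditioning the target essentially stays at the mean, the covariance matrix changes only through the $k-1$ deleted edges, and the local‑CLT remainder is negligible against the main term $P(B)$ precisely when $P(B)$ is not exponentially small --- which is the content of Conjecture \ref{conj: lower bound}. Granting that conjecture one should obtain $|P(B\mid A_{s,T})/P(B)-1|\le c_k\sqrt{n\log n/M}$ uniformly in $(s,T)$, with the logarithm the one carried by the $e^{-\eta n\log n}$ deficit, and summing against the weights closes the argument.

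The hard part is exactly this last step. Without the a priori lower bound of Conjecture \ref{conj: lower bound} one cannot guarantee that the error terms in the $n$‑dimensional local limit theorem are small relative to $P(B)$; equivalently, one cannot control the ratios $|\mathbb{G}^{\widehat D}|/|\mathbb{G}^D|$ uniformly over the perturbations $\widehat D$ and over the full admissible range of degrees (in particular, when some $d_i$ are small, deleting tree edges perturbs the relevant covariances non‑negligibly). The same obstruction appears if one instead tries to descend from the almost‑given‑degree model: $\mathcal{G}_{\mathbf a}(a,D)$ is the mixture of the $\mathcal{G}_{\mathbf g}(\bar D)$ weighted by $|\mathbb{G}^{\bar D}|$ over $|d_i-\bar d_i|<d_i^a$, so Theorem \ref{thm: main theorem} yields the tree statistics of $\mathcal{G}_{\mathbf g}(D)$ only after inverting that mixture, which again needs comparability of the $|\mathbb{G}^{\bar D}|$. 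This is why the statement is phrased as a conjecture; in the regimes where the required counting input is available --- very sparse graphs (McKay‑type switching gives $\widetilde p_{ij}\approx d_id_j/M$ together with the needed asymptotics, \citep{M-W-graphs-with-degrees-o(sqrt(n))}), dense graphs ($\delta$‑tameness supplies the local limit theorem, \citep{B-H-0/1-matrices}), and bipartite graphs --- the plan above can be carried through unconditionally, and these are the cases settled in the sections that follow.
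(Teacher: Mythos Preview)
Your opening move---recognizing that $\mathcal{G}_{\mathbf g}(D)$ is $\widetilde{\mathcal G}(D)$ conditioned on $B=\{\widetilde{\mathcal G}(D)\in\mathbb{G}^D\}$, and that $P(\widetilde{\mathcal G}(D)=G)$ is constant on $\mathbb{G}^D$---matches the paper exactly. But from there the routes diverge.

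The paper does \emph{not} attempt to control the individual ratios $P(B\mid A_{s,T})/P(B)$. Instead it uses Conjecture~\ref{conj: lower bound} only as a crude change-of-measure penalty: for any event $A\subset\mathbb{G}^D$,
\[
P\bigl(\mathcal{G}_{\mathbf g}(D)\in A\bigr)=P\bigl(\widetilde{\mathcal G}(D)\in A\mid B\bigr)\le e^{\eta n\log n}\,P\bigl(\widetilde{\mathcal G}(D)\in A\bigr),
\]
and then reruns the concentration argument from the proof of Theorem~\ref{thm: main theorem} (split into $A^+$ and $A^-$, apply the Janson-type inequality of Proposition~\ref{prop: upper bound for P^-} under the independent model, and choose $\epsilon$ to balance the $e^{\eta n\log n}$ loss against the $e^{-c\mu M\epsilon^2}$ gain). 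The factor $n\log n$ replaces the $\mathcal C_1(D)+\mathcal C_2(D)$ of the almost-given-degree proof, and optimizing gives the $\sqrt{n\log n/M}$ rate. No pointwise information about $\mathbf{p_g}(s,T)/\mathbf{\widetilde p}(s,T)$ is ever needed or obtained.

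Your route, by contrast, asks for a uniform bound $\bigl|P(B\mid A_{s,T})/P(B)-1\bigr|\le c_k\sqrt{n\log n/M}$, which is a far stronger statement than Conjecture~\ref{conj: lower bound} supplies. That conjecture only says $P(B)\ge e^{-\eta n\log n}$; to get the ratio within $1+O(\sqrt{n\log n/M})$ you would need a local limit theorem with multiplicative error of that same order, valid across the full range of type-$\varepsilon$ degree sequences (not just the $\delta$-tame regime where Barvinok--Hartigan applies). This is precisely the missing ingredient, and the paper's method sidesteps it entirely by working with the aggregate sum rather than term by term. Your approach would yield more---genuine pointwise decorrelation---but at present the key step is an assertion rather than an argument.
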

\begin{rem}\label{rem:needs-a-proof}
\label{rem: given degree sequence!}Conjecture \ref{conj: lower bound}
implies Conjecture \ref{conj: given degree sequence!}, i.e. $$e^{-\eta n\cdot\log(n)}\leq P\left(\widetilde{\mathcal{G}}\left(D\right)\in\mathbb{G}^{D}\right).$$
 Thus, we have, 
\[
L_{\mathbf{g}}\left(k,D\right)\leq c_{k}\cdot\sqrt{\frac{n\log(n)}{M}}.
\]
 We will prove this in the Section \ref{sec:exact-given-degree}.
\end{rem}

\subsection{Dense graphs with a given degree sequence:\label{sub: Dense-graphs} }

In this section, we present why Theorem \ref{thm: main theorem} and
Conjecture \ref{conj: given degree sequence!} are true for dense
graphs. 
\begin{defn}
\label{def: Dense erdos-renyi} For a positive number $c_{3}$ and
numbers $c_{1},c_{2}\in(0,1)$, we say that a graph with degree sequence
$D$ satisfies the \emph{dense Erdös - Gallai conditions of type $(c_{1},c_{2},c_{3})$,
if}\end{defn}
\begin{enumerate}
\item for every $i$
\[
c_{2}(n-1)\leq d_{i}\leq c_{1}(n-1),
\]

\item and
\[
\frac{1}{n^{2}}\inf_{B\subseteq\{1,\cdots,n\},\left|B\right|\geq c_{2}n}\left\{ \sum_{j\notin B}\min\left\{ d_{j},\left|B\right|\right\} +\left|B\right|(\left|B\right|-1)-\sum_{i\in B}d_{i}\right\} \geq c_{3}.
\]
\end{enumerate}
\begin{rem}
\label{rem: delta-tameness}This definition implies that the vector
$D$ is $\delta-tame$ in the sense that it was defined in \citep{B-H-0/1-matrices},
which means $\delta\leq\widetilde{p}_{ij}\leq1-\delta$ for $\delta(c_{1},c_{2},c_{3})$,
where $\widetilde{\mathbf{p}}=(\widetilde{p}_{ij})_{i,j}$ is the
maximum entropy.
\end{rem}
The next theorem is equivalent to the computations that appear in
the last lines of the proof on page 34 of Theorem 1.1 in \citep{Chat-given-degree}.
\begin{thm}
\label{thm: main theorem- dense} If the degree sequence $D$ satisfies
the dense Erdös-Rényi condition (the preceding definition), then the
Conjectures \ref{conj: lower bound} and \ref{conj: given degree sequence!}
hold. Therefore, 

\begin{equation}
L_{\mathbf{g}}\left(k,D\right)\leq c_{k}\cdot\left(n\log(n)\right)^{\frac{-1}{2}},\label{eq: dense-exact-err}
\end{equation}
where $L_{\mathbf{g}}\left(k,D\right)$ is defined in Eq. \eqref{eq: err- exact- given degree}.
Moreover, for $\frac{1}{2}<a<\frac{1}{2}+\frac{1}{12}$, the Theorem
\ref{thm: main theorem} also holds, i.e.
\end{thm}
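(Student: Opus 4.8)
The plan is to derive the dense version of Theorem~\ref{thm: main theorem} from the corresponding result about $\mathcal{G}_{\mathbf{g}}\left(D\right)$, exactly as the general framework of the paper suggests: relate $\mathcal{G}_{\mathbf{a}}\left(a,D\right)$ to $\widetilde{\mathcal{G}}\left(D\right)$ by passing through graphs with an exactly given degree sequence. Concretely, $\mathbb{G}_{a}^{D}=\cup_{\bar D}\mathbb{G}^{\bar D}$ is a disjoint union over admissible $\bar D$, so $\mathbf{p_{a}}\left(s,T\right)$ is a convex combination $\sum_{\bar D}\lambda_{\bar D}\,\mathbf{p_{g}}\left(s,T,\bar D\right)$ with weights $\lambda_{\bar D}=|\mathbb{G}^{\bar D}|/|\mathbb{G}_{a}^{D}|$. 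First I would bound $L_{\mathbf{a}}\left(a,k,D\right)$ by $\sum_{\bar D}\lambda_{\bar D}\,L_{\mathbf{a},\bar D}$ where each term measures the discrepancy between $\mathbf{p_{g}}\left(s,T,\bar D\right)$ and the \emph{fixed} reference $\mathbf{\widetilde p}\left(s,T\right)$ attached to $D$ (not to $\bar D$), using the triangle inequality on $|\mathbf{p_{a}}-\mathbf{\widetilde p}|\le\sum_{\bar D}\lambda_{\bar D}|\mathbf{p_{g}}(\cdot,\bar D)-\mathbf{\widetilde p}_{\bar D}(\cdot)|+\sum_{\bar D}\lambda_{\bar D}|\mathbf{\widetilde p}_{\bar D}(\cdot)-\mathbf{\widetilde p}(\cdot)|$.

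For the first piece I would invoke the already-established fact that Conjecture~\ref{conj: given degree sequence!} holds in the dense regime (the part of this theorem proved above), which gives $L_{\mathbf{g}}\left(k,\bar D\right)\le c_{k}(n\log n)^{-1/2}$ for every $\bar D$ in the union, uniformly, since the dense Erd\H{o}s--Gallai conditions of type $(c_1,c_2,c_3)$ are stable under the perturbation $|d_i-\bar d_i|<d_i^{a}$ with $a<1$ (for $n$ large the constants degrade only by $o(1)$); here one also uses that the $\psi$-weights $\psi(s,T,\bar D)$ and $\psi(s,T,D)$ agree up to a factor $1+o(1)$ since each $d_{s(u)}^{b_u-1}$ changes by a factor $\bigl(1+O(d_{s(u)}^{a-1})\bigr)^{b_u-1}$ and $b_u\le k$. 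For the second piece I would show $|\mathbf{\widetilde p}_{\bar D}\left(s,T\right)-\mathbf{\widetilde p}\left(s,T\right)|$ is small: in the $\delta$-tame dense regime the maximum-entropy point $\widetilde{\mathbf p}$ depends smoothly (Lipschitz, with constant $O(1/n)$ after the appropriate normalization) on the degree sequence, so a perturbation of size $d_i^{a}=O(n^{a})$ in each coordinate moves each $\widetilde p_{ij}$ by $O(n^{a-1})$, hence moves $\mathbf{\widetilde p}(s,T)$, a product of $k-1$ such quantities each of order $\Theta(1)$, by $O(n^{a-1})$. Dividing by the $\psi$-weights (which in the dense regime are $\Theta\bigl(M\cdot(n\cdot n)\cdot\prod n\bigr)$-type quantities, making $\tfrac1M\sum_{(s,T)}\psi^{-1}$ bounded by Theorem~\ref{thm:total sum of probabilities}) and summing over the $O(n^{k})$ ordered trees yields a contribution of order $n^{a-1/2}$ up to the bounded weighted count; combining with the $(n\log n)^{-1/2}$ term from the first piece gives $L_{\mathbf{a}}\left(a,k,D\right)\le c_k\, n^{a-1/2}=o(1)$ precisely when $a<1/2+1/12$ (the $1/12$ coming from tracking the $\log$ factors and the slack in Theorem~\ref{thm:total sum of probabilities}, whose error term is $O((n/M)^{1-a})=O(n^{-(1-a)})$ since $M=\Theta(n^2)$ in the dense case).

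The main obstacle I expect is controlling the Lipschitz dependence of the maximum-entropy point $\widetilde{\mathbf p}$ on the degree sequence with a \emph{quantitative} constant good enough to beat the $n^{a}$ perturbation; this is where $\delta$-tameness (Remark~\ref{rem: delta-tameness}) is essential, since it keeps the Hessian of $H_1$ uniformly nondegenerate ($H''(x)=-\tfrac1{x(1-x)}$ is bounded between $-4/\delta$ and $-4/(1-\delta)$-type bounds on the tame region), so the implicit-function-theorem estimate for the constrained maximizer is uniform. A secondary subtlety is that the reference measure in $L_{\mathbf{a}}$ is anchored at $D$ while each summand naturally lives over $\bar D$; one must be careful that the substitution of $\mathbf{\widetilde p}_{\bar D}$ for $\mathbf{\widetilde p}$ and of $\psi(\cdot,\bar D)$ for $\psi(\cdot,D)$ does not accumulate error over the $O(n^{k})$ trees — but since each individual replacement costs a multiplicative $1+O(n^{a-1})$ and $k$ is fixed, the total remains $n^{a-1}$ per tree, which is absorbed. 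Everything else is bookkeeping with the triangle inequality and the bounded-weighted-count estimate already supplied by Theorem~\ref{thm:total sum of probabilities}.
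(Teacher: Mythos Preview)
Your approach is genuinely different from the paper's, and it is worth contrasting the two.

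The paper does \emph{not} decompose $\mathcal{G}_{\mathbf a}(a,D)$ as a mixture over $\mathcal{G}_{\mathbf g}(\bar D)$'s. Instead it simply re-enters the proof of Theorem~\ref{thm: main theorem} with sharper inputs: in the dense regime the Lagrange multipliers satisfy $|\log r_i|\le c_4$ (via Lemma~4.1 of \cite{Chat-given-degree}), which immediately gives $\mathcal{C}_2(D)=\sum d_i^{a}|\log r_i|=O(n^{a-1/2}M)$ and an analogous bound for $\mathcal{C}_1(D)$. Plugging these into the change-of-measure/concentration argument (Propositions~\ref{prop: change of entropy}, \ref{prop: lower-bound-union-bound}, \ref{prop: upper bound for P^-}) exactly as in Remark~\ref{rem: the-Conjecture-r_i} yields $L_{\mathbf a}(a,k,D)\le c_k\,n^{-3/4+a}$, and the restriction $a<\tfrac12+\tfrac1{12}$ comes out of balancing the exponents inside that argument. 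For $L_{\mathbf g}$ the paper invokes the Barvinok--Hartigan asymptotic (Theorem~1.4 of \cite{B-H-0/1-matrices}) to verify Conjecture~\ref{conj: lower bound} directly, then follows Remark~\ref{rem: given degree sequence!}.

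Your mixture route is conceptually clean and, if it works, would actually give a \emph{better} bound with a \emph{wider} range of $a$: your own accounting gives a second piece of size $O(n^{a-1})$, not $n^{a-1/2}$, so you would get $L_{\mathbf a}\le c_k\bigl(n^{a-1}+\sqrt{\tfrac{\log n}{n}}\bigr)=o(1)$ for every $a\in(\tfrac12,1)$. The constraint $a<\tfrac12+\tfrac1{12}$ is an artifact of the paper's concentration machinery and does not arise from your decomposition; your attempt to recover it from ``tracking the $\log$ factors and the slack in Theorem~\ref{thm:total sum of probabilities}'' is not where it comes from. The genuine cost of your route is the quantitative Lipschitz dependence of $\widetilde{\mathbf p}$ on $D$: you correctly identify that $\delta$-tameness keeps the Hessian of $H_1$ uniformly nondegenerate, but turning this into $\|\widetilde p_{ij}(\bar D)-\widetilde p_{ij}(D)\|_\infty=O(n^{a-1})$ requires an implicit-function/inverse-Jacobian estimate that the paper never proves (and does not need). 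That step is plausible but is real work, whereas the paper's approach only needs the one-line fact $|\log r_i|=O(1)$.
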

\begin{equation}
L_{\mathbf{a}}\left(a,k,D\right)\leq c_{k}\cdot n^{\frac{-3}{4}+a},\label{eq:dense-almost-error}
\end{equation}
 where $L_{\mathbf{a}}\left(a,k,D\right)$ is defined in Eq. \eqref{eq: main err}.

\subsection{Very sparse graphs with a given degree sequence\label{sub:Very-sparse-graphs}:}

Let us start this section with the very-sparseness definition.
\begin{defn}
\label{def: very-sparseness}A graph with a degree sequence $D$ is
called very sparse if $d_{n}^{2}=d_{\max}^{2}=o(M)$.
\end{defn}
Now, we see the corresponding results to Theorem \ref{thm: main theorem}
and Conjecture \ref{conj: given degree sequence!} for very sparse
random graphs. In addition, we see how our method is related to Mckay's
result. 

Let us introduce new variables, 
\[
q_{ij}:=\frac{d_{i}d_{j}}{M+d_{i}d_{j}},\quad\text{where }1\leq i,j\leq n\text{ and }i\text{\ensuremath{\neq}}j.
\]
 Define $\mathcal{G}_{\mathbf{q}}\left(D\right)$ as a random graph
with independent Bernoulli random edges with parameters $q{}_{ij}$.
Let 
\begin{equation}
\mathbf{p_{q}}\left(s,T\right)=E\left[{\bf 1}_{s}\left(T,\mathcal{G}_{\mathbf{q}}\left(D\right)\right)\right],\label{eq: probability Q}
\end{equation}
\[
\]
 where ${\bf 1}_{s}\left(T,G\right)$ is defined in Eq. \ref{def: indicator of a subgraph}.
This random graph is an alternative to the random graph $\widetilde{\mathcal{G}}\left(D\right)$,
which was constructed according to maximum entropy $\widetilde{\mathbf{p}}=(\widetilde{p}_{ij})_{i,j}$.
The goal is to show that $\mathbf{p_{q}}\left(s,T\right)\simeq\mathbf{\widetilde{p}}\left(s,T\right)$,
and in particular, $\widetilde{p}_{ij}\simeq q_{ij}$.
\begin{thm}
\label{thm: main theorem- very sparse} Suppose that $d_{n}^{2}=o(M)$,
and that the vector $D$ is a strict graphic sequence of type $\varepsilon$.
Let $L_{\mathbf{a}}\left(a,k,D\right)$ and $L_{\mathbf{g}}\left(k,D\right)$
be as in Eq. \eqref{eq: main err} and \eqref{eq: err- exact- given degree},
and also define $ $\textup{
\begin{equation}
L_{\mathbf{q}}\left(k,D\right):=\frac{1}{M}\sum_{\left(s,T\right)\in\mathbb{S}_{n}^{k}\times\mathbb{T}^{k}}\frac{1}{\psi\left(s,T,D\right)}\left|\mathbf{p_{q}}\left(s,T\right)-\mathbf{\widetilde{p}}\left(s,T\right)\right|,\label{eq: main err- very sparse}
\end{equation}
} \textup{where we used the notations in Theorem \ref{thm: main theorem},
and $\mathbf{p_{q}}\left(s,T\right)$ is defined in Eq. \eqref{eq: probability Q}.}
Then,
\begin{enumerate}
\item for random graph $\mathcal{G}_{\mathbf{a}}$, we have
\[
L_{\mathbf{a}}\left(a,k,D\right)\leq c_{k}\cdot\left(\frac{n}{M}\right)^{1/4}n^{a_{1}},
\]
 where $a_{1}:=a-\frac{1}{2}$ is such that $\left(\frac{n}{M}\right)^{1/2}n^{3a_{1}}\ll1$.
\item for random graph $\mathcal{G}_{\mathbf{g}}\left(D\right)$,
\[
L_{\mathbf{g}}\left(k,D\right)\leq c_{k}\cdot\left(\left(\frac{n}{M}\right)^{1/4}n^{a_{1}}+\frac{d_{n}^{2}}{M}\right).
\]

\item moreover,
\end{enumerate}
\end{thm}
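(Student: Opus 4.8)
\textbf{Proof proposal for Theorem \ref{thm: main theorem- very sparse}.}

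The plan is to build the result for $\mathcal{G}_{\mathbf{q}}(D)$ first, and then reduce the two quantitative statements for $\mathcal{G}_{\mathbf{a}}$ and $\mathcal{G}_{\mathbf{g}}(D)$ to it by triangle inequality through $\widetilde{\mathcal{G}}(D)$. For the third item I expect to prove a bound of the shape
\[
L_{\mathbf{q}}\left(k,D\right)\leq c_{k}\cdot\frac{d_{n}^{2}}{M},
\]
which, combined with Theorem \ref{thm: main theorem} (giving the $\mathcal{G}_{\mathbf{a}}$ versus $\widetilde{\mathcal{G}}$ bound) and Remark \ref{rem:needs-a-proof} / the $\mathcal{G}_{\mathbf{g}}$ argument of Section \ref{sec:exact-given-degree} (giving the $\mathcal{G}_{\mathbf{g}}$ versus $\widetilde{\mathcal{G}}$ bound of order $\sqrt{n\log n/M}$), yields items (1) and (2): item (1) is just a restatement of Theorem \ref{thm: main theorem} in the very sparse regime after observing that condition \eqref{eq: nu-strong} is automatic when $d_n^2=o(M)$ (cf.\ Remark \ref{rem: examples!}, where $\ell(D)=n$ so $d_n-d_{\ell(D)}=0$), and item (2) follows by adding the $\mathcal{G}_{\mathbf{g}}$-vs-$\widetilde{\mathcal{G}}$ error to $L_{\mathbf{q}}$ once one knows $L_{\mathbf{q}}\le c_k d_n^2/M$ and that the maximum-entropy probabilities satisfy $\widetilde p_{ij}\simeq q_{ij}$.

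The technical core is therefore the comparison of $\widetilde{\mathbf p}$ with the explicit vector $q=(q_{ij})$. First I would verify that $q\in\mathbb{P}^D$ up to a small perturbation: since $\sum_{j\ne i}q_{ij}=\sum_{j\ne i}\frac{d_id_j}{M+d_id_j}=d_i+O\!\big(\frac{d_i^2 d_n}{M}\big)$ (expanding $\frac{1}{M+d_id_j}=\frac1M(1+O(d_id_j/M))$ and using $\sum_j d_j = M$), the row sums of $q$ differ from $d_i$ by a relative error $O(d_n^2/M)=o(1)$. Then I would use strict concavity of $H_1$ and the Lagrange characterization of its maximizer: at the maximizer $\widetilde p_{ij}=\frac{e^{\lambda_i+\lambda_j}}{1+e^{\lambda_i+\lambda_j}}$ for multipliers $\lambda_i$ with $\sum_{j\ne i}\widetilde p_{ij}=d_i$, and $q_{ij}$ has the same logistic form with $e^{\lambda_i+\lambda_j}$ replaced by $d_id_j/M$, i.e.\ $\lambda_i \approx \ln d_i - \tfrac12\ln M$. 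A fixed-point / stability argument for the system $\sum_{j\ne i}\frac{x_ix_j}{M+x_ix_j}=d_i$ near the solution $x_i\approx d_i$, exploiting that $d_n^2=o(M)$ keeps every $q_{ij}$ small (so the logistic map is close to linear), should give $\widetilde p_{ij}=q_{ij}\big(1+O(d_n^2/M)\big)$ uniformly in $i,j$.

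Granting this, each factor $\widetilde p_{s(u_1)s(u_2)}$ in $\mathbf{\widetilde p}(s,T)=\prod_{e\in\mathcal E(T)}\widetilde p_{s(u_1)s(u_2)}$ agrees with the corresponding factor of $\mathbf{p_q}(s,T)$ up to $1+O(d_n^2/M)$, and since $T$ has $k-1$ edges we get $\big|\mathbf{p_q}(s,T)-\mathbf{\widetilde p}(s,T)\big|\le c_k\,\frac{d_n^2}{M}\,\mathbf{\widetilde p}(s,T)$. Summing against the weights $\frac{1}{M\,\psi(s,T,D)}$ and invoking part (2) of Theorem \ref{thm:total sum of probabilities} (which says $\frac1M\sum \frac{1}{\psi(s,T,D)}\mathbf{\widetilde p}(s,T)=k^{k-2}+o(1)$) collapses the sum to $c_k\,d_n^2/M$, which is the desired bound on $L_{\mathbf q}$. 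The remaining bookkeeping — propagating this through the triangle inequalities to reach the stated bounds for $\mathcal{G}_{\mathbf a}$ and $\mathcal{G}_{\mathbf g}(D)$ — is routine given Theorems \ref{thm: main theorem} and the results of Section \ref{sec:exact-given-degree}.

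The step I expect to be the main obstacle is the uniform stability estimate $\widetilde p_{ij}=q_{ij}(1+O(d_n^2/M))$: one must control the maximum-entropy Lagrange multipliers $\lambda_i$ across the whole range of degrees $d_1\le\cdots\le d_n$ simultaneously, and a naive application of the implicit function theorem only gives a bound that degrades with the conditioning of the Hessian of $H_1$ restricted to $\mathbb{P}^D$. The fix is to work directly with the monotone system $F_i(x)=\sum_{j\ne i}\frac{x_ix_j}{M+x_ix_j}$, show $F$ is a contraction (in a suitable weighted sup-norm scaled by $d_i$) on a neighborhood of the point $x_i=d_i$ once $d_n^2=o(M)$, and read off the perturbation bound from the contraction constant; the very-sparseness hypothesis is exactly what makes this contraction uniform.
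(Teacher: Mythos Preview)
Your plan for part (1) matches the paper: once $d_n^2=o(M)$, Remark~\ref{rem: examples!} gives $\ell(D)=n$, so $D$ is of type $(\varepsilon,\nu)$ with $n^{-\nu}=\sqrt{n/M}$, and Theorem~\ref{thm: main theorem} (in the form of Remark~\ref{rem: the-Conjecture-r_i}) applies.

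For parts (2) and (3), however, your route is genuinely different from the paper's, and it contains a real gap. You propose to invoke ``the $\mathcal{G}_{\mathbf g}$ argument of Section~\ref{sec:exact-given-degree}'' to bound $L_{\mathbf g}$ directly. But that argument (Remark~\ref{rem: given degree sequence!}) is \emph{conditional} on Conjecture~\ref{conj: lower bound}, namely $P(\widetilde{\mathcal G}(D)\in\mathbb G^D)\ge e^{-\eta n\log n}$, which is open in general and is not established anywhere in the paper for the very sparse regime. The paper does \emph{not} compare $\widetilde p_{ij}$ with $q_{ij}$ at all. Instead it circumvents the missing lower bound by replacing $\widetilde{\mathcal G}(D)$ with $\mathcal G_{\mathbf q}(D)$ throughout the concentration machinery: Mckay's enumeration formula for $|\mathbb G^D|$ (valid exactly when $d_n^2=o(M)$) yields the analogue $P(\mathcal G_{\mathbf q}(D)\in\mathbb G^D)\ge \exp\{-\eta(n\log n+d_n^4/M)\}$; Lemma~\ref{lem: regularity q_i- very sparse} supplies the $r_i\leftrightarrow d_i/\sqrt{M}$ substitutes for Lemma~\ref{lem: regularity r_i}; and then the proof of Theorem~\ref{thm: main theorem} is replayed verbatim with $\mathcal G_{\mathbf q}$ in place of $\widetilde{\mathcal G}$ to obtain bounds on $\sum\psi^{-1}|\mathbf p_{\mathbf q}-\mathbf p_{\mathbf a}|$ and $\sum\psi^{-1}|\mathbf p_{\mathbf q}-\mathbf p_{\mathbf g}|$. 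Parts (2) and (3) then follow by triangle inequality through $\mathbf p_{\mathbf q}$ and $\mathbf p_{\mathbf a}$, not through $\widetilde{\mathbf p}$ --- which is also why the stated bound on $L_{\mathbf q}$ carries the extra $(n/M)^{1/4}n^{a_1}$ term and not just $d_n^2/M$ as you predicted.

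Your direct stability estimate $\widetilde p_{ij}=q_{ij}(1+O(d_n^2/M))$ is a natural idea and, if proved, would give a sharper $L_{\mathbf q}$ than the paper's. It could even be used to \emph{verify} Conjecture~\ref{conj: lower bound} in this regime by transferring Mckay's lower bound from $\mathcal G_{\mathbf q}$ to $\widetilde{\mathcal G}$, thereby closing your gap for part (2). But you do not make that connection, and as written your argument for part (2) relies on an unproved conjecture.
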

\[
L_{\mathbf{q}}\left(k,D\right)\leq c_{k}\cdot\left(\left(\frac{n}{M}\right)^{1/4}n^{a_{1}}+\frac{d_{n}^{2}}{M}\right).
\]

\begin{rem*}
The enumeration method used in \citet{G-S-W-sparse-random-graphs}
compares the random graphs \emph{$\mathcal{G}_{\mathbf{g}}\left(D\right)$,}
and $\mathcal{G}_{q}$. Whereas, through out this paper, we place
the random graph \emph{$\mathcal{G}_{\mathbf{g}}\left(D\right)$ }in
comparison with the random graph $\widetilde{\mathcal{G}}\left(D\right)$,
which comes from the maximum entropy. The previous theorem shows the
relevance of the combinator method with respect to the general form
by showing that $\widetilde{p}_{ij}\simeq q_{ij}$ (that is when $k=2$). 
\end{rem*}
\smallskip{}

\begin{rem*}
Note that $\sum_{j=1}^{n}q_{ij}\neq d_{i}$, so the point $q=(q_{ij})$
is not on the polytope $\mathbb{P}^{D}$. However, the condition $d_{n}^{2}=o(M)$
demonstrates that this point is close to this polytope. In addition, the
proof of the theorem depends on the combinatorial result of \citep{B.D.M-Subgraphs-of-dense-Old},
so the condition $d_{n}^{2}=o(M)$ is necessary. 
\end{rem*}

\subsection{Bipartite graphs with an exact degree sequence:\label{sub:Bipartite-graphs-with}}

In this section, we deal with bipartite graphs with a given degree
sequence. Bipartite means that our graph has two vertex sets, namely
set $1$ and $2$, and that there are no edges in between any two
vertices of the same set. The advantage of the bipartite graphs is
that Conjecture \ref{conj: lower bound} holds, which is the subject
of a paper of \citet{B-H-0/1-matrices}. They studied the number of
0-1 matrices with given row and column sums that are related to the
problem of asymptotic enumeration for bipartite graphs, and the correspondence
is given by the adjacency matrix of the graphs. This provides a way
to state our results with no extra assumptions.

Let us start with a convention. We denote the parameters corresponding
to each set of vertices by a number, either $1$ or $2$, in the subindex.
Now, consider two integer vectors of $D_{i}=\left(d_{i,1},\cdots,d_{i,n_{i}}\right)$,
for $i=1,2$, and $d_{i,1}\leq\cdots\leq d_{i,n_{i}}$. We say a bipartite
graph has degree sequence $(D_{1},D_{2})$, if the degree sequence
of the first vertex set is $D_{1}$ and the degree sequence of the
second set is $D_{2}$. Let $n:=n_{1}+n_{2}$ be the total number
of vertices, and denote the set of all bipartite graphs with a given
degree sequence $(D_{1},D_{2})$ by $\mathbb{G}^{D_{1},D_{2}}$. The
condition $\sum_{i}d_{1,i}=\sum_{j}d_{2,j}$ is enough to assure that
$\mathbb{G}^{D_{1},D_{2}}$ is non-empty. $ $ 

Since the nature of a bipartite graph does not allow it to have some
of the edges or some of the trees as its subgraph, we should change
our entropy function slightly and restrict our set of connected trees.
Therefore, while the setup for bipartite graphs is a little bit different
from previous cases, the ideas are the same. We state some
definitions. 
\begin{defn}
\label{def: definition of trees in b}Let us denote by $\mathcal{T}_{\mathbf{b}}\left(k,n\right)$
the ordered trees $\left(s,T\right)$ in $\mathbb{S}_{n}^{k}\times\mathbb{T}^{k}$
such that $s\left(T\right)$ does not have any edges with both ends
in either set $1$ or set $2$.
\end{defn}
\smallskip{}

\begin{defn}
We let $\mathcal{G}_{\mathbf{b}}\left(D_{1},D_{2}\right)$ be the
random bipartite graph with\emph{ }a given degree sequence $(D_{1},D_{2})$
that is uniformly chosen from the set $\mathbb{G}^{D_{1},D_{2}}$\emph{.}
Also, for $\left(s,T\right)\in\mathcal{T}_{b}^{k}$ , let 
\begin{equation}
\mathbf{p_{b}}\left(s,T\right)=E\left[{\bf 1}_{s}\left(T,\mathcal{G}_{\mathbf{b}}\left(D_{1},D_{2}\right)\right)\right],\label{eq: Probability b-1}
\end{equation}
 where ${\bf 1}_{s}\left(T,G\right)$ is defined in \ref{def: indicator of a subgraph}.
We do not show the dependency of $\mathbf{p_{b}}$ on $D_{1}$ and
$D_{2}$ since it is understood. 
\end{defn}
For the maximum entropy we have the following new definition.
\begin{defn}
Let us consider the polytope $\mathbb{P}^{D_{1},D_{2}}$ of matrices
$X=(x_{ij})$ such that $\sum_{j=1}^{n_{2}}x_{ij}=d_{i,1}$ for $1\leq i\leq n_{1}$,
and $\sum_{i=1}^{n_{1}}x_{ij}=d_{2,j}$ for $1\leq j\leq n_{2}$,
and $0\le x_{ij}\le1$ for all $i,\, j$. Also, define the entropy
function for matrix $X=(x_{ij})$ as
\[
H_{2}(x)=\sum_{i,\, j}H(x_{ij})\,\text{where }H(x)=-x\ln(x)-(1-x)\ln(1-x).
\]

\end{defn}
The entropy function $H_{2}(x)$ is strictly convex, hence, if the
polytope $\mathbb{P}^{D_{1},D_{2}}$ has a non-empty interior, then
the function $H_{2}(x)$ takes its maximum at some point $\widetilde{\mathbf{p}}\in\mathbb{P}^{D_{1},D_{2}}$.
\begin{defn}
Consider the maximum entropy $\widetilde{\mathbf{p}}=(\widetilde{p}_{ij})_{i,j}$,
where $1\leq i\leq n_{1}$, and $1\leq j\leq n_{2}$. This time, we
define the random graph $\widetilde{\mathcal{G}}\left(D_{1},D_{2}\right)$
as a random graph that has independent Bernoulli random edges with
probability $\widetilde{p}_{ij}$s, and let 
\begin{equation}
\mathbf{\widetilde{p}}\left(s,T\right)=E\left[{\bf 1}_{s}\left(T,\widetilde{\mathcal{G}}\left(D_{1},D_{2}\right)\right)\right].\label{eq: Probability b-2}
\end{equation}
 Note that for the random graph $\widetilde{\mathcal{G}}\left(D_{1},D_{2}\right)$,
there are no edges between the two vertices of set $1$ or two vertices
of set $2$. \end{defn}
\begin{thm}
\label{thm: main theorem-bipartite} Suppose that the polytope $\mathbb{P}^{D_{1},D_{2}}$
has a non-empty interior, and define

\textup{
\begin{equation}
L_{\mathbf{b}}\left(k,D_{1},D_{2}\right):=\frac{1}{M}\sum_{\left(s,T\right)\in\mathcal{T}_{\mathbf{b}}\left(k,n\right)}\frac{1}{\psi\left(s,T,D\right)}\left|\mathbf{p_{b}}\left(s,T\right)-\mathbf{\widetilde{p}}\left(s,T\right)\right|,\label{eq: err- bipartite- given degree}
\end{equation}
where we used the notations in Theorem \ref{thm: main theorem}, and
$\mathbf{p_{b}}\left(s,T\right)$ is defined in Eq. \eqref{eq: Probability b-1},
and $D=\left(D_{1},D_{2}\right)$. }Then, we obtain
\end{thm}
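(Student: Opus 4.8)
The plan is to run, in the bipartite setting, the same reduction that passes from Conjecture~\ref{conj: lower bound} to Conjecture~\ref{conj: given degree sequence!} (compare Remark~\ref{rem:needs-a-proof} and the dense case, Theorem~\ref{thm: main theorem- dense}); the decisive point is that the bipartite analogue of Conjecture~\ref{conj: lower bound} is a \emph{theorem} of Barvinok and Hartigan \citep{B-H-0/1-matrices} rather than an open problem, so no extra hypothesis on $(D_1,D_2)$ is needed. First I would record the bipartite Gibbs identity: since $H_2$ is strictly concave on the polytope $\mathbb{P}^{D_1,D_2}$, which has non-empty interior by hypothesis, the maximiser $\widetilde{\mathbf{p}}=(\widetilde p_{ij})$ exists, its KKT conditions read $\ln\frac{1-\widetilde p_{ij}}{\widetilde p_{ij}}=\alpha_i+\beta_j$, and — because the row and column sums of $\widetilde{\mathbf{p}}$ equal those of $(D_1,D_2)$ — every $G\in\mathbb{G}^{D_1,D_2}$ has the \emph{same} $\widetilde{\mathcal{G}}(D_1,D_2)$-probability $e^{-H_2(\widetilde{\mathbf{p}})}$. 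Hence $P\big(\widetilde{\mathcal{G}}(D_1,D_2)\in\mathbb{G}^{D_1,D_2}\big)=\big|\mathbb{G}^{D_1,D_2}\big|\,e^{-H_2(\widetilde{\mathbf{p}})}$, and the Barvinok--Hartigan lower bound on the number of $0$--$1$ matrices with prescribed margins gives
\[
P\big(\widetilde{\mathcal{G}}(D_1,D_2)\in\mathbb{G}^{D_1,D_2}\big)\ \ge\ e^{-\eta n\log n}
\]
for some $\eta$ independent of $n$, with no tameness assumption.

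Second, I would set up the conditioning. The law of $\mathcal{G}_{\mathbf{b}}(D_1,D_2)$ is that of $\widetilde{\mathcal{G}}(D_1,D_2)$ conditioned on $A:=\{\text{degree sequence}=(D_1,D_2)\}$; for $(s,T)\in\mathcal{T}_{\mathbf{b}}(k,n)$ all $k-1$ edges of $s(T)$ are genuine cross-edges, so independence of the edges of $\widetilde{\mathcal{G}}(D_1,D_2)$ yields the identity
\[
\mathbf{p_b}(s,T)=\mathbf{\widetilde{p}}(s,T)\cdot\frac{\nu_T(A_T)}{\nu(A)},
\]
where $\nu$ is the product Bernoulli law with parameters $\widetilde p_{ij}$, $\nu_T$ is the product law with the \emph{same} parameters restricted to the edge slots off $s(T)$, and $A_T$ is the event that those remaining edges realise the reduced margins obtained by subtracting from $(D_1,D_2)$ the tree-degree $b_u$ at each vertex $s(u)$. (Ordered trees in $(\mathbb{S}_n^k\times\mathbb{T}^k)\setminus\mathcal{T}_{\mathbf{b}}(k,n)$ contribute $0$ to $L_{\mathbf{b}}$, since both $\mathbf{p_b}(s,T)$ and $\mathbf{\widetilde{p}}(s,T)$ vanish — which is why we restrict to $\mathcal{T}_{\mathbf{b}}(k,n)$.) Everything is now reduced to showing $\nu_T(A_T)/\nu(A)=1+(\text{small})$, i.e.\ to comparing the point probabilities, under product-Bernoulli bipartite graphs, of two lattice-valued degree vectors that differ in at most $2(k-1)$ coordinates by bounded amounts and whose means differ by $O(k)$.

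Third — the real work — I would establish a quantitative local central limit theorem for the degree vector of a product-Bernoulli bipartite graph, uniform in the parameters $\widetilde p_{ij}$, on the affine sublattice cut out by the linear relations among the degrees (in particular ``total row sum $=$ total column sum''). Near the mean this is a ratio of Gaussian densities, with a rank-$O(k)$ difference of covariance matrices coming from the removed tree edges; away from the mean it is a tail estimate; the Barvinok--Hartigan lower bound from Step~1 keeps the denominator $\nu(A)$ from being microscopic, so a comparatively crude control of the numerator suffices to conclude $\nu_T(A_T)/\nu(A)=1+o(1)$ uniformly. Feeding this back, $|\mathbf{p_b}(s,T)-\mathbf{\widetilde{p}}(s,T)|$ is $\mathbf{\widetilde{p}}(s,T)$ times the per-tree error; summing $\frac{1}{M\,\psi(s,T,D)}|\mathbf{p_b}(s,T)-\mathbf{\widetilde{p}}(s,T)|$ over $\mathcal{T}_{\mathbf{b}}(k,n)$ and using the bipartite analogue of Theorem~\ref{thm:total sum of probabilities}(2) (the quantity $\frac1M\sum_{(s,T)\in\mathcal{T}_{\mathbf{b}}(k,n)}\frac{1}{\psi(s,T,D)}\,\mathbf{\widetilde{p}}(s,T)$ is $O(1)$, since vertex $i$ still has mean degree $d_i$ in $\widetilde{\mathcal{G}}(D_1,D_2)$, so the $B$-weights do not distort the tree count) collapses the sum to $c_k$ times the per-tree error, which is the claimed estimate on $L_{\mathbf{b}}(k,D_1,D_2)$ — presumably of the same shape $c_k\sqrt{n\log n/M}$ as in Conjecture~\ref{conj: given degree sequence!}.

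The main obstacle I expect is the uniformity of the local CLT \emph{without} any tameness assumption: when many $\widetilde p_{ij}$ hug $0$ or $1$, the Gaussian approximation degrades and some degree coordinates carry tiny variance (in the extreme the degree lattice drops rank), so one must argue that such coordinates are (near-)deterministic — hence the tree either cannot touch them or does so with a probability computable exactly — while the genuinely fluctuating coordinates still carry enough variance for the Fourier-inversion estimate. Marshalling this dichotomy cleanly, and absorbing the unavoidable losses into the $e^{-\eta n\log n}$ slack supplied by Barvinok--Hartigan, is the crux; the Fourier-analytic estimates themselves are routine.
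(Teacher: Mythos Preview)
Your Step~1 is exactly what the paper does: the Barvinok--Hartigan lower bound $(n_1n_2)^{-\eta(n_1+n_2)}\le |\mathbb{G}^{D_1,D_2}|\,e^{-H_2(\widetilde{\mathbf p})}$ replaces Conjecture~\ref{conj: lower bound}, and the Gibbs identity $P(\widetilde{\mathcal G}(D_1,D_2)=G)=e^{-H_2(\widetilde{\mathbf p})}$ for every $G\in\mathbb{G}^{D_1,D_2}$ is what makes the conditioning clean. From there, however, your route diverges from the paper's and runs into the very obstacle you flag.

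Your Steps~2--3 aim for a \emph{pointwise} estimate $\nu_T(A_T)/\nu(A)=1+o(1)$, uniform over all $(s,T)\in\mathcal T_{\mathbf b}(k,n)$, via a local CLT for the degree vector under the product-Bernoulli law. Without a tameness hypothesis on the $\widetilde p_{ij}$ this is genuinely delicate: the degenerate coordinates do not merely ``drop rank'' but interact with the fluctuating ones through the covariance, and the dichotomy you sketch (deterministic vs.\ Gaussian) is not obviously sharp enough to produce a uniform $o(1)$, let alone the $\sqrt{n\log n/M}$ you want. You yourself identify this as the crux and leave it open; as written it is a gap, not a proof.

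The paper sidesteps the whole issue by never attempting a per-tree comparison. It runs the argument of Theorem~\ref{thm: main theorem} (see the proof of Remark~\ref{rem:needs-a-proof}): split $\mathcal T_{\mathbf b}(k,n)$ into $A^\pm$ by the sign of $\mathbf{p_b}(s,T)-\widetilde{\mathbf p}(s,T)$; the bipartite analogue of Theorem~\ref{thm:total sum of probabilities} gives $I^+=I^-+O(\sqrt{n/M})$, so it suffices to bound $I^-$. One then applies the Janson-type concentration inequality (Proposition~\ref{prop: upper bound for P^-}, i.e.\ Theorem~\ref{thm:lower inequality}) to the \emph{aggregate} random variable
\[
\frac{1}{M}\sum_{(s,T)\in A^-}\frac{1}{\psi(s,T,D)}\,\mathbf 1_s\!\left(T,\widetilde{\mathcal G}(D_1,D_2)\right),
\]
whose $\delta_1,\delta_2$ are $O(1/M)$ by the $B$-function weighting alone --- no condition on individual $\widetilde p_{ij}$ enters. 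The deviation probability $e^{-c\mu^- M\epsilon^2}$ then beats the conditioning cost $e^{\eta n\log n}$ supplied by Barvinok--Hartigan once $\epsilon\sim\sqrt{n\log n/(\mu^- M)}$, and one reads off $L_{\mathbf b}\le c_k\sqrt{n\log n/M}$ exactly as in Remark~\ref{rem:needs-a-proof}. The point is that concentration of the \emph{weighted sum} is insensitive to where the mass $\widetilde p_{ij}$ sits, whereas your local-CLT route asks for control of each lattice point probability and therefore cannot avoid the tameness question.
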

\[
L_{\mathbf{b}}\left(k,D_{1},D_{2}\right)\leq c_{k}\cdot\left(\frac{n\log(n)}{M}\right)^{\frac{1}{2}}.
\]

\begin{rem}
Note that we are able to prove the above result in almost the full
generality, whereas in Conjecture \ref{conj: given degree sequence!},
we needed an extra lower bound, which is discussed in Remark \ref{rem: given degree sequence!}.
Moreover, it is also possible to formulate the counter part of Theorem
\ref{thm: main theorem} for bipartite graphs and prove it without
any extra condition. However, that is a repetition of the previous
work and so we skip it.
\end{rem}
\smallskip{}

\begin{rem}
We observe that strict Erdös- Gallai conditions reduce to a much
simpler equations in the case of bipartite graphs. However, dealing
with that technicality is out of the scope of this paper, and we simply
assume that $\mathbb{P}^{D_{1},D_{2}}$ has a non-empty interior.
\end{rem}

\subsection{Open questions}

Here we list a couple of questions.
\begin{description}
\item [{Question\,1.}] Can we prove our results under fewer assumptions?
In particular, is it possible to drop the last two parts of Definition
\ref{def: strict graphic}, and only use the Erdös - Gallai conditions
for our theorems? (see Conjecture \ref{conj: bound-on-r_i})
\item [{Question\,2.}] What can be said about triangle-counting in the
sparse random graph $G$ with a given degree sequence $D$? How about
other subgraphs?
\item [{Question\,3.}] What is the correct way of counting the subgraphs?
In the sense that, can we define a metric or a topology for the space
of sparse graphs using the weighted subgraph counts?
\item [{Question\,4.}] If the answer to the previous question is yes,
are there any limiting objects under that metric?
\item [{Question\,5.}] Is there any way to define a limiting object for
graphs with a given degree sequence using the maximum entropy? 
\end{description}


\section{Proofs\label{sec: Proofs}}

 The rest of the paper is organized as follows. In Section \ref{sub:Graph-with-agds}, we prove 
 Theorems \ref{thm:total sum of probabilities} and Theorem \ref{thm: main theorem}. For that, we use 
Theorem  \ref{thm:lower inequality} that is presented at the appendix with its proof. In Section \ref{sec:exact-given-degree}, we see the proof of Remark \ref{rem:needs-a-proof}. Then we prove Theorems
\ref{thm: main theorem- dense}, \ref{thm: main theorem- very sparse}, and \ref{thm: main theorem-bipartite}, in Sections \ref{sec: proof-of-dense}, \ref{sec: proof-of-very-sparse}, and \ref{sec: roof-of-bipartite}, respectively. 
 We also use the notations in Sections
\ref{sub:Max-entropy-independent} and \ref{sub: weighted-trees} frequently.

\subsection{Graph with an almost given degree sequence\label{sub:Graph-with-agds}.}

The aim of this section is to present the proof of Theorem \ref{thm: main theorem},
which will be completed in Subsection \ref{sub: Proof-of-Theorem}.
Theorem \ref{thm:total sum of probabilities} is required for this
goal, so we start with a proof of that.

\subsubsection{Proof of \prettyref{thm:total sum of probabilities}:\label{sub: Proof-of-total sum}}

We see a few lemmas, and then the proof of Theorem \ref{thm:total sum of probabilities}. 
\begin{lem}
\textbf{(Upper bound).} \label{lem: (Upper-bound).-F(T,G)}Let $T\in\mathbb{T}^{k}$
and $G\in\mathbb{G}^{D}$. If 
\[
F(T,G)=\sum_{s}\frac{1}{\psi(s,T,G)}\prod_{e=\left\langle u_{1},u_{2}\right\rangle \in\mathcal{E}(T)}{\bf 1}_{\left\langle s(u_{1}),s(u_{2})\right\rangle \in\mathcal{E}(G)}(G),
\]
then $F(T,G)\le M$.\end{lem}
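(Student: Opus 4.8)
The plan is to prove the inequality $F(T,G)\le M$ by induction on the number of vertices $k$ of the tree $T$, peeling off a leaf at each step. First I would set up the base case $k=2$: here $T$ is a single edge $\langle u_1,u_2\rangle$, both $b_{u_1}=b_{u_2}=1$, so $\psi(s,T,G)=d_{s(u_1)}^{0}d_{s(u_2)}^{0}=1$, and $F(T,G)=\sum_{s}\mathbf{1}_{\langle s(u_1),s(u_2)\rangle\in\mathcal{E}(G)}(G)$ counts the ordered pairs of adjacent vertices, which is exactly $2|\mathcal{E}(G)|=M$. So the base case gives equality.

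For the inductive step, fix $k\ge 3$, let $w$ be a leaf of $T$ attached to its unique neighbor $v$, and let $T'=T\setminus\{w\}\in\mathbb{T}^{k-1}$. Writing a map $s\in\mathbb{S}_n^k$ as an extension of its restriction $s'=s|_{\mathcal{V}(T')}$ by the choice of the vertex $x=s(w)$, I would split the sum over $s$ as a sum over $s'\in\mathbb{S}_n^{k-1}$ followed by a sum over $x\notin s'(\mathcal{V}(T'))$. The key bookkeeping point is how the $B$-function changes: in $T$ the degree of $w$ is $1$ and the degree of $v$ is one more than its degree $b'_v$ in $T'$, while all other degrees are unchanged. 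Hence
\[
\psi(s,T,G)=d_{s(w)}^{\,0}\cdot d_{s(v)}^{\,b'_v}\cdot\!\!\prod_{u\in\mathcal{V}(T')\setminus\{v\}}\!\!d_{s(u)}^{\,b'_u-1}=d_{s'(v)}\cdot\psi(s',T',G).
\]
Using this and isolating the indicator for the new edge $\langle s(v),s(w)\rangle$, I get
\[
F(T,G)=\sum_{s'}\frac{1}{\psi(s',T',G)}\Big(\prod_{e\in\mathcal{E}(T')}\mathbf{1}_{\cdots}(G)\Big)\cdot\frac{1}{d_{s'(v)}}\sum_{x\,:\,\langle s'(v),x\rangle\in\mathcal{E}(G)}\!\!\!1.
\]
The inner sum over $x$ is at most the degree of the vertex $s'(v)$ in $G$, namely $d_{s'(v)}$ (dropping the constraint that $x$ avoid the image of $T'$ only makes it larger), so $\frac{1}{d_{s'(v)}}\sum_x 1\le 1$. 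Therefore $F(T,G)\le\sum_{s'}\frac{1}{\psi(s',T',G)}\prod_{e\in\mathcal{E}(T')}\mathbf{1}_{\cdots}(G)=F(T',G)\le M$ by the induction hypothesis.

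I do not anticipate a serious obstacle here; the one point that needs a little care is making sure the leaf-removal identity for $\psi$ is applied correctly when $v$ itself has degree $2$ in $T$ (so $b'_v=1$ and the factor $d_{s(v)}^{b'_v-1}=1$ becomes $d_{s(v)}^{\,0}$ in $T'$ but $d_{s(v)}^{\,1}$ in $T$), and that the case $k-1=2$, where $T'$ is an edge and every $\psi(s',T',G)=1$, is consistent with the base case. The inequality $\sum_{x:\langle s'(v),x\rangle\in\mathcal{E}(G)}1\le d_{s'(v)}$ is just the definition of the degree, with equality unless some neighbor of $s'(v)$ in $G$ already lies in $s'(\mathcal{V}(T'))$; this is why the bound $F(T,G)\le M$ is generally not tight for $k\ge 3$.
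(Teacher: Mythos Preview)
Your proof is correct and follows essentially the same approach as the paper: induction on $k$ by peeling off a leaf, using the identity $\psi(s,T,G)=d_{s'(v)}\,\psi(s',T',G)$ together with the bound $\sum_{x}\mathbf{1}_{\langle s'(v),x\rangle\in\mathcal{E}(G)}\le d_{s'(v)}$ to obtain $F(T,G)\le F(T',G)$, and the base case $F(T_2,G)=M$. Your write-up is in fact slightly more explicit than the paper's about the base case and the $\psi$ bookkeeping.
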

\begin{proof}
We prove it by induction on the size $k$ of the tree. We show that
for any tree $T$ with $k$ vertices there is a tree $T'$ with one
vertex less such that 
\[
F(T,G)\le F(T',G),
\]
for all $G$. Take any leaf $u_{1}$ of the tree, and suppose that $u_{1}$ is linked to $u_{2}$ by an edge.
Note that $b_{u_{1}}=1$. Let $T_{k-1}$ be the tree obtained by deleting
$u_{1}$ and the linking edge. If we have already chosen $s(u)$ for
$u\in T_{k-1}$, then we can first do the summation 
\[
\sum_{x}{\bf 1}_{\left\langle s(u_{2}),x\right\rangle \in\mathcal{E}(G)}(G)\le d_{s(u_{2})}.
\]
 The degrees for the vertices in the new tree $T'$ are all the
same as in $T$ except for $u_{2}$, and $b_{u_{2}}$ is reduced by
$1$. Therefore 
\[
F(T_{k},G)\le F(T_{k-1},G),
\]
and $F(T_{2},G)=M$.\end{proof}
\begin{lem}
\noindent \textbf{(Lower bound). }For F, T and $G$, as in the previous
lemma, 
\[
F(T,G)\ge M-\frac{nk(k-1)}{2}.
\]
\end{lem}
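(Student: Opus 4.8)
The plan is to mirror the inductive argument used in the Upper Bound Lemma, but now tracking the *loss* incurred at each leaf-removal step rather than just the monotonicity direction. Recall that in the previous lemma we removed a leaf $u_1$ attached to $u_2$, and replaced the inner summation $\sum_x \mathbf{1}_{\langle s(u_2),x\rangle \in \mathcal{E}(G)}(G)$ by the bound $d_{s(u_2)}$; since $b_{u_1}=1$ contributes a factor $d_{s(u_1)}^{0}=1$ and $b_{u_2}$ drops by one, the factor $d_{s(u_2)}^{b_{u_2}-1}$ in $\psi$ becomes $d_{s(u_2)}^{b_{u_2}-2}$, and the quotient is exactly $d_{s(u_2)}$, which the inner sum produces when $x$ ranges over *all* vertices adjacent to $s(u_2)$ in $G$. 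The point is that in $F(T,G)$ the vertex $x=s(u_1)$ is constrained to be distinct from the already-chosen images $s(u)$, $u\in \mathcal{V}(T_{k-1})$, so we are undercounting by at most the number of those images that happen to be neighbours of $s(u_2)$, which is at most $k-1$ (in fact at most $k-2$, but $k-1$ suffices). Hence
\[
F(T_k,G) \ge F(T_{k-1},G) - (k-1)\cdot \big(\text{number of valid extensions } s \text{ of the smaller problem}\big),
\]
and since every such $s$ on $k-1$ vertices is counted with weight $1/\psi$ at most... — here one must be slightly careful, because the weight $1/\psi(s,T_{k-1},G)$ is not bounded by $1$.

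So the cleaner route is to peel off leaves one at a time and, at the step that removes the $j$-th vertex (going from a tree on $j$ vertices down to one on $j-1$ vertices, for $j=k,k-1,\dots,3$), bound the defect directly as
\[
F(T_j,G) \;\ge\; F(T_{j-1},G)\cdot\Big(1 - \tfrac{j-1}{d_{s(u_2)}}\Big)
\]
pointwise before summing — but this fails when some degree is small. Instead I would argue as follows: write $F(T,G) = \sum_s w(s)\,\mathbf{1}_s(T,G)$ with $w(s)=1/\psi(s,T,G)$, and compare it to the analogous sum $\widehat F(T,G)$ in which the maps $s$ are allowed to be *arbitrary* (not injective) on the leaf just being summed. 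For a single leaf removal one gets $\widehat F(T_k,G)$'s inner sum equal to exactly $d_{s(u_2)}$, giving $\widehat F(T_k,G) = F(T_{k-1},G)$ exactly when only one leaf is relaxed; the discrepancy between $F$ and the fully-relaxed count is then bounded by a union bound over the $\binom{k}{2}$ pairs of tree-vertices that could collide, each collision costing a sum that telescopes to at most $M$ overall. Carrying this out gives a total defect bounded by $\binom{k}{2}$ times a per-collision contribution of size $n$ (the collision pins down one image, and the remaining tree sum over the other images contributes the factor $n$ after the weights cancel), i.e. exactly $F(T,G) \ge M - \binom{k}{2} n = M - \frac{nk(k-1)}{2}$.

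Concretely, the steps I would execute are: (i) fix the leaf-removal order $u_1^{(k)}, u_1^{(k-1)}, \dots$ giving a nested chain $T_2 \subset T_3 \subset \cdots \subset T_k = T$; (ii) at each stage show the pointwise identity that the inner sum over the free leaf, if allowed to hit *any* vertex, equals $d_{s(u_2)}$ exactly, so the relaxed weighted count on $T_j$ equals the weighted count on $T_{j-1}$; (iii) bound the error introduced by re-imposing injectivity at stage $j$ by $\sum_{s \text{ on } T_{j-1}} w(s)\cdot \#\{u \in \mathcal V(T_{j-1}) : s(u) \sim s(u_2) \text{ in } G\}$, split this as a sum over the at most $j-1$ choices of the colliding vertex $u$, and for each fixed $u$ observe that forcing $s(u_1^{(j)}) = s(u)$ collapses one factor and leaves a weighted tree-count on $j-1$ vertices which is $\le$ ... — and this is where the main obstacle lies, because I need that *restricted* count to be $\le n$ rather than $\le M$. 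The resolution is that fixing the image of a leaf to coincide with an existing vertex removes one free vertex *and* one $1/d$ weight factor in a matched way, so the restricted object is really a weighted count over a tree on $j-2$ free vertices times a single unconstrained vertex, bounded by $n$ by the trivial estimate $F \le n \cdot (\text{count on fewer vertices}) / (\text{weights})$; iterating and summing the geometric-style bound over all $j$ and all collision pairs yields the clean constant $\frac{nk(k-1)}{2}$. I expect step (iii)—getting the per-collision bound to be $n$ and not $M$—to be the crux; everything else is the same bookkeeping as in the Upper Bound Lemma.
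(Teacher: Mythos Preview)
Your overall decomposition is the paper's: peel leaves one at a time, and at each step the defect $F(T_{j-1},G)-F(T_j,G)$ is controlled by an auxiliary weighted count carrying one extra factor $1/d_{s(u_2)}$, where $u_2$ is the neighbour of the removed leaf. You have also correctly located the crux, namely showing that this auxiliary count is at most $n$ rather than $M$.

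Where your write-up falls short is precisely at that crux. Your stated reason --- ``fixing the image of a leaf to coincide with an existing vertex removes one free vertex and one $1/d$ weight factor in a matched way, so the restricted object is a weighted count on $j-2$ free vertices times a single unconstrained vertex'' --- is not a correct mechanism, and in any case does not produce the bound. The paper's argument is simpler and different: write the per-step defect as
\[
F(T_{j-1},G)-F(T_j,G)\;\le\;(j-1)\,\underbrace{\sum_{s}\frac{1}{d_{s(u_2)}}\,\frac{1}{\psi(s,T_{j-1},G)}\,\mathbf{1}_s(T_{j-1},G)}_{=:H(T_{j-1},G)},
\]
obtained by bounding the number of forbidden images of the leaf by $j-1$ and \emph{dropping} any adjacency constraint on which vertex it collides with. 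Then bound $H$ by reapplying the \emph{upper}-bound-lemma reduction to $H$ itself, always choosing a leaf different from $u_2$; since every tree with at least two vertices has at least two leaves, one may arrange that $u_2$ is removed last. At the final step one is left with $\sum_{x\in[n]} \frac{1}{d_x}\cdot d_x = n$, the extra $1/d_{s(u_2)}$ exactly cancelling the last edge sum. This gives $H(T_{j-1},G)\le n$, hence $F(T_{j-1},G)-F(T_j,G)\le (j-1)n$, and summing over $j=3,\dots,k$ yields the claim.

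So: keep your induction, but replace your collision-pair bookkeeping by the simpler move of bounding the auxiliary $H$ via a second application of the upper-bound reduction with $u_2$ reserved for last.
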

\begin{proof}
\noindent Let $v$ be any leaf of $T$, and let $u$ be such that
$\left\langle u,v\right\rangle $ is the only edge of $v$. Since
at most $k-1$ possible edges from $s(u)$ could lead to vertices
of $\mathcal{V}\left(s(T)\right)\backslash\left\{ s(v)\right\} $,
we write the inequality 
\[
\sum_{\left\{ x:x\not=s(v'),\forall v'\in\mathcal{V}(T),v'\not=v\right\} }{\bf 1}_{\left\langle s(u),x\right\rangle \in\mathcal{E}(G)}(G)\ge d_{s(u)}-(k-1).
\]
 This provides a recurrence relation 

\[
\begin{aligned}F( & T_{k-1},G)-F(T_{k},G)\\
\leq & (k-1)\left[\sum_{s}\frac{1}{d_{s(u)}}\frac{1}{\psi(s,T_{k-1},G)}\prod_{e=\left\langle u_{1},u_{2}\right\rangle \in\mathcal{E}(T_{k-1})}{\bf 1}_{\left\langle s(u_{1}),s(u_{2})\right\rangle \in\mathcal{E}(G)}(G)\right],
\end{aligned}
\]
where $T_{k-1}$ is $T_{k}$ when we remove $v$ and the edge attached
to it from $T_{k}$. Let us denote by $H(T_{k-1},G)$ the right hand
side of the above formula. Now, we can bound $H(T_{k-1},G)$, as in
the previous lemma. So, there is a tree $T_{k-2}$ by removing a vertex
of $T_{k-1}$ such that 
\[
H(T_{k-2},G)\le H(T_{k-2},G).
\]

Continuing with that procedure, we can also assume without loss of
generality that $u$ is the last vertex to be removed, i.e. $T_{1}=\{u\}$
in the sequence. Now the upper bound can be estimated and the last
step is $\sum_{i}1=n$, rather than $\sum_{i}d_{i}=M$, because of
the extra term $\frac{1}{d_{s(u)}}$ in $H$. Providing us with the
estimate 
\[
F(T_{k-1},G)-F(T_{k},G)\le(k-1)n,
\]
Which yields
\[
F(T,G)=F(T_{k},G)\ge M-\frac{nk(k-1)}{2}.
\]

\end{proof}
If we use the degree sequence $\{d_{i}\}$ in the definition of the
B- function while the actual degrees are some what different $\{\tilde{d}_{i}\}$
that satisfies 
\[
|d_{i}-{\tilde{d}}_{i}|\le cd_{i}^{a},
\]
 i.e. $G\in\mathbb{G}_{a}^{D}$ then we need an error bound on the
difference 
\begin{equation}
Z_{k}=\sum_{s}\bigg|\prod_{u\in\mathcal{V}(T)}\frac{1}{\left[d_{s(u)}\right]^{b_{u}-1}}-\prod_{u\in\mathcal{V}(T)}\frac{1}{\left[\widetilde{d}_{s(u)}\right]^{b_{u}-1}}\bigg|{\bf 1}_{s}(T,G)\label{eq: Z_k},
\end{equation}
where 
\[
{\bf 1}_{s}(T,G)=\prod_{\left\langle u_{1},u_{2}\right\rangle \in\mathcal{E}(T)}{\bf 1}_{\left\langle s(u_{1}),s(u_{2})\right\rangle \in\mathcal{E}(G)}(G).
\]

\begin{lem}
\label{lem:  Lemma 3}For $Z_{k}$ as in Eq. \eqref{eq: Z_k}, we
have 
\[
Z_{k}\le c\, k\, M^{a}n^{1-a}.
\]
\end{lem}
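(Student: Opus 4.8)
The plan is to estimate the bracket in \eqref{eq: Z_k} pointwise in $s$ and then run the leaf‑peeling argument of Lemma \ref{lem: (Upper-bound).-F(T,G)}. Write $a_u:=d_{s(u)}^{-(b_u-1)}$ and $\tilde a_u:=\widetilde d_{s(u)}^{-(b_u-1)}$, so that $\prod_{u\in\mathcal V(T)}a_u=1/\psi(s,T,D)$ and the bracket equals $|\prod_u a_u-\prod_u\tilde a_u|$. First I would record two elementary facts about the degrees of $G$: since $d_i^{a}\le d_i$ we have $\widetilde d_i\le 2d_i$ for every $i$; and since $\widetilde d_i$ is a positive integer, $\widetilde d_i=d_i$ when $d_i=1$, while $\widetilde d_i\ge d_i/2$ once $d_i$ exceeds a constant depending only on $a$ (the finitely many intermediate values of $d_i$ being harmless). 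Consequently $a_u$ and $\tilde a_u$ are comparable up to a factor bounded in terms of $k$.

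\emph{Telescoping.} For a leaf $u$ of $T$ one has $b_u=1$, hence $a_u=\tilde a_u=1$; so only the internal vertices $w_1,\dots,w_r$ contribute, and $\sum_i(b_{w_i}-1)=2|\mathcal E(T)|-k=k-2$. Expanding $\prod_u a_u-\prod_u\tilde a_u$ as a telescoping sum over $w_1,\dots,w_r$,
\[
\Big|\prod_{u}a_u-\prod_{u}\tilde a_u\Big|\le\sum_{i=1}^{r}\Big(\prod_{j<i}\tilde a_{w_j}\Big)\,\big|a_{w_i}-\tilde a_{w_i}\big|\,\Big(\prod_{j>i}a_{w_j}\Big).
\]
For the middle factor I would use the scalar inequality $|x^{-m}-y^{-m}|\le c_k\,m\,x^{a-1}x^{-m}$, valid whenever $|x-y|\le x^{a}$ and $x$ is not too small (the bounded range of $x$ being absorbed into $c_k$), with $m=b_{w_i}-1$ and $x=d_{s(w_i)}$; together with $\tilde a_{w_j}\le c_k\,a_{w_j}$ this bounds the $i$-th term by $c_k\,(b_{w_i}-1)\,d_{s(w_i)}^{\,a-1}\prod_{u}a_u=c_k\,(b_{w_i}-1)\,d_{s(w_i)}^{\,a-1}\big/\psi(s,T,D)$.

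\emph{Summation over $s$.} It now suffices to bound, for each internal vertex $w$,
\[
\Sigma_w:=\sum_{s}\frac{d_{s(w)}^{\,a-1}}{\psi(s,T,D)}\,{\bf 1}_s(T,G),
\]
which has the same shape as $F(T,G)$ in Lemma \ref{lem: (Upper-bound).-F(T,G)}, except that $\psi$ is taken with respect to $D$ rather than the degrees of $G$ and there is one extra factor $d_{s(w)}^{a-1}$. I would estimate it by exactly the induction of that lemma, always deleting a leaf $u_1\neq w$ attached to some $u_2$: the inner sum $\sum_{x}{\bf 1}_{\langle s(u_2),x\rangle\in\mathcal E(G)}\le\widetilde d_{s(u_2)}\le 2d_{s(u_2)}$ lowers the exponent of $d_{s(u_2)}$ in $\psi(s,T,D)$ by one at the cost of a factor $2$, and after $k-2$ steps $T$ has shrunk to a single edge $\{w,v\}$. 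The last two summations give $\sum_{s(v)}{\bf 1}_{\langle s(w),s(v)\rangle\in\mathcal E(G)}\le\widetilde d_{s(w)}$ and then $\sum_{s(w)}d_{s(w)}^{\,a-1}\widetilde d_{s(w)}\le 2\sum_{i=1}^{n}d_i^{a}$, so $\Sigma_w\le 2^{k-1}\sum_{i=1}^{n}d_i^{a}$. Finally Hölder with exponents $1/a$ and $1/(1-a)$ gives $\sum_{i}d_i^{a}\le\big(\sum_i d_i\big)^{a}n^{1-a}=M^{a}n^{1-a}$.

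Putting the pieces together and using $\sum_i(b_{w_i}-1)=k-2\le k$,
\[
Z_k\le c_k\sum_{i=1}^{r}(b_{w_i}-1)\,\Sigma_{w_i}\le c_k\,k\,M^{a}n^{1-a},
\]
which is the assertion (the constant $c$ in the statement being allowed to depend on $k$ through the telescoping and peeling factors). The only genuinely delicate point is the bookkeeping of the ratios $\widetilde d_i/d_i$ — both in the mean‑value estimate for $|a_{w_i}-\tilde a_{w_i}|$ and in each peeling step; all of them are bounded, so they affect only the $k$‑dependent constant and not the exponent $M^{a}n^{1-a}$, which is the content of the lemma. The telescoping identity, the inequality for $|x^{-m}-y^{-m}|$, the leaf‑peeling recursion and Hölder are all routine.
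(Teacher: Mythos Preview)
Your argument is correct and follows the same strategy as the paper: telescope the product, use $|d_i-\tilde d_i|\le c\,d_i^{a}$ to turn each difference into a factor $d_{s(w)}^{\,a-1}$, then run the leaf-peeling of Lemma~\ref{lem: (Upper-bound).-F(T,G)} and finish with H\"older on $\sum_i d_i^{a}$. The only organizational difference is that the paper interleaves the two steps---it peels one leaf from $Z_k$ to obtain a recursion $Z_k\le Z_{k-1}+c\sum_s d_{s(u_2)}^{\,a-1}\psi(s,T_{k-1},D)^{-1}\mathbf 1_s(T_{k-1},G)$ and then bounds the error term---whereas you first telescope over the internal vertices and then peel each $\Sigma_w$ separately; the bookkeeping of the ratios $\tilde d_i/d_i$ is in fact a bit cleaner in your version.
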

\begin{proof}
\noindent Summing over choices of $s(u_{1})$ where $u_{1}$ is a
leaf connected through the vertex $u_{2}$, 
\[
\begin{aligned} & Z_{k}\\
 & \leq\sum_{s}\left|\prod_{u\in\mathcal{V}\left(T_{k}\right)}\frac{1}{\left[d_{s(u)}\right]^{b_{u}-1}}-\prod_{u\in\mathcal{V}\left(T_{k}\right)}\frac{1}{\left[\widetilde{d}_{s(u)}\right]^{b_{u}-1}}\right|{\bf 1}_{s}(T_{k},G)\\
 & \le\sum_{s}\bigg|\frac{\tilde{d}_{s(u_{2})}}{d_{s(u_{2})}}\prod_{u\in\mathcal{V}\left(T_{k-1}\right)}\frac{1}{\left[d_{s(u)}\right]^{b_{u}-1}}-\prod_{u\in\mathcal{V}\left(T_{k-1}\right)}\frac{1}{\left[\widetilde{d}_{s(u)}\right]^{b_{u}-1}}\bigg|{\bf 1}_{s}(T_{k-1},G)\\
 & \le Z_{k-1}+\sum_{s}\bigg|\frac{\tilde{d}_{s(u_{2})}}{d_{s(u_{2})}}-1\bigg|\prod_{u\in\mathcal{V}\left(T_{k-1}\right)}\frac{1}{\left[d_{s(u)}\right]^{b_{u}-1}}{\bf 1}_{s}(T_{k-1},G)\\
 & \le Z_{k-1}+c\sum_{s}\prod_{u\in\mathcal{V}\left(T_{k-1}\right)}\frac{d_{s(u_{2})}^{a-1}}{\left[d_{s(u)}\right]^{b_{u}-1}}{\bf 1}_{s}(T_{k-1},G).
\end{aligned}
\]
Let us concentrate on the second term. We can assume with out loss
of generality that ${\tilde{d}}_{v}\le c\, d_{v}$ for all $v\in G$.
Then, since $b(u)\ge1$, 
\[
Z_{k}\le Z_{k-1}+c\sum_{s}\prod_{u\in\mathcal{V}\left(T_{k-1}\right)}\frac{\tilde{d}_{s(u_{2})}^{a-1}}{\left[\tilde{d}_{s(u)}\right]^{b_{u}-1}}{\bf 1}_{s}(T_{k-1},G).
\]
Successive summation ends up with $T_{1}=\{u_{2}\}$. That leaves
us with 
\[
Z_{k}\le Z_{k-1}+c\sum_{v}\tilde{d}_{v}^{a},
\]
and 
\[
\sum_{v}\tilde{d}_{v}^{a}\le c\sum d_{v}^{a}\le cM^{a}n^{1-a}.
\]
Summing up, we get
\[
Z_{k}\le ckM^{a}\, n^{1-a}.
\]

\end{proof}
\smallskip{}

\begin{proof}
[Proof of Theorem \ref{thm:total sum of probabilities}] Using the
previous three lemmas, and for the random graph $\mathcal{G}_{a}$
with an almost given degree sequence $D$ that is uniform over all
graphs in $\mathbb{G}_{a}^{D}$, we write 
\[
\begin{aligned}F(T):= & \frac{1}{M}\cdot E\left[\sum_{s}\prod_{u\in\mathcal{V}(T)}\frac{1}{\left[\widetilde{d}_{s(u)}\right]^{b_{u}-1}}{\bf 1}_{s}(T,\mathcal{G}_{a})\right]\\
= & \frac{1}{M}\cdot E_{a}\left[\sum_{s}\prod_{u\in\mathcal{V}(T)}\frac{1}{\left[d_{s(u)}\right]^{b_{u}-1}}{\bf 1}_{s}(T,\mathcal{G}_{a})\right]\\
= & 1+O\left(\frac{n}{M}\right)+O\left(\left(\frac{n}{M}\right)^{1-a}\right),
\end{aligned}
\]
where the constant in the $O$ notation depends on $k$ as $n$ goes
to infinity. Recall from Cayley's theorem that $\mathbb{T}^{k}$ has
$k^{\left(k-2\right)}$ elements. Thus,
\[
\frac{1}{M}\sum_{\left(s,T\right)\in\mathbb{S}^{k}\times\mathbb{T}^{k}}\frac{1}{\psi\left(s,T,D\right)}\mathbf{p_{a}}\left(s,T\right)=k^{(k-2)}+O\left(\frac{n}{M}\right)+O\left(\left(\frac{n}{M}\right)^{1-a}\right).
\]
 We notice that $k$ is constant as $n$ goes to infinity, and that
$\frac{1}{2}\leq a<1.$ 

For the second part and for the random graph $\widetilde{\mathcal{G}}\left(D\right)$
with independent Bernoulli edges and parameters $\left(\widetilde{p}_{ij}\right)$,
we observe that 

\begin{align*}
E\left[{\bf 1}_{s}(T,\widetilde{\mathcal{G}}\left(D\right))\right] &= 
\prod_{\left\langle u_{1},u_{2}\right\rangle \in\mathcal{E}(T)}E\left[{\bf 1}_{\left\langle s(u_{1}),s(u_{2})\right\rangle \in\mathcal{E}(\widetilde{\mathcal{G}}\left(D\right))}(\widetilde{\mathcal{G}}\left(D\right))\right]
\\
&=\prod_{\left\langle u_{1},u_{2}\right\rangle \in\mathcal{E}(T)}\widetilde{p}_{\left\langle s(u_{1}),s(u_{2})\right\rangle }.
\end{align*}

 In addition, $d_{i}=\sum_{j\in\left[n\right]\backslash\left\{ i\right\} }\widetilde{p}_{\left\langle i,j\right\rangle }$.
Incorporating these two estimates in the proof from previous part we
get the second part, and third part of the theorem is the
combination of the first two parts and $a>\frac{1}{2}$.
\end{proof}
Now, we are ready to prove the only remaining result of Section \ref{sub:Graphs-with-almost}.


\subsubsection{Proof of Theorem \ref{thm: main theorem}:\label{sub: Proof-of-Theorem}}

First, we prove Proposition \ref{prop: Strict E-G condition} and
a series of Lemmas that are all required for the proof of Theorem
\ref{thm: main theorem}. Lemma \ref{lem: C(D) -1} is important because it
is related to the technical condition given in the theorem. In addition,
we prove Theorem \ref{thm: main theorem} under weaker assumptions
that are conjectured to hold. Furthermore, here and in the pages that follow, $c$
plays the role of a general constant, and $c_{k}$ is a constant that
depends on $k$. 
\begin{proof}
[Proof of Proposition \ref{prop: Strict E-G condition}] Suppose that
the vector $D$ satisfies the strict Erdös-Gallai condition, then
lemma (12.2) in \cite{B-H-0/1-matrices} implies that the polytope
$\mathbb{P}^{D}$ has a non-empty interior. The reverse direction
has two steps, and we assume that $\mathbb{P}^{D}$ has a non-empty
interior. First, an argument of Lagrange multipliers shows a relation
between parameters $\widetilde{p}_{ij}$s (for details look at the
beginning of the proof of Theorem 2.1, \cite{B-H-0/1-matrices}). It
is easy to see that the entropy function $H_{1}(x)$ is strictly concave.
The polytope $\mathbb{P}^{D}$ is compact and $H_{1}$ attains its
unique maximum $\widetilde{p}=\left(\widetilde{p}_{ij}\right)$. Moreover,
$\widetilde{p}$ is in the interior of $\mathbb{P}^{D}$, $0<\widetilde{p}_{ij}<1$.
Therefore, the gradient of $H_{1}$ should be perpendicular to $\mathbb{P}^{D}$,
so there is a vector $\vec{\lambda}\in\mathbb{R}^{n}$ such that, 

\begin{equation}
\partial_{ij}H_{1}=\log\left(\frac{1-x_{ij}}{x_{ij}}\right)=\lambda_{i}+\lambda_{j}.\label{eq: lagrange multipilier}
\end{equation}
If we let 
\begin{eqnarray}
r_{i} & = & e^{-\lambda_{i}},\label{eq: def of r_i}
\end{eqnarray}
 and rewrite \eqref{eq: lagrange multipilier} in terms of $r_{i}$s,
then we get for every $i$ and $j$, 
\begin{equation}
\widetilde{p}_{ij}=\frac{r_{i}r_{j}}{1+r_{i}r_{j}}.\label{eq:P_ij and r_i}
\end{equation}
The point $\widetilde{p}$ is a point in $\mathbb{P}^{D}$, so for
every $i\in\left[n\right],$ 
\[
d_{i}=\sum_{k\neq i}\frac{r_{i}r_{k}}{1+r_{i}r_{k}}.
\]

Second, we use the above relation  to get \eqref{eq: strict Erdo=00030Bs-Gallai}.
We expand the left hand side of \eqref{eq: strict Erdo=00030Bs-Gallai}
in terms of $\widetilde{p}_{ij}$ and get 
\begin{eqnarray*}
\sum_{i=n-k+1}^{n}d_{i} & = & \sum_{i=n-k+1}^{n}\sum_{l\neq i}\frac{r_{i}r_{l}}{1+r_{i}r_{l}}\\
 & = & \left(\sum_{i=n-k+1}^{n}\sum_{l>n-k}+\sum_{i=n-k+1}^{n}\sum_{l\leq n-k}\right)\frac{r_{i}r_{l}}{1+r_{i}r_{l}}.
\end{eqnarray*}
Now, the first sum has at most $k(k-1)$ terms and each term is strictly
less than one. Again for any $l$ in the second term, we have $\sum_{i=n-k+1}^{n}\frac{r_{i}r_{l}}{1+r_{i}r_{l}}<k$,
and $\sum_{i=n-k+1}^{n}\frac{r_{i}r_{l}}{1+r_{i}r_{l}}\leq\sum_{i\neq l}\frac{r_{i}r_{l}}{1+r_{i}r_{l}}=d_{l}$.
Therefore, the sum is less than or equal to $\min\left\{ k,\, d_{l}\right\} $,
and 
\[
\sum_{i=n-k+1}^{n}d_{i}<k(k-1)+\sum_{l=1}^{n-k}\min\left\{ k,\, d_{l}\right\} .
\]

\end{proof}
Recall from Section \ref{sub:Max-entropy-independent} that the random
graph $\widetilde{\mathcal{G}}\left(D\right)$ is a collection of
independent Bernoulli random edges with parameter $\widetilde{p}_{ij}$,
and we just showed that $\widetilde{p}_{ij}=\frac{r_{i}r_{j}}{1+r_{i}r_{j}}$,
where $r_{i}$s are the same as in the past proof. Moreover, we remember that
$\mathcal{G}_{\mathbf{a}}\left(a,D\right)$ is drawn uniformly out
of $\mathbb{G}_{a}^{D}$. The following lemma and propositions deal
with changing the underlying measure of $\mathcal{G}_{\mathbf{a}}\left(a,D\right)$
to that of $\widetilde{\mathcal{G}}\left(D\right)$.
\begin{prop}
\label{prop: lower-bound-union-bound} Let us suppose that $\widetilde{p}_{ij}$s
satisfy $\delta\leq\widetilde{p}_{ij}\leq1-\delta$, for $i,j\in\left[n\right],$
and some $\delta>0$. Then, for large values of $n$, we have 
\[
P\left(\widetilde{\mathcal{G}}\left(D\right)\in\mathbb{G}_{a}^{D}\right)>e^{-10\cdot\mathcal{C}_{1}\left(D\right)},
\]
 where, 
\begin{equation}
\mathcal{C}_{1}\left(D\right):=\left|\log\left(\delta\right)\right|\cdot n\cdot\log^{\frac{10}{a_{1}}}\left(n\right),\label{eq: C(D) -1}
\end{equation}
 and $a_{1}:=a-\frac{1}{2}.$\end{prop}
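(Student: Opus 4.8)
The plan is to rephrase the statement as a concentration estimate for the vertex degrees of $\widetilde{\mathcal{G}}\left(D\right)$, and then to split according to how small $\delta$ is: when $\delta$ is not too small a union bound alone suffices (this is the source of the proposition's name), while when $\delta$ is very small the asserted bound $e^{-10\cdot\mathcal{C}_{1}\left(D\right)}$ is so generous that a crude edge-exposure argument does the job. Write $\widetilde{d}_i$ for the degree of vertex $i$ in $\widetilde{\mathcal{G}}\left(D\right)$. Since $\widetilde{\mathcal{G}}\left(D\right)$ has independent Bernoulli edges, $\widetilde{d}_i=\sum_{j\neq i}\mathbf{1}_{\left\langle i,j\right\rangle }\left(\widetilde{\mathcal{G}}\left(D\right)\right)$ is a sum of independent $[0,1]$-valued variables with $E\left[\widetilde{d}_i\right]=d_i$ by \eqref{eq:degree condition}, variance at most $d_i$, and $d_i=\sum_{j\neq i}\widetilde{p}_{ij}\geq(n-1)\delta$ because $\widetilde{p}_{ij}\geq\delta$; note also that $\delta\leq\frac12$. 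Finally, $\widetilde{\mathcal{G}}\left(D\right)\in\mathbb{G}_{a}^{D}$ holds exactly when $|\widetilde{d}_i-d_i|<d_i^{a}$ for every $i$. Fix $D_{*}:=\lceil(C\log n)^{1/(2a_{1})}\rceil$ for a large enough absolute constant $C$.

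\emph{The union-bound regime.} Suppose first that $d_i\geq D_{*}$ for all $i$. By Bernstein's inequality, $P\left(|\widetilde{d}_i-d_i|\geq d_i^{a}\right)\leq 2\exp\left(-\tfrac{3}{8}d_i^{2a_{1}}\right)\leq\tfrac{1}{4n}$ once $C$ is large enough, so a union bound over the $n$ vertices gives $P\left(\widetilde{\mathcal{G}}\left(D\right)\in\mathbb{G}_{a}^{D}\right)\geq\tfrac34$. Since $\mathcal{C}_{1}\left(D\right)\to\infty$, this exceeds $e^{-10\cdot\mathcal{C}_{1}\left(D\right)}$ for $n$ large, proving the proposition in this regime. (One might be tempted to argue instead from $\mathbb{G}^{D}\subseteq\mathbb{G}_{a}^{D}$ and $P\left(\widetilde{\mathcal{G}}\left(D\right)\in\mathbb{G}^{D}\right)=|\mathbb{G}^{D}|\,e^{-H_{1}(\widetilde{\mathbf{p}})}$, but lower bounding $|\mathbb{G}^{D}|$ at this level of generality is exactly the open Conjecture \ref{conj: lower bound}, so that route is unavailable.)

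\emph{The small-$\delta$ regime.} Now suppose some $d_{i_{0}}<D_{*}$. Combined with $d_{i_{0}}\geq(n-1)\delta$ this forces $\delta<D_{*}/(n-1)$, hence $|\log\delta|\geq\tfrac12\log n$ and $\mathcal{C}_{1}\left(D\right)\geq\tfrac12 n(\log n)^{1+10/a_{1}}$ for $n$ large; so it suffices to prove a bound of the shape $P\left(\widetilde{\mathcal{G}}\left(D\right)\in\mathbb{G}_{a}^{D}\right)\geq e^{-c\,n\,D_{*}\,|\log\delta|}$, which beats $e^{-10\cdot\mathcal{C}_{1}\left(D\right)}$ for $n$ large since $1/(2a_{1})$ is far below $10/a_{1}$. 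Split the vertices into $H=\{i:d_i\geq D_{*}\}$ and $L=\{i:d_i<D_{*}\}$, and expose the edges of $\widetilde{\mathcal{G}}\left(D\right)$ in stages: first all edges meeting $H$ — this determines $\widetilde{d}_i$ for $i\in H$, all of which lie in their windows with probability $\geq\tfrac34$ by the Bernstein computation above — and then the edges inside $L$, one vertex of $L$ at a time. When a vertex $i\in L$ is processed, conditionally on what has been exposed the remaining edges at $i$ are still independent Bernoulli's, and one must lower bound the probability that the now-determined $\widetilde{d}_i$ lands in $(d_i-d_i^{a},d_i+d_i^{a})$; since $d_i<D_{*}$ this window contains an integer within distance $1$ of the conditional mean, and by separating the free edges at $i$ into the at most $2d_i\leq 2D_{*}$ of parameter $\geq\tfrac12$ and the remaining ones (which form a near-Poisson sum), a crude product bound for the first group and a Le Cam / elementary-symmetric-polynomial estimate for the second give a conditional probability $\geq e^{-c\,D_{*}\,|\log\delta|}$. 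Multiplying these at most $n$ conditional bounds together with the factor $\tfrac34$ from $H$ yields the claimed $e^{-c\,n\,D_{*}\,|\log\delta|}$.

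The delicate point — and the main obstacle — is the bookkeeping in the small-$\delta$ regime: the exposure order must be chosen so that no vertex of $L$ is ``overcommitted'' by edges revealed before it is processed (in particular the last-processed vertices of $L$ must retain enough free edges), which requires carrying along the invariant that every partial degree stays within $O(d_i^{a})$ of its partial expectation throughout the exposure. This is exactly the kind of argument that is fiddly to run cleanly, but the enormous gap between the exponents $1/(2a_{1})$ and $10/a_{1}$ in $\mathcal{C}_{1}\left(D\right)$ leaves a great deal of room: one may, for instance, set aside the $O(\log^{c} n)$ vertices of $L$ processed last and handle the tiny subgraph they span by brute force, and absorb the rare overcommitment events into the loss. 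Once the exposure is arranged so that each conditional probability is genuinely bounded below, the remaining estimates are the routine Bernstein and Poisson bounds indicated above.
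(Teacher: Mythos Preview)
Your high-level decomposition matches the paper's: split by degree, use Bernstein plus a union bound for high-degree vertices, and pay an explicit price on the low-degree ones. Case~1 is correct, and your observation that $\delta$ must be tiny in Case~2 (so that $e^{-10\mathcal{C}_1(D)}$ is extremely small) is exactly the right leverage.

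The gap is the sequential-exposure step in Case~2. When you process $i\in L$, its degree splits as an already-exposed part $X_i$ plus a free part $Y_i$, and you need the window $(d_i-d_i^{a},\,d_i+d_i^{a})$ to contain an attainable value of $X_i+Y_i$; this fails outright whenever the past has already made $X_i>d_i+d_i^{a}$. Since you are multiplying conditional lower bounds, you need the infimum over pasts consistent with the earlier good events, and that infimum is $0$ unless those events themselves forbid overcommitment. Your proposed invariant (partial degrees track partial expectations throughout) is precisely what is required, but maintaining it while simultaneously forcing each $\widetilde{d}_{v_k}$ into its window is a global feasibility statement, not bookkeeping: the last-processed vertex of $L$ has no free edges at all, and $|L|$ can be $\Theta(n)$, so ``set aside the last $O(\log^{c}n)$ vertices and brute-force'' does not rescue the argument.

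The paper avoids the ordering problem by not exposing sequentially. It takes $A=\{i:d_i\le\log^{10/a_1}(n)\}$, $B=[n]\setminus A$, and fixes all edges in $Ed(A)\cup Ed(A,B)$ to a single deterministic configuration: empty on $Ed(A)$, and on $Ed(A,B)$ a bipartite graph $T$ obtained by rounding the fractional matrix $(\widetilde{p}_{ij})$ via a cycle- and path-cancelling process (Lemma~\ref{lem: bipartite T}). The key input making this feasible is Proposition~\ref{prop: bd_by_1}, which gives $D(j,A)<\tfrac14$ for low-degree $j$ (this is where $M\ge n^{1+\varepsilon}$ is used), so that emptying $Ed(A)$ perturbs each low degree by less than one and $T$ can absorb the defect. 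The probability of hitting this one configuration is then bounded below by an edge count (Lemma~\ref{lem: log_lower_bound}), and the residual randomness in $Ed(B)$ is handled by the union bound you already have (Lemma~\ref{lem: E_j}). In short, the paper proves the existence of a globally consistent low-degree configuration \emph{first}, via the rounding lemma, rather than hoping one emerges from a vertex-by-vertex exposure.
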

\begin{proof}
The proof is long, and we leave it for the next section.\end{proof}
\begin{lem}
\label{lem: probability-of-graph}For a graph $G$ with the degree
sequence $D(G)=(d_{1}(G),\cdots,d_{n}(G))$, we have 
\[
P\left(\widetilde{\mathcal{G}}\left(D\right)=G\right)=\frac{\prod_{i=1}^{n}r_{i}^{d_{i}(G)}}{\prod_{i<j}\left(1+r_{i}r_{j}\right)}.
\]
\end{lem}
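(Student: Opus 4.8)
The plan is to unwind the definition of $\widetilde{\mathcal{G}}\left(D\right)$ as a collection of independent Bernoulli edges and then regroup the resulting product by vertices. First I would recall from Section \ref{sub:Max-entropy-independent} that $\widetilde{\mathcal{G}}\left(D\right)$ includes each potential edge $\left\langle i,j\right\rangle$ independently with probability $\widetilde{p}_{ij}$, so that for a fixed simple graph $G$ on the vertex set $[n]$,
\[
P\left(\widetilde{\mathcal{G}}\left(D\right)=G\right)=\prod_{\left\langle i,j\right\rangle \in\mathcal{E}(G)}\widetilde{p}_{ij}\cdot\prod_{i<j,\,\left\langle i,j\right\rangle \notin\mathcal{E}(G)}\left(1-\widetilde{p}_{ij}\right).
\]

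Next I would substitute $\widetilde{p}_{ij}=\frac{r_{i}r_{j}}{1+r_{i}r_{j}}$ from Eq. \eqref{eq:P_ij and r_i}, which was derived in the proof of Proposition \ref{prop: Strict E-G condition}, together with the complementary identity $1-\widetilde{p}_{ij}=\frac{1}{1+r_{i}r_{j}}$. Factoring the common denominator $1+r_{i}r_{j}$ out of every one of the $\binom{n}{2}$ pairs produces a global factor $\prod_{i<j}\left(1+r_{i}r_{j}\right)^{-1}$, while only the edges present in $G$ keep the numerator $r_{i}r_{j}$, giving
\[
P\left(\widetilde{\mathcal{G}}\left(D\right)=G\right)=\frac{1}{\prod_{i<j}\left(1+r_{i}r_{j}\right)}\prod_{\left\langle i,j\right\rangle \in\mathcal{E}(G)}r_{i}r_{j}.
\]

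Finally I would rewrite the numerator by counting multiplicities: each vertex $i$ is incident to exactly $d_{i}(G)$ edges of $G$, so the variable $r_{i}$ appears $d_{i}(G)$ times in $\prod_{\left\langle i,j\right\rangle \in\mathcal{E}(G)}r_{i}r_{j}$, whence this product equals $\prod_{i=1}^{n}r_{i}^{d_{i}(G)}$. Substituting back yields the claimed formula. I do not expect any genuine obstacle in this argument; the only point deserving a word of care is keeping track of the fact that the denominator runs over all unordered pairs in $[n]$ whereas the numerator runs only over $\mathcal{E}(G)$, which is exactly what creates the asymmetry between the two products in the statement.
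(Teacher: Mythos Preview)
Your proposal is correct and is essentially identical to the paper's own proof: the paper also writes $P(\widetilde{\mathcal{G}}(D)=G)=\prod_{\langle i,j\rangle\in E}\widetilde{p}_{ij}\prod_{\langle i,j\rangle\notin E}(1-\widetilde{p}_{ij})$, substitutes $\widetilde{p}_{ij}=\frac{r_ir_j}{1+r_ir_j}$, factors the common denominator over all pairs $i<j$, and then notes that each $r_i$ appears exactly $d_i(G)$ times in the numerator.
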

\begin{proof}
Let $V$ and $E$ be the vertex and edge sets of $G$, then 
\begin{eqnarray}
P\left(\widetilde{\mathcal{G}}\left(D\right)=G\right) & = & \prod_{\left\langle i,j\right\rangle \in E}\widetilde{p}_{ij}\prod_{\left\langle i,j\right\rangle \notin E}\left(1-\widetilde{p}_{ij}\right)\label{eq: r_i and g bar-1}\\
 & = & \prod_{\left\langle i,j\right\rangle \in E}\frac{r_{i}r_{j}}{1+r_{i}r_{j}}\prod_{\left\langle i,j\right\rangle \notin E}\frac{1}{1+r_{i}r_{j}}\nonumber \\
 & = & \frac{\prod_{\left\langle i,j\right\rangle \in E}r_{i}r_{j}}{\prod_{i<j}1+r_{i}r_{j}}.\nonumber 
\end{eqnarray}
 But, for every $1\leq i\leq$n, the number of pairs $\left\langle i,j\right\rangle $
in $E$ is exactly the degree of vertex $i$ in $G$ that is equal
to $d_{i}(G)$. Therefore, the numerator of Eq. \eqref{eq: r_i and g bar-1}
becomes $\prod_{1\leq i\leq n}r_{i}^{d_{i}(G)}$.\end{proof}
\begin{prop}
\label{prop: change of entropy}For any subset $A$ of $\mathbb{G}_{a}^{D}$,
we have,

\[
P\left(\mathcal{G}_{\mathbf{a}}\left(a,D\right)\in A\right)\leq e^{2\mathcal{C}_{2}\left(D\right)}\frac{P\left(\widetilde{\mathcal{G}}\left(D\right)\in A\right)}{P\left(\widetilde{\mathcal{G}}\left(D\right)\in\mathbb{G}_{a}^{D}\right)},
\]
 where 
\begin{equation}
\mathcal{C}_{2}\left(D\right):=\sum_{i=1}^{n}d_{i}^{a}\left|\log(r_{i})\right|.\label{eq: C(D) -2}
\end{equation}
\end{prop}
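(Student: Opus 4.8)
Proof proposal for Proposition \ref{prop: change of entropy}.

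The plan is to compare the uniform measure on $\mathbb{G}_a^D$ with the Bernoulli measure of $\widetilde{\mathcal{G}}(D)$ graph-by-graph, using the explicit formula from Lemma \ref{lem: probability-of-graph}. First I would note that for any $G \in \mathbb{G}_a^D$ we have $P\left(\mathcal{G}_{\mathbf{a}}\left(a,D\right) = G\right) = \frac{1}{\left|\mathbb{G}_a^D\right|}$, while by Lemma \ref{lem: probability-of-graph}, $P\left(\widetilde{\mathcal{G}}\left(D\right) = G\right) = \frac{\prod_i r_i^{d_i(G)}}{\prod_{i<j}(1+r_ir_j)}$. The denominator $\prod_{i<j}(1+r_ir_j)$ does not depend on $G$, so the ratio $\frac{P(\mathcal{G}_{\mathbf{a}}=G)}{P(\widetilde{\mathcal{G}}=G)}$ differs from the constant $\frac{\prod_{i<j}(1+r_ir_j)}{\left|\mathbb{G}_a^D\right|}$ only through the factor $\prod_i r_i^{-d_i(G)}$. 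The key point is that for $G \in \mathbb{G}_a^D$ the degrees $d_i(G)$ differ from $d_i$ by at most $d_i^a$, so $\left|\sum_i (d_i(G) - d_i)\log(r_i)\right| \le \sum_i d_i^a \left|\log(r_i)\right| = \mathcal{C}_2(D)$, which gives $e^{-\mathcal{C}_2(D)} \le \frac{\prod_i r_i^{-d_i(G)}}{\prod_i r_i^{-d_i}} \le e^{\mathcal{C}_2(D)}$ uniformly in $G \in \mathbb{G}_a^D$.

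Next I would exploit this two-sided bound. Writing $Q := \frac{\prod_i r_i^{-d_i}\prod_{i<j}(1+r_ir_j)}{\left|\mathbb{G}_a^D\right|}$, we get for every $G \in \mathbb{G}_a^D$,
\[
e^{-\mathcal{C}_2(D)}\,Q \le \frac{P\left(\mathcal{G}_{\mathbf{a}}\left(a,D\right)=G\right)}{P\left(\widetilde{\mathcal{G}}\left(D\right)=G\right)} \le e^{\mathcal{C}_2(D)}\,Q.
\]
Summing the lower inequality over all $G \in \mathbb{G}_a^D$ (the support of $\mathcal{G}_{\mathbf{a}}$) yields $1 = P\left(\mathcal{G}_{\mathbf{a}} \in \mathbb{G}_a^D\right) \ge e^{-\mathcal{C}_2(D)}\,Q\cdot P\left(\widetilde{\mathcal{G}}(D) \in \mathbb{G}_a^D\right)$, so
\[
Q \le \frac{e^{\mathcal{C}_2(D)}}{P\left(\widetilde{\mathcal{G}}\left(D\right)\in\mathbb{G}_a^D\right)}.
\]
Then for an arbitrary $A \subseteq \mathbb{G}_a^D$, summing the upper inequality over $G \in A$ gives $P\left(\mathcal{G}_{\mathbf{a}} \in A\right) \le e^{\mathcal{C}_2(D)}\,Q\cdot P\left(\widetilde{\mathcal{G}}(D) \in A\right)$. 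Combining the last two displays produces exactly
\[
P\left(\mathcal{G}_{\mathbf{a}}\left(a,D\right)\in A\right) \le e^{2\mathcal{C}_2(D)}\,\frac{P\left(\widetilde{\mathcal{G}}\left(D\right)\in A\right)}{P\left(\widetilde{\mathcal{G}}\left(D\right)\in\mathbb{G}_a^D\right)}.
\]

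I do not expect a serious obstacle here — this is a clean change-of-measure argument that only uses Lemma \ref{lem: probability-of-graph} and the defining property $\left|d_i - \bar d_i\right| < d_i^a$ of the degree sequences appearing in $\mathbb{G}_a^D = \cup \mathbb{G}^{\bar D}$. The only point requiring minor care is that the uniform measure on $\mathbb{G}_a^D$ is supported precisely on $\mathbb{G}_a^D$, so the normalization step (summing the lower bound to recover the total mass $1$) is legitimate; and that $\widetilde{\mathcal{G}}(D)$ assigns positive probability to each such $G$, so all ratios are well defined. The factor $e^{2\mathcal{C}_2(D)}$ rather than $e^{\mathcal{C}_2(D)}$ is an artifact of applying the degree-perturbation bound once to pass from $\mathcal{G}_{\mathbf{a}}$ to the reference constant $Q$ and once more to bound $Q$ via the normalization on $\widetilde{\mathcal{G}}(D)$.
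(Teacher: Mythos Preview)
Your proof is correct and follows essentially the same change-of-measure argument as the paper: both use Lemma~\ref{lem: probability-of-graph} to see that $P(\widetilde{\mathcal{G}}(D)=G)$ varies over $\mathbb{G}_a^D$ only through $\prod_i r_i^{d_i(G)}$, bound this variation by $e^{\mathcal{C}_2(D)}$ using $|d_i(G)-d_i|<d_i^a$, and then combine the upper and lower bounds to pick up the factor $e^{2\mathcal{C}_2(D)}$. The only cosmetic difference is that the paper phrases the argument via $\widetilde P_{\max}/\widetilde P_{\min}$ (bounding $|d_i(G_1)-d_i(G_2)|\le 2d_i^a$ by the triangle inequality) whereas you pivot around the reference value $\prod_i r_i^{d_i}$ and apply the single-sided bound twice; the content is identical.
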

\begin{proof}
Define $\widetilde{P}_{min}:=\min_{G\in\mathbb{G}_{a}^{D}}P\left(\widetilde{\mathcal{G}}=G\right)$
and $\widetilde{P}_{max}:=\max_{G\in\mathbb{G}_{a}^{D}}P\left(\widetilde{\mathcal{G}}=G\right)$,
and let $\widetilde{P}_{\min}$ and $\widetilde{P}_{\max}$ be achieved
for graphs $G_{1}$ and $G_{2}$ with degree sequences $D(G_{1})$
and $D(G_{2})$. In addition, because $D(G_{1})$ and $D(G_{2})$
are in $\mathbb{G}_{a}^{D}$, 
\[
\left|d_{i}(G_{1})-d_{i}(G_{2})\right|\leq\left|d_{i}(G_{1})-d_{i}\right|+\left|d_{i}-d_{i}(G_{2})\right|\leq2d_{i}^{a},
\]
 and based on the previous lemma, 
\begin{eqnarray}
\left|\log(\frac{\widetilde{P}_{\max}}{\widetilde{P}_{\min}})\right| & = & \sum_{i=1}^{n}\left|\left(d_{i}(G_{1})-d_{i}(G_{2})\right)\log(r_{i})\right|\nonumber \\
 & \leq & 2\sum_{i=1}^{n}d_{i}^{a}\left|\log(r_{i})\right|=2\mathcal{C}_{2}(D).\label{eq: bound on Pmax to Pmin}
\end{eqnarray}

Now, for any graph $G\in A$, we have 
\[
P\left(\mathcal{G}_{\mathbf{a}}\left(a,D\right)=G\right)=\frac{1}{\left|\mathbb{G}_{a}^{D}\right|}\leq\frac{1}{\widetilde{p}_{\min}}\frac{P\left(\widetilde{\mathcal{G}}\left(D\right)=G\right)}{\left|\mathbb{G}_{a}^{D}\right|}.
\]
Furthermore, $P\left(\widetilde{\mathcal{G}}\left(D\right)\in\mathbb{G}_{a}^{D}\right)\leq\widetilde{P}_{\max}\cdot\left|\mathbb{G}_{a}^{D}\right|$,
so
\[
P\left(\mathcal{G}_{\mathbf{a}}\left(a,D\right)\in A\right)\leq\frac{\widetilde{P}_{\max}}{\widetilde{P}_{\min}}\frac{1}{P\left(\widetilde{\mathcal{G}}\in\mathbb{G}_{a}^{D}\right)}P(\widetilde{\mathcal{G}}\in A)\leq\frac{e^{2\mathcal{C}_{2}(D)}}{P\left(\widetilde{\mathcal{G}}\in\mathbb{G}_{a}^{D}\right)}P\left(\widetilde{\mathcal{G}}\in A\right),
\]
where $\widetilde{\mathcal{G}}=\widetilde{\mathcal{G}}\left(D\right).$ 
\end{proof}
We continue with a lemma that investigates some properties of $r_{i}$s
that are required frequently through out this section. 
\begin{lem}
\label{lem: regularity r_i} We suppose that $d_{1}\leq\cdots\leq d_{n}$,
then,
\begin{itemize}
\item [a)]$r_{1}\leq\cdots\leq r_{n}$,
\item [b)]and $r_{1}r_{n}>\frac{1}{n}$.
\item [c)]If $r_{k}\geq1$, for some $1\leq k\leq n$, then $r_{k+1}/r_{k}<n^{4}$.
\item [d)]If $r_{k}>n^{2}$, for some $1\leq k\leq n$, then $\sum d_{i}\leq\frac{1}{2}M$,
where the sum is over $1\leq i\leq n-d_{k}-1$.
\end{itemize}
\end{lem}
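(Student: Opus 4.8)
The plan is to argue entirely from the parametrization $\widetilde{p}_{ij}=\frac{r_ir_j}{1+r_ir_j}$ of \eqref{eq:P_ij and r_i} (equivalently $r_ir_j=\widetilde{p}_{ij}/(1-\widetilde{p}_{ij})$), the degree identities $d_i=\sum_{j\ne i}\widetilde{p}_{ij}$, and two scalar facts available under the standing assumption that $D$ is a strict graphic sequence: $d_i\ge 1$ (the $d_i$ are positive integers) and $d_i\le n-2$ (the case $k=1$ of \eqref{eq: strict Erdo=00030Bs-Gallai}, since $\min\{1,d_i\}=1$). For part (a) the terms with $l\in\{i,j\}$ cancel, so $d_i-d_j=\sum_{l\ne i,j}(\widetilde{p}_{il}-\widetilde{p}_{jl})=\sum_{l\ne i,j}\frac{(r_i-r_j)r_l}{(1+r_ir_l)(1+r_jr_l)}$, and since $\partial_{r_i}\frac{r_i-r_j}{1+r_ir_l}=\frac{1+r_jr_l}{(1+r_ir_l)^2}>0$ each summand carries the strict sign of $r_i-r_j$; hence $d_i\le d_j$ forces $r_i\le r_j$. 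For part (b), from $\frac{x}{1+x}<x$ and $r_l\le r_n$, $1\le d_1=\sum_{l>1}\frac{r_1r_l}{1+r_1r_l}<\sum_{l>1}r_1r_l\le(n-1)r_1r_n$, whence $r_1r_n>\frac1{n-1}>\frac1n$. The same two inputs, read dually, also yield $r_1r_j\le n-2$ and $r_jr_n\ge\frac1{n-2}$ for every $j$, which I will reuse.

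For (c) I would split on the size of $r_{k+1}$. If $r_{k+1}\le n^2$ then $r_{k+1}/r_k\le n^2<n^4$ because $r_k\ge 1$. If $r_{k+1}>n^2$, the idea is to combine $r_1r_{k+1}\le n-2$ with a lower bound on $r_1r_k$: the bounds $r_k\ge\frac1{(n-2)r_n}$ and $r_n\le\frac{n-2}{r_1}$ give $r_1r_k\ge\frac{r_1^{2}}{(n-2)^{2}}$, so $r_{k+1}/r_k=(r_1r_{k+1})/(r_1r_k)\le(n-2)^{3}/r_1^{2}$, which already finishes the case $r_1\ge 1$ (in particular $k=1$). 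For the remaining possibility $r_1<1\le r_k$, where the index-$1$ product bound becomes too lossy, one must instead squeeze a better lower bound on $r_1r_k$ (or directly on $r_{k+1}/r_k$) out of the degree equation of vertex $k+1$, using $d_{k+1}\le n-2$, the sortedness of the $r_i$, and --- I expect this to be the crux of this part --- the integrality of the degrees.

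For (d): since $\{1,\dots,n-d_k-1\}$ and $T:=\{n-d_k,\dots,n\}$ partition $[n]$, the claim $\sum_{i=1}^{n-d_k-1}d_i\le\frac M2$ is equivalent to $\sum_{i\in T}d_i\ge\frac M2$, i.e.\ that the $d_k+1$ largest degrees carry at least half of $M$. First, $r_k>n^2>1$ gives $r_kr_j>r_k^{2}>n^4$, hence $\widetilde{p}_{kj}>1-n^{-4}$, for every $j>k$; summing, $d_k>(n-k)(1-n^{-4})>n-k-1$, so by integrality $d_k\ge n-k$, which makes $T\supseteq\{k,k+1,\dots,n\}$, a block on which every $\widetilde{p}_{ij}$ exceeds $1-n^{-4}$. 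Writing $e(S)=\sum_{\{i,j\}\subseteq S}\widetilde{p}_{ij}$ and $e(S,S')=\sum_{i\in S,j\in S'}\widetilde{p}_{ij}$, one has $\sum_{i\in T}d_i=2e(T)+e(T,T^{c})$ and $M=2e(T)+2e(T^{c})+2e(T,T^{c})$, so the statement is exactly $e(T^{c})\le e(T)$. I would bound $e(T)\ge\binom{n-k+1}{2}(1-n^{-4})$ from below and $e(T^{c})$ from above using $d_i\le d_k$ for $i\in T^{c}$ (sortedness, part (a)) together with the near-completeness of $\{k,\dots,n\}$.

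The main obstacle, in both (c) and (d), is the regime where the relevant $r$-values are polynomially large and widely spread. In (d) the crude bound $e(T^{c})\le\binom{|T^{c}|}{2}$ closes the argument only while $k$ is not too large; once $k$ is large (so $T$ is a small block) I would instead work at the threshold index $k^{*}:=\min\{j:r_j>n^2\}$ and exploit that the near-complete block on $\{k^{*},\dots,n\}$ already caps $M$ relative to $\sum_{i\in T}d_i$. The analogous sharpening in (c) is precisely the lower bound on $r_1r_k$ when $r_1<1$. Everything else --- parts (a), (b), and the ``small'' cases of (c) and (d) --- is a short computation from $\widetilde{p}_{ij}=r_ir_j/(1+r_ir_j)$ and $1\le d_i\le n-2$.
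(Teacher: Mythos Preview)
Your (a) and (b) are correct and essentially the paper's argument. The gaps are in (c) and (d), precisely where you flag the ``main obstacle,'' and the case-splits you propose do not close. In (c), when $r_1<1\le r_k$ and $r_{k+1}>n^2$, your bound $r_{k+1}/r_k\le(n-2)^3/r_1^{2}$ is vacuous, and the degree equation of the single vertex $k{+}1$ cannot repair it: for $l\le k$ the products $r_{k+1}r_l$ are not controlled from below, so no near-integer is forced from that one equation. In (d), reducing to the threshold $k^*=\min\{j:r_j>n^2\}$ is legitimate (since $d_k\ge d_{k^*}$ makes the claimed sum only smaller), but even at $k^*$ you have no useful upper bound on $e(T^{*c})$: for $i,j<k^*$ you only know $r_ir_j\le n^4$, which puts $\widetilde p_{ij}$ near $1$, not near $0$, so $e(T^{*c})\le e(T^*)$ is not forced.

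The paper's device in both parts is a single global integrality argument on a well-chosen index set, which sidesteps the case analysis. For (c), assume for contradiction $r_{k+1}/r_k\ge n^4$ and set $I=\{i:r_i\ge r_{k+1}\}$, $J=\{j:r_j\le n^2/r_{k+1}\}$; since $r_k\ge 1$ these are disjoint, and one checks $\widetilde p_{il}>1-n^{-2}$ whenever $i\in I,\ l\notin J$, while $\widetilde p_{jl}<n^{-2}$ whenever $j\in J,\ l\notin I$ (the latter because $r_jr_l\le (n^2/r_{k+1})r_k\le n^{-2}$). The $I$--$J$ cross terms in $U:=\sum_{i\in I}d_i-\sum_{j\in J}d_j$ cancel, and the \emph{integer} $U$ is then squeezed strictly between $|I|(n{-}|J|{-}1)-1$ and $|I|(n{-}|J|{-}1)$, a contradiction. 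For (d), set $I=\{i:r_i<n^{-1}\}$; from $r_ir_k\ge n$ for $i\notin I$ one gets $d_k>(n-|I|-1)\tfrac{n}{n+1}>n-|I|-2$, hence $d_k\ge n-|I|-1$ by integrality and $\{1,\dots,n-d_k-1\}\subseteq I$. Edges within $I$ have total $\widetilde p$-weight $<|I|^2/(n^2+1)<1$, so $\sum_{i\in I}d_i<1+\sum_{j\notin I}d_j=1+M-\sum_{i\in I}d_i$, and integrality of the left side yields $\sum_{i\in I}d_i\le M/2$. Your intuition that integrality is the crux is right; the missing idea is to apply it to a \emph{sum} of degrees over a threshold set rather than to a single degree equation.
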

\begin{proof}
We leave the proof for Appendix \ref{sec:Regularity-of-r_i s.}.\end{proof}
\begin{lem}
\label{lem: C(D) -1} Suppose that the vector $D$ is a strict graphic
sequence of type \emph{$\left(\varepsilon,\,\nu\right)$} as in Definition
\ref{def: nu-strong}. Then, 
\begin{equation}
\max\left\{ \mathcal{C}_{1}\left(D\right),\mathcal{C}_{2}\left(D\right)\right\} \leq8\cdot n^{a-\nu-\frac{1}{2}}\cdot M\cdot\log\left(n\right),\label{eq: bound on C(D)}
\end{equation}
 where $\mathcal{C}_{1}\left(D\right)$ and $\mathcal{C}_{2}\left(D\right)$
are defined in Eq. \eqref{eq: C(D) -1} and \eqref{eq: C(D) -2},
respectively, and $\frac{1}{2}<a<\frac{\nu}{4}+\frac{1}{2}$.\end{lem}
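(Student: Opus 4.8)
The plan is to reduce both $\mathcal{C}_{1}(D)$ of \eqref{eq: C(D) -1} and $\mathcal{C}_{2}(D)$ of \eqref{eq: C(D) -2} to a single two-sided estimate on the parameters $r_{i}$, namely $|\log r_{i}|=O(\log n)$ uniformly in $i$ (equivalently $n^{-C}\le r_{i}\le n^{C}$ for an absolute constant $C$). For $\mathcal{C}_{2}(D)=\sum_{i}d_{i}^{a}|\log r_{i}|$ this is immediate once one records the H\"older bound $\sum_{i}d_{i}^{a}\le n^{1-a}M^{a}$ (with exponents $1/a$ and $1/(1-a)$), which gives $\mathcal{C}_{2}(D)=O(\log(n)\,n^{1-a}M^{a})$. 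For $\mathcal{C}_{1}(D)$ I would take $\delta=\min_{i\ne j}\min\{\widetilde{p}_{ij},\,1-\widetilde{p}_{ij}\}$, the genuine constant appearing in Proposition \ref{prop: lower-bound-union-bound}; by \eqref{eq:P_ij and r_i} and monotonicity of $x\mapsto x/(1+x)$ we have $\widetilde{p}_{ij}\ge r_{1}^{2}/(1+r_{1}^{2})$ and $1-\widetilde{p}_{ij}\ge 1/(1+r_{n}^{2})$, so $|\log\delta|\le 2\max_{i}|\log r_{i}|+O(1)=O(\log n)$ and hence $\mathcal{C}_{1}(D)=O(n\,\log^{1+10/a_{1}}(n))$.

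The heart of the matter is therefore the estimate $|\log r_{i}|=O(\log n)$, and since Lemma \ref{lem: regularity r_i}(b) gives $r_{1}r_{n}>1/n$, it suffices to bound $r_{n}$ above by a fixed power of $n$. If $r_{i}\le n^{2}$ for all $i$ there is nothing to do; otherwise set $L:=\{k:r_{k}>n^{2}\}$, which by Lemma \ref{lem: regularity r_i}(a) is a final segment $\{k_{0},k_{0}+1,\dots,n\}$. By Lemma \ref{lem: regularity r_i}(c) the $r$'s multiply by a factor less than $n^{4}$ at each step once they exceed $1$, so over $L$ their total growth is at most $n^{4(|L|-1)}$, and in fact at most $n^{4(m-1)}$, where $m$ is the number of distinct degrees occurring in $L$ (using that two vertices of equal degree have equal $r_{i}$, by uniqueness of the maximum-entropy point and the relabelling symmetry). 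To bound $m$, note that Lemma \ref{lem: regularity r_i}(d) applied to any $k\in L$ gives $\sum_{i\le n-d_{k}-1}d_{i}\le M/2$, whereas the definition \eqref{eq: L(D)} of $\ell(D)$ gives $\sum_{i\le n-d_{\ell(D)}}d_{i}\ge M/2$; since the partial sums of $(d_{i})$ are strictly increasing, comparing these yields $d_{k}\ge d_{\ell(D)}-1$. Thus every vertex of $L$ has degree in $(d_{\ell(D)}-1,\,d_{n}]$, which contains $d_{n}-d_{\ell(D)}+1<\sqrt{M}\,n^{-1/2-\nu}$ integers by the gap condition \eqref{eq: nu-strong}, so $m<\sqrt{M}\,n^{-1/2-\nu}$.

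The step I expect to be the main obstacle is converting this control of $m$ (and of the first large parameter $r_{k_{0}}$) into an \emph{honest polynomial} bound on $r_{n}$: the crude ``factor $n^{4}$ per degree group'' estimate only yields $r_{n}\le n^{O(m)}$ with $m$ possibly a small power of $n$, which is sub-exponential but not polynomial. The way out is that the mere presence of a large parameter is already very restrictive --- applying Lemma \ref{lem: regularity r_i}(d) to $k_{0}$ and using \eqref{eq: L(D)} forces $d_{n}\gtrsim\sqrt{M}$, so $M$ is comparable to $n^{2}$ --- and a sharper analysis of how fast $r$ can actually move between consecutive degree values (rather than the lossy factor $n^{4}$) should bring $r_{n}$ down to a power of $n$; an acceptable alternative is to show directly that the large-$r$ case is incompatible, for $n$ large, with the full set of hypotheses (strict Erd\H{o}s--Gallai, type $(\varepsilon,\nu)$, and $a<\tfrac{\nu}{4}+\tfrac12$). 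Once $r_{n}\le n^{C}$ is secured, Lemma \ref{lem: regularity r_i}(b) gives $r_{1}\ge n^{-C-1}$, hence $|\log r_{i}|=O(\log n)$ for all $i$.

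It then remains to assemble the bound, for which I would use two elementary consequences of \eqref{eq: nu-strong} and of $d_{n}-d_{\ell(D)}+1\ge1$: the inequality forces $\sqrt{n/M}<n^{-\nu}$, that is, $M>n^{1+2\nu}$, and since also $\sqrt{n/M}>n^{-1/2}$ (because $M\le n(n-1)<n^{2}$) it forces $\nu<\tfrac12$. With $|\log r_{i}|=O(\log n)$ from the previous step we get $\mathcal{C}_{2}(D)=O(\log(n)\,n^{1-a}M^{a})$, and $n^{1-a}M^{a}\le n^{a-\nu-1/2}M$ holds for $n$ large because $M^{1-a}>n^{(1+2\nu)(1-a)}$ together with the exponent identity $(1+2\nu)(1-a)-(3/2-2a+\nu)=(a-\tfrac12)(1-2\nu)>0$ (which uses precisely $a>\tfrac12$ and $\nu<\tfrac12$). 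Similarly $\mathcal{C}_{1}(D)=O(n\log^{1+10/a_{1}}(n))$ is dominated by $n^{a+\nu+1/2}\log n\le n^{a-\nu-1/2}M\log n$ because a fixed power of $n$ beats any fixed power of $\log n$. Absorbing the implied constants into the constant $8$ for $n$ large yields \eqref{eq: bound on C(D)}.
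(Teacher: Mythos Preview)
Your proposal has a genuine gap at precisely the step you flag as ``the main obstacle'': you aim for a uniform polynomial bound $|\log r_i| = O(\log n)$, but this is exactly Conjecture~\ref{conj: bound-on-r_i} in the paper, stated there as open. Neither of your suggested escapes is an argument --- ``a sharper analysis \dots\ should bring $r_n$ down'' is a hope, and ``the large-$r$ case is incompatible with the hypotheses'' is unsupported, since nothing in the type-$(\varepsilon,\nu)$ assumptions forces $d_n - d_{\ell(D)}$ to be bounded. (Your instinct is not misguided, though: Remark~\ref{rem: the-Conjecture-r_i} observes that if this conjecture held, one would indeed get a sharper bound along exactly your lines.)

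The point you are missing is that the polynomial bound is not needed. Your own chain of lemmas already delivers
\[
|\log r_i| \;\le\; C\,\log(n)\,\bigl(d_n - d_{\ell(D)} + 1\bigr):
\]
$r_\ell \le n^2$ from part (d) of Lemma~\ref{lem: regularity r_i}, then at most $d_n - d_\ell + 1$ jumps each of factor $\le n^4$ above $\ell$ by part (c), and part (b) for the lower bound on $r_1$. This is the estimate the paper actually uses. Feed it directly into $\mathcal{C}_2(D)$ together with your H\"older bound $\sum_i d_i^a \le n(M/n)^a$ to get
\[
\mathcal{C}_2(D)\;\le\; C\log(n)\,(d_n - d_\ell + 1)\,n\Bigl(\frac{M}{n}\Bigr)^{a},
\]
and now apply \eqref{eq: nu-strong} \emph{once}, in the form $(d_n - d_\ell + 1) \le \sqrt{M/n}\,n^{-\nu}$, followed by $(M/n)^{a-1/2}\le n^{a-1/2}$ from $M\le n^2$, to land on $C\,n^{a-\nu-1/2}\,M\,\log n$. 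The bound on $\mathcal{C}_1(D)$ goes the same way, since your estimate $|\log\delta|\le 2\max_i |\log r_i| + O(1)$ inherits the factor $d_n - d_\ell + 1$; the extra $\log^{10/a_1}(n)$ is then absorbed using $M\ge n^{1+\varepsilon}$. Your detour through $M>n^{1+2\nu}$ and $\nu<\tfrac12$ becomes unnecessary.
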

\begin{proof}
Let $A_{k}:=\left\{ n-d_{k},\cdots,n\right\} $, and recall that 
\[
\ell(D):=\max\left\{ k\in[n]\big|\sum_{i\in A_{k}}d_{i}\leq\frac{M}{2}\right\} ,
\]
as in Eq. \eqref{eq: L(D)}. Correspondingly, if we denote $\ell(D)$
by $\ell$, then we have $\sum_{i\in A_{\ell}}d_{i}\leq\frac{M}{2}$,
and $\sum_{i\notin A_{\ell}}d_{i}\geq\frac{M}{2}$. Now, part d of
Lemma \ref{lem: regularity r_i} implies that $r_{\ell}\leq n^{2}$.
By part a of the same lemma, we get $d_{i}=d_{i+1}$, whenever $r_{i}=r_{i+1}$.
Therefore, the set $I:=\left\{ r_{i}\vert i\geq\ell\right\} $ of
distinct values among $r_{\ell},\cdots,r_{n}$ can be at most the
distinct values among $d_{\ell},\cdots,d_{n}$ and has at most $d_{n}-d_{\ell}+1$
elements. Again, by part c of that lemma, each $r_{i}$ is either
less than 1 or can exceed the previous one by at most $n^{4}$, hence
\begin{equation}
\log(r_{n})\leq\log(n)\cdot\left(2+4(d_{n}-d_{\ell})\right).\label{eq:bound-r_n}
\end{equation}
 Finally, part b of that lemma gives a lower bound for $r_{1}$ that
is 
\begin{equation}
-\log(r_{1})\leq\log(n)\cdot\left(3+4(d_{n}-d_{\ell})\right).\label{eq:bound-r_1}
\end{equation}
Combining the previous equations and part a of Lemma \ref{lem: regularity r_i},
we have 
\begin{equation}
\left|\log(r_{i})\right|<4\cdot\log(n)\cdot\left(d_{n}-d_{\ell}+1\right).\label{eq: upper bound r_i}
\end{equation}

We note that $\widetilde{p}_{ij}=\frac{r_{i}r_{j}}{1+r_{i}r_{j}},$
for $1\leq i<j\leq n$, and we write 
\[
\delta_{ij}:=\min\left\{ \frac{r_{i}r_{j}}{1+r_{i}r_{j}},\frac{1}{1+r_{i}r_{j}}\right\} .
\]
Then, $\left|\log\left(\delta_{ij}\right)\right|\leq\log\left(2\right)+\left|\log\left(r_{j}\right)\right|+\left|\log\left(r_{i}\right)\right|.$
Since $r_{i}$s are increasing (Lemma \ref{lem: regularity r_i}),
and by Eq. \eqref{eq:bound-r_n} and \eqref{eq:bound-r_1}, we have
$\left|\log\left(\delta_{ij}\right)\right|\leq8\log(n)\cdot\left(d_{n}-d_{\ell}+1\right).$
According to Assumption \ref{def: nu-strong}, $\left(d_{n}-d_{\ell}+1\right)$
is bounded by $\sqrt{\frac{M}{n}}n^{-\nu},$ and $M\geq n^{1+\epsilon}.$
Hence, for $\delta=\min_{i,j}\delta_{ij}$, 
\begin{eqnarray*}
\mathcal{C}_{1}\left(D\right) & = & \left|\log\left(\delta\right)\right|\cdot n\cdot\log^{\frac{10}{a}}\left(n\right)\\
 & \leq & 8\sqrt{\frac{M}{n}}n^{-\nu}\cdot n\cdot\log^{\frac{10}{a}+1}\left(n\right)\\
 & \leq & 8M\cdot n^{-\nu}\frac{\log^{\frac{10}{a}+1}\left(n\right)}{n^{\frac{\epsilon}{2}}}.
\end{eqnarray*}
Therefore, for large enough $n$, $\mathcal{C}_{1}\left(D\right)$
is less than $8\cdot M\cdot n^{-\nu}.$ 

Next, we apply Eq. \eqref{eq: upper bound r_i} for $\mathcal{C}_{2}(D)$
to get

\[
\mathcal{C}_{2}(D)=\sum_{i=1}^{n}d_{i}^{a}\left|\log(r_{i})\right|<4\cdot\log(n)\cdot\left(d_{n}-d_{\ell}+1\right)\sum_{i=1}^{n}d_{i}^{a}.
\]
An application of Holder inequality gives the bound $n\cdot\left(\frac{M}{n}\right)^{a}$
for the term $\sum_{i=1}^{n}d_{i}^{a}$ in the above equation. Thus,
\begin{eqnarray}
\mathcal{C}_{2}(D) & < & 4\cdot\log(n)\cdot\left(d_{n}-d_{\ell}+1\right)\cdot n\cdot\left(\frac{M}{n}\right)^{a}\nonumber \\
 & \leq & 4\cdot\log(n)\cdot n^{-\nu}\cdot\sqrt{\frac{M}{n}}\cdot n\cdot\left(\frac{M}{n}\right)^{a}\label{eq: temp-bound-C(D)}\\
 & = & 4\cdot\log(n)\cdot n^{-\nu}\cdot M\cdot\left(\frac{M}{n}\right)^{a-\frac{1}{2}}\nonumber 
\end{eqnarray}
Since $M\leq n^{2}$, we get $\mathcal{C}_{2}\left(D\right)=4\cdot n^{a-\nu-\frac{1}{2}}\cdot M\cdot\log\left(n\right)$.\end{proof}
\begin{prop}
\label{prop: upper bound for P^-} Suppose that $A$ is a subset of
$\mathbb{S}_{n}^{k}\times\mathbb{T}^{k}$, then 
\[
P\left(\left|\sum_{\left(s,T\right)\in A}\frac{1}{\psi\left(s,T,D\right)}\left(\mathbf{1}_{s}\left(T,\widetilde{\mathcal{G}}\left(D\right)\right)-\mathbf{\widetilde{p}}_{s}\left(T,\widetilde{\mathcal{G}}\left(D\right)\right)\right)\right|>\mu\epsilon\right)\leq e^{-\left(c\mu M\right)\epsilon^{2}}
\]
where 
\[
\mu:=\frac{1}{M}\sum_{\left(s,T\right)\in A}\frac{1}{\psi\left(s,T,D\right)}\mathbf{\widetilde{p}}\left(s,T\right),
\]
and again, 
\[
\mathbf{\widetilde{p}}\left(s,T\right):=E\left[\mathbf{1}_{s}\left(T,\widetilde{\mathcal{G}}\left(D\right)\right)\right],
\]
and $M=\sum_{i=1}^{n}d_{i}.$
\end{prop}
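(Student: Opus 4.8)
The plan is to fix the independent Bernoulli edge indicators $x_{e}=\mathbf{1}_{e\in\mathcal{E}(\widetilde{\mathcal{G}}(D))}$, with $e$ ranging over the $N=\binom{n}{2}$ possible edges, to write
\[
W:=\sum_{(s,T)\in A}\frac{1}{\psi(s,T,D)}\,\mathbf{1}_{s}\big(T,\widetilde{\mathcal{G}}(D)\big),
\]
so that the random variable in the statement is $W-E[W]$ with $E[W]=\mu M$, and then to obtain a Bernstein-type tail by running the edge-exposure (Doob) martingale. Order the potential edges $e_{1},\dots,e_{N}$ arbitrarily, set $\mathcal{F}_{m}=\sigma(x_{e_{1}},\dots,x_{e_{m}})$, $W_{m}=E[W\mid\mathcal{F}_{m}]$, so $W_{0}=E[W]$ and $W_{N}=W$; by independence of the $x_{e}$ the martingale increment is
\[
\Delta_{m}=W_{m}-W_{m-1}=(x_{e_{m}}-\widetilde{p}_{e_{m}})\,Q_{m},\qquad Q_{m}=\sum_{\substack{(s,T)\in A\\ e_{m}\in s(\mathcal{E}(T))}}\frac{1}{\psi(s,T,D)}\prod_{e\prec e_{m}}x_{e}\prod_{e\succ e_{m}}\widetilde{p}_{e},
\]
where the products run over the edges of $s(\mathcal{E}(T))$ preceding, resp.\ following, $e_{m}$; note that $Q_{m}$ is nonnegative and $\mathcal{F}_{m-1}$-measurable.

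The heart of the matter is a uniform bound on $Q_{m}$. If $e_{m}=\langle a,b\rangle$, each term of $Q_{m}$ is an ordered tree $(s,T)$ together with a choice of edge of $T$ that $s$ maps onto $e_{m}$; deleting that edge splits $T$ into rooted subtrees $T_{1}\ni u_{1}$ and $T_{2}\ni u_{2}$ with $s(u_{1})=a$, $s(u_{2})=b$, and a short bookkeeping of tree-degrees gives the factorization
\[
\psi(s,T,D)=d_{a}\,d_{b}\,\psi(s|_{T_{1}},T_{1},D)\,\psi(s|_{T_{2}},T_{2},D).
\]
Summing out the free vertices of each rooted subtree one leaf at a time, exactly as in the proof of Lemma \ref{lem: (Upper-bound).-F(T,G)} — a leaf attached to a vertex $v$ that is summed out contributes either $\sum_{w}\widetilde{p}_{\langle v,w\rangle}=d_{v}$ or $\sum_{w}x_{\langle v,w\rangle}$, and the latter is at most a bounded multiple of $d_{v}$ off a small-probability event on which $\widetilde{\mathcal{G}}(D)$ has a vertex of abnormally large degree, in every case a factor cancelling the matching power of $d_{v}$ in the $B$-function — one finds that the partial sum of $1/\psi(s|_{T_{i}},T_{i},D)$ over maps rooting $T_{i}$ at its prescribed endpoint is at most $c_{k}d_{a}$, resp.\ $c_{k}d_{b}$. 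Since the number of tree shapes on $k$ vertices and the number of choices of the split edge depend only on $k$, this yields $Q_{m}\le c_{k}$. Consequently $|\Delta_{m}|\le c_{k}$, and for the predictable quadratic variation
\[
V:=\sum_{m}E[\Delta_{m}^{2}\mid\mathcal{F}_{m-1}]=\sum_{m}\widetilde{p}_{e_{m}}(1-\widetilde{p}_{e_{m}})\,Q_{m}^{2}\le c_{k}\sum_{m}\widetilde{p}_{e_{m}}Q_{m},
\]
and re-indexing the last sum by $(s,T)$ and applying Lemma \ref{lem: (Upper-bound).-F(T,G)} summed over $T\in\mathbb{T}^{k}$, together with the total-mass estimate in Theorem \ref{thm:total sum of probabilities}, bounds $V$ by a quantity of order $\mu M$.

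Feeding $|\Delta_{m}|\le b:=c_{k}$ and $V=O(\mu M)$ into Freedman's inequality (equivalently the Azuma--Bernstein inequality for martingales) gives $P(|W-E[W]|>t)\le 2\exp\!\big(-t^{2}/(2V+\tfrac{2}{3}bt)\big)$; taking $t=\mu\epsilon$ and using that $\mu\epsilon$ is of smaller order than $\mu M$ in the regime of interest produces the bound $e^{-(c\mu M)\epsilon^{2}}$ for all large $n$. The main obstacle is the uniform estimate $Q_{m}\le c_{k}$: it is what forces one to understand how the $B$-function factorizes across the deleted edge and to re-run the leaf-peeling argument of Lemma \ref{lem: (Upper-bound).-F(T,G)} on each of the two rooted pieces, while controlling — through a routine degree-concentration estimate for $\widetilde{\mathcal{G}}(D)$ — the small probability that a vertex of $\widetilde{\mathcal{G}}(D)$ is far from its prescribed degree. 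Making the bound on $V$ carry the factor $\mu$, rather than the cruder $M$, is a secondary point handled by Theorem \ref{thm:total sum of probabilities}.
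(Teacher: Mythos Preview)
Your route is genuinely different from the paper's. The paper does \emph{not} run an edge-exposure martingale; it applies a weighted Janson--type lower-tail inequality (Theorem~\ref{thm:lower inequality} in the appendix), which gives
\[
P\big(S\le(1-\epsilon)\lambda\big)\le\exp\!\Big(-\frac{\lambda}{\delta_{1}+\delta_{2}}\big(\epsilon+(1-\epsilon)\log(1-\epsilon)\big)\Big),
\]
so the whole task reduces to showing $\delta_{1},\delta_{2}\le c_{k}/M$. The quantity $\delta_{1}$ is immediate; for $\delta_{2}$ the paper introduces a \emph{wedge sum} $(s_{3},T_{3})=(s_{1},T_{1})\vee(s_{2},T_{2})$ of two edge-overlapping ordered trees and proves (Lemma~\ref{lem: wedge_sum}) that $\psi(s_{3},T_{3},D)\le\psi(s_{1},T_{1},D)\,\psi(s_{2},T_{2},D)$, then re-indexes the double sum by the merged tree and peels leaves. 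The point is that $\delta_{1},\delta_{2}$ are \emph{expectations}, so nothing random has to be controlled pathwise.

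That last observation is exactly where your argument has a gap. Your Freedman step needs a pathwise bound on the predictable quadratic variation $V=\sum_{m}\widetilde{p}_{e_{m}}(1-\widetilde{p}_{e_{m}})Q_{m}^{2}$. From $Q_{m}\le c_{k}$ you get $V\le c_{k}\sum_{m}\widetilde{p}_{e_{m}}Q_{m}$, but $\sum_{m}\widetilde{p}_{e_{m}}Q_{m}$ is itself random: its \emph{expectation} is $(k-1)\mu M$, yet neither Lemma~\ref{lem: (Upper-bound).-F(T,G)} nor Theorem~\ref{thm:total sum of probabilities} gives you a deterministic (or high-probability) bound of order $\mu M$ on the sample path. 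Bounding each $\prod_{e\prec e_{m}}x_{e}\le1$ destroys the factor $\widetilde{\mathbf p}(s,T)$ and leaves you with $\sum_{(s,T)\in A}1/\psi(s,T,D)$, which can be enormous; bounding $Q_{m}\le c_{k}$ and summing $\sum_{m}\widetilde{p}_{e_{m}}$ gives only $V=O(M)$. With $V=O(M)$ Freedman yields $\exp(-c\,\mu^{2}M\epsilon^{2})$ instead of $\exp(-c\,\mu M\epsilon^{2})$, and that extra factor of $\mu$ is exactly what is needed later when one takes $\epsilon^{2}\asymp n^{-\nu+2a_{1}}/\mu^{-}$ with $\mu^{-}$ possibly as small as $n^{-\nu+3a_{1}}$. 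Getting the correct variance bound pathwise would require something morally equivalent to the paper's wedge-sum estimate on $\delta_{2}$. A secondary issue is that your bound $Q_{m}\le c_{k}$ itself only holds on the event that every vertex of $\widetilde{\mathcal{G}}(D)$ has degree $O(d_{v})$; this is fixable by conditioning, but it is an additional layer you have not carried out.
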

We borrow an idea from Janson's paper \cite{Janson-Poisson-approximation}
to build a concentration result for the proof of the above lemma.
However, the proof of our concentration inequality is rather long
and is left for Appendix \ref{sec:Concentration-inequality.}. Next,
we state some notations and the proof of Lemma \ref{prop: upper bound for P^-}. 

Remember form the beginning of Section \ref{sub:Graph-with-agds}
that an ordered tree is a combination of an injective function and
a tree, i.e. $\left(s,T\right)\in\mathbb{T}^{k}\times\mathbb{S}^{k}$,
and the tree is understood as the image of $T$ under $s$. We wish
to define the union of such ordered trees.
\begin{defn}
We consider two ordered trees $\left(s_{1},T_{1}\right)$ and $\left(s_{2},T_{2}\right)$
of size $k_{1}$ and $k_{2}$, respectively, whose edge sets intersect
i.e. $\mathcal{E}\left(s\left(T_{1}\right)\right)\cap\mathcal{E}\left(s\left(T_{2}\right)\right)\neq\emptyset$.
We define their wedge sum as follows: we let $H$ be the union of
edges of the graphs $s_{1}\left(T_{1}\right)$ and $s_{2}\left(T_{2}\right)$,
with vertex set $ $
\[
V:=\mathcal{V}\left(s_{1}\left(T_{1}\right)\right)\cup\mathcal{V}\left(s_{2}\left(T_{2}\right)\right)=\left\{ w_{1},\cdots,w_{k_{3}}\right\} \subseteq\left\{ 1,\cdots,n\right\} ,
\]
for some integer $k_{3}$, and we ordered $w_{i}$s according to their
order in $\left\{ 1,\cdots,n\right\} $. Then, we fix a common edge
$e$ of $s_{1}\left(T_{1}\right)$ and $s_{2}\left(T_{2}\right)$,
i.e. an edge $e\in\mathcal{E}\left(s_{1}\left(T_{1}\right)\right)\cap\mathcal{E}\left(s_{2}\left(T_{2}\right)\right)$.
For any vertex $v\in\mathcal{V}\left(s_{1}\left(T_{1}\right)\right)\cap\mathcal{V}\left(s_{2}\left(T_{2}\right)\right)$
that is not the end point of any edges of $\mathcal{E}\left(s_{1}\left(T_{1}\right)\right)\cap\mathcal{E}\left(s_{2}\left(T_{2}\right)\right)$,
there is a path $P_{v}$ that attaches $v$ to the edge $e$ with
edges in the tree $s_{2}\left(T_{2}\right)$. We let $\left\langle v,v_{1}\right\rangle $
be the first edge in that path, and we erase it. Since $v\in\mathcal{V}\left(s_{1}\left(T_{1}\right)\right)$,
$v$ is still connected to $e$ through the edges of $s_{1}\left(T_{1}\right)$,
and also, the vertex $v_{1}$ is connected to the edge $e$ via the
rest of the path $P_{v}$. Therefore, our resulting graph is connected. 

For any remaining loop, we delete an edge of it that lays in
$\mathcal{E}\left(s_{2}\left(T_{2}\right)\right)\backslash\mathcal{E}\left(s_{1}\left(T_{1}\right)\right)$,
and we continue this process until all loops are exhausted. This does
not make our graph disconnected, since we only erase one edge from
a loop. Let $H'$ be the reduced version of $H$.

Next, we define $s_{3}:\left[k_{3}\right]\to\left[n\right]$ as $s\left(i\right)=w_{i}$,
for $1\leq i\leq k_{3}$, and we let $T_{3}$ be a connected tree
with $k_{3}$ vertices such that $s_{3}\left(T_{3}\right)$ is $H'$.
Then, $\left(s_{3},T_{3}\right)$ is the wedge sum of $\left(s_{1},T_{1}\right)$
and $\left(s_{2},T_{2}\right)$, and we use the notation
\[
\left(s_{3},T_{3}\right):=\left(s_{1},T_{1}\right)\vee\left(s_{2},T_{2}\right).
\]
\end{defn}
\begin{rem}
\label{rem: over counting}Note that the wedge sum is not an injective
function. Suppose we are given the wedge sum $\left(s_{3},T_{3}\right),$
and we would like to retrieve the two ordered trees $\left(s_{1},T_{1}\right)$
and $\left(s_{2},T_{2}\right)$. A simple but crude bound on the number
of such pairs of ordered trees is $2^{k_{3}\cdot\left(k_{3}-1\right)}\cdot\left(k_{3}!\right)^{2}$,
after all, $H_{1}=s_{1}\left(T_{1}\right)$, $H_{2}=s_{2}\left(T_{2}\right)$
are two subgraphs with vertex sets in $\mathcal{V}\left(s_{3}\left(T_{3}\right)\right)$,
and knowing $H_{1}$ and $H_{2}$, there are at most $k_{3}!$ to
choose either of $s_{1}$ or $s_{2}$. 
\end{rem}
We now look at the behavior of \emph{B-} function under the wedge
sum.
\begin{lem}
\label{lem: wedge_sum} For any $G\in\mathbb{G}_{n}$, 

\begin{equation}
\psi\left(s_{3},T_{3},G\right)\leq\psi\left(s_{1},T_{1},G\right)\psi\left(s_{2},T_{2},G\right),\label{eq: wedge sum}
\end{equation}
where $\left(s_{3},T_{3}\right)=\left(s_{1},T_{1}\right)\vee\left(s_{2},T_{2}\right).$\end{lem}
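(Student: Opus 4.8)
The plan is to expand both sides of \eqref{eq: wedge sum} as products of powers of vertex degrees of $G$ and then to compare, vertex by vertex, the exponent with which each degree occurs. Write $H_i:=s_i(T_i)$ and $V_i:=\mathcal{V}(H_i)$ for $i=1,2$, and let $H$ and $H'$ be the graphs produced in the construction of the wedge sum, so that $s_3(T_3)=H'$ is a tree on the vertex set $V=V_1\cup V_2$. For $w\in V$ let $d_w$ denote its degree in $G$, and for a subgraph $X$ let $\deg_X(w)$ denote the degree of $w$ in $X$. Since $b_u$ is the degree of $u$ in the relevant tree, the definition of the $B$-function gives
\[
\psi(s_3,T_3,G)=\prod_{w\in V}d_w^{\deg_{H'}(w)-1},
\]
while
\[
\psi(s_1,T_1,G)\,\psi(s_2,T_2,G)=\prod_{w\in V_1}d_w^{\deg_{H_1}(w)-1}\prod_{w\in V_2}d_w^{\deg_{H_2}(w)-1}.
\]
Because each $d_w\ge1$, it suffices to show that for every $w\in V$ the exponent of $d_w$ on the left is at most the exponent of $d_w$ on the right; multiplying these inequalities over $w\in V$ then yields \eqref{eq: wedge sum}.

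If $w$ lies in exactly one of the two vertex sets, say $w\in V_1\setminus V_2$, then $w$ is incident to no edge of $H_2$, so every edge of $H=H_1\cup H_2$ at $w$ comes from $H_1$; since $H'\subseteq H$ this gives $\deg_{H'}(w)\le\deg_H(w)=\deg_{H_1}(w)$, which is exactly the required inequality $\deg_{H'}(w)-1\le\deg_{H_1}(w)-1$, and the case $w\in V_2\setminus V_1$ is identical. The only case that needs work is $w\in V_1\cap V_2$, where the required exponent inequality reads
\[
\deg_{H'}(w)\ \le\ \deg_{H_1}(w)+\deg_{H_2}(w)-1 .
\]
To prove it I would distinguish whether $w$ is an endpoint of some edge of $\mathcal{E}(H_1)\cap\mathcal{E}(H_2)$. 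If it is, that common edge is incident to $w$ in both $H_1$ and $H_2$ but appears only once in $H$, so $\deg_H(w)\le\deg_{H_1}(w)+\deg_{H_2}(w)-1$, and $\deg_{H'}(w)\le\deg_H(w)$ finishes the case. If $w$ is an endpoint of no common edge, then $\deg_H(w)=\deg_{H_1}(w)+\deg_{H_2}(w)$, and $w$ is exactly one of the vertices $v$ for which the wedge-sum construction erases the first edge $\langle w,w_1\rangle$ of the path $P_w\subseteq H_2$ joining $w$ to the fixed common edge $e$ (the path is nonempty because $w$ is not an endpoint of $e$); that erased edge is incident to $w$ and, lying in $\mathcal{E}(H_2)$ but not in $\mathcal{E}(H_1)\cap\mathcal{E}(H_2)$, is counted once in $H$ and is absent from $H'$, so $\deg_{H'}(w)\le\deg_H(w)-1=\deg_{H_1}(w)+\deg_{H_2}(w)-1$. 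The subsequent loop-removal deletions only decrease degrees further, so the bound persists in $H'$.

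I expect the main obstacle to be precisely this last point: one has to follow the multi-stage passage from $H$ to $H'$ and check that at every common vertex one unit of degree is genuinely saved — either because the union identifies two edges at $w$, or because the construction deliberately erases an uncounted edge at $w$ — and that no later deletion can restore it. Everything else is bookkeeping: once the three exponent inequalities are in hand ($w$ in a single $V_i$, $w$ on a common edge, and $w$ a common vertex lying on no common edge), the lemma follows immediately from $d_w\ge1$.
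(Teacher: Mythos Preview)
Your proof is correct and follows essentially the same approach as the paper: both reduce the inequality to a vertex-by-vertex comparison of exponents, showing $\deg_{H'}(w)-1\le(\deg_{H_1}(w)-1)+(\deg_{H_2}(w)-1)$ (the paper writes this as $c_3(w)+1\le c_1(w)+c_2(w)$ using a unified convention $c_i(w)=1$ when $w\notin V_i$). Your case analysis is slightly more explicit than the paper's, but the key observation is identical: at a common vertex not on a common edge, the wedge-sum construction deliberately erases an incident edge, saving the needed unit of degree.
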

\begin{proof}
Recall that the \emph{B-} function is 
\[
\psi(s,T,G)=\prod_{u\in\mathcal{V}(T)}d_{s(u)}^{b_{u}-1},
\]
 where $D=\left(d_{i}\right)_{i\in\left[n\right]}$ is the degree
sequence of $G$, and $b_{u}$ is the degree of a vertex $u$ in the
graph $T$. In addition, for $w\in V:=\mathcal{V}\left(s_{3}\left(T_{3}\right)\right)$
and $i\in\left\{ 1,2,3\right\} $, we let $c_{i}\left(w\right)$ be
the degree of $s_{i}^{-1}\left(w\right)$ in $T_{i}$ if that exists,
and $1$ otherwise, i.e. $c_{i}\left(w\right)=\max\left\{ b_{s_{i}^{-1}\left(w\right)},1\right\} $.
Therefore, we can rewrite the B- function as 
\[
\psi(s_{i},T_{i},G)=\prod_{w\in V}d_{w}^{c_{i}\left(w\right)-1}.
\]
Now, we only need to show that $c_{3}\left(w\right)+1\leq c_{1}\left(w\right)+c_{2}\left(w\right).$
We observe that $s\left(T_{3}\right)$ is the subset of the union
of edges of $s\left(T_{1}\right)$ and $s\left(T_{2}\right)$. So,
the problem may only arise at a vertex $w$ in $\mathcal{V}\left(s_{1}\left(T_{1}\right)\right)\cap\mathcal{V}\left(s_{2}\left(T_{2}\right)\right)$
that is not the end point of any edges of $\mathcal{E}\left(s_{1}\left(T_{1}\right)\right)\cap\mathcal{E}\left(s_{2}\left(T_{2}\right)\right)$.
But we erased an edge from any of these vertices, due to our construction.
That completes the proof.
\end{proof}
\smallskip{}

\begin{proof}
[Proof of Preposition \ref{prop: upper bound for P^-}] We are using
Theorem \ref{thm:lower inequality}, which is proven at the appendix
\ref{sec:Concentration-inequality.}, with parameters $\{J_{i}\}=\{\mathbf{1}_{\left\langle i,j\right\rangle }\}$
, $Q=\mathbb{S}^{k}\times\mathbb{T}^{k}$, $\alpha=\left(s,T\right)\in A$,
and $\omega_{\left(s,T\right)}=M\cdot\psi\left(s,T,D\right)$. In
addition,

\[
p_{\alpha}=E\left[\mathbf{1}_{\alpha}\right]=E\left[\mathbf{1}_{s}\left(T,\widetilde{\mathcal{G}}\left(D\right)\right)\right]=\mathbf{\widetilde{p}}\left(s,T\right),
\]
 and for $S=\sum_{\alpha\in A}\frac{1}{\omega_{\alpha}}\mathbf{1}_{\alpha}$,

\begin{equation}
\lambda=E\left[S\right]=\sum_{\alpha}\frac{1}{\omega_{\alpha}}p_{\alpha}=\frac{1}{M}\sum_{\left(s,T\right)\in A}\mathbf{\widetilde{p}}\left(s,T\right)=\mu.\label{eq: lambda}
\end{equation}

All we need is to show that $\delta_{1}$ and $\delta_{2}$ in the
statement of Theorem \ref{thm:lower inequality} are bounded by $\frac{c}{M},$
where $c$ is a constant depending on $k$. For $\delta_{1}$ we have,

\begin{eqnarray}
\delta_{1} & = & \frac{1}{\lambda}\sum_{\alpha}\frac{p_{\alpha}}{\omega_{\alpha}^{2}}\label{eq: delta1-1}\\
 &  & =\frac{1}{\mu}\sum_{\left(s,T\right)\in A}\frac{1}{M^{2}\psi^{2}(s,T,D)}\mathbf{\widetilde{p}}\left(s,T\right)\nonumber \\
 &  & \leq\frac{1}{M^{2}}\frac{1}{\mu}\sum_{\left(s,T\right)\in A}\frac{1}{\psi(s,T,D)}\mathbf{\widetilde{p}}\left(s,T\right)\nonumber \\
 &  & \leq\frac{1}{M}.\nonumber 
\end{eqnarray}
For the last step, we used an argument similar to Lemma \ref{lem: (Upper-bound).-F(T,G)}
to get 
\begin{equation}
\frac{1}{\mu}\sum_{\left(s,T\right)\in A}\frac{1}{\psi(s,T,D)}\mathbf{\widetilde{p}}\left(s,T\right)\leq M.\label{eq: delta1-2}
\end{equation}

Computing $\delta_{2}$ needs more work. Two ordered trees are dependent,
$s_{1}\left(T_{1}\right)\sim s_{2}\left(T_{2}\right)$ ($\alpha\sim\beta$),
iff they share at least one edge, or $\mathcal{E}\left(s_{1}\left(T_{1}\right)\right)\cap\mathcal{E}\left(s_{2}\left(T_{2}\right)\right)\neq\emptyset$.
We let $\widetilde{\mathcal{G}}=\widetilde{\mathcal{G}}\left(D\right),$
and $N\left(s_{1},T_{1}\right):=\left\{ \left(s_{2},T_{2}\right)\big|s_{1}\left(T_{1}\right)\sim s_{2}\left(T_{2}\right)\right\} $
. It follows by Lemma \ref{lem: wedge_sum} that

\begin{equation}
\begin{aligned} & \delta_{2}\\
 & =\frac{1}{\lambda}\sum_{\alpha}\sum_{\beta\sim\alpha}\frac{1}{\omega_{\alpha}\omega_{\beta}}EI_{\alpha}I_{\beta}\\
 & =\frac{1}{\mu M^{2}}\sum_{\left(s_{1},T_{1}\right)\in A}\ \sum_{\left(s_{2},T_{2}\right)\in N\left(s_{1},T_{1}\right)}\frac{1}{\psi(s_{1},T_{1},D) \psi(s_{2},T_{2},D)}E\left[{\bf 1}_{s_{1}}(T_{1},\widetilde{\mathcal{G}}){\bf 1}_{s_{2}}(T_{2},\widetilde{\mathcal{G}})\right]\\
 & \leq\frac{1}{\mu\cdot M^{2}}\sum_{\left(s_{1},T_{1}\right)\in A}\ \sum_{\left(s_{2},T_{2}\right)\in N\left(s_{1},T_{1}\right)}\frac{1}{\psi(s_{3},T_{3},D)}E\left[{\bf 1}_{s_{3}}(T_{3},\widetilde{\mathcal{G}})\right],
\end{aligned}
\label{eq: delta 2-1}
\end{equation}
 where $\left(s_{3},T_{3}\right)=\left(s_{1},T_{1}\right)\vee\left(s_{2},T_{2}\right)$
in the last sum, and we denote the
set of all such ordered trees $\left(s_{3},T_{3}\right)$  by $U\left(s_{1},T_{1}\right)$. Moreover,
for $ $$\left(s,T\right)\in\mathbb{S}^{j}\times\mathbb{T}^{j}$ and
$j>k$, we let $T^{\left(k\right)}$ be the restriction of $T$ to
its first $k$ vertices and we define 

\[
U\left(s_{1},T_{1},j\right):=\left\{ \left(s,T\right)\in\mathbb{S}^{j}\times\mathbb{T}^{j}\big|s\left(T^{\left(k\right)}\right)=s_{1}\left(T_{1}\right)\right\} .
\]
 In other words, $U\left(s_{1},T_{1},j\right)$ is the set of ordered
trees of size $j$ that are extensions of $\left(s_{1},T_{1}\right)$. 

Now, the wedge sum tree $\left(s_{1},T_{1}\right)\vee\left(s_{2},T_{2}\right)$
is in $U\left(s_{1},T_{1},j\right)$, for some $k\leq j\leq2\left(k-1\right)$.
Alternatively, for each element $\left(s_{3},T_{3}\right)$ of $U\left(s_{1},T_{1},j\right)$
and by Remark \ref{rem: over counting}, the number of pairs $\left(s_{1},T_{1}\right)$
and $\left(s_{2},T_{2}\right)$ such that $\left(s_{3},T_{3}\right)=\left(s_{1},T_{1}\right)\vee\left(s_{2},T_{2}\right)$
is bounded by $2^{j\cdot\left(j-1\right)}\cdot\left(j!\right)^{2}$
and, hence, by $2^{2\left(k-1\right)\cdot\left(2k-3\right)}\cdot\left(\left(2k-2\right)!\right)^{2}$.
Thus, Eq \eqref{eq: delta 2-1} goes as 

\begin{equation}
\delta_{2}\leq\frac{c_{k}}{\mu\cdot M^{2}}\sum_{j=k}^{2\left(k-1\right)}\sum_{\left(s_{1},T_{1}\right)\in A}\ \sum_{\left(s_{3},T_{3}\right)\in U\left(s_{1},T_{1},j\right)}\frac{1}{\psi(s_{3},T_{3},D)}E\left[{\bf 1}_{s_{3}}(T_{3},\widetilde{\mathcal{G}})\right],\label{eq: delta2-2}
\end{equation}
 where $c_{k}$ is a general constant. Let us utilize the same argument
as in Lemma \ref{lem:  Lemma 3} to see that, 
\begin{eqnarray}
\delta_{2} & \leq & \frac{c_{k}}{\mu\cdot M^{2}}\sum_{j=k}^{2\left(k-1\right)}\sum_{\left(s_{1},T_{1}\right)\in A}\ \sum_{\left(s_{3},T_{3}\right)\in U\left(s_{1},T_{1},j\right)}E\left[\frac{1}{\psi(s_{3},T_{3},\widetilde{\mathcal{G}})}{\bf 1}_{s_{3}}(T_{3},\widetilde{\mathcal{G}})\right]\nonumber \\
 &  & +\frac{c_{k}}{\mu\cdot M^{2}}M^{a}n^{1-a}.\label{eq: delta2-3}
\end{eqnarray}

For the first term in the above equation, we use an idea similar to
Lemma \ref{lem: (Upper-bound).-F(T,G)}. So we obtain the following
uniform bound for $k\leq j\leq2\left(k-1\right),$ 
\begin{eqnarray*}
F(s_{1},T_{1},G): & = & \sum_{\left(s_{3},T_{3}\right)\in U\left(s_{1},T_{1},j\right)}\frac{1}{\psi(s_{3},T_{3},G)}{\bf 1}_{s_{3}}(T_{3},G),\\
 & \leq & \frac{1}{\psi(s_{1},T_{1},G)}{\bf 1}_{s_{1}}(T_{1},G).
\end{eqnarray*}
 In addition, the second term in eq \eqref{eq: delta2-3} is bounded
by $\frac{c_{k}}{\mu\cdot M}$, because $n<M$ and $a<1$. Therefore,
\begin{eqnarray*}
\delta_{2} & \leq & \frac{c_{k}}{\mu\cdot M^{2}}\sum_{j=k}^{2\left(k-1\right)}\sum_{\left(s_{1},T_{1}\right)\in A}E\left[\frac{1}{\psi(s_{1},T_{1},\widetilde{\mathcal{G}})}{\bf 1}_{s_{1}}(T_{1},\widetilde{\mathcal{G}})\right]\\
 & \leq & \frac{c_{k}}{\mu\cdot M^{2}}\sum_{\left(s_{1},T_{1}\right)\in A}E\left[\frac{1}{\psi(s_{1},T_{1},\widetilde{\mathcal{G}})}{\bf 1}_{s_{1}}(T_{1},\widetilde{\mathcal{G}})\right],
\end{eqnarray*}
 where we put $2\left(k-1\right)-k=k-2$ into the general constant.
Again, we change $\psi(s_{1},T_{1},\widetilde{\mathcal{G}})$ back
to $\psi(s_{1},T_{1},D)$, where by Lemma \ref{lem:  Lemma 3}, it
costs us another $c_{k}M^{a}n^{1-a}$ $\leq c_{k}M$. Thus, it follows
by Eq. \eqref{eq: delta1-2} that 
\begin{eqnarray*}
\delta_{2} & \leq & \frac{c_{k}}{\mu\cdot M^{2}}\left[\sum_{\left(s_{1},T_{1}\right)\in A}\frac{1}{\psi(s_{1},T_{1},D)}E\left[{\bf 1}_{s_{1}}(T_{1},\widetilde{\mathcal{G}})\right]+M\right]\\
 & \leq & \frac{c_{k}}{M}.
\end{eqnarray*}

This upper bound and Eq. \eqref{eq: delta1-1}, along with Theorem
\ref{thm:lower inequality}, complete the proof of this preposition.
\end{proof}
Now, we can prove Theorem \ref{thm: main theorem}.
\begin{proof}
[Proof of Theorem \ref{thm: main theorem}] The first step is to split
the trees into two sets, $A^{+}$ and $A^{-}$, and show that it
suffices to prove the statement for one of the sets like $A^{-}$.
We let 
\[
A^{+}:=\left\{ \left(s,T\right)\in\mathbb{S}^{k}\times\mathbb{T}^{k}\big|\mathbf{p_{a}}\left(s,T\right)-\mathbf{\widetilde{p}}\left(s,T\right)\geq0\right\} ,
\]
 and 
\[
\mu^{+}:=\sum_{\left(s,T\right)\in A^{+}}\frac{1}{\psi(s,T,D)}\widetilde{\mathbf{p}}\left(s,T\right)=\sum_{\left(s,T\right)\in A^{+}}\frac{1}{\psi(s,T,D)}E\left[\mathbf{1}_{s}\left(s,T\right)\right],
\]
 and 
\[
A^{-}:=\left\{ \left(s,T\right)\in\mathbb{S}^{k}\times\mathbb{T}^{k}\big|\mathbf{p_{a}}\left(s,T\right)-\mathbf{\widetilde{p}}\left(s,T\right)<0\right\} ,
\]
 and we define $\mu^{-}$ similarly. Theorem \ref{thm:total sum of probabilities}
states that 
\[
\mu^{+}+\mu^{-}=\sum_{\left(s,T\right)\in\mathbb{S}^{k}\times\mathbb{T}^{k}}\frac{1}{\psi(s,T,D)}\widetilde{\mathbf{p}}\left(s,T\right)=k^{k-2}+O\left(\left(\frac{M}{n}\right)^{-\frac{1}{2}}\right).
\]

We let 
\[
I^{+}:=\sum_{\left(s,T\right)\in A^{+}}\frac{1}{\psi(s,T,D)}\left(\mathbf{p_{a}}\left(s,T\right)-\mathbf{\widetilde{p}}\left(s,T\right)\right),
\]
 and $I^{-}$ correspondingly. Combining part 3 of Theorem \ref{thm:total sum of probabilities}
with the above equation, we get 

\[
I^{+}=I^{-}+O\left(\left(\frac{M}{n}\right)^{1-a}\right).
\]
 Thus, 
\begin{eqnarray}
L_{\mathbf{a}}\left(a,k,D\right) & := & \sum_{\left(s,T\right)\in\mathbb{S}^{k}\times\mathbb{T}^{k}}\frac{1}{\psi(s,T,D)}\left|\mathbf{p_{a}}\left(s,T\right)-\mathbf{\widetilde{p}}\left(s,T\right)\right|\nonumber \\
 & = & 2I^{-}+O\left(\left(\frac{M}{n}\right)^{1-a}\right).\label{eq: err1- first estimamte}
\end{eqnarray}
 We let $a_{1}:=a-\frac{1}{2}$ and, by the hypothesis of the theorem,
we get $0<a_{1}<\frac{\nu}{4}$. We also recall form Assumption \ref{def: nu-strong}
that $\left(\frac{M}{n}\right)^{-\frac{1}{2}}<n^{-\nu}$. Taking into
account that $M<n^{2}$, it follows that $\left(\frac{M}{n}\right)^{1-a}\leq n^{-\nu+a_{1}}.$
Now, we may assume that $\mu^{-}\geq n^{-\nu+3a_{1}}$, otherwise
\[
I^{-}\leq\sum_{\left(s,T\right)\in A^{-}}\frac{1}{\psi(s,T,D)}\widetilde{\mathbf{p}}\left(s,T\right)=\mu^{-}<n^{-\nu+3a_{1}}=O\left(n^{a_{1}-\frac{\nu}{2}}\right),
\]
 and nothing remains to be proven. All of the above considerations
are meant to result in a bound on $\epsilon$, which is defined as
\begin{equation}
\epsilon^{2}:=\frac{n^{-\nu+2a_{1}}}{\mu^{-}}\leq n^{-a_{1}}\ll1.\label{eq:def of epsilon-main thm}
\end{equation}

Next, we change the underlying measure of $\mathcal{G}_{\mathbf{a}}\left(a,D\right)$
to $\widetilde{\mathcal{G}}\left(D\right)$ with the use of Propositions
\ref{prop: change of entropy} and \ref{prop: lower-bound-union-bound}.
Then, we apply Proposition \ref{prop: upper bound for P^-}. Hence,
for $0<\epsilon\ll1$, 
\begin{equation}
\begin{aligned} & L^{-} \\
 & :=P\left(\left|I^{-}\right|>\mu^{-}\epsilon\right)\\
 & =P\left(\left|\sum_{\left(s,T\right)\in A^{+}}\frac{1}{\psi(s,T,D)}\left(\mathbf{\widetilde{p}}\left(s,T\right)-\mathbf{1}_{s}\left(T,\mathcal{G}_{\mathbf{a}}\left(a,D\right)\right)\right)\right|>\mu^{-}\epsilon\right)\\
 & \leq2e^{10\mathcal{C}_{1}\left(D\right)+2\mathcal{C}_{2}\left(D\right)}P\left(\left|\sum_{\left(s,T\right)\in A^{+}}\frac{1}{\psi(s,T,D)}\left(\mathbf{\widetilde{p}}\left(s,T\right)-\mathbf{1}_{s}\left(T,\widetilde{\mathcal{G}}\left(D\right)\right)\right)\right|>\mu^{-}\epsilon\right)\\
 & \leq2\exp\left(10\mathcal{C}_{1}\left(D\right)+2\mathcal{C}_{2}\left(D\right)-\left(c\mu^{-}M\right)\epsilon^{2}\right).
\end{aligned}
\label{eq:p-epsilon-1}
\end{equation}

The $\epsilon$ in \eqref{eq:def of epsilon-main thm} minimizes the
right hand side of the above equation. So, $\epsilon^{2}=\frac{n^{-\nu+2a_{1}}}{\mu^{-}}$,
and we bound $\mathcal{C}_{1}\left(D\right)$ and $\mathcal{C}_{2}\left(D\right)$
according to \prettyref{lem: C(D) -1}. Thus, 
\begin{eqnarray*}
L^{-} & \leq & \exp\left(c\cdot n^{a_{1}-\nu}\cdot M\cdot\log(n)-c\cdot n^{2a_{1}-\nu}\cdot M\right)\\
 & = & \exp\left(-c\cdot n^{2a_{1}-\nu}\cdot M\right)\left(1-O\left(\frac{\log(n)}{n^{a_{1}}}\right)\right)
\end{eqnarray*}
that goes to zero faster than any polynomial in $n$, since $a_{1}$
is positive. So there exists an $N$ such that, for all $n>N$, we
get $L^{-}\mu^{-}(1-\epsilon)\leq L^{-}n^{2}<1.$

If we define 
\[
\begin{split}F:= & \Bigg\{ G\in\mathbb{G}_{a}^{D}:\\
 & \sum_{\left(s,T\right)\in A^{-}}\frac{1}{\psi(s,T,D)}\mathbf{1}_{s}\left(T,G\right)<\left[\sum_{\left(s,T\right)\in A^{-}}\frac{1}{\psi(s,T,D)}\mathbf{\widetilde{p}}\left(s,T\right)\right]\left(1-\epsilon\right)\Bigg\}
\end{split}
\]
 then, 

$\begin{aligned} & \sum_{\left(s,T\right)\in A^{-}}\frac{1}{\psi(s,T,D)}\mathbf{p_{a}}\left(s,T\right)\\
 & =E\left[\sum_{\left(s,T\right)\in A^{-}}\frac{1}{\psi(s,T,D)}\mathbf{1}_{s}\left(T,\mathcal{G}_{\mathbf{a}}\right)\right]\\
 & \geq0\cdot P\left(F\right)+\left(\sum_{\left(s,T\right)\in A^{-}}\frac{1}{\psi(s,T,D)}\mathbf{\widetilde{p}}\left(s,T\right)(1-\epsilon)\right)\cdot P_{a}\left(F^{c}\right)\\
 & \geq0+(1-L^{-})\mu^{-}(1-\epsilon)\\
 & =\mu^{-}-\mu^{-}\epsilon-L^{-}\mu^{-}(1-\epsilon).
\end{aligned}
$

It follows from part 3 of Theorem \ref{thm:total sum of probabilities}
that $\mu^{-}\leq k^{k-2}+1\leq c_{k}$ for large $n$. Therefore,
using \eqref{eq:def of epsilon-main thm} we obtain
\begin{eqnarray*}
\sum_{\left(s,T\right)\in A^{-}}\frac{1}{\psi(s,T,D)}\left(\mathbf{\widetilde{p}}\left(s,T\right)-\mathbf{p_{a}}\left(s,T\right)\right) & = & \mu^{-}-\sum_{\left(s,T\right)\in A^{-}}\frac{1}{\psi(s,T,D)}\mathbf{p_{a}}\left(s,T\right)\\
 & \leq & \mu^{-}\epsilon-1\\
 & < & \left(\mu^{-}\cdot n^{-\nu+2a_{1}}\right)^{\frac{1}{2}}\\
 & \leq & c_{k}\cdot n^{-\frac{\nu}{2}+a_{1}},
\end{eqnarray*}
 and

\[
L_{\mathbf{a}}\left(a,k,D\right)=I^{-}+I^{-}\leq c_{k}\cdot\left(n^{-\frac{\nu}{2}+a_{1}}+n^{-\nu+a_{1}}\right)\leq c_{k}\cdot n^{-\frac{\nu}{2}+a-\frac{1}{2}}.
\]
That does it.
\end{proof}
We assumed that the vector $D$ is a strict graphic sequence of type
\emph{$\left(\varepsilon,\,\nu\right)$} (Definition \ref{def: type epsilon})
and, as we saw, that provides some bounds on numbers $r_{i}$s. Although,
those bounds were crucial for our main proof, we believe that they
hold in wider generality.
\begin{conjecture}
\label{conj: bound-on-r_i}Suppose that $D$ is a strict graphic sequence
as in Definition \ref{def: strict graphic}. Also, for $1\leq i\leq n$,
let the variables $r_{i}$s be defined as in Eq. \eqref{eq: def of r_i},
then 
\[
\left|\log(r_{i})\right|<c\cdot\log(n),
\]
 where $c>0$.\end{conjecture}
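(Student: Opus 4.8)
The plan is to reduce the conjecture to a single polynomial upper bound, $r_n\le n^{c}$ for every strict graphic sequence, and then to attack that bound through the degree identities $d_i=\sum_{j\ne i}\tfrac{r_ir_j}{1+r_ir_j}$, equivalently $n-1-d_i=\sum_{j\ne i}\tfrac1{1+r_ir_j}$, together with a sharpening of Lemma~\ref{lem: regularity r_i}(c). For the reduction: by Lemma~\ref{lem: regularity r_i}(a) we have $r_1\le\cdots\le r_n$, so it is enough to bound $r_n$ above and $r_1$ below. The strict Erd\H{o}s--Gallai condition at $k=1$ reads $d_n<\sum_{i=1}^{n-1}\min\{1,d_i\}=n-1$, hence $d_n\le n-2$ and the complementary sequence $\bar D:=(n-1-d_i)_i$ is again a vector of positive integers. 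It is strict graphic as well, because $\mathbb P^{\bar D}$ is the image of $\mathbb P^{D}$ under the entrywise involution $x\mapsto 1-x$, which preserves $H_1$ and therefore sends $\widetilde{\mathbf p}$ to the maximiser of $H_1$ over $\mathbb P^{\bar D}$, namely the matrix with entries $1-\widetilde p_{ij}$; writing $1-\widetilde p_{ij}=\tfrac1{1+r_ir_j}=\tfrac{(1/r_i)(1/r_j)}{1+(1/r_i)(1/r_j)}$ and using the uniqueness of the representation \eqref{eq:P_ij and r_i} identifies the parameters of $\bar D$, after re-sorting, as $1/r_{n+1-i}$. Thus $r_n\le n^{c}$ applied to $\bar D$ gives $r_1\ge n^{-c}$, so the conjecture is equivalent to this one-sided bound.

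A cheap two-sided estimate on the product $r_1r_n$ then localises the remaining difficulty to the \emph{spread}. For every $i$ we have $n-1-d_i=\sum_{j\ne i}\tfrac1{1+r_ir_j}\ge1$; bounding each of the $n-1$ summands by the largest one -- attached to the smallest available parameter, which is $r_1$ when $i\ge2$ -- gives $1+r_ir_1\le n-1$, so $r_ir_1\le n-2$ for all $i\ge2$. The same inequality for $\bar D$ reads $r_ir_n\ge\tfrac1{n-2}$ for all $i\le n-1$, and in particular $\tfrac1{n-2}\le r_1r_n\le n-2$. Since $r_n^{2}=(r_1r_n)(r_n/r_1)$ and $r_1^{2}=(r_1r_n)/(r_n/r_1)$, the conjecture now follows once one proves $r_n/r_1\le n^{c}$.

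The spread is the real obstacle. The step estimate $r_{k+1}/r_k<n^{4}$ of Lemma~\ref{lem: regularity r_i}(c) is far too weak here -- multiplied over the (up to $n-1$) steps it only gives $r_n/r_1<n^{4(n-1)}$ -- and it is precisely this exponential accumulation that made the extra hypothesis \eqref{eq: nu-strong} necessary in Lemma~\ref{lem: C(D) -1}, where the number of steps at which $r$ actually increases was controlled by $d_n-d_{\ell(D)}$. What one needs instead is a step bound that telescopes into $O(\log n)$; comparing the degree identities for the consecutive vertices $k$ and $k+1$, using the concavity of $x\mapsto x/(1+x)$ and the product bounds just established, one should aim for something of the shape
\[
\log\frac{r_{k+1}}{r_k}\ \le\ C\Bigl(\log\max\{d_{k+1},2\}-\log\max\{d_k,2\}\Bigr)+\frac{C\log n}{n},
\]
whose sum over $k$ is $C\log(d_n/d_1)+C\log n=O(\log n)$. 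This is the correct heuristic shape: in the very sparse regime $r_i\asymp d_i/\sqrt M$, so $\log(r_{k+1}/r_k)\approx\log(d_{k+1}/d_k)$, while in the dense regime all $r_i\asymp1$ and the ratios are even closer to $1$; the genuine difficulty is making the comparison rigorous through the crossover regime $r_k\approx1$, where the link between ratios of $r$'s and ratios of degrees is weakest, and this is presumably why only a conjecture is stated. Short of the full argument the bound is in any case already available at the two ends of the range: in the dense case from the $\delta$-tameness of Barvinok--Hartigan \cite{B-H-0/1-matrices}, which forces $r_i\asymp1$; and in the very sparse case $d_n^{2}=o(M)$ from the identification $\widetilde p_{ij}\simeq q_{ij}=\tfrac{d_id_j}{M+d_id_j}$ in Theorem~\ref{thm: main theorem- very sparse}, which gives $n^{-1}\lesssim r_i\lesssim1$.
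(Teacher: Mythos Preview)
This statement is left as a \emph{conjecture} in the paper; there is no proof to compare against. The only thing the paper establishes is the special case $d_n^{2}<\tfrac12 M$, in a short remark immediately following the conjecture: from $\sum_{i\le n-d_n}d_i>M/2$ and the contrapositive of Lemma~\ref{lem: regularity r_i}(d) one gets $r_n\le n^{2}$, and then Lemma~\ref{lem: regularity r_i}(b) gives $r_1\ge n^{-3}$, so $|\log r_i|\le 3\log n$. That is the full extent of what the paper proves.

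Your proposal is therefore not a competing proof but a programme, and you are candid about this. The structural reductions you give are correct and not in the paper: the involution $D\mapsto\bar D=(n-1-d_i)_i$ sending $r_i\mapsto 1/r_i$ is a clean way to reduce to a one-sided bound, and the two-sided product estimate $\tfrac{1}{n-2}\le r_1r_n\le n-2$ follows as you say. These buy you the equivalence of the conjecture with a polynomial bound on the spread $r_n/r_1$.

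But that is exactly where the genuine gap sits, and you do not close it. The telescoping inequality you ``aim for'',
\[
\log\frac{r_{k+1}}{r_k}\ \le\ C\bigl(\log\max\{d_{k+1},2\}-\log\max\{d_k,2\}\bigr)+\frac{C\log n}{n},
\]
is asserted as the right heuristic shape but not derived; the phrase ``one should aim for'' is doing all the work. The only rigorous step bound available is Lemma~\ref{lem: regularity r_i}(c), namely $r_{k+1}/r_k<n^{4}$, which as you note cannot be summed. Invoking concavity of $x\mapsto x/(1+x)$ and the product bounds is not by itself enough to produce the telescoping form, and the crossover regime $r_k\approx 1$ is precisely where the degree identities decouple ratios of $r$'s from ratios of $d$'s. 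So the proposal reframes the conjecture usefully but does not resolve it; the status remains exactly as in the paper.
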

\begin{rem}
\label{rem: the-Conjecture-r_i} Lemma \ref{lem: C(D) -1} gives the
rough bound of $O\left(n^{a-\frac{1}{2}-\nu}\cdot M\cdot\log(n)\right)$
on $\mathcal{C}_{2}(D)$ for a strict graphic sequence $D$ of type
\emph{$\left(\varepsilon,\,\nu\right)$}. However, if we believe that
$r_{i}$s have polynomial bounds in $n$, as in Conjecture \ref{conj: bound-on-r_i},
then we get a better bound. Indeed, we obtain 
\begin{eqnarray}
\mathcal{C}_{2}(D) & = & \sum_{i=1}^{n}d_{i}^{a}\left|\log(r_{i})\right|\nonumber \\
 & = & O\left(\log(n)\left(\sum_{i=1}^{n}d_{i}^{a}\right)\right)\nonumber \\
 & = & O\left(\log(n)\cdot n^{a-\frac{1}{2}}\cdot M\cdot\left(\frac{n}{M}\right)^{1/2}\right),\label{eq: bound on C(D) -2}
\end{eqnarray}
where we bound $d_{i}$ by $n$, for $i\in[n]$ and use Cauchy-Schwarz
for $\sum_{i=1}^{n}d_{i}^{\frac{1}{2}}$. Note that $\left(\frac{n}{M}\right)^{1/2}$
can be much smaller than $n^{-\nu}$, which can lead to a better bound
in Theorem \ref{thm: main theorem}. Actually, a careful investigation
of our previous proof shows that if we pick $a$ such that $n^{4\left(a-\frac{1}{2}\right)}\ll\left(\frac{n}{M}\right)^{1/2}$,
and if Conjecture \ref{conj: bound-on-r_i} holds, then 
\[
L_{\mathbf{a}}\left(a,k,D\right)\leq c_{k}\left(\sqrt{\frac{n}{M}}\right)^{\frac{1}{2}}n^{\left(a-\frac{1}{2}\right)}=c_{k}\left(\frac{n}{M}\right)^{\frac{1}{4}}n^{\left(a-\frac{1}{2}\right)}.
\]

\end{rem}
\smallskip{}

\begin{rem}
Conjecture \ref{conj: bound-on-r_i} holds if $d_{n}^{2}<\frac{1}{2}M$.
Indeed, let us write 
\[
\sum_{i=1}^{n-d_{n}}d_{i}=M-\sum_{n-d_{n}}^{n}d_{i}\geq M-d_{n}^{2}>\frac{1}{2}M.
\]
Using part 5 of previous lemma, we have $r_{n}\leq n^{2}$. Also,
from part 1 of that lemma, $r_{1}\geq n^{-3}$. Therefore, $\left|\log(r_{i})\right|\leq3\log(n)$.\end{rem}


\subsection{A lower bound}

Our goal is to prove Proposition \ref{prop: lower-bound-union-bound},
that is 

\[
P\left(\widetilde{\mathcal{G}}\left(D\right)\in\mathbb{G}_{a}^{D}\right)>\exp\left(-10\cdot\mathcal{C}_{1}\left(D\right)\right),
\]
 where 
\[
\mathcal{C}_{1}\left(D\right):=\left|\log\left(\delta\right)\right|\cdot n\cdot\log^{\frac{10}{a_{1}}}\left(n\right),
\]
 and $\delta$ is such that $\delta\leq\widetilde{p}_{ij}\leq1-\delta$,
and $a_{1}:=a-\frac{1}{2}$.

Throughout this section, we use the following notations frequently. 
\begin{defn}
\label{def: Ed_definition}For the sets $A,B\subseteq[n]$, we define
$Ed(A,B)$ as the set of all edges with one end in $A$ and the other
end in $B$, or 
\[
Ed(A,B):=\{\left\langle i,j\right\rangle \big|i\in A,j\in B\}.
\]
In addition, we use the short version $Ed(A)$ for $Ed(A,A)$. 
\end{defn}
Correspondingly, $Ed([n])$ is the set of all possible edges on $[n]$.
\begin{defn}
\label{def: G(E)}Let $E\subseteq Ed\left([n]\right)$ be a collection
of edges. We define $\widetilde{\mathcal{G}}\left(E,D\right)$ as
the restriction of $\widetilde{\mathcal{G}}\left(D\right)$ to the
edge set $E$, i.e. $\widetilde{\mathcal{G}}\left(E,D\right)$ is
a random graph with independent Bernoulli edges with probability $\widetilde{p}_{ij}$,
where $\left\langle i,j\right\rangle \in E$. We also show random
graphs $\widetilde{\mathcal{G}}\left(Ed\left(A\right),D\right)$,
$\widetilde{\mathcal{G}}\left(Ed\left(A,B\right),D\right)$, and $\widetilde{\mathcal{G}}\left(Ed\left(B\right),D\right)$,
with $\mathcal{\widetilde{G}}_{A}$, $\mathcal{\widetilde{G}}_{A,B}$,
and $\mathcal{\widetilde{G}}_{B}$, respectively. 

It is easy to check that the following theorem implies Proposition
\ref{prop: lower-bound-union-bound}, and we spend the rest of this
section proving that.\end{defn}
\begin{thm}
\label{thm: Equivalent_lower_bd}We use the notation in Def. \eqref{def: Ed_definition}
and \eqref{def: G(E)}, and $0<a<1$, and $a_{1}=a-\frac{1}{2}$ are
constants independent of $n$. In addition, we assume that the $\widetilde{p}_{ij}$s
in \ref{def: G(E)}, for $i,j\in\left[n\right],$ satisfy $\delta\leq\widetilde{p}_{ij}\leq1-\delta$.
Then, there is a partition of $[n]$ into two sets $A$ and $B$,
and there exists a deterministic tree $T$ with edges in $Ed\left(A,B\right)$
such that, for all large enough $n$, 
\begin{itemize}
\item [a)]\ 
\[
P\left(\mathcal{\widetilde{G}}_{A}=\emptyset_{A}\right)\geq e^{-4\cdot\left|\log\left(\delta\right)\right|\cdot n\cdot\log^{\frac{10}{a_{1}}}\left(n\right)},
\]

\item [b)] and
\[
P\left(\mathcal{\widetilde{G}}_{A,B}=T\right)\geq e^{-5\cdot\left|\log\left(\delta\right)\right|\cdot n\cdot\log^{\frac{10}{a_{1}}}\left(n\right)},
\]

\item [c)] and
\[
P\left(\emptyset_{A}\cup T\cup\mathcal{\widetilde{G}}_{B}\in\mathbb{G}_{a}^{D}\right)\geq\frac{1}{2}.
\]

\end{itemize}
Here $G_{1}\cup G_{2}$ is the union of edges of the two graphs $G_{1}$
and $G_{2}$, and the graph $\emptyset_{A}$ is a graph on vertices
of $A$ with no edges.
\end{thm}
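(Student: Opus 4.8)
The plan is to produce, deterministically from $D$, the partition $[n]=A\cup B$ and the tree $T\subseteq Ed(A,B)$, and then check (a)--(c) by direct computation; the idea is that $B$ should collect the ``concentration--safe'' vertices and $A$ the low--degree ones, whose degrees the tree restores. I would fix the threshold $\tau:=\lceil(\log n)^{1/a_1}\rceil$ and put $A:=\{i\in[n]:d_i\le\tau\}$ and $B:=[n]\setminus A$. Since $\widetilde p_{ij}\ge\delta$ forces $d_i\ge(n-1)\delta$, the set $A$ is nonempty only when $\delta\le\tau/(n-1)$, and then $|\log\delta|\ge\tfrac12\log n$ for $n$ large; this slack in $|\log\delta|$ is what makes the exponents $4\mathcal C_1,5\mathcal C_1$ comfortable. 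Also $\sum_{i\in A}d_i\le|A|\tau\le n(\log n)^{1/a_1}$, which for large $n$ is at most $\mathcal C_1(D)/|\log\delta|=n(\log n)^{10/a_1}$ since $1/a_1<10/a_1$. (When $A=\emptyset$ take $T$ empty, $\widetilde{\mathcal G}_B=\widetilde{\mathcal G}(D)$, and only the last step is needed.)

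\textbf{The vertices of $B$.} For $w\in B$, $\deg_{\widetilde{\mathcal G}_B}(w)$ is a sum of independent Bernoullis with mean $\mu_w:=\sum_{u\in B\setminus\{w\}}\widetilde p_{wu}=d_w-\sum_{u\in A}\widetilde p_{wu}$ and variance at most $d_w$, so Bernstein's inequality gives
\[
P\!\left(\bigl|\deg_{\widetilde{\mathcal G}_B}(w)-\mu_w\bigr|\ \ge\ \tfrac13 d_w^{\,a}\right)\ \le\ 2\exp\!\bigl(-c\,d_w^{\,2a_1}\bigr)\ \le\ 2\exp\!\bigl(-c(\log n)^{2}\bigr),
\]
because $d_w>\tau$ and $a_1=a-\tfrac12>0$. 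A union bound over the $\le n$ vertices of $B$ shows that, with probability $\ge1-o(1)\ge\tfrac12$, every $w\in B$ has $\deg_{\widetilde{\mathcal G}_B}(w)$ within $\tfrac13 d_w^{\,a}$ of $\mu_w$. This is the only use of $a>\tfrac12$: the Gaussian scale $\sqrt{d_w\log n}$ is absorbed by the slack $d_w^{\,a}$.

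\textbf{The tree (the hard step).} Deleting all edges inside $A$ makes the degree of $v\in A$ in $\emptyset_A\cup T\cup\widetilde{\mathcal G}_B$ equal to $\deg_T(v)$, so I would ask that $\deg_T(v)=d_v$ for all $v\in A$; and since $\widetilde{\mathcal G}_B$ already contributes mean $\mu_w=d_w-\sum_{u\in A}\widetilde p_{wu}$ at $w\in B$, I would ask that $\deg_T(w)=t_w$ with $t_w$ an integer within $\tfrac23 d_w^{\,a}$ of $\sum_{u\in A}\widetilde p_{wu}$. Concretely, round the numbers $\sum_{u\in A}\widetilde p_{wu}$, discard those rounding to $0$, and adjust a few high--degree $w\in B$ upward so that $\sum_{w\in B}t_w=\sum_{v\in A}d_v$; then use the classical fact that a bipartite tree with parts $A$ and $\{w:t_w>0\}$ and prescribed positive degrees exists exactly when the two degree sums agree. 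The crux is to verify (i) that the upward correction --- roughly twice the expected number of $\widetilde{\mathcal G}(D)$--edges inside $A$ --- can be spread over $B$ keeping every $t_w$ within $\tfrac23 d_w^{\,a}$ of its target, and (ii) that $\{w:t_w>0\}\subseteq B$ is large enough to support the tree. Both are statements about the maximum--entropy weights $r_i$ in $\widetilde p_{ij}=r_ir_j/(1+r_ir_j)$: since $A$ holds the smallest degrees, the $r_i$ with $i\in A$ are the smallest, so a low--degree vertex $v$ connects mostly into $B$; quantitatively one would bound $\sum_{u\in A}\widetilde p_{vu}\le r_v\sum_{u\in A}r_u$ and compare it with $\sum_{w\in B}\widetilde p_{vw}\ge\tfrac12\sum_{w\in B}\min(r_vr_w,1)$ via the monotonicity and ratio bounds on the $r_i$ (the same regularity input used in Lemma \ref{lem: regularity r_i} and Lemma \ref{lem: C(D) -1}), which simultaneously shows $\sum_{w\in B}r_w$, and hence $B$, cannot be too small. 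I expect this bookkeeping to be where the real work lies.

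\textbf{Putting it together.} For (a), $P(\widetilde{\mathcal G}_A=\emptyset_A)=\prod_{e\in Ed(A)}(1-\widetilde p_e)$; splitting $Ed(A)$ by whether $\widetilde p_e\le\tfrac12$, bounding $\ln(1-x)\ge-2x$ on the small edges and $1-\widetilde p_e\ge\delta$ on the large ones (of which there are $<2\sum_{e\in Ed(A)}\widetilde p_e\le\sum_{i\in A}d_i$), gives $P(\widetilde{\mathcal G}_A=\emptyset_A)\ge\exp(-C|\log\delta|\sum_{i\in A}d_i)\ge e^{-4\mathcal C_1(D)}$ for large $n$, by the bound on $\sum_{i\in A}d_i$. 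For (b), $P(\widetilde{\mathcal G}_{A,B}=T)=\prod_{e\in T}\widetilde p_e\prod_{e\in Ed(A,B)\setminus T}(1-\widetilde p_e)$; the second factor is bounded as in (a) with $\sum_{e\in Ed(A,B)}\widetilde p_e\le\sum_{i\in A}d_i$, and the first is $\ge\delta^{|E(T)|}=\delta^{\sum_{v\in A}d_v}$ since $T$ is bipartite between $A$ and $B$ with $A$--degrees $d_v$, so $|E(T)|=\sum_{v\in A}d_v$; together this is $\ge e^{-5\mathcal C_1(D)}$. For (c), conditioning on $\{\widetilde{\mathcal G}_A=\emptyset_A\}\cap\{\widetilde{\mathcal G}_{A,B}=T\}$ leaves $\widetilde{\mathcal G}_B$ with its original product law (disjoint edge sets); on this event every $v\in A$ has degree exactly $d_v$, while every $w\in B$ has degree $t_w+\deg_{\widetilde{\mathcal G}_B}(w)$, which by the $B$--step and the choice of $t_w$ lies within $d_w^{\,a}$ of $d_w$ for all $w\in B$ at once with probability $\ge\tfrac12$; hence $\emptyset_A\cup T\cup\widetilde{\mathcal G}_B\in\mathbb G_a^D$ with probability $\ge\tfrac12$, which is (c). Finally, independence of $\widetilde{\mathcal G}_A,\widetilde{\mathcal G}_{A,B},\widetilde{\mathcal G}_B$ gives $P(\widetilde{\mathcal G}(D)\in\mathbb G_a^D)\ge e^{-4\mathcal C_1(D)}e^{-5\mathcal C_1(D)}\cdot\tfrac12\ge e^{-10\mathcal C_1(D)}$ for $n$ large, which is Proposition \ref{prop: lower-bound-union-bound}.
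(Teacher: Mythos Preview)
Your overall architecture—split by a degree threshold, freeze $\widetilde{\mathcal G}_A$ to the empty graph and $\widetilde{\mathcal G}_{A,B}$ to a fixed bipartite graph $T$, then use a Bernstein/Chernoff union bound on the $B$-degrees—is exactly the paper's scheme. Your concentration step is in fact tidier than the paper's: by measuring the deviation against $d_w^{\,a}$ (variance $\le d_w$) rather than against $D(w,B)^a$, you avoid the secondary split of $B$ into $J$ and $B\setminus J$ in Lemma~\ref{lem: E_j}, and your smaller threshold $(\log n)^{1/a_1}$ suffices where the paper uses $(\log n)^{10/a_1}$ precisely to rescue that secondary case in Lemma~\ref{lem: degree_condition_satisfied}.

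The one genuine gap is the construction of $T$. First, the ``classical fact'' you cite is wrong: a (spanning) tree on $m$ vertices has exactly $m-1$ edges, so a bipartite tree on parts $A$ and $B'=\{w:t_w>0\}$ with prescribed positive degrees exists only if, besides the two degree sums agreeing, that common sum equals $|A|+|B'|-1$; since $\sum_{v\in A}d_v$ can far exceed $|A|+|B'|$, no such tree need exist. (Despite the word ``tree'' in the statement, all the proof needs—and all Lemma~\ref{lem: bipartite T} actually delivers—is a bipartite \emph{graph}.) Second, even after relaxing to a bipartite graph, your scheme of rounding the $B$-targets and redistributing the defect $\sum_{v\in A}D(v,A)$ still owes a Gale--Ryser feasibility check (in particular $d_v\le|B'|$ for every $v\in A$), and you have not controlled $|B'|$. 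The paper bypasses both issues with a direct construction: start from the fractional bipartite graph with weights $\widetilde p_{ij}$ on $Ed(A,B)$ and iteratively push weights to $\{0,1\}$ along even cycles and then maximal paths (Lemma~\ref{lem: bipartite T}); this always yields a simple bipartite graph whose degree at every vertex is within $1$ of its fractional target $D(i,\cdot)$. Then Proposition~\ref{prop: bd_by_1} (which is where the $r_i$-regularity you anticipated is used) gives $D(v,A)<\tfrac14$ for $v\in A$, so the $A$-side degrees of $T$ are automatically within $2$ of $d_v$—no exact matching or redistribution is required. That rounding lemma is the missing ingredient in your outline.
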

We actually prove the above theorem for $A=I_{\frac{10}{a_{1}}}$
and $B=[n]\backslash I_{\frac{10}{a_{1}}}$, where $I_{\alpha}$,
for any $\alpha>0$, is defined as 
\begin{equation}
I_{\alpha}:=\left\{ i\big|d_{i}\leq\log^{\alpha}(n)\right\} .\label{eq:I_alpha}
\end{equation}

Our proof has two steps. First, we build the deterministic graph $T$,
and then we show that it can be extended to a graph with an almost
given degree sequence with high probability.

\begin{figure}
\caption{The process of changing a weighted bipartite graph to a bipartite
graph with weights in $\left\{ 0,1\right\} $.}
\label{fig: process}

\includegraphics[scale=0.48]{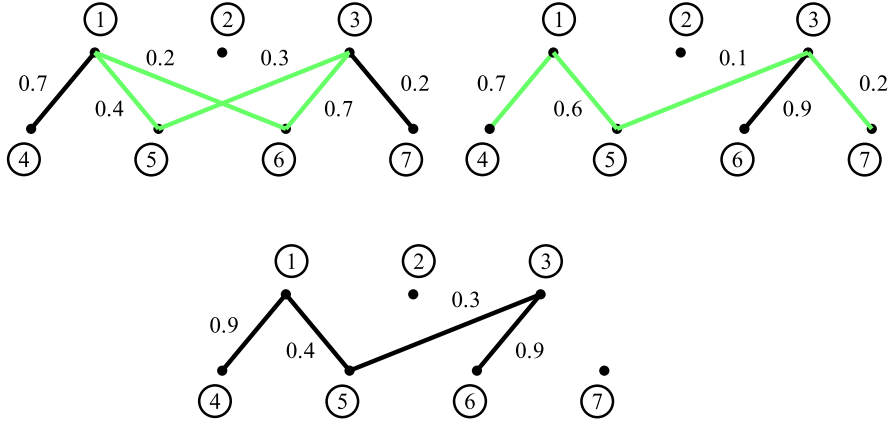}

The figure on the upper left hand side shows the live edges of a bipartite graph on
$7$ vertices. We pick a loop that is shown in green, and change the
weights on it. We add $0.2$ to the weights of edges $\left\langle 1,5\right\rangle $
and $\left\langle 3,6\right\rangle $, and subtract $0.2$ from $w\left(\left\langle 5,3\right\rangle \right)$
and $w\left(\left\langle 6,1\right\rangle \right)$. Now, the edge
$\left\langle 1,6\right\rangle $ dies and we get the figure on the upper right
hand side. Since we have no other loops, we choose the biggest path that
is shown in green. If we add and subtract $0.1$ to and from the weights
of edges of this path alternatively, we get the third graph, where
the edge $\left\langle 3,7\right\rangle $ becomes dead.
\end{figure}

\begin{lem}
\label{lem: bipartite T}Let us define 
\[
D\left(i,A\right):=\sum_{j\in A\backslash\{i\}}\widetilde{p}_{ij},
\]
 where $i\in[n]$ and $A$ is a subset of $[n]=\left\{ 1,\cdots,n\right\} .$
Then, for every $A\subseteq[n]$, there is a bipartite graph $T$
with vertex sets $A$ and $B=[n]\backslash A$, and edges inside $Ed(A,B)$
such that 
\begin{itemize}
\item [a)] 
\begin{equation}
b_{i}\in\left\{ \left\lfloor D\left(i,B\right)\right\rfloor ,\,\left\lfloor D\left(i,B\right)\right\rfloor +1\right\} ,\text{ where }i\in A,\label{eq:b_i,A}
\end{equation}

\item [b)] and, 
\begin{equation}
b_{i}\in\left\{ \left\lfloor D\left(i,B\right)\right\rfloor ,\,\left\lfloor D\left(i,B\right)\right\rfloor +1\right\} ,\text{ where }i\in B,\label{eq:b_i,B}
\end{equation}
 
\end{itemize}
where $b_{i}$ is the degree of vertex $i$ in the graph $T$ and
$\left\lfloor x\right\rfloor $ is the biggest integer less than $x$,
for $x\in\mathbb{R}.$ \end{lem}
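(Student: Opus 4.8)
The plan is to prove Lemma~\ref{lem: bipartite T} by the standard fractional-to-integral rounding procedure on the weighted bipartite graph, exactly as illustrated in Figure~\ref{fig: process}. I would start from the complete bipartite graph on parts $A$ and $B=[n]\setminus A$ with each edge $\langle i,j\rangle$ carrying weight $w(\langle i,j\rangle):=\widetilde{p}_{ij}\in[0,1]$; then the weighted degree of a vertex $i\in A$ is exactly $D(i,B)$, and that of $i\in B$ is exactly $D(i,A)$. Call an edge \emph{live} if its current weight lies in $(0,1)$. I will repeatedly modify the weights, always keeping all weights in $[0,1]$ and strictly decreasing the number of live edges, until no live edge remains; the desired tree $T$ is then the set of edges that end up with weight $1$.

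Each step is one of two moves. If the live edges contain a cycle $C$, note that $C$ has even length since the graph is bipartite, fix an alternating orientation of $C$, and add $+\epsilon$ to the weights in one alternating class and $-\epsilon$ to the other, with $\epsilon>0$ as large as possible subject to all weights on $C$ staying in $[0,1]$; this kills at least one live edge and, since each vertex of $C$ meets exactly one edge of each class in $C$, leaves every weighted vertex degree unchanged. If the live edges form a forest, pick a maximal path $P=v_{0}e_{1}v_{1}\cdots e_{m}v_{m}$ in it, so $v_{0},v_{m}$ are leaves of the live-edge forest, and add $\pm\epsilon$ alternately along $e_{1},\dots,e_{m}$ with $\epsilon$ maximal keeping weights in $[0,1]$, killing at least one live edge; now the interior vertices $v_{1},\dots,v_{m-1}$ keep their weighted degrees while the two endpoints' degrees change by at most $\epsilon\le1$. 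In either move no integral-weight edge is touched, so no new live edge is created and the process terminates after at most $|Ed(A,B)|$ steps.

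The real content of the lemma is the invariant that throughout the procedure every vertex $v$ satisfies $\lfloor D(v)\rfloor\le\deg(v)\le\lceil D(v)\rceil$, where $D(v)$ is the \emph{initial} weighted degree ($D(i,B)$ for $i\in A$, $D(i,A)$ for $i\in B$) and $\deg(v)$ the current one. It holds initially because $\deg(v)=D(v)$, and it is trivially preserved by cycle moves. For a path move only the endpoints can change; consider $v_{0}$. Being a leaf of the live-edge forest, $v_{0}$ has exactly one live incident edge, namely $e_{1}$, so $\deg(v_{0})=c+w$ with $c\in\mathbb{Z}_{\ge0}$ the sum of its integral incident weights and $w=w(e_{1})\in(0,1)$; hence $c=\lfloor\deg(v_{0})\rfloor$, and since $\deg(v_{0})\notin\mathbb{Z}$ the inductive hypothesis forces $\lfloor D(v_{0})\rfloor\le c\le\lceil D(v_{0})\rceil-1$. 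The move replaces $w$ by some $w'\in[0,1]$ without altering $c$, so the new degree $c+w'\in[c,c+1]\subseteq[\lfloor D(v_{0})\rfloor,\lceil D(v_{0})\rceil]$, and the same holds for $v_{m}$. At termination all weights lie in $\{0,1\}$, so each $b_{v}=\deg_{T}(v)$ is an integer in $[\lfloor D(v)\rfloor,\lceil D(v)\rceil]$, hence in $\{\lfloor D(v)\rfloor,\lfloor D(v)\rfloor+1\}$, which is precisely \eqref{eq:b_i,A}--\eqref{eq:b_i,B}.

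The one genuinely delicate point — and the step I would watch most carefully — is this endpoint analysis: a priori an endpoint's degree could drift out of its target interval after many path moves, and the resolution is exactly the observation above that a leaf of the \emph{live}-edge forest has a single live incident edge, which pins the integer part of its current degree to $\lfloor\deg(v_{0})\rfloor$. Everything else (even length of cycles in a bipartite graph, maximality of $\epsilon$ forcing an edge to become integral, and the termination count) is routine; note also that no use is made here of the $\delta\le\widetilde p_{ij}\le1-\delta$ hypothesis, since the lemma is a purely combinatorial rounding statement valid for every $A\subseteq[n]$.
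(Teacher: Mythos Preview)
Your proof is correct and follows essentially the same two-stage rounding procedure as the paper (kill cycles, then kill maximal live paths), exactly as in Figure~\ref{fig: process}. Your endpoint analysis via the explicit invariant $\lfloor D(v)\rfloor\le\deg(v)\le\lceil D(v)\rceil$ is in fact a bit more carefully stated than the paper's, which asserts the conclusion somewhat tersely; the underlying observation---that once a vertex has a single live edge the integer part of its degree is frozen---is the same in both.
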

\begin{proof}
We already have a weighted bipartite graph $T$ with edge weights
$0\leq\widetilde{p}_{ij}\leq1$ that almost satisfies the conditions
\eqref{eq:b_i,A} and \eqref{eq:b_i,B}. Our goal is to change the
weights continuously to get weights of size $0$ or $1$ without changing
the degrees of our graph as much as possible. Let us denote the weight
of an edge $e$ by $w(e)$. In addition, we say an edge $e$ is dead
if $w(e)\in\left\{ 0,1\right\} $, and is live if $0<w(e)<1.$

Let us start a two stage process, where an example of that is shown
at Fig. \ref{fig: process}. We pick a closed loop of live edges, namely
$e_{1},\cdots,e_{2k}$. Note that such a loop has an even number of edges
since the graph $T$ is bipartite. Now, we add and subtract a constant
number $c$ alternatively to the weights of $e_{i}$, for $1\leq i\leq k$,
to get new weights $w(e_{1})+c,w(e_{2})-c,\cdots,$ and $w(e_{k})+(-1)^{k-1}c.$
Let $c$ grow gradually from $0$ until the first edge dies. Since
the loop has an even length, the degrees of the graph have not changed.
We keep on doing this until all loops disappear.

Next, we pick a path from the longest live paths in $T$. Suppose
that the path runs through vertices $v_{1},\cdots,v_{k}$. We observe
that if $v_{1}$ is attached to two live edges, we can either make
our path longer or we get a loop amongst vertices $v_{i}$s. Since
none of them are possible, $v_{1}$ is only attached to one live edge,
and the same is true for $v_{k}$, the last vertex of the path. Again,
we change the weights alternatively by $+c$ and $-c$, to get the
new weights as $w(<v_{1},v_{2}>)+c,\, w(<v_{2},v_{3}>)-c,\cdots,$
and $w(<v_{k-1},v_{k}>)+(-1)^{k-1}c.$ Then, we let $c$ grow gradually
until an edge dies. We repeat this process until no live edges are
left.

Since the dead edges have a weight of 0 or 1, we have reached a bipartite
graph. It remains to be shown that our new graph satisfies conditions
\eqref{eq:b_i,A} and \eqref{eq:b_i,B}. As we saw before, the degrees
of vertices of $T$ do not change during the first stage of the process.
Similarly, the degrees do not change in the second part, except for
the end vertices of the paths. In addition, as we discussed earlier,
if a vertex $v$ becomes an end point for a path in our procedure,
that vertex is only attached to one live edge. Therefore, its degree
$d(v)$ has changed by at most the amount of changes on the weight
of that edge. Hence, $d(v)$ has became $\left\lfloor d(v)\right\rfloor $
or $\left\lfloor d(v)\right\rfloor +1$, which completes the proof.\end{proof}
\begin{lem}
\label{lem: log_lower_bound}If $\delta\leq\widetilde{p}_{ij}\leq1-\delta$,
for some $\delta>0,$ and $1\leq i,j,\leq n$, then, 
\[
P\left(\mathcal{\widetilde{G}}_{A}=\emptyset_{A}\right)\geq e^{-5\cdot\left|\log\left(\delta\right)\right|\cdot n\log^{\alpha}(n)},
\]
where \textup{$A=I_{\alpha}$ and $B=[n]\backslash I_{\alpha}$, for
some positive $\alpha$ (in particular $\alpha=\frac{10}{a_{1}}$),
and $I_{\alpha}$ is as in Eq. \eqref{eq:I_alpha}. In addition, for
the bipartite graph $T\subseteq Ed(A,B)$ in the previous lemma,}

\[
P\left(\mathcal{\widetilde{G}}_{A,B}=T\right)\geq e^{-5\cdot\left|\log\left(\delta\right)\right|\cdot n\log^{\alpha}(n)}.
\]
\end{lem}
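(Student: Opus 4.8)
The plan is to exploit the independence of the Bernoulli edges of $\widetilde{\mathcal{G}}(D)$ together with two elementary facts: first, the edge‑probabilities attached to any vertex have total mass equal to its degree, $\sum_{j\neq i}\widetilde{p}_{ij}=d_i$; second, since $\delta\le\widetilde{p}_{ij}\le1-\delta$ forces $\delta\le\tfrac12$, the convex function $x\mapsto-\log(1-x)$ lies below its chord through $0$ and $1-\delta$, so $-\log(1-x)\le\frac{|\log\delta|}{1-\delta}\,x\le 2|\log\delta|\,x$ for $x\in[0,1-\delta]$. With these in hand, both inequalities reduce to bounding a sum $\sum\widetilde{p}_{ij}$ or a count of edges, each of which is $O(n\log^{\alpha}(n))$ precisely because every vertex of $A=I_{\alpha}$ has degree at most $\log^{\alpha}(n)$ by the definition \eqref{eq:I_alpha}.

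For the first estimate, independence gives $P(\widetilde{\mathcal{G}}_A=\emptyset_A)=\prod_{\langle i,j\rangle\in Ed(A)}(1-\widetilde{p}_{ij})$, hence
\[
-\log P\bigl(\widetilde{\mathcal{G}}_A=\emptyset_A\bigr)=\sum_{\langle i,j\rangle\in Ed(A)}-\log(1-\widetilde{p}_{ij})\le 2|\log\delta|\sum_{\langle i,j\rangle\in Ed(A)}\widetilde{p}_{ij}\le|\log\delta|\sum_{i\in A}d_i,
\]
where in the last step I double-count each edge of $Ed(A)$ and use $\sum_{j\in A\setminus\{i\}}\widetilde{p}_{ij}\le\sum_{j\neq i}\widetilde{p}_{ij}=d_i$. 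Since $d_i\le\log^{\alpha}(n)$ for $i\in A$ and $|A|\le n$, the right-hand side is at most $|\log\delta|\,n\log^{\alpha}(n)$, comfortably below $5|\log\delta|\,n\log^{\alpha}(n)$.

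For the second estimate I write $P(\widetilde{\mathcal{G}}_{A,B}=T)=\prod_{e\in\mathcal{E}(T)}\widetilde{p}_e\cdot\prod_{e\in Ed(A,B)\setminus\mathcal{E}(T)}(1-\widetilde{p}_e)$ and treat the two factors separately. The absent-edge factor is handled exactly as above: $-\sum_{e\in Ed(A,B)\setminus\mathcal{E}(T)}\log(1-\widetilde{p}_e)\le 2|\log\delta|\sum_{i\in A}\sum_{j\in B}\widetilde{p}_{ij}\le 2|\log\delta|\sum_{i\in A}d_i\le 2|\log\delta|\,n\log^{\alpha}(n)$. For the present-edge factor, $-\log\widetilde{p}_e\le|\log\delta|$, so it contributes at most $|\log\delta|\cdot|\mathcal{E}(T)|$, and it remains to see that $T$ is sparse: by Lemma~\ref{lem: bipartite T} the degree $b_i$ of any $i\in A$ in $T$ satisfies $b_i\le\lfloor D(i,B)\rfloor+1\le d_i+1\le\log^{\alpha}(n)+1$, and since $T$ is bipartite $|\mathcal{E}(T)|=\sum_{i\in A}b_i\le n(\log^{\alpha}(n)+1)\le 2n\log^{\alpha}(n)$ for large $n$. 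Adding the two bounds gives $-\log P(\widetilde{\mathcal{G}}_{A,B}=T)\le 4|\log\delta|\,n\log^{\alpha}(n)\le 5|\log\delta|\,n\log^{\alpha}(n)$ once $n$ is large.

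The only real obstacle is the temptation to bound each factor crudely by $\delta$, which yields $\delta^{|Ed(A)|}$ with $|Ed(A)|$ potentially of order $\binom{n}{2}$ — far too small. The whole point is to pass from $-\log(1-\widetilde{p}_{ij})$ to a constant multiple of $\widetilde{p}_{ij}$ so that the degree identities $\sum_{j\neq i}\widetilde{p}_{ij}=d_i$ collapse the sum onto the low-degree set $A=I_{\alpha}$, and, symmetrically, to control the number of edges actually forced to be present through the degree bounds supplied by Lemma~\ref{lem: bipartite T} rather than trivially. Everything else is bookkeeping with absolute constants, for which there is ample slack (the $4$ versus the $5$).
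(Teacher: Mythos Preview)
Your proof is correct and follows essentially the same strategy as the paper: write the probability as a product of independent Bernoulli factors, bound $-\log(1-\widetilde p_{ij})$ linearly in $\widetilde p_{ij}$, and then collapse the resulting sum via the degree constraint $\sum_{j\neq i}\widetilde p_{ij}=d_i\le\log^\alpha(n)$ for $i\in A$. The only difference is cosmetic: the paper splits edges into those with $\widetilde p_{ij}>\tfrac12$ (bounded crudely by $\delta$) and the rest (handled by $1-x\ge e^{-2x}$), whereas your single convexity bound $-\log(1-x)\le\frac{|\log\delta|}{1-\delta}\,x$ on $[0,1-\delta]$ handles all edges at once and is slightly cleaner.
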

\begin{proof}
We have $d_{i}\leq\log^{\alpha}(n)$, for $i\in A$, and $\left|A\right|\leq n$.
Therefore, 
\begin{equation}
S_{1}:=\sum_{i\in A}D(i,A)=\sum_{i\in A}\sum_{j\in A,j\neq i}\widetilde{p}_{ij}\leq\sum_{i\in A}d_{i}\leq n\log^{\alpha}(n).\label{eq:S_1}
\end{equation}

Let $F_{1}$ be the set of edges $<i,j>$ in $Ed(A)$ such that $\widetilde{p}_{ij}>\frac{1}{2}.$
We see that $\left|F_{1}\right|\leq2n\log^{\alpha}(n)$, otherwise,
$S_{1}$ would exceed the right hand side of Eq. \eqref{eq:S_1}.
Hence,
\begin{eqnarray*}
P(\mathcal{\widetilde{G}}_{A}=\emptyset_{A}) & = & \prod_{<i,j>\in Ed(A)}\left(1-\widetilde{p}_{ij}\right),\\
 & = & \prod_{<i,j>\in F_{1}}\left(1-\widetilde{p}_{ij}\right)\cdot\prod_{<i,j>\in Ed(A)\backslash F_{1}}\left(1-\widetilde{p}_{ij}\right)\\
 & \geq & \delta^{\left|F_{1}\right|}\prod_{<i,j>\in Ed(A)\backslash F_{1}}e^{-2\widetilde{p}_{ij}}\\
 & \geq & \delta^{\left|F_{1}\right|}e^{-2S_{1}}\\
 & \geq & e^{-\left(2\cdot\left|\log\left(\delta\right)\right|+2\right)\cdot n\log^{\alpha}(n)},
\end{eqnarray*}
where we have used the inequality $1-x\geq e^{-2x}$, for $x\leq\frac{1}{2}$,
and Eq. \eqref{eq:S_1}. This concludes the first part, since $\delta\leq\frac{1}{2}$
and $3\log(2)\geq3\left|\log(\delta)\right|>2$. 

For the second part, we define $F_{2}\subseteq Ed(A,B)$ similar to
the set $F_{1}$, and 
\begin{equation}
S_{2}:=\sum_{i\in A}D\left(i,B\right)=\sum_{i\in A}\sum_{j\in B}\widetilde{p}_{ij}\leq\sum_{i\in A}d_{i}\leq n\log^{\alpha}\left(n\right).\label{eq:bound S_2}
\end{equation}
 Let $E$ be the set of edges of the graph $T$. Then, 
\begin{eqnarray*}
P(\mathcal{\widetilde{G}}_{A,B}=T) & = & \prod_{<i,j>\in E}\widetilde{p}_{ij}\cdot\prod_{<i,j>\in Ed(A,B)\backslash E}\left(1-\widetilde{p}_{ij}\right),\\
 & \geq & \delta^{\left|E\right|}.\prod_{<i,j>\in F_{2}}\left(1-\widetilde{p}_{ij}\right)\prod_{<i,j>\in Ed(A,B)\backslash\left(E\cup F_{2}\right)}\left(1-\widetilde{p}_{ij}\right)\\
 & \geq & \delta^{\left|E\right|+\left|F_{2}\right|}\cdot\prod_{<i,j>\in Ed(A,B)\backslash F_{2}}e^{-\widetilde{p}_{ij}}\\
 & \geq & \delta^{\left|E\right|+\left|F_{2}\right|}e^{-2S_{2}}.
\end{eqnarray*}
 Let us observe from the previous lemma that $\left|E\right|$, the number
of edges of $T$, is less than or equal to $\frac{1}{2}S_{2}+n$.
Using a bound on $F_{2}$ similar to that of $F_{1}$, and by \eqref{eq:bound S_2}
we get our result.
\end{proof}
In order to get the third part of Theorem \eqref{thm: Equivalent_lower_bd},
we use the union bound on the random graph $\mathcal{\widetilde{G}}_{B}$,
where $B=[n]\backslash I_{\frac{10}{a_{1}}}$. Hence, the following
proposition is handy.
\begin{prop}
\label{prop: bd_by_1}For every $\alpha,\beta\in\mathbb{R}^{+},$
and large enough $n$ (depending on $\alpha$ and $\beta$), we have,
\[
D\left(j,I_{\alpha}\right)<\frac{1}{4},
\]
 where $j\in I_{\beta},$ and $I_{\alpha}$ is as in Eq. \eqref{eq:I_alpha}. \end{prop}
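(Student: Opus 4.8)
The plan is to read everything off from the explicit formula $\widetilde{p}_{ij}=\frac{r_{i}r_{j}}{1+r_{i}r_{j}}\le r_{i}r_{j}$ in \eqref{eq:P_ij and r_i}, the monotonicity $r_{1}\le\cdots\le r_{n}$ from Lemma \ref{lem: regularity r_i}, and the standing assumption that $D$ is a strict graphic sequence of type $\varepsilon$, so that $n^{1+\varepsilon}\le M$. The point is that a vertex of $I_{\alpha}$ or $I_{\beta}$ is forced to carry a very small weight $r_{i}$, so small that $\sum_{i\in I_{\alpha}}\widetilde{p}_{ji}$ is not merely $<\tfrac14$ but $o(1)$.

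First I would record two macroscopic facts. Since $d$ is nondecreasing, $I_{\alpha}=\{1,\dots,m\}$ and $I_{\alpha}^{c}=\{m+1,\dots,n\}$ with $m:=|I_{\alpha}|$; as $d_{i}\le\log^{\alpha}n$ on $I_{\alpha}$ we get $\sum_{i\in I_{\alpha}}d_{i}\le n\log^{\alpha}n$, hence $\sum_{k\in I_{\alpha}^{c}}d_{k}\ge M-n\log^{\alpha}n\ge\tfrac12M$ for large $n$ --- the only place $n^{1+\varepsilon}\le M$ enters. Writing $R:=\sum_{k}r_{k}$, $R':=\sum_{k\in I_{\alpha}^{c}}r_{k}$ and using $d_{k}=\sum_{\ell}\widetilde{p}_{k\ell}\le r_{k}R$, this gives $R'R\ge\tfrac12M$, so $R^{2}\ge\tfrac12M$ and $R'\ge M/(2R)$; a parallel count of weighted edges inside and across $I_{\alpha}^{c}$ also yields $n-m\gtrsim\sqrt{M}$, so $I_{\alpha}$ is a proper subset by a wide margin, and (using that any index $k$ with $r_{k}\ge1$ would have to sit among the top $2\log^{\alpha}n$ indices) $r_{i}<1$ for every $i\in I_{\alpha}$. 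Then $\widetilde{p}_{ii'}\ge\tfrac12r_{i}r_{i'}$ on $I_{\alpha}$, which with $\sum_{i\in I_{\alpha}}d_{i}\le n\log^{\alpha}n$ forces $R_{\alpha}^{2}-R_{\alpha}\lesssim n\log^{\alpha}n$, i.e. $R_{\alpha}:=\sum_{i\in I_{\alpha}}r_{i}=O(\sqrt{n\log^{\alpha}n})$.

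The core step is to bound the individual weights on $I_{\alpha}$ and $I_{\beta}$. For $j\in I_{\beta}$, all but at most $2d_{j}\le2\log^{\beta}n$ of the indices $k\in I_{\alpha}^{c}$ satisfy $r_{j}r_{k}\le1$ (each violation forces $\widetilde{p}_{jk}\ge\tfrac12$, hence contributes $\ge\tfrac12$ to $d_{j}$), so from $d_{j}\ge\sum_{k\in I_{\alpha}^{c}}\widetilde{p}_{jk}\ge\tfrac12r_{j}R'$ --- up to a negligible correction from those few top indices --- one reads off $r_{j}\lesssim d_{j}/R'$; the same estimate applied to each $i\in I_{\alpha}$ gives $r_{i}\lesssim d_{i}/R'$, whence $R_{\alpha}\lesssim\frac1{R'}\sum_{i\in I_{\alpha}}d_{i}\le n\log^{\alpha}n/R'$. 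Combining with $\widetilde{p}_{ji}\le r_{i}r_{j}$,
\[
D(j,I_{\alpha})=\sum_{i\in I_{\alpha}}\widetilde{p}_{ji}\ \le\ r_{j}R_{\alpha}\ \lesssim\ \frac{\log^{\beta}n}{R'}\cdot\frac{n\log^{\alpha}n}{R'}\ =\ \frac{n\log^{\alpha+\beta}n}{(R')^{2}}\ \le\ \frac{4R^{2}n\log^{\alpha+\beta}n}{M^{2}} ,
\]
and as long as $R^{2}\asymp M$ --- the ``non-concentrated'' regime --- this is $\asymp n\log^{\alpha+\beta}n/M\le n^{-\varepsilon}\log^{\alpha+\beta}n\to0$, giving the proposition.

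The delicate case is $R\gg\sqrt{M}$, i.e. when a bounded number of ``hub'' vertices of $I_{\alpha}^{c}$ carry nearly all of $R$ and the step $\widetilde{p}_{jk}\ge\tfrac12r_{j}r_{k}$ above fails for those hubs. Here I would argue separately, using Lemma \ref{lem: regularity r_i}(c)--(d): if some $i\in I_{\alpha}$ (or $j\in I_{\beta}$) had $r_{i}\ge1/\sqrt{n}$ then $\widetilde{p}_{ik}\ge\tfrac12$ for every $k$ with $r_{k}>\sqrt{n}$, forcing at most $2\log^{\alpha}n$ such vertices; parts (c)--(d) then control this handful of large weights and the near-universal degrees of the corresponding vertices, while $|I_{\alpha}^{c}|\gtrsim\sqrt{M}$ and $d_{i}\le\log^{\alpha}n$ force most crossing probabilities $\widetilde{p}_{ik}$ ($k\in I_{\alpha}^{c}$), in particular $\widetilde{p}_{i,m+1}$, to be $\lesssim\log^{\alpha}n/|I_{\alpha}^{c}|$; combined with $r_{m+1}>\log^{\alpha}n/R$ (from $d_{m+1}>\log^{\alpha}n$) this pins $r_{i}$ and $r_{j}$ down small enough that $r_{j}R_{\alpha}$, with $R_{\alpha}=O(\sqrt{n\log^{\alpha}n})$ and $|I_{\alpha}^{c}|^{2}\gtrsim M\ge n^{1+\varepsilon}$, is again $o(1)$. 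Getting this hub bookkeeping to close cleanly is the step I expect to be the main obstacle; everything else is the elementary chain of inequalities above.
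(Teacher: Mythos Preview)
Your overall architecture matches the paper's: write $D(j,I_\alpha)\le r_j R_\alpha$ with $R_\alpha:=\sum_{i\in I_\alpha}r_i$, then control $r_j$ and $R_\alpha$ separately. Your bound $R_\alpha=O(\sqrt{n\log^\alpha n})$ is essentially the paper's second lemma, obtained by the same square-and-sum trick once you know $r_i<1$ on $I_\alpha$.

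The gap is exactly where you flag it. To bound $r_j$ you compare against $R'=\sum_{k\in I_\alpha^c}r_k$ via $d_j\gtrsim r_jR'$, but this needs $r_jr_k\le1$; the at most $2d_j$ indices where this fails may carry almost all of $R'$, and your final inequality $D(j,I_\alpha)\le4R^2n\log^{\alpha+\beta}n/M^2$ becomes useless once $R\gg\sqrt M$. Your hub paragraph does not close this, and you say so yourself. The paper sidesteps the issue by comparing against a different set: rather than $I_\alpha^c$ (the \emph{top} indices), it takes $L=\{1,\dots,\ell\}$ with $\ell:=n-2d_k$ (the \emph{bottom} indices, where $k=|I_\alpha|$), and observes that the $2d_k$ indices above $\ell$ already saturate $d_k$:
\[
d_k\ \ge\ \sum_{j>\ell}\widetilde p_{kj}\ \ge\ 2d_k\,\widetilde p_{k\ell},
\]
so $\widetilde p_{k\ell}\le\tfrac12$ and hence $r_ir_j\le1$ for \emph{all} $i\le k$, $j\le\ell$, with no exceptional set to track. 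This gives $r_j\le 2d_j/\sum_{i\le\ell}r_i$ cleanly, and since only $2d_k\le2\log^\alpha n$ indices are excluded from $L$, one has $\bigl(\sum_{i\le\ell}r_i\bigr)^2\ge\sum_{i,j\le\ell}\widetilde p_{ij}\ge M-4nd_k\ge M/2$ directly. Combining yields $D(j,I_\alpha)\lesssim\log^{\beta+\alpha/2}(n)\,/\,n^{\varepsilon/2}$ with no case split. The moral: compare against the many small-$r$ indices rather than the possibly hub-dominated complement $I_\alpha^c$; your pigeonhole ``at most $2d_j$ violations'' is the right idea, but applied to $I_\alpha^c$ it removes exactly the indices that matter, whereas applied from the bottom it removes nothing.
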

\begin{proof}
We prove this through a series of lemmas. Let us recall from the proof
of Proposition \ref{prop: Strict E-G condition} that, for the degree
sequence $d_{1}\leq d_{2}\leq\cdots\leq d_{n}$, there exist positive
real numbers $0<r_{1}\leq\cdots\leq r_{n}$ such that 
\[
\widetilde{p}_{ij}=\frac{r_{i}r_{j}}{1+r_{i}r_{j}},
\]
 and, 
\[
d_{i}=\sum_{j\in\left[n\right]\backslash\left\{ i\right\} }\widetilde{p}_{ij},
\]
 where $\left[n\right]=\left\{ 1,\cdots,n\right\} .$ Also recall
that $M=\sum_{i\leq n}d_{i}>n^{1+\epsilon}$, for large enough $n$.
Moreover, let us assume that $I_{\alpha}=\left\{ 1,\cdots,k(\alpha)\right\} $,
where $k(\alpha)$ is the number of elements of $I_{\alpha}$, and
\begin{equation}
L_{\alpha}:=\left\{ i:i\leq\ell(\alpha)\right\} \quad\text{with }\quad\ell(\alpha)=n-2d_{k(\alpha)}.\label{eq:def-of-ell}
\end{equation}
 We note that $k(\alpha)\in I_{\alpha}$, and hence, $d_{k(\alpha)}\leq\log^{\alpha}\left(n\right).$
In addition, for large enough $n$, 
\begin{eqnarray*}
\sum_{i=1}^{k(\alpha)}d_{i}+\sum_{i=\ell}^{n}d_{i} & \leq & k(\alpha)\cdot d_{k(\alpha)}+\left(2d_{k(\alpha)}+1\right)\cdot n\\
 & \leq & n\cdot\log^{\alpha}\left(n\right)+\left(2\log^{\alpha}\left(n\right)+1\right)\cdot n\\
 & \leq & n^{1+\epsilon}<M.
\end{eqnarray*}
 Therefore, $k(\alpha)<\ell(\alpha)$.

Now that the dependency of $I_{\alpha}$, $L_{\alpha}$, $k(\alpha)$,
and $\ell(\alpha)$ on $\alpha$ is understood, we drop the $\alpha$
for the convenience of our notation, and use $I$, $L$, $k$ and $\ell$
throughout the rest of our proof.
\begin{lem}
For $j\in I$, 
\[
D(j,I)\leq2d_{j}\frac{\sum_{i\in I}r_{i}}{\sum_{i\in L}r_{i}}.
\]
\end{lem}
\begin{proof}
We see that $\widetilde{p}_{ij}=\frac{r_{i}r_{j}}{1+r_{i}r_{j}}$
is increasing both in $i$ and $j$ since $r_{i}$s are increasing
in $i$, for $1\leq i,j\leq n$. Thus, it follows by Eq. \eqref{eq:def-of-ell}
that 
\[
d_{k}\geq\sum_{j\geq\ell+1}\widetilde{p}_{kj}\geq2d_{k}\widetilde{p}_{kl}.
\]
 This implies $\widetilde{p}_{kl}\leq\frac{1}{2}$ and therefore,
for $i\leq k$ and $j\leq\ell$, we get $\widetilde{p}_{ij}\leq\frac{1}{2}$
and $r_{i}r_{j}\leq1.$ We can now estimate for $j\leq\ell,$ 
\[
D(j,I)=\sum_{i\leq k}\widetilde{p}_{ij}=\sum_{i\leq k}\frac{r_{i}r_{j}}{1+r_{i}r_{j}}\leq r_{j}\sum_{i\leq k}r_{i}.
\]
 Furthermore, 
\[
r_{j}=\frac{d_{j}r_{j}}{d_{j}}=\frac{d_{j}r_{j}}{\sum_{1\leq i\leq n}\frac{r_{i}r_{j}}{1+r_{i}r_{j}}}\leq\frac{d_{j}r_{j}}{\sum_{1\leq i\leq\ell}\frac{r_{i}r_{j}}{1+r_{i}r_{j}}}\leq\frac{2d_{j}r_{j}}{\sum_{1\leq i\leq\ell}r_{i}r_{j}}\leq\frac{2d_{j}}{\sum_{1\leq i\leq\ell}r_{i}}.
\]
That does it.\end{proof}
\begin{lem}
We have, 
\[
\sum_{i\leq k}r_{i}\leq\sqrt{(4n+2)d_{k}}.
\]
\end{lem}
\begin{proof}
Note that as in the previous lemma $r_{i}r_{j}\leq1$, for $i\leq k$
and $j\leq\ell.$ Then, we observe that $k<\ell,$ and 
\begin{eqnarray*}
\left[\sum_{i\leq k}r_{i}\right]^{2} & = & 2\sum_{i<j\leq k}r_{i}r_{j}+\sum_{i\leq k}r_{i}^{2}\\
 & \leq & 4\sum_{i<j\leq k}\frac{r_{i}r_{j}}{1+r_{i}r_{j}}+\sum_{i\leq k}r_{i}r_{k}\\
 & \leq & 4\sum_{i<j\leq k}\frac{r_{i}r_{j}}{1+r_{i}r_{j}}+2\sum_{i\leq k}\frac{r_{i}r_{k}}{1+r_{i}r_{k}}\\
 & \leq & 4\sum_{i\leq k}d_{i}+2d_{k}\\
 & \leq & \left(4n+2\right)d_{k}.
\end{eqnarray*}
\end{proof}
\begin{lem}
For large enough $n$, we get 
\[
\sum_{i\leq\ell}r_{i}\geq\sqrt{\frac{M}{2}}.
\]
\end{lem}
\begin{proof}
The proof goes through the following lines 
\[
\left[\sum_{i\leq\ell}r_{i}\right]^{2}\geq\sum_{1\leq i,j\leq\ell}\frac{r_{i}r_{j}}{1+r_{i}r_{j}}\geq\sum_{1\leq i,j\leq\ell}\widetilde{p}_{ij}\geq M-2\sum_{i>\ell}d_{i}\geq M-4nd_{k}.
\]
Since $d_{k}\leq\log^{\alpha}\left(n\right)$, and $M>n^{1+\epsilon}$,
we get our result for large enough $n$.
\end{proof}
Let us go back to the proof of our proposition. Since $k\in I_{\alpha}$
and $j\in I_{\beta}$, we have $d_{j}\leq\log^{\beta}\left(n\right)$
and $d_{k}\leq\log^{\alpha}\left(n\right)$. In addition, $M>n^{1+\epsilon}$,
for large values of $n$. Combining the above three lemmas, we obtain
\[
D(j,I)\leq2d_{j}\sqrt{\frac{2\left(4n+2\right)d_{k}}{M}}\leq8\frac{\log^{\beta+\frac{\alpha}{2}}\left(n\right)}{n^{\frac{\epsilon}{2}}}<\frac{1}{4},
\]
 for large enough $n$.
\end{proof}
We continue with the last part of Theorem \ref{thm: Equivalent_lower_bd}.
Recall that $\widetilde{\mathcal{G}}_{B}$ is the random graph with
bernoulli random edges with parameter $\widetilde{p}_{ij}$, for $<i,j>\in Ed\left(B\right),$
where $B=\left[n\right]\backslash I_{\frac{10}{a_{1}}}$ and $0<a_{1}=a-\frac{1}{2}<\frac{1}{2}$. 
\begin{lem}
\label{lem: E_j}Let $\mathbf{1}_{\left\langle i,j\right\rangle \in\mathcal{E}(G)}\left(G\right)$,
for $i,j\in B$, be the indicator of the edge $\left\langle i,j\right\rangle $
in graph $G$. Then, we define the events, 
\begin{equation}
E_{j}:=\left\{ G\text{ such that}\left|\sum_{i\in B}\mathbf{1}_{\left\langle i,j\right\rangle \in\mathcal{E}(G)}\left(G\right)-D(j,B)\right|\leq D(j,B)^{a}\right\} ,\label{eq: E_j}
\end{equation}
and
\begin{equation}
F_{j}:=\left\{ G\text{ such that}\sum_{i\in B}\mathbf{1}_{\left\langle i,j\right\rangle \in\mathcal{E}(G)}\left(G\right)\leq\left(2\log^{2}(n)+1\right)D(j,B)\right\} ,\label{eq: F_j}
\end{equation}
for $j\in B$. In addition, for $0<a_{1}=a-\frac{1}{2}<\frac{1}{2}$,
we define
\begin{equation}
J:=\left\{ j\in\left[n\right]\big|\, D(j,B)\geq\log^{\frac{1}{a_{1}}}\left(n\right)\right\} .\label{eq:def-J}
\end{equation}
 Then, for large enough $n$,
\[
P\left(\mathcal{\widetilde{G}}_{B}\in\cap_{j\in J}E_{j}\cap_{j\notin J}F_{j}\right)\geq1-\frac{1}{n}>\frac{1}{2}.
\]
\end{lem}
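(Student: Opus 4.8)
The proof goes by a union bound over the vertices $j\in B$ — these are exactly the indices for which the events $E_j$ and $F_j$ are defined — combined with a Chernoff-type estimate for each. For $j\in B$ put $X_j:=\sum_{i\in B\setminus\{j\}}\mathbf{1}_{\langle i,j\rangle\in\mathcal{E}(\widetilde{\mathcal{G}}_{B})}$. This is a sum of independent Bernoulli variables and $E[X_j]=\sum_{i\in B\setminus\{j\}}\widetilde{p}_{ij}=D(j,B)$. Since $|B|\le n$ it suffices to bound $P(\widetilde{\mathcal{G}}_{B}\notin E_j)$ for $j\in B\cap J$ and $P(\widetilde{\mathcal{G}}_{B}\notin F_j)$ for $j\in B\setminus J$, each by $o(n^{-2})$ (with a bit of extra care for $B\setminus J$, as explained below); the claimed bound $1-\tfrac1n>\tfrac12$ then follows for $n\ge2$. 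If $|B|\le1$ there is nothing to prove, since then $X_j\equiv0$, $D(j,B)=0$, so $j\notin J$ and $F_j$ holds surely; assume $|B|\ge2$ henceforth.

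For $j\in B\cap J$ we have $\mu_j:=D(j,B)\ge\log^{1/a_1}(n)$, so $\mu_j\ge1$ for large $n$ and $\mu_j^{a}\le\mu_j$. Bernstein's inequality then yields
\[
P\bigl(|X_j-\mu_j|>\mu_j^{a}\bigr)\le2\exp\!\Bigl(-\tfrac{\mu_j^{2a}}{2(\mu_j+\mu_j^{a}/3)}\Bigr)\le2\exp\!\bigl(-\tfrac{3}{8}\mu_j^{2a-1}\bigr),
\]
and since $2a-1=2a_1$ and $\mu_j^{2a_1}\ge\bigl(\log^{1/a_1}n\bigr)^{2a_1}=\log^2 n$, this is $\le2e^{-\frac{3}{8}\log^2 n}=o(n^{-2})$. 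This is exactly the point of taking $a>\tfrac12$: the cushion $\mu_j^{a}=\sqrt{\mu_j}\,\mu_j^{a_1}$ in $E_j$ beats the fluctuation scale $\sqrt{\mu_j}$ by the polylog factor $\mu_j^{a_1}\ge\log n$, and the threshold $\log^{1/a_1}(n)$ defining $J$ is calibrated so that $\mu_j^{2a_1}$ reaches $\log^2 n$.

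For $j\in B\setminus J$ one has $\mu_j=D(j,B)<\log^{1/a_1}(n)$. The first step is to localize such $j$: the estimate obtained in the proof of Proposition \ref{prop: bd_by_1}, namely $D(j,I_{10/a_1})\le2d_j\sqrt{2(4n+2)\,d_{k(10/a_1)}/M}$ for indices $j\le\ell(10/a_1)=n-2d_{k(10/a_1)}$, together with $M\ge n^{1+\varepsilon}$ and $d_{k(10/a_1)}\le\log^{10/a_1}(n)$, gives $D(j,I_{10/a_1})=o(d_j)$, hence $D(j,B)=d_j-o(d_j)\ge\tfrac12\log^{10/a_1}(n)>\log^{1/a_1}(n)$ whenever such a $j$ also lies in $B$; thus those $j$ belong to $J$. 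Consequently $B\setminus J$ is contained in the at most $2d_{k(10/a_1)}\le2\log^{10/a_1}(n)$ largest-index vertices. For each of them a Chernoff upper-tail bound gives
\[
P\bigl(X_j>(2\log^2 n+1)\mu_j\bigr)\le\Bigl(\tfrac{e}{2\log^2 n+1}\Bigr)^{(2\log^2 n+1)\mu_j},
\]
which is super-polynomially small once $\mu_j$ is not too tiny (say $\mu_j\gtrsim1/(\log n\log\log n)$), while if $(2\log^2 n+1)\mu_j<1$ the event forces $X_j=0$ and has probability at most $\mu_j$. The hard part is to push these last $O(\log^{10/a_1}(n))$ exceptional (top-index) vertices through the union bound — i.e. to show $\sum_{j\in B\setminus J}P(\widetilde{\mathcal{G}}_{B}\notin F_j)=o(1/n)$ — which requires a finer estimate of $D(j,B)$ for large indices $j$, obtained from the representation $\widetilde{p}_{ij}=r_ir_j/(1+r_ir_j)$ and the monotonicity and size bounds on the $r_i$'s in Lemma \ref{lem: regularity r_i}, so as to rule out the awkward middle range $D(j,B)\asymp1/\log^2 n$ for these vertices. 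Granting that, the complement of $\bigl(\bigcap_{j\in B\cap J}E_j\bigr)\cap\bigl(\bigcap_{j\in B\setminus J}F_j\bigr)$ has probability $\le n\cdot o(n^{-2})+o(1/n)=o(1/n)$, which is the claim.
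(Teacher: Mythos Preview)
Your treatment of $j\in B\cap J$ via Bernstein is fine and matches the paper's Chernoff step. The gap is entirely in the $j\in B\setminus J$ case: you try to localize these vertices to the top $O(\log^{10/a_1}n)$ indices and then openly concede you cannot rule out the ``awkward middle range'' $D(j,B)\asymp 1/\log^2 n$, writing ``Granting that\ldots''. That is exactly the missing idea, and your localization argument also quietly extends the estimate from the proof of Proposition~\ref{prop: bd_by_1} to all $j\le\ell(10/a_1)$, whereas that proof only justifies the bound $r_j\le 2d_j/\sum_{i\le\ell}r_i$ under $r_ir_j\le1$ for $i\le\ell$, which was not established in that generality.

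The paper avoids all of this by proving directly that $D(j,B)\ge\tfrac14$ for \emph{every} $j\in B$. The argument is short: if $A=\emptyset$ then $D(j,B)=d_j\ge1$; otherwise $1\in A$, and since $r_1\le r_i$ for all $i$ one has $\widetilde p_{ij}\ge\widetilde p_{1j}$ for $i\in B$, whence
\[
D(j,B)\;\ge\;\sum_{i\in B}\widetilde p_{i1}-\widetilde p_{1j}\;=\;D(1,B)-\widetilde p_{1j}\;=\;d_1-D(1,A)-\widetilde p_{1j}.
\]
Proposition~\ref{prop: bd_by_1} gives $D(1,A)<\tfrac14$, so if $\widetilde p_{1j}<\tfrac14$ then $D(j,B)\ge\tfrac12$; if $\widetilde p_{1j}\ge\tfrac14$ then, since every $i\in B\setminus\{j\}$ satisfies $\widetilde p_{ij}\ge\widetilde p_{1j}$, already a single term gives $D(j,B)\ge\widetilde p_{1j}\ge\tfrac14$. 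With $D(j,B)\ge\tfrac14$ in hand, the Chernoff upper-tail bound yields $P(F_j^c)\le\exp(-\tfrac14\log^2 n)$ uniformly in $j\in B$, and the union bound over at most $n$ vertices is immediate. This single monotonicity observation replaces your entire localization-and-case-analysis programme.
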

\begin{proof}
Let $X_{1},\cdots X_{n}$ be a vector of independent Bernoulli random
variables with total mean $\mu:=\sum_{1\leq i\leq n}E\left[X_{i}\right]$.
The Chernoff's bound \cite{Chernoff:online} states that, for $\delta>0$,
\[
P\left(\sum_{1\leq i\leq n}X_{i}\geq(1+\delta)\mu\right)\leq\exp\left(\frac{-\mu\cdot\delta^{2}}{2+\delta}\right),
\]
 and 
\[
P\left(\sum_{1\leq i\leq n}X_{i}\leq(1-\delta)\mu\right)\leq\exp\left(\frac{-\mu\cdot\delta^{2}}{2+\delta}\right).
\]
We apply Chernoff's bound with parameters 
\[
\mu_{j}:=\sum_{i\in B\backslash\left\{ j\right\} }E\left[\mathbf{1}_{\left\langle i,j\right\rangle }\right]=\sum_{i\in B\backslash\left\{ j\right\} }\widetilde{p}_{ij}=D\left(j,B\right),
\]
 and $\delta=D\left(j,B\right)^{-\frac{1}{2}+a_{1}}\leq1,$ where
$1\leq j\leq n$. Hence, 
\begin{equation}
P\left(E_{j}^{c}\right)\leq2\exp\left(-\frac{1}{3}D\left(j,B\right)^{2a_{1}}\right).\label{eq:bound_E_j}
\end{equation}
Now, for those $j$ in $J$ and by \eqref{eq:def-J}, we observe that
Eq. \eqref{eq:bound_E_j} turns into 
\begin{equation}
P\left(E_{j}^{c}\right)\leq2e^{-\frac{1}{3}\log^{2}\left(n\right)}.\label{eq:Bound_E_j_2}
\end{equation}

Similarly, for $1\leq j\leq n$, 
\begin{equation}
P\left(F_{j}^{c}\right)\leq\exp\left(-\log^{2}(n)D\left(j,B\right)\right),\label{eq:Bound_F_j}
\end{equation}
 where $\delta=2\log^{2}(n)D\left(j,B\right).$ In order to complete
the bound in Eq \eqref{eq:Bound_F_j}, we show $D(j,B)\geq\frac{1}{4},$
for $j\in B$. If $A=I_{\frac{10}{a_{1}}}=\left[n\right]\backslash B$
is empty then $D(j,B)=d_{j}\geq1$. Otherwise, if $A\neq\emptyset$,
then $1\in A$. We observe that $\frac{r_{i}r_{j}}{1+r_{i}r_{j}}$
is increasing both in $i$ and $j$, so 
\begin{eqnarray}
D(j,B) & = & \sum_{i\in B\backslash\left\{ j\right\} }\widetilde{p}_{ij}=\sum_{i\in B\backslash\left\{ j\right\} }\frac{r_{i}r_{j}}{1+r_{i}r_{j}}\label{eq: trivial lower bound}\\
 & \geq & \sum_{i\in B}\left(\frac{r_{i}r_{1}}{1+r_{i}r_{1}}\right)-\frac{r_{1}r_{j}}{1+r_{1}r_{j}}\nonumber \\
 & = & D(1,B)-\widetilde{p}_{1j}\nonumber \\
 & = & d_{1}-D(1,A)-\widetilde{p}_{1j}.\nonumber 
\end{eqnarray}
 In addition, we know that $d_{1}\geq1$, and by Proposition \ref{prop: bd_by_1},
we get $D(1,A)\leq\frac{1}{4}$, for large enough $n$. Thus, if $\widetilde{p}_{1j}$
is smaller than $\frac{1}{4}$, then Eq. \eqref{eq: trivial lower bound}
gives $D(j,B)\geq\frac{1}{2}.$ On the other hand, we get $\frac{1}{4}\leq\widetilde{p}_{1j}\leq D(j,B)$. 

Therefore, 
\[
P\left(F_{j}^{c}\right)\leq e^{-\frac{1}{4}\log^{2}\left(n\right)}.
\]
 Combining the previous inequality with Eq. \eqref{eq:Bound_E_j_2},
we have 
\[
P\left(\mathcal{\widetilde{G}}_{B}\in\cap_{j\in J}E_{j}\cap_{j\notin J}F_{j}\right)\geq1-\sum_{i\in J}e^{-\frac{1}{3}\log^{2}\left(n\right)}-\sum_{i\notin J}e^{-\frac{1}{4}\log^{2}\left(n\right)}\geq1-\frac{1}{n},
\]
 for large enough $n$. That concludes the proof.\end{proof}
\begin{lem}
\label{lem: degree_condition_satisfied}Let $E_{j}$, $F_{j}$, and
$J$ be as they are in Lemma \ref{lem: E_j}. In addition,  $T$ is the resulting
bipartite graph in Lemma \ref{lem: bipartite T}. Then, for every
graph $G\in\cap_{j\in J}E_{j}\cap_{j\notin J}F_{j}$, we have 
\[
\emptyset_{A}\cup T\cup G\in\mathbb{G}_{a}^{D},
\]
where\textup{ $A=I_{\frac{10}{a_{1}}}$, and }the graph $\emptyset_{A}$
is a graph on vertices of $A$ with no edges.\end{lem}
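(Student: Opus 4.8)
The plan is to compute the degree sequence $\bar D=(\bar d_{1},\dots,\bar d_{n})$ of $H:=\emptyset_{A}\cup T\cup G$ and verify the defining inequalities $|\bar d_{i}-d_{i}|<d_{i}^{a}$ of $\mathbb{G}_{a}^{D}$ for every $i$, treating $i\in A$ and $i\in B$ separately. First I would record the edge accounting: $\emptyset_{A}$ has no edges, $T\subseteq Ed(A,B)$, and $G=\widetilde{\mathcal{G}}_{B}\subseteq Ed(B)$, so $H$ has no edge inside $A$ and the $T$-edges and $G$-edges are disjoint. Hence $\bar d_{i}=\deg_{T}(i)$ for $i\in A$, and $\bar d_{i}=\deg_{T}(i)+\deg_{G}(i)$ for $i\in B$, where $\deg_{G}(i)=\sum_{j\in B}\mathbf{1}_{\langle i,j\rangle\in\mathcal{E}(G)}(G)$ and the $T$-edges at a vertex of $B$ all run into $A$.

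For $i\in A$ I would invoke Lemma \ref{lem: bipartite T}(a): $\bar d_{i}=\deg_{T}(i)\in\{\lfloor D(i,B)\rfloor,\lfloor D(i,B)\rfloor+1\}$. Since $i\in A=I_{10/a_{1}}$, Proposition \ref{prop: bd_by_1} applied with $\alpha=\beta=10/a_{1}$ gives $0\le D(i,A)<\tfrac14$ for $n$ large, so $D(i,B)=d_{i}-D(i,A)\in(d_{i}-\tfrac14,d_{i}]$; thus $\lfloor D(i,B)\rfloor\in\{d_{i}-1,d_{i}\}$ and $\bar d_{i}\in\{d_{i}-1,d_{i},d_{i}+1\}$, whence $|\bar d_{i}-d_{i}|\le1\le d_{i}^{a}$.

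For $i\in B$ I would first apply Lemma \ref{lem: bipartite T}(b) (whose bound concerns the weighted $T$-degree of $i$, namely $D(i,A)$, as the $T$-edges at a vertex of $B$ go to $A$): $\deg_{T}(i)\in\{\lfloor D(i,A)\rfloor,\lfloor D(i,A)\rfloor+1\}$, so $|\deg_{T}(i)-D(i,A)|\le1$. I would then control $\deg_{G}(i)$ using the event $\cap_{j\in J}E_{j}\cap_{j\notin J}F_{j}$ that $G$ is assumed to lie in, with $E_{j}$, $F_{j}$, $J$ as in \eqref{eq: E_j}, \eqref{eq: F_j}, \eqref{eq:def-J}. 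If $i\in J$, then $E_{i}$ gives $|\deg_{G}(i)-D(i,B)|\le D(i,B)^{a}$, and since $D(i,A)+D(i,B)=d_{i}$ we get $|\bar d_{i}-d_{i}|\le1+D(i,B)^{a}$. If $i\notin J$, then $D(i,B)<\log^{1/a_{1}}(n)$, so $F_{i}$ gives $\deg_{G}(i)\le(2\log^{2}(n)+1)D(i,B)<(2\log^{2}(n)+1)\log^{1/a_{1}}(n)$ and thus $|\bar d_{i}-d_{i}|=O(\log^{2+1/a_{1}}(n))$. In both cases $d_{i}>\log^{10/a_{1}}(n)$ (since $i\notin I_{10/a_{1}}$), hence $d_{i}^{a}>\log^{10a/a_{1}}(n)$; because $10a/a_{1}>2+1/a_{1}$ (equivalently $8a>0$), the non-$J$ bound is $o(d_{i}^{a})$. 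For the $J$ case I would note $D(i,B)<d_{i}$, hence $D(i,B)^{a}<d_{i}^{a}$, and to absorb the stray $+1$ I would harmlessly shrink the deviation in $E_{i}$ from $D(i,B)^{a}$ to $\tfrac12 D(i,B)^{a}$ (the Chernoff exponent in the proof of Lemma \ref{lem: E_j} stays $\gg\log^{2}(n)$ for $i\in J$, so its union bound is untouched); then $|\bar d_{i}-d_{i}|\le1+\tfrac12 D(i,B)^{a}<1+\tfrac12 d_{i}^{a}<d_{i}^{a}$ as soon as $d_{i}^{a}>2$, which holds for $n$ large.

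The hard part, and essentially the only place real care is needed, is this last comparison for $i\in B\cap J$: there $\deg_{G}(i)$ is controlled only at scale $D(i,B)^{a}$ and not at scale $d_{i}^{a}$, so one must check that the $\pm1$ rounding error coming from the deterministic bipartite graph $T$ together with a fluctuation of size $D(i,B)^{a}$ still fits within the allowed tolerance $d_{i}^{a}$. This is precisely where the choice $B=[n]\setminus I_{10/a_{1}}$ does the work: it forces $d_{i}>\log^{10/a_{1}}(n)$, so $d_{i}^{a}$ dwarfs every logarithmic error term, and since $D(i,B)<d_{i}$ there is enough slack (after the rescaling above) to swallow the extra constant. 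Everything else---the edge bookkeeping, the applications of Lemmas \ref{lem: bipartite T} and \ref{lem: E_j}, and of Proposition \ref{prop: bd_by_1}---is routine.
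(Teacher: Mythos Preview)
Your proof is correct and follows essentially the same route as the paper: split into $i\in A$ and $i\in B$, use Lemma~\ref{lem: bipartite T} (correctly reading part~(b) as controlling $\deg_T(i)$ by $D(i,A)$ for $i\in B$, which is what the paper's own argument uses despite the typo in the lemma's statement) together with Proposition~\ref{prop: bd_by_1} for the deterministic $T$-contribution, and then invoke the events $E_j$, $F_j$ for the random $G$-contribution. The paper is content with the cruder bound $s_i\le 2d_i^{a}$ and does not perform your extra tightening of $E_i$ to absorb the $+1$; otherwise the arguments coincide.
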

\begin{proof}
Let us denote $d_{i}(G)$ by the degree of the $i^{th}$ vertex of
the graph $G$. As usual, $B$ is $\left[n\right]\backslash A$. We
need to show that 
\[
s_{i}:=\left|d_{i}-\left(d_{i}(\emptyset_{A})+d_{i}(T)+d_{i}(G)\right)\right|\leq2d_{i}^{a},
\]
 for every $i\in\left[n\right].$ For $i\in A$, it follows by Lemma
\ref{lem: bipartite T} and Proposition \ref{prop: bd_by_1} that,
for large enough $n$, 
\[
s_{i}=\left|d_{i}-d_{i}(T)\right|\leq1+\left|d_{i}-D(i,B)\right|=D(i,A)+1<2.
\]

Again, we let $J$ be the same as in \eqref{eq:def-J}. In addition, for $i\in J\cap B,$
we use the definition of $E_{j}$ (Eq. \eqref{eq: E_j}), to get
\begin{eqnarray*}
s_{i} & = & \left|d_{i}-d_{i}(T)-d_{i}(G)\right|\\
 & \leq & 1+\left|d_{i}-D(i,A)-d_{i}(G)\right|\\
 & = & 1+\left|D(i,B)-d_{i}(G)\right|\\
 & \leq & 1+D(i,B)^{a}\\
 & \leq & 2d_{i}^{a}.
\end{eqnarray*}
 Last, for $i\in B\backslash J$, and using the property of the set
$F_{i}$ (Eq. \eqref{eq: F_j}), we obtain, 
\begin{eqnarray*}
s_{i} & = & \left|d_{i}-d_{i}(T)-d_{i}(G)\right|\\
 & \leq & 1+\left|D(i,B)-d_{i}(G)\right|\\
 & \leq & 1+D(i,B)+\left(2\log^{2}\left(n\right)+1\right)D(i,B)\\
 & \leq & 1+2\left(\log^{2}\left(n\right)+1\right)D(i,B).
\end{eqnarray*}
 Note that $i\notin J$, so $D(i,B)\leq\log^{\frac{1}{a_{1}}}\left(n\right)$,
where $a_{1}=a-\frac{1}{2}$. In addition, $i\in B=\left[n\right]\backslash I_{\frac{10}{a_{1}}}$,
where $I_{\frac{10}{a_{1}}}$ is defined by Eq. \eqref{eq:I_alpha}.
Therefore, $d_{i}\geq\log^{\frac{10}{a_{1}}}\left(n\right)$, and
\[
s_{i}\leq8\log^{2+\frac{1}{a_{1}}}\left(n\right)\leq2d_{i}^{a},
\]
where we used $a<1$. This completes the proof.
\end{proof}
\smallskip{}

\begin{proof}
[Proof of Theorem \ref{thm: Equivalent_lower_bd}] As we saw before,
the partition is $A=I_{\frac{10}{a_{1}}}$, and $B=\left[n\right]\backslash A$,
and $I_{\frac{10}{a}}$ is as in Eq. \eqref{eq:I_alpha}. Lemma \ref{lem: bipartite T}
shows us that there exists a bipartite tree with edges in $Ed\left(A,B\right)$.
From Lemma \ref{lem: log_lower_bound}, we get parts \emph{a} and \emph{b}
of the theorem.

Finally, putting Lemmas \ref{lem: E_j} and \ref{lem: degree_condition_satisfied}
together, we get the required lower bound for part \emph{c} of the
theorem.\end{proof}


\subsection{Graphs with a given degree sequence.\label{sec:exact-given-degree}}

Throughout this section, we let $C$ be a general constant. In addition,
most of the proofs are analogous to the proof of Theorem \ref{thm: main theorem}
with some changes. So we provide an outline for each solution as well
as the essential steps. 

Recall that 
\[
\mathbf{p_{g}}\left(s,T\right)=E\left[\mathbf{1}_{s}\left(T,\mathcal{G}_{\mathbf{g}}\right)\right],
\]
 where $\mathcal{G}_{\mathbf{g}}$ is the random graph chosen uniformly
in $\mathbb{G}^{D}$.
\begin{cor}
\label{thm:total sum of probabilities-2}If we use the notation in
Conjecture \ref{conj: given degree sequence!}, then
\[
\frac{1}{M}\cdot\sum_{\left(s,T\right)\in\mathbb{S}^{k}\times\mathbb{T}^{k}}\frac{1}{\psi(s,T,D)}\mathbf{p_{g}}\left(s,T\right)=k^{k-2}+O\left(\sqrt{\frac{n}{M}}\right).
\]
\end{cor}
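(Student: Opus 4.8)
The plan is to mimic the proof of Theorem \ref{thm:total sum of probabilities}, but replace the random graph $\mathcal{G}_{\mathbf{a}}$ with the exact-degree random graph $\mathcal{G}_{\mathbf{g}}$. The starting observation is that every $G\in\mathbb{G}^{D}$ has \emph{exactly} the degree sequence $D$, so the $B$-function issue that produced the error term $Z_{k}$ in Lemma \ref{lem: Lemma 3} simply does not arise here: there is no discrepancy between $d_{i}$ and $\widetilde{d}_{i}$. Concretely, for any fixed $T\in\mathbb{T}^{k}$ and any $G\in\mathbb{G}^{D}$ we have
\[
\sum_{s}\frac{1}{\psi(s,T,D)}\,\mathbf{1}_{s}(T,G)=F(T,G),
\]
where $F(T,G)$ is exactly the quantity estimated in Lemma \ref{lem: (Upper-bound).-F(T,G)} and the Lower-bound lemma: $M-\tfrac{nk(k-1)}{2}\le F(T,G)\le M$. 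Since these bounds hold for \emph{every} graph in $\mathbb{G}^{D}$, they hold after taking the expectation over $\mathcal{G}_{\mathbf{g}}$ drawn uniformly from $\mathbb{G}^{D}$.

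Thus the main computation is essentially immediate: for each tree $T$,
\[
\frac{1}{M}\,E\!\left[F(T,\mathcal{G}_{\mathbf{g}})\right]=1+O\!\left(\frac{nk(k-1)}{2M}\right)=1+O\!\left(\frac{n}{M}\right),
\]
where the implied constant absorbs the dependence on $k$. Summing over the $k^{k-2}$ trees in $\mathbb{T}^{k}$ (Cayley's theorem) and recalling
\[
\frac{1}{M}\sum_{(s,T)\in\mathbb{S}^{k}\times\mathbb{T}^{k}}\frac{1}{\psi(s,T,D)}\mathbf{p_{g}}(s,T)=\sum_{T\in\mathbb{T}^{k}}\frac{1}{M}\,E\!\left[F(T,\mathcal{G}_{\mathbf{g}})\right],
\]
we obtain $k^{k-2}+O(n/M)$. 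Finally, since $O(n/M)\le O(\sqrt{n/M})$ (because $n\le M$, indeed $n^{1+\varepsilon}\le M$ by the type-$\varepsilon$ hypothesis), the claimed bound $k^{k-2}+O(\sqrt{n/M})$ follows a fortiori.

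The only thing to be careful about — and the place where a reader might expect an obstacle — is that this corollary is deliberately stated with the weaker error term $O(\sqrt{n/M})$ rather than $O(n/M)$, presumably to match the error terms in Conjecture \ref{conj: given degree sequence!} and in Theorems \ref{thm: main theorem- dense} and \ref{thm: main theorem- very sparse}; so there is genuinely nothing subtle to prove beyond invoking the two bounding lemmas and Cayley's theorem. If one wanted the sharper $O(n/M)$ one would simply keep track of the constant $k(k-1)/2$ in the Lower-bound lemma; no new ideas are needed. The write-up is therefore short: restate the two-sided bound on $F(T,G)$ from the earlier lemmas, note it survives averaging over $\mathbb{G}^{D}$, sum over trees, and weaken $n/M$ to $\sqrt{n/M}$.
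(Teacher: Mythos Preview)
Your argument is correct and is essentially the same as the paper's: the paper's proof is the one line ``This is Theorem \ref{thm:total sum of probabilities}, when $a=0$,'' and setting $a=0$ amounts precisely to your observation that for $G\in\mathbb{G}^{D}$ the perturbation term $Z_{k}$ vanishes, leaving only the two-sided bound on $F(T,G)$ from the Upper- and Lower-bound lemmas. Your remark that the actual error is $O(n/M)$, which is then weakened to the stated $O(\sqrt{n/M})$, is also exactly what the paper's reduction yields.
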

\begin{proof}
This is Theorem \ref{thm:total sum of probabilities}, when $a=0$.
\end{proof}
\smallskip{}

\begin{proof}
[Proof of Remark \ref{rem: given degree sequence!}]Again, Lemma \ref{lem: regularity r_i}
implies that for a graph $G$ in $\mathbb{G}^{D}$, 
\[
P\left(\widetilde{\mathcal{G}}\left(D\right)=G\right)=\frac{\prod_{i=1}^{n}r_{i}^{d_{i}}}{\prod_{i,\, j}\left(1+r_{i}r_{j}\right)}=e^{-H_{1}\left(\widetilde{\mathbf{p}}\right)}
\]
 that is independent of the choice of the graph $G$. Therefore, conditioning
on the event $\left\{ \widetilde{\mathcal{G}}\left(D\right)\in\mathbb{G}^{D}\right\} $,
the random graph $\widetilde{\mathcal{G}}\left(D\right)$ is exactly
the random graph $\mathcal{G}_{\mathbf{g}}\left(D\right)$. In addition,
suppose that Conjecture \ref{conj: lower bound} holds that $e^{-\eta n\cdot\log(n)}\leq P\left(\widetilde{\mathcal{G}}\left(D\right)\in\mathbb{G}^{D}\right)$,
for some $\eta>0$. Therefore, 
\[
P\left(\mathcal{G}_{\mathbf{g}}\left(D\right)\in A\right)=P\left(A\vert\widetilde{\mathcal{G}}\left(D\right)\in\mathbb{G}^{D}\right)\leq e^{\eta n\cdot\log(n)}P\left(\widetilde{\mathcal{G}}\left(D\right)\in A\right),
\]
 where $A$ is a subset of $\mathbb{G}^{D}$. The rest of the proof
is almost identical to that of Theorem \ref{thm: main theorem}. 

Lastly, the factor $O\left(n\log(n)\right)$ in the above equation ultimately
provides us with a better bound, whereas in the proof of
Theorem \ref{thm: main theorem}, we had $\mathcal{C}(D)$, which was
of order $O(n^{a-\nu-\frac{1}{2}}\cdot M\cdot\log(n))$.
\end{proof}

\subsection{Dense graphs (Definition \ref{def: Dense erdos-renyi}).\label{sec: proof-of-dense}}
\begin{proof}
[Proof of theorem \ref{thm: main theorem- dense}] Suppose that the
sequence $D$ satisfies the dense Erdös-Rényi condition for some positive
numbers $c_{1}$, $c_{2}$ and $c_{3}$. Let $r_{i}$ be the variables
defined in Eq. \eqref{eq: def of r_i}. Then, Lemma 4.1 in \cite{Chat-given-degree}
implies $ $
\[
\log\left(\left|r_{i}\right|\right)<c_{4},
\]
 for $1\leq i\leq n$, and $c_{4}$ is a number that depends on $c_{1}$,
$c_{2}$ and $c_{3}$. 

Therefore, for $\mathcal{C}_{2}\left(D\right)$ as in Eq. \eqref{eq: C(D) -2},
\[
\mathcal{C}\left(D\right)=\sum_{i=1}^{n}d_{i}^{a}\left|\log(r_{i})\right|<c_{4}\sum_{i=1}^{n}d_{i}^{a}.
\]
Using Cauchy-Schwarz, and that $d_{i}<n$, for $1\leq i\leq n$, we
get $\mathcal{C}_{2}(D)<C\sqrt{\frac{n}{M}}n^{a-\frac{1}{2}}$ $<Cn^{-\frac{1}{2}+a}$
. Similarly, we get a better bound for $\mathcal{C}_{1}\left(D\right)$
\eqref{eq: C(D) -1}. From here, the solution is as follows in Remark
\ref{rem: the-Conjecture-r_i} and, hence,

\[
\sum_{\left(s,T\right)\in\mathbb{S}^{k}\times\mathbb{T}^{k}}\frac{1}{\psi(s,T,D)}\left|\mathbf{p_{\mathbf{a}}}\left(s,T\right)-\mathbf{\widetilde{p}}\left(s,T\right)\right|\leq C_{k}\cdot n^{\frac{-1}{4}+\left(a-\frac{1}{2}\right)},
\]
where $C_{k}>0$ is a constant depending on $k$. That completes the
second part of the theorem.

As for the first part of the theorem, we can either use lemma 6.2
in \cite{Chat-given-degree} or an exact bound from Theorem 1.4 in
\cite{B-H-0/1-matrices} to show that Conjecture \ref{conj: lower bound}
holds. We consider the latter here. Again, for $1\leq i,j\leq n$,
the numbers $r_{i}$s are bounded, and so are the numbers $\widetilde{p}_{ij}=\frac{r_{i}r_{j}}{1+r_{i}r_{j}}$,
the entries of the maximum entropy. In addition, for some $\delta(c_{4})$
and as it was discussed in Remark \ref{rem: delta-tameness}, we have
$\delta\leq\widetilde{p}_{ij}\leq1-\delta$, which means that the
maximum entropy vector $\widetilde{\mathbf{p}}$ is $\delta-tame$.
Now, Theorem 1.4 in \cite{B-H-0/1-matrices} states,

\[
P\left(\widetilde{G}\in\mathbb{G}^{D}\right)=e^{-H_{1}(\widetilde{p})}\left|\mathbb{G}^{D}\right|\approx\frac{2}{\left(2\pi\right)^{n/2}\sqrt{Q}}e^{-\frac{\mu}{2}+\nu}\geq Ce^{-\gamma n\log(n)}.
\]

Look at \cite{B-H-0/1-matrices} for a precise definition of the variables.
But let us just note that $\mu$ and $\nu$ are constants depending
on $D$, and bounded by $\delta$. Also, the variable $Q$ is the
determinant of a $n\times n$ matrix with entries bounded from below
and above by constants depending on $\delta$. Using Hadamard's inequality
\cite{Hadamard} to bound the determinant, we get the lower bound
that we needed, which is 
\[
Ce^{-\eta n\log(n)}\leq P(\widetilde{\mathcal{G}}\left(D\right)\in\mathbb{G}^{D}),
\]
for some constants $C$ and $\eta>0$. The rest is similar to the
proof of Remark\ref{rem: given degree sequence!} and \ref{thm: main theorem}.
\end{proof}

\subsection{Very Sparse graphs ( Definition \ref{def: very-sparseness}).\label{sec: proof-of-very-sparse}}

Effectively, the proof of Theorem \ref{thm: main theorem- very sparse}
is a repetition of the arguments in the proof of Theorem \ref{thm: main theorem},
although with slight changes. We start by giving the counterparts
of Lemma \ref{lem: regularity r_i} and Proposition \ref{prop: change of entropy}.
The idea is to use $\frac{d_{i}}{\sqrt{M}}$ as $r_{i}$, for $1\leq i\leq n$,
so $\widetilde{p}_{ij}=\frac{r_{i}r_{j}}{1+r_{i}r_{j}}$ becomes $q_{ij}=\frac{d_{i}d_{j}}{M+d_{i}d_{j}}.$ 

Recall that $\mathcal{G}_{q}$ is a random graph with independent
Bernoulli random edges with parameters $q{}_{ij}$, and 
\[
\mathbf{p_{q}}\left(s,T\right)=E\left[\mathbf{1}_{s}\left(T,\mathcal{G}_{\mathbf{q}}\left(D\right)\right)\right].
\]
We state some lemmas.
\begin{lem}
\label{lem: regularity q_i- very sparse}With the same notation as above, 
\begin{enumerate}
\item let $d_{\mathbf{q}}\left(i\right):=\sum_{j=1,j\neq i}^{n}q_{ij}$,
then $d_{q,i}=d_{i}\left(1-O(\frac{d_{n}^{2}}{M})\right)$, or more
precisely, 
\[
d_{i}\left(1-\frac{2d_{n}^{2}}{M}\right)\leq d_{i}\left(1-\frac{2d_{i}d_{n}}{M}\right)\leq d_{q,i}\leq d_{i}.
\]

\item For a given graph $G$ with the degree sequence $D(G):=\left\{ \hat{d}_{1}(G),\cdots,\hat{d}_{n}(G)\right\} $
that may differ from $D=\left\{ d_{1},\cdots d_{n}\right\} ,$ we
get 
\[
P\left(\mathcal{G}_{\mathbf{q}}\left(D\right)=G\right)=\frac{\prod_{i=1}^{n}\left(\frac{d_{i}}{\sqrt{M}}\right)^{\hat{d}_{i}(G)}}{\prod_{1\leq i<j\leq n}\left(1+\frac{d_{i}d_{j}}{M}\right)}.
\]

\item Define 
\[
P_{q,\min}=\min_{G\in\mathbb{G}_{a}^{D}}P\left(\mathcal{G}_{\mathbf{q}}\left(D\right)=G\right)
\textit{ and } P_{q,\max}=\max_{G\in\mathbb{G}_{a}^{D}}P\left(\mathcal{G}_{\mathbf{q}}\left(D\right)=G\right),
\]
then, 
\[
\left|\log\left(\frac{P_{q,\max}}{P_{q,\min}}\right)\right|\leq2\log(n)\sum_{i=1}^{n}\left(d_{i,q}\right)^{a}.
\]
Moreover, $\sum_{i=1}^{n}d_{i}^{a}\leq\left(\frac{M}{n}\right)^{\frac{1}{2}}n^{\left(a-\frac{1}{2}\right)}$.
\end{enumerate}
\end{lem}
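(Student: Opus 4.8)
The lemma is the explicit-$r_i$ incarnation of the maximum-entropy bookkeeping from Section~\ref{sub:Max-entropy-independent}: the point is that the choice $r_i=d_i/\sqrt M$ turns $\widetilde p_{ij}=\frac{r_ir_j}{1+r_ir_j}$ into $q_{ij}$, and that under $d_n^2=o(M)$ the random graph $\mathcal G_{\mathbf q}(D)$ is an honest surrogate for $\widetilde{\mathcal G}(D)$. The plan is to prove the three parts in order, each by a short direct estimate. For part (1) I would estimate $d_{q,i}=\sum_{j\neq i}\frac{d_id_j}{M+d_id_j}$ by hand: the upper bound $d_{q,i}\le d_i$ is immediate from $\frac{d_id_j}{M+d_id_j}\le\frac{d_id_j}{M}$ together with $\sum_{j\neq i}d_j=M-d_i\le M$; for the lower bound, bound the denominator by $M+d_id_j\le M+d_id_n=M(1+\tfrac{d_id_n}{M})$ and use $\frac{1}{1+x}\ge 1-x$ for $x\ge0$, giving $\frac{d_id_j}{M+d_id_j}\ge\frac{d_id_j}{M}\bigl(1-\tfrac{d_id_n}{M}\bigr)$, so that summing over $j\neq i$ yields $d_{q,i}\ge d_i\bigl(1-\tfrac{d_i}{M}\bigr)\bigl(1-\tfrac{d_id_n}{M}\bigr)\ge d_i\bigl(1-\tfrac{2d_id_n}{M}\bigr)\ge d_i\bigl(1-\tfrac{2d_n^2}{M}\bigr)$, where the last two steps use $d_n\ge 1$ and $d_i\le d_n$.

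For part (2) I would simply repeat the computation of Lemma~\ref{lem: probability-of-graph} with $d_i/\sqrt M$ in place of $r_i$. Writing $E$ for the edge set of $G$ and $\hat d_i(G)$ for its degrees, and using $q_{ij}=\frac{d_id_j/M}{1+d_id_j/M}$, $1-q_{ij}=\frac{1}{1+d_id_j/M}$,
\[
P\bigl(\mathcal G_{\mathbf q}(D)=G\bigr)=\prod_{\langle i,j\rangle\in E}q_{ij}\prod_{\langle i,j\rangle\notin E}(1-q_{ij})=\prod_{\langle i,j\rangle\in E}\frac{d_id_j}{M}\,\prod_{1\le i<j\le n}\frac{1}{1+\frac{d_id_j}{M}},
\]
and since each vertex $i$ lies on exactly $\hat d_i(G)$ edges of $G$, one has $\prod_{\langle i,j\rangle\in E}\frac{d_id_j}{M}=\prod_{i=1}^n\bigl(\frac{d_i}{\sqrt M}\bigr)^{\hat d_i(G)}$, which is the stated formula.

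For part (3), let $G_1,G_2\in\mathbb G_a^D$ attain $P_{q,\min}$ and $P_{q,\max}$. The denominator in part (2) does not depend on the graph, so
\[
\Bigl|\log\frac{P_{q,\max}}{P_{q,\min}}\Bigr|=\Bigl|\sum_{i=1}^n\bigl(\hat d_i(G_2)-\hat d_i(G_1)\bigr)\log\frac{d_i}{\sqrt M}\Bigr|\le\sum_{i=1}^n\bigl|\hat d_i(G_2)-\hat d_i(G_1)\bigr|\,\Bigl|\log\frac{d_i}{\sqrt M}\Bigr|.
\]
Since $G_1,G_2\in\mathbb G_a^D$, the triangle inequality gives $|\hat d_i(G_2)-\hat d_i(G_1)|\le 2d_i^a$; and by $d_n^2=o(M)$ one has $1\le d_i\le d_n\le\sqrt M\le n$ for large $n$, so $|\log(d_i/\sqrt M)|=\log(\sqrt M/d_i)\le\tfrac12\log M\le\log n$. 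This gives $\le 2\log(n)\sum_i d_i^a$, and part (1) lets one replace $d_i^a$ by $d_{q,i}^a$ up to a factor $(1+o(1))$. For the last assertion I would write $d_i^a\le n^{a-1/2}d_i^{1/2}$ (using $d_i\le n$ and $a>\tfrac12$) and apply Cauchy--Schwarz, $\sum_i d_i^{1/2}\le\sqrt n\,\sqrt{\sum_i d_i}=\sqrt{nM}$, to conclude $\sum_i d_i^a\le n^{a-1/2}\sqrt{nM}$.

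None of this is difficult; the only substantive point is that the hypothesis $d_n^2=o(M)$ is exactly what makes $d_{q,i}=d_i(1-o(1))$ in part (1), which is what later permits $q$ to play the role of the maximum-entropy point $\widetilde{\mathbf p}$ in the very-sparse analogues of Propositions~\ref{prop: change of entropy} and~\ref{prop: lower-bound-union-bound} (cf.\ Section~\ref{sec: proof-of-very-sparse}). I expect no real obstacle.
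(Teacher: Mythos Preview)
Your proof is correct and follows essentially the same approach as the paper. Part~(1) is computed slightly differently (the paper writes out $d_i-d_{q,i}$ directly as $\frac{d_i^2}{M}+\sum_{j\neq i}\frac{d_i^2d_j^2}{M(M+d_id_j)}$ and bounds it by $d_i\cdot\frac{2d_n^2}{M}$, whereas you bound each $q_{ij}$ from below via $\frac{1}{1+x}\ge 1-x$), but the two arguments are equally elementary and yield the same chain of inequalities; Parts~(2) and~(3) match the paper's proof line for line, including the use of $d_i\le n$ together with Cauchy--Schwarz on $\sum_i d_i^{1/2}$ (cf.\ Remark~\ref{rem: the-Conjecture-r_i}). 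Note that the bound stated in the lemma as $(M/n)^{1/2}n^{a-1/2}$ appears to be a misprint for $(nM)^{1/2}n^{a-1/2}=n^aM^{1/2}$, which is what both your argument and the paper's own Remark~\ref{rem: the-Conjecture-r_i} actually produce, and similarly the $d_{i,q}^a$ in the display of Part~(3) should read $d_i^a$ (the paper's proof sketch gives $d_i^a$, and $d_{q,i}\le d_i$ so the inequality as literally stated does not follow from that sketch either); your treatment is consistent with what the paper actually proves.
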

\begin{proof}
\,
\begin{enumerate}
\item First, consider the following expression for $d_{i}-d_{\mathbf{q}}\left(i\right)$,
\begin{eqnarray*}
d_{i}-\sum_{j\neq i,j\in\left[n\right]}q_{ij} & = & \frac{d_{i}}{M}\sum_{j=1}^{n}d_{j}-\sum_{j=1,j\neq i}^{n}\frac{d_{i}d_{j}}{M+d_{i}d_{j}}\\
 & = & \frac{d_{i}^{2}}{M}+\sum_{j=1,j\neq i}^{n}\frac{d_{i}^{2}d_{j}^{2}}{M\left(M+d_{i}d_{j}\right)}\\
 & < & \frac{d_{i}^{2}}{M}\left(1+\sum_{j\neq i}\frac{d_{n}d_{j}}{M}\right)\\
 & < & d_{i}^{2}\frac{2d_{n}}{M}<d_{i}\frac{2d_{n}^{2}}{M}.
\end{eqnarray*}
Second, note that the difference is positive as it is shown in the
second step of the above equation. 
\item This is part 2 of Lemma \ref{lem: regularity r_i}, when $r_{i}$s
are replaced by $\frac{d_{i}}{\sqrt{M}}$s.
\item The proof follows from the second part of this lemma, Eq. \eqref{eq: bound on Pmax to Pmin}
and \eqref{eq: C(D) -2}, and the fact that $\log(\frac{d_{i}}{\sqrt{M}})$
is bounded by $\log(n)$.
\end{enumerate}
\end{proof}
\begin{lem}
There exits $\eta>0$ such that, for large enough $n$, 
\[
-\eta\left(n\log(n)+\frac{d_{n}^{4}}{M}\right)\leq\log\left(P\left(\mathcal{G}_{\mathbf{q}}\left(D\right)\in\mathbb{G}^{D}\right)\right)\leq\log\left(P\left(\mathcal{G}_{\mathbf{q}}\left(D\right)\in\mathbb{G}_{a}^{D}\right)\right).
\]
\end{lem}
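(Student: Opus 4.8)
The plan is to prove the two inequalities separately. The right-hand one is immediate: since $|d_{i}-d_{i}|=0<d_{i}^{a}$ for every $i$, the sequence $D$ is one of the $\bar D$ in the union defining $\mathbb{G}_{a}^{D}$, so $\mathbb{G}^{D}\subseteq\mathbb{G}_{a}^{D}$ and hence $\{\mathcal{G}_{\mathbf{q}}(D)\in\mathbb{G}^{D}\}\subseteq\{\mathcal{G}_{\mathbf{q}}(D)\in\mathbb{G}_{a}^{D}\}$; taking probabilities and then logarithms gives $\log P(\mathcal{G}_{\mathbf{q}}(D)\in\mathbb{G}^{D})\le\log P(\mathcal{G}_{\mathbf{q}}(D)\in\mathbb{G}_{a}^{D})$. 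Everything below is the lower bound on $\log P(\mathcal{G}_{\mathbf{q}}(D)\in\mathbb{G}^{D})$.

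First I would use part~2 of Lemma~\ref{lem: regularity q_i- very sparse}: for $G\in\mathbb{G}^{D}$ one has $\hat d_{i}(G)=d_{i}$ for all $i$, so $P(\mathcal{G}_{\mathbf{q}}(D)=G)$ does not depend on $G$ and equals $c_{D}:=\frac{\prod_{i=1}^{n}(d_{i}/\sqrt M)^{d_{i}}}{\prod_{1\le i<j\le n}(1+d_{i}d_{j}/M)}$. Therefore $P(\mathcal{G}_{\mathbf{q}}(D)\in\mathbb{G}^{D})=|\mathbb{G}^{D}|\cdot c_{D}$, and it suffices to bound $\log|\mathbb{G}^{D}|+\log c_{D}$ below by $-\eta(n\log n+d_{n}^{4}/M)$. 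Since $D$ is a strict graphic sequence of type $\varepsilon$ (so $\mathbb{G}^{D}\neq\emptyset$ and $M\ge n^{1+\varepsilon}$) and $d_{n}^{2}=o(M)$, the asymptotic enumeration of McKay \citep{B.D.M-Subgraphs-of-dense-Old} applies and gives
\[
|\mathbb{G}^{D}|=\frac{M!}{(M/2)!\,2^{M/2}\,\prod_{i}d_{i}!}\exp\left(-\frac{M_{2}}{2M}-\frac{M_{2}^{2}}{4M^{2}}+E\right),\qquad M_{2}:=\sum_{i}d_{i}(d_{i}-1),
\]
with a one-sided error $E=O(n\log n+d_{n}^{4}/M)$ under these hypotheses.

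The rest is bookkeeping. Stirling's formula $\log m!=m\log m-m+O(\log(m+1))$, together with $\sum_{i}O(\log(d_{i}+1))=O(n\log n)$, turns the configuration count into $\frac{M}{2}\log M+\frac{M}{2}-\sum_{i}d_{i}\log d_{i}+O(n\log n)$. On the other side $\log c_{D}=\sum_{i}d_{i}\log d_{i}-\frac{M}{2}\log M-\sum_{i<j}\log(1+d_{i}d_{j}/M)$, and expanding $\log(1+x)=x-\frac{x^{2}}{2}+O(x^{3})$ at $x=d_{i}d_{j}/M=o(1)$, using $\sum_{i<j}d_{i}d_{j}=\frac{1}{2}(M^{2}-S_{2})$ with $S_{2}:=\sum_{i}d_{i}^{2}$, $\sum_{i<j}d_{i}^{2}d_{j}^{2}=\frac{1}{2}(S_{2}^{2}-\sum_{i}d_{i}^{4})$, $\sum_{i}d_{i}^{4}\le d_{n}^{3}M$, and $\sum_{i<j}d_{i}^{3}d_{j}^{3}\le\frac{1}{2}d_{n}^{4}M^{2}$, one gets $\log c_{D}=\sum_{i}d_{i}\log d_{i}-\frac{M}{2}\log M-\frac{M}{2}+\frac{S_{2}}{2M}+\frac{S_{2}^{2}}{4M^{2}}+O(d_{n}^{4}/M)$. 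Substituting $M_{2}=S_{2}-M$ gives $-\frac{M_{2}}{2M}-\frac{M_{2}^{2}}{4M^{2}}=\frac{1}{4}-\frac{S_{2}^{2}}{4M^{2}}$, so when the three contributions are added the large terms $\frac{M}{2}\log M$, $\frac{M}{2}$, $\sum_{i}d_{i}\log d_{i}$ and $\frac{S_{2}^{2}}{4M^{2}}$ cancel in pairs, leaving $\log P(\mathcal{G}_{\mathbf{q}}(D)\in\mathbb{G}^{D})=\frac{S_{2}}{2M}+O(n\log n)+O(d_{n}^{4}/M)+E$. Since $S_{2}\le d_{n}M\le(n-1)M$ we have $\frac{S_{2}}{2M}=O(n)$, so the right-hand side is $O(n\log n+d_{n}^{4}/M)$ and in particular $\ge-\eta(n\log n+d_{n}^{4}/M)$ for a suitable $\eta>0$ and all large $n$; together with the first paragraph this is the claim.

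The main obstacle is making the appeal to McKay's formula legitimate with the weak one-sided error $E=O(n\log n+d_{n}^{4}/M)$ under only $d_{n}^{2}=o(M)$, since the cleanest published statements assume stronger sparsity so that $E=o(1)$. I would therefore either quote the version of \citep{B.D.M-Subgraphs-of-dense-Old} that carries explicit error terms, or re-run its switching / inclusion--exclusion estimate to obtain just the lower bound $|\mathbb{G}^{D}|\ge\frac{M!}{(M/2)!\,2^{M/2}\,\prod_{i}d_{i}!}\exp(-\frac{M_{2}}{2M}-\frac{M_{2}^{2}}{4M^{2}}-O(n\log n+d_{n}^{4}/M))$, which is all the argument actually uses; the remaining cancellation above is elementary.
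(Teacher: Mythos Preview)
Your proof is correct and follows essentially the same route as the paper: write $P(\mathcal{G}_{\mathbf{q}}(D)\in\mathbb{G}^{D})=|\mathbb{G}^{D}|\cdot c_{D}$ via part~2 of Lemma~\ref{lem: regularity q_i- very sparse}, plug in McKay's enumeration formula, apply Stirling, Taylor-expand $\sum_{i<j}\log(1+d_{i}d_{j}/M)$, and observe that the $\frac{M}{2}\log M$, $\frac{M}{2}$, $\sum_{i}d_{i}\log d_{i}$ and quadratic-in-$\lambda$ terms cancel, leaving $O(n\log n+d_{n}^{4}/M)$. The one place you are more cautious than the paper is the error term in McKay's formula: the paper simply invokes Theorem~4.6 of \cite{B.D.-Mckay-Asymptotics-which-we-need!} with error $O(d_{n}^{2}/M)$, which is $o(1)$ under the standing hypothesis $d_{n}^{2}=o(M)$, so your flagged ``main obstacle'' is resolved by that citation rather than by re-running the switching argument.
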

\begin{proof}
It follows from the part two of the previous lemma that, 
\begin{equation}
P\left(\mathcal{G}_{\mathbf{q}}\left(D\right)\in\mathbb{G}^{D}\right)=\left|\mathbb{G}^{D}\right|\cdot e^{L},\label{eq:very-sparse-L-1}
\end{equation}
 where 
\[
L=\sum_{i=1}^{n}d_{i}\left(\ln\left(d_{i}\right)-\frac{1}{2}\ln\left(M\right)\right)-\sum_{1\leq i<j\leq n}\ln\left(1+\frac{d_{i}d_{j}}{M}\right),
\]
and also, $L=P\left(\mathcal{G}_{\mathbf{q}}\left(D\right)=G\right)$
for any graph $G\in\mathbb{G}^{D}$.

Next, we use Theorem 4.6 in \cite{B.D.-Mckay-Asymptotics-which-we-need!},
which gives the number of graphs with\emph{ }a given degree sequence
$D$. Hence, 
\begin{equation}
P\left(\mathcal{G}_{\mathbf{q}}\left(D\right)\in\mathbb{G}^{D}\right)=\frac{M!\exp\left(-\lambda-\lambda^{2}+O(\frac{d_{n}^{2}}{M})\right)}{\left(\frac{M}{2}\right)!2^{\left(\frac{M}{2}\right)}\prod_{i=1}^{n}d_{i}!}\cdot e^{L},\label{eq:very-sparse-L-2}
\end{equation}
 where $\lambda$ is $\frac{1}{M}\sum_{i=1}^{n}\left(\begin{array}{c}
d_{i}\\
2
\end{array}\right)$. We put Eqs. \eqref{eq:very-sparse-L-1} and\eqref{eq:very-sparse-L-2}
together, and use the stirling estimate that is 
\[
\log\left(n!\right)-n\log\left(n\right)+n-\frac{1}{2}\log\left(2\pi n\right)\leq\frac{c}{n},
\]
 where $c>0$.

Thus, for $M=\sum_{i=1}^{n}d_{i}$ and $d_{i}\geq1$, 
\begin{equation}
\begin{aligned}\log & \left(P\left(\mathcal{G}_{\mathbf{q}}\left(D\right)\in\mathbb{G}^{D}\right)\right)\\
\geq & L-\lambda-\lambda^{2}+O\left(\frac{d_{n}^{2}}{M}\right)+M\log\left(M\right)-M+\frac{1}{2}\log\left(2\pi M\right)\\
 & -\left(\frac{M}{2}\log\left(\frac{M}{2}\right)-\frac{M}{2}+\frac{1}{2}\log\left(2\pi\frac{M}{2}\right)+\frac{c}{M}\right)-\frac{M}{2}\log\left(2\right)\\
 & -\sum_{i=1}^{n}\left(d_{i}\log\left(d_{i}\right)-d_{i}+\frac{1}{2}\log\left(2\pi d_{i}\right)+\frac{c}{d_{i}}\right)\\
\geq & -\sum_{1\leq i<j\leq n}\ln\left(1+\frac{d_{i}d_{j}}{M}\right)-\lambda-\lambda^{2}+O\left(\frac{d_{n}^{2}}{M}\right)+\frac{M}{2}+\frac{1}{2}\log(2)\\
 & -\frac{n}{2}\log\left(2\pi\right)-\frac{1}{2}\sum_{i=1}^{n}\log\left(d_{i}\right)-c\left(n+1\right).
\end{aligned}
\label{eq: sparse-lower-bound-q}
\end{equation}

It is time for the Taylor series for $\log(1+\frac{d_{i}d_{j}}{M})$,
which is possible because $\frac{d_{i}d_{j}}{M}<\frac{d_{n}^{2}}{M}<1$.
So, for $\lambda=\frac{1}{2M}\sum_{i=1}^{n}\left(d_{i}^{2}-d_{i}\right)=\frac{1}{2M}\sum_{i=1}^{n}d_{i}^{2}-\frac{1}{2},$
\begin{equation}
\begin{aligned}I:= & \sum_{1\leq i<j\leq n}\ln\left(1+\frac{d_{i}d_{j}}{M}\right)+\lambda+\lambda^{2}\\
\leq & \sum_{1\leq i<j\leq n}\left(\frac{d_{i}d_{j}}{M}-\frac{1}{2}\left(\frac{d_{i}d_{j}}{M}\right)^{2}+\frac{1}{3}\left(\frac{d_{i}d_{j}}{M}\right)^{3}\right)+\frac{1}{2M}\sum_{i=1}^{n}d_{i}^{2}-\frac{1}{2}\\
 & +\left(\frac{1}{2M}\sum_{i=1}^{n}d_{i}^{2}-\frac{1}{2}\right)^{2}\\
= & \frac{1}{2M}\left(\sum_{i=1}^{n}d_{i}\right)^{2}-\frac{1}{2}+\sum_{1\leq i<j\leq n}\left(-\frac{1}{2M^{2}}d_{i}^{2}d_{j}^{2}+\frac{1}{3M^{3}}d_{i}^{3}d_{j}^{3}\right)\\
 & +\frac{1}{4M^{2}}\sum_{i=1}^{n}d_{i}^{4}+\frac{1}{2M^{2}}\sum_{1\leq i<j\leq n}d_{i}^{2}d_{j}^{2}-\frac{1}{2M}\sum_{i=1}^{n}d_{i}^{2}+\frac{1}{4}\\
= & \frac{M}{2}-\frac{1}{4}+\frac{1}{3M^{3}}\sum_{1\leq i<j\leq n}d_{i}^{3}d_{j}^{3}+\frac{1}{4M^{2}}\sum_{i=1}^{n}d_{i}^{4}-\frac{1}{2M}\sum_{i=1}^{n}d_{i}^{2}\\
\leq & \frac{M}{2}+\frac{d_{n}^{4}}{3M^{3}}\sum_{1\leq i<j\leq n}d_{i}d_{j}+\frac{d_{n}^{3}}{4M^{2}}\sum_{i=1}^{n}d_{i}\\
\leq & \frac{M}{2}+\frac{d_{n}^{4}}{M}.
\end{aligned}
\label{eq: sparse-lower-bound-q-2}
\end{equation}
Combining \eqref{eq: sparse-lower-bound-q} and \eqref{eq: sparse-lower-bound-q-2},
we get
\[
\log\left(P\left(\mathcal{G}_{\mathbf{q}}\left(D\right)\in\mathbb{G}^{D}\right)\right)\geq-\frac{d_{n}^{4}}{M}-\frac{n}{2}\log\left(2\pi\right)-\frac{1}{2}n\log\left(n\right)-c\left(n+1\right),
\]
where we used $d_{i}<n$. That concludes the lemma.
\end{proof}
Next, we see the sparse version of Theorem \ref{thm:total sum of probabilities},
which follows from the proof of Theorem \ref{thm: main theorem- very sparse}.
\begin{lem}
\label{lem: total sum of probabilities- very sparse}The sum of variables
in Theorem \ref{thm: main theorem- very sparse} is nearly constant,
i.e.

\[
\frac{1}{M}\cdot\sum_{\left(s,T\right)\in\mathbb{S}^{k}\times\mathbb{T}^{k}}\frac{1}{\psi(s,T,D)}\mathbf{p_{q}}\left(s,T\right)=k^{k-2}+O\left(\sqrt{\frac{n}{M}}\right)+O\left(\frac{d_{n}^{2}}{M}\right),
\]
 where the constant in the $O$ notation may depend on $k$.

\end{lem}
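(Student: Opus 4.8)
The plan is to use independence of the edges of $\mathcal{G}_{\mathbf{q}}\left(D\right)$ together with the regularity of the $q_{ij}$'s from Lemma \ref{lem: regularity q_i- very sparse}, and to run exactly the leaf-by-leaf summation that proved Theorem \ref{thm:total sum of probabilities}, now with $q_{ij}$ playing the role that $\widetilde{p}_{ij}$ played there. Since the edges of $\mathcal{G}_{\mathbf{q}}\left(D\right)$ are independent and an injective $s$ sends distinct edges of a tree $T$ to distinct edges of $K_{n}$,
\[
\mathbf{p_q}\left(s,T\right)=\prod_{\left\langle u_{1},u_{2}\right\rangle \in\mathcal{E}(T)}q_{\left\langle s(u_{1}),s(u_{2})\right\rangle }.
\]
So it suffices to estimate, for each fixed $T\in\mathbb{T}^{k}$,
\[
\Phi(T):=\frac{1}{M}\sum_{s\in\mathbb{S}_{n}^{k}}\frac{1}{\psi(s,T,D)}\prod_{e\in\mathcal{E}(T)}q_{s(e)},
\]
and then sum over the $k^{k-2}$ trees in $\mathbb{T}^{k}$.

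For the recursion step, as in Lemma \ref{lem: (Upper-bound).-F(T,G)}, delete a leaf $u_{1}$ of $T$, attached to $u_{2}$, and let $T'$ be the resulting tree. Since $b_{u_{1}}-1=0$ we have $\psi(s,T,D)=\psi(s',T',D)\cdot d_{s(u_{2})}$, where $s'$ is the restriction of $s$ to $\mathcal{V}(T')$. Performing the innermost sum over the image of the leaf, which ranges over the $n-(k-1)$ vertices not already used,
\[
\sum_{x\notin s(\mathcal{V}(T'))}q_{\langle x,s(u_{2})\rangle}=d_{\mathbf{q}}\!\left(s(u_{2})\right)-\sum_{x\in s(\mathcal{V}(T')),\,x\neq s(u_{2})}q_{\langle x,s(u_{2})\rangle},
\]
and the subtracted sum has at most $k-2$ terms, each at most $d_{s(u_{2})}d_{n}/M$ because $q_{ij}\le d_{i}d_{j}/M$. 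Dividing by $d_{s(u_{2})}$ and using part 1 of Lemma \ref{lem: regularity q_i- very sparse}, i.e. $d_{\mathbf{q}}(j)/d_{j}=1+O(d_{n}^{2}/M)$ uniformly in $j$, the bracket equals $1+\beta_{s'}$ with $\left|\beta_{s'}\right|\le c_{k}d_{n}^{2}/M$ uniformly in $s'$; hence $\Phi(T)=(1+\gamma)\Phi(T')$ for some $\gamma$ with $\left|\gamma\right|\le c_{k}d_{n}^{2}/M$.

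Iterating this $k-2$ times reduces $T$ to a single edge $T_{2}$, where $\psi\equiv1$ and
\[
\Phi(T_{2})=\frac{1}{M}\sum_{i\neq j}q_{ij}=\frac{1}{M}\sum_{i=1}^{n}d_{\mathbf{q}}(i)=1-O\!\left(\frac{d_{n}^{2}}{M}\right),
\]
the last step because $\sum_{i}\left(d_{i}-d_{\mathbf{q}}(i)\right)\le\sum_{i}2d_{i}^{2}d_{n}/M\le2d_{n}^{2}$ by Lemma \ref{lem: regularity q_i- very sparse}. Multiplying the $k-2$ factors $1+O(c_{k}d_{n}^{2}/M)$ (valid since $d_{n}^{2}=o(M)$) yields $\Phi(T)=1+O(c_{k}d_{n}^{2}/M)$ for every $T\in\mathbb{T}^{k}$, and summing over $\mathbb{T}^{k}$ gives
\[
\frac{1}{M}\sum_{\left(s,T\right)\in\mathbb{S}^{k}\times\mathbb{T}^{k}}\frac{1}{\psi(s,T,D)}\mathbf{p_q}\left(s,T\right)=k^{k-2}+O\!\left(\frac{d_{n}^{2}}{M}\right),
\]
which is stronger than what is claimed. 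The extra $O(\sqrt{n/M})$ in the statement is the slack one inherits if instead one compares the degree sequence of $\mathcal{G}_{\mathbf{q}}\left(D\right)$ with $D$ and invokes Lemma \ref{lem:  Lemma 3} with exponent $a=\tfrac12$, as is done inside the proof of Theorem \ref{thm: main theorem- very sparse}.

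I expect no genuine obstacle here: independence, the leaf-deletion identity for $\psi$, and the near-degree identity $d_{i}=d_{\mathbf{q}}(i)\bigl(1+O(d_{n}^{2}/M)\bigr)$ are all already available, and the only point needing care is keeping the per-step error $\beta_{s'}$ uniform in $s'$ so that it factors cleanly out of the sum — which is exactly what part 1 of Lemma \ref{lem: regularity q_i- very sparse} delivers. The argument is the bookkeeping of the proof of Theorem \ref{thm:total sum of probabilities}, with the exact identity $d_{i}=\sum_{j}\widetilde{p}_{ij}$ there replaced by this approximate one.
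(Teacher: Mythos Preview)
Your argument is correct and uses the same leaf-deletion mechanism as the paper, only packaged differently. The paper does not rerun the recursion: it applies Corollary~\ref{thm:total sum of probabilities-2} as a black box with $q_{ij}$ in place of $\widetilde{p}_{ij}$, the modified degree vector $D_{\mathbf{q}}=(d_{\mathbf{q}}(i))_{i}$ in place of $D$, and $M_{\mathbf{q}}=\sum_{i}d_{\mathbf{q}}(i)$ in place of $M$, which is where the $O(\sqrt{n/M})$ term enters; it then converts $\psi(s,T,D_{\mathbf{q}})$ back to $\psi(s,T,D)$ and $M_{\mathbf{q}}$ back to $M$ via the ratio bounds of part~1 of Lemma~\ref{lem: regularity q_i- very sparse}, picking up the $O(d_{n}^{2}/M)$ term. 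You instead keep $D$ inside $\psi$ throughout and absorb the discrepancy $d_{\mathbf{q}}(i)/d_{i}=1+O(d_{n}^{2}/M)$ at each leaf removal; this bypasses the detour through $D_{\mathbf{q}}$ and actually gives the sharper estimate $k^{k-2}+O(d_{n}^{2}/M)$, with no $O(\sqrt{n/M})$ term needed. One small inaccuracy in your closing remark: the $O(\sqrt{n/M})$ slack in the stated lemma is not inherited from Lemma~\ref{lem:  Lemma 3} with $a=\tfrac12$; it simply comes from quoting Corollary~\ref{thm:total sum of probabilities-2} with the sequence $D_{\mathbf{q}}$.
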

\begin{proof}
Let $D_{\mathbf{q}}$ be the vector $\left(d_{\mathbf{q}}\left(i\right)\right)$,
where $1\leq i\leq n$, and $M_{\mathbf{q}}:=\sum_{i=1}^{n}d_{\mathbf{q}}\left(i\right)$.
Replacing $\widetilde{p}_{ij}$ with $q_{ij}$ in Theorem \ref{thm:total sum of probabilities-2},
we get 
\[
\frac{1}{M_{\mathbf{q}}}\cdot\sum_{\left(s,T\right)\in\mathbb{S}^{k}\times\mathbb{T}^{k}}\frac{1}{\psi(s,T,D_{\mathbf{q}})}\mathbf{p_{q}}\left(s,T\right)=k^{k-2}+O\left(\sqrt{\frac{n}{M_{q}}}\right).
\]

Recall that 
\[
\psi(s,T,D)=\prod_{u\in V(T)}d_{s(u)}^{b_{u}-1},
\]
 where $\mathcal{V}(T)$ is the vertex set of $T\in\mathbb{T}^{k}$,
and $b_{u}$ is the degree of a vertex $u$ in $\mathcal{V}(T)$.
Thus, the first part of Lemma \ref{lem: regularity q_i- very sparse}
demonstrates that 
\[
1\leq\frac{\psi(s,T,D)}{\psi(s,T,D_{\mathbf{q}})}\leq\left(1-\frac{2d_{n}^{2}}{M}\right)^{-k}\leq1+2k\cdot\frac{d_{n}^{2}}{M},
\]
and that $\left(1-\frac{2d_{n}^{2}}{M}\right)\leq\frac{M_{q}}{M}\leq1.$
The combination of the above equations concludes this lemma. \end{proof}
\begin{lem}
\label{lem: upper bound- very sparse - q}We let $A$ be any subset
of $\mathbb{S}_{n}^{k}\times\mathbb{T}^{k}$. Then, for $\epsilon<<1$,

\[
P\left(\left|\frac{1}{M}\cdot\sum_{\left(s,T\right)\in A}\frac{1}{\psi(s,T,D)}\left(\mathbf{p_{q}}\left(s,T\right)-\mathbf{1}_{s}\left(T,\mathcal{G}_{\mathbf{q}}\left(D\right)\right)\right)\right|>\mu\epsilon\right)\leq e^{-\left(c\mu M\right)\epsilon^{2}},
\]
 where 
\[
\mu:=\frac{1}{M}\cdot\sum_{\left(s,T\right)\in A}\frac{1}{\psi(s,T,D)}\mathbf{p_{q}}\left(s,T\right).
\]
\end{lem}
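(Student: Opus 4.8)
The plan is to replay the proof of Proposition~\ref{prop: upper bound for P^-} essentially verbatim, with the independent--edge model $\widetilde{\mathcal{G}}\left(D\right)$ replaced by $\mathcal{G}_{\mathbf{q}}\left(D\right)$. Apply Theorem~\ref{thm:lower inequality} with indicators $\{J_{i}\}=\{\mathbf{1}_{\left\langle i,j\right\rangle }\}$ the edges of $\mathcal{G}_{\mathbf{q}}\left(D\right)$, index set $Q=\mathbb{S}^{k}\times\mathbb{T}^{k}$, weights $\omega_{\left(s,T\right)}=M\cdot\psi\left(s,T,D\right)$, and $p_{\alpha}=\mathbf{p_{q}}\left(s,T\right)$, so that
\[
S=\sum_{\alpha\in A}\frac{1}{\omega_{\alpha}}\mathbf{1}_{\alpha}=\frac{1}{M}\sum_{\left(s,T\right)\in A}\frac{1}{\psi\left(s,T,D\right)}\mathbf{1}_{s}\left(T,\mathcal{G}_{\mathbf{q}}\left(D\right)\right)
\]
has mean $\lambda=\mu$. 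It then remains to verify that the quantities $\delta_{1}$ and $\delta_{2}$ of Theorem~\ref{thm:lower inequality} are $O\!\left(\frac{c_{k}}{M}\right)$.

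The bound on $\delta_{1}$ is immediate: since $\psi\left(s,T,D\right)\ge1$,
\[
\delta_{1}=\frac{1}{\mu}\sum_{\left(s,T\right)\in A}\frac{\mathbf{p_{q}}\left(s,T\right)}{M^{2}\psi\left(s,T,D\right)^{2}}\le\frac{1}{M^{2}\mu}\sum_{\left(s,T\right)\in A}\frac{\mathbf{p_{q}}\left(s,T\right)}{\psi\left(s,T,D\right)}=\frac{1}{M},
\]
the last equality being the definition of $\mu$; alternatively one may argue as in Lemma~\ref{lem: (Upper-bound).-F(T,G)}, using $\sum_{x}q_{ix}=d_{\mathbf{q}}\left(i\right)\le d_{i}$ (part~1 of Lemma~\ref{lem: regularity q_i- very sparse}).

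The real work, exactly as in Proposition~\ref{prop: upper bound for P^-}, is $\delta_{2}$. Two ordered trees are dependent iff their images share an edge; for such a pair set $\left(s_{3},T_{3}\right)=\left(s_{1},T_{1}\right)\vee\left(s_{2},T_{2}\right)$, so that $\mathbf{1}_{s_{1}}(T_{1},\cdot)\,\mathbf{1}_{s_{2}}(T_{2},\cdot)=\mathbf{1}_{s_{3}}(T_{3},\cdot)$ and, by Lemma~\ref{lem: wedge_sum} (valid for any degree sequence), $\psi\left(s_{3},T_{3},D\right)\le\psi\left(s_{1},T_{1},D\right)\psi\left(s_{2},T_{2},D\right)$. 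Using Remark~\ref{rem: over counting} to absorb the at most $c_{k}$ pairs producing a given $\left(s_{3},T_{3}\right)$, and grouping the wedge sums into the extension classes $U\left(s_{1},T_{1},j\right)$ with $k\le j\le2(k-1)$, one reduces as before to estimating
\[
\frac{c_{k}}{\mu M^{2}}\sum_{j=k}^{2(k-1)}\ \sum_{\left(s_{1},T_{1}\right)\in A}\ \sum_{\left(s_{3},T_{3}\right)\in U\left(s_{1},T_{1},j\right)}\frac{1}{\psi\left(s_{3},T_{3},D\right)}E\left[\mathbf{1}_{s_{3}}(T_{3},\mathcal{G}_{\mathbf{q}})\right].
\]
Here I would sidestep the random degree switch of Lemma~\ref{lem:  Lemma 3}: since $d_{\mathbf{q}}\left(i\right)\le d_{i}$ one has $\psi\left(\cdot,D_{\mathbf{q}}\right)\le\psi\left(\cdot,D\right)$, hence $1/\psi\left(s_{3},T_{3},D\right)\le1/\psi\left(s_{3},T_{3},D_{\mathbf{q}}\right)$; then the leaf--peeling of Lemma~\ref{lem: (Upper-bound).-F(T,G)} (summing the edge indicator of each extension leaf against its $q_{ij}$, which sums to $d_{\mathbf{q}}\left(\cdot\right)$) collapses the inner sum into $E\left[\frac{1}{\psi\left(s_{1},T_{1},D_{\mathbf{q}}\right)}\mathbf{1}_{s_{1}}(T_{1},\mathcal{G}_{\mathbf{q}})\right]$; and finally the estimate $\psi\left(s,T,D\right)/\psi\left(s,T,D_{\mathbf{q}}\right)\le1+2k\,\frac{d_{n}^{2}}{M}=1+o(1)$ from part~1 of Lemma~\ref{lem: regularity q_i- very sparse} (already used in the proof of Lemma~\ref{lem: total sum of probabilities- very sparse}) lets one return to $D$. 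Summing over $\left(s_{1},T_{1}\right)\in A$ and dividing by $\mu M^{2}$ then gives $\delta_{2}\le\frac{c_{k}}{M}$.

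With $\delta_{1},\delta_{2}\le\frac{c_{k}}{M}$, Theorem~\ref{thm:lower inequality} yields $P\left(\left|S-\mu\right|>\mu\epsilon\right)\le e^{-\left(c\mu M\right)\epsilon^{2}}$, which is the assertion. I expect the only delicate point to be $\delta_{2}$: one must check that the $B$--function bookkeeping under the wedge sum, together with the passage from the degree sequence of $\mathcal{G}_{\mathbf{q}}\left(D\right)$ back to $D$, never costs more than a multiplicative $c_{k}$, and this is precisely where the very--sparse hypothesis $d_{n}^{2}=o(M)$ enters.
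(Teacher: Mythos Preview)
Your proposal is correct and follows essentially the same route as the paper: replay the proof of Proposition~\ref{prop: upper bound for P^-} with $\mathcal{G}_{\mathbf{q}}(D)$ in place of $\widetilde{\mathcal{G}}(D)$, and handle the discrepancy between the weights $\psi(\cdot,D)$ and the actual mean degrees $d_{\mathbf{q}}(i)$ via the deterministic estimate $d_{\mathbf{q}}(i)/d_i=\Theta(1-2d_n^2/M)$ from part~1 of Lemma~\ref{lem: regularity q_i- very sparse}. One small slip: the product $\mathbf{1}_{s_1}(T_1,\cdot)\,\mathbf{1}_{s_2}(T_2,\cdot)$ is in general only $\le\mathbf{1}_{s_3}(T_3,\cdot)$ (the wedge sum may delete edges), not equal to it, but the inequality is all that is needed and is what the paper uses.
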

\begin{proof}
We use the the estimate $\frac{d_{\mathbf{q}}\left(i\right)}{d_{i}}=\Theta(1-\frac{2d_{n}^{2}}{M})$$ $
from part 1 of Lemma \ref{lem: regularity q_i- very sparse} to interchange
between $d_{i}$ and $d_{\mathbf{q}}\left(i\right)$. Other than that
the proof is a repetition of Lemma \ref{lem:  Lemma 3}, which we
skip. 
\end{proof}
\smallskip{}

\begin{proof}
[Proof of Theorem \ref{thm: main theorem- very sparse}]We know that
$d_{n}^{2}=o(M)$, hence $d_{n}^{2}<\frac{1}{2}M$ eventually. The first
part follows from Remark \ref{rem: weaker condition} and \ref{thm: main theorem}. 

Next, we show that 

\begin{eqnarray}
I: & = & \sum_{\left(s,T\right)\in\mathbb{T}^{k}\times\mathbb{S}^{k}}\frac{1}{\psi(s,T,D)}\left|\mathbf{p_{\mathbf{q}}}\left(s,T\right)-\mathbf{p_{a}}\left(s,T\right)\right|\label{eq: q-1}\\
 & \leq & C_{k}\cdot\left(\left(\frac{n}{M}\right)^{1/4}n^{a_{1}}+\frac{d_{n}^{2}}{M}\right),\nonumber 
\end{eqnarray}
 where $a_{1}:=\left(a-\frac{1}{2}\right)$ and $C_{k}>0$ are constants.
We combine Lemma \ref{lem: total sum of probabilities- very sparse}
and Theorem \ref{thm:total sum of probabilities} as usual.  We get
an equation related to Eq. \eqref{eq: err1- first estimamte} that
is 
\[
I=2\left[\sum_{\left(s,T\right)\in A^{-}}\frac{1}{\psi(s,T,D)}\left(\mathbf{p_{\mathbf{q}}}\left(s,T\right)-\mathbf{p_{a}}\left(s,T\right)\right)\right]+O\left(\left(\frac{n}{M}\right)^{1/2}n^{a_{1}}+\frac{d_{n}^{2}}{M}\right),
\]
 where $ $
\[
A^{-}=\left\{ \left(s,T\right)\in\mathbb{S}_{n}^{k}\times\mathbb{T}^{k}\Big|\mathbf{p_{\mathbf{q}}}\left(s,T\right)-\mathbf{p_{a}}\left(s,T\right)>0\right\} .
\]
 Now, we follow the streamline in the proof of Theorem \ref{thm: main theorem}.
Much like \eqref{eq:def of epsilon-main thm} and without a loss of generality,
we assume that the variable 
\[
\mu_{\mathbf{q}}^{-}:=\sum_{\left(s,T\right)\in A^{-}}\frac{1}{\psi(s,T,D)}\mathbf{p_{\mathbf{q}}}\left(s,T\right)
\]
 is greater or equal to $\left(\frac{n}{M}\right)^{1/2}n^{3a_{1}}$.
In addition, we let $\epsilon$ satisfy 
\[
\epsilon^{2}=\frac{1}{\mu_{\mathbf{q}}^{-}}\left(\frac{n}{M}\right)^{1/2}n^{2a_{1}},
\]
 which resembles Eq. \eqref{eq:def of epsilon-main thm} with $n^{-\nu}=\sqrt{\frac{n}{M}}$. 

We continue and combine part 3 and 4 of Lemma \ref{lem: regularity q_i- very sparse},
and Lemma \ref{lem: upper bound- very sparse - q} to get,
\[
\begin{aligned} & L^{-}\\
 & :=P\left(\left|\sum_{\left(s,T\right)\in A^{-}}\frac{1}{\psi(s,T,D)}\left(\mathbf{p}_{\mathbf{q}}\left(s,T\right)-\mathbf{1}_{s}\left(T,\mathcal{G}_{\mathbf{a}}\left(a,D\right)\right)\right)\right|>\mu^{-}\epsilon\right)\\
 & \leq\frac{\exp\left(O(\left(\frac{M}{n}\right)^{\frac{1}{2}}n^{a_{1}})\right)}{P\left(\mathcal{G}_{\mathbf{q}}\in\mathbb{G}_{a}^{D}\right)}P\left(\left|\sum_{\left(s,T\right)\in A^{-}}\frac{1}{\psi(s,T,D)}\left(\mathbf{p}_{\mathbf{q}}\left(s,T\right)-\mathbf{1}_{s}\left(T,\mathcal{G}_{\mathbf{q}}\right)\right)\right|>\mu^{-}\epsilon\right)\\
 & \leq\exp\left(-\left(c\mu M\right)\epsilon^{2}+O\left(\left(\frac{M}{n}\right)^{\frac{1}{2}}n^{a_{1}}\right)+O\left(n\log(n)\right)\right),
\end{aligned}
\]
 where $\mathcal{G}_{\mathbf{q}}=\mathcal{G}_{\mathbf{q}}\left(D\right)$.
The rest is the same process as in the proof of Theorem \ref{thm: main theorem},
and we end up with the bound in Eq. \eqref{eq: q-1}, i.e. 
\[
I\leq C_{k}\cdot\left(\left(\frac{n}{M}\right)^{1/4}n^{a_{1}}+\frac{d_{n}^{2}}{M}\right).
\]

For the last parts of our theorem, we use part 1 of Lemma \ref{lem: regularity q_i- very sparse}.
Therefore, 
\begin{eqnarray}
P\left(\mathcal{G}_{\mathbf{q}}\left(D\right)\in A\right) & = & P\left(\mathcal{G}_{\mathbf{q}}\left(D\right)\in A\Big|\mathcal{G}_{\mathbf{q}}\left(D\right)\in\mathbb{G}^{D}\right)\label{eq:upper bound- very sparse}\\
 & \leq & \frac{1}{P\left(\mathcal{G}_{\mathbf{q}}\left(D\right)\in\mathbb{G}^{D}\right)}P\left(\mathcal{G}_{\mathbf{q}}\left(D\right)\in A\right),\nonumber 
\end{eqnarray}
 where $A$ is a subset of $\mathbb{G}^{D}$, and again, $\mathcal{G}_{\mathbf{g}}$
is the random graph chosen uniformly from $\mathbb{G}^{D}$. Using
Eq. \ref{eq:upper bound- very sparse}, the last part of Lemma \ref{lem: regularity q_i- very sparse},
and an argument identical to the proof of Remark \ref{rem: given degree sequence!},
we obtain 

\begin{equation}
\sum_{\left(s,T\right)\in\mathbb{S}_{n}^{k}\times\mathbb{T}^{k}}\frac{1}{\psi(s,T,D)}\left|\mathbf{p_{\mathbf{q}}}\left(s,T\right)-\mathbf{p_{g}}\left(s,T\right)\right|\leq C_{k}\cdot\left(\left(\frac{n\log(n)}{M}\right)^{1/2}+\frac{d_{n}^{2}}{M}\right),\label{eq: q-2}
\end{equation}
 where 
\[
\mathbf{p_{g}}\left(s,T\right)=E\left[\mathbf{1}_{s}\left(T,\mathcal{G}_{\mathbf{g}}\right)\right],
\]
 as in Conjecture \ref{conj: given degree sequence!}.

Finally, parts 2 and 3 of the theorem follow from the first part
of the theorem, Eq. \eqref{eq: q-1} and \eqref{eq: q-2}, and the
triangle inequality. \end{proof}
\begin{rem}
Although the bound for the differences $\left|\mathbf{p_{g}}\left(s,T\right)-\mathbf{\widetilde{p}}\left(s,T\right)\right|$
in the second part of the theorem is $o(1)$, compared to Conjecture
\ref{conj: given degree sequence!}, it is not optimal. The reason
is that we do not get the lower bound in Conjecture \ref{conj: lower bound}.
Instead, we use parameters $\mathbf{p_{q}}\left(s,T\right)$ and $\mathbf{p_{a}}\left(s,T\right)$
as a middle step to achieve our bound.
\end{rem}

\subsection{Bipartite graphs (proof of Theorem \ref{thm: main theorem-bipartite}).\label{sec: roof-of-bipartite}}

Although the setup is a little bit different here, the proof operates
along similar lines as the proof of Remark \ref{rem: given degree sequence!}.
The main difference is that everything splits into two sets of variables.
For example, there is a related version of Eq. \eqref{eq:P_ij and r_i}
for the maximum entropy $\widetilde{\mathbf{p}}\in\mathbb{P}^{D_{1},D_{2}}$.
We write, for $1\leq i\leq n_{1}$ and $1\leq j\leq n_{2}$, 
\[
\widetilde{p}_{ij}=\frac{r_{1,i}r_{2,j}}{1+r_{1,i}r_{2,j}},
\]
 where $x^{\ast}=(r_{1,1},...,r_{1,n_{1}})$ and $y^{\ast}=(r_{2,1},...,r_{2,n_{2}})$
are two positive vectors.

In regard to the ordered trees, we restrict our sums to the trees
$\left(s,T\right)\in\mathbb{S}_{n}^{k}\times\mathbb{T}^{k}$ that
$s\left(T\right)$ does not have any edge with both ends in vertices
of either part $1$ or part $2$. We let $\mathcal{T}_{\mathbf{b}}^{k}$
be the set of such trees. We check that 
\begin{equation}
\frac{1}{M}\cdot\sum_{\left(s,T\right)\in\mathcal{T}_{\mathbf{b}}^{k}}\frac{1}{\psi(s,T,D)}\mathbf{p_{b}}\left(s,T\right)=k^{k-2}+O\left(\sqrt{\frac{n}{M}}\right)\label{eq: constant sum bipartite}
\end{equation}
 is still valid. Although it sounds contradictory to Theorem \ref{thm:total sum of probabilities-2}
since $\mathcal{T}_{\mathbf{b}}^{k}$ is a subset of $\mathbb{S}_{n}^{k}\times\mathbb{T}^{k}$,
we note that the definitions of $d_{1,i}$ and $d_{2,i}$ are different
from $d_{i}$ in Theorem \ref{thm:total sum of probabilities-2}.
Here, 
\[
d_{1,i}:=\sum_{j=1}^{n_{1}}\widetilde{p}_{ij},\text{ and }d_{2,i}:=\sum_{i=1}^{n_{2}}\widetilde{p}_{ij},
\]
as opposed to 
\[
d_{i}:=\sum_{i=1}^{n}\widetilde{p}_{ij},
\]
 where $n=n_{1}+n_{2}$. 

Next, Theorem 1-1 of \cite{B-H-0/1-matrices} gives the following
bounds 
\begin{equation}
(n_{1}n_{2})^{-\eta(n_{1}+n_{2})}\leq P\left(\mathcal{G}_{\mathbf{b}}\left(D_{1},D_{2}\right)\in\mathbb{G}^{D_{1},D_{2}}\right)=e^{-H_{2}\left(\widetilde{\mathbf{p}}\right)}\left|\mathbb{G}^{D_{1},D_{2}}\right|,\label{eq: Asymptotic bipartite}
\end{equation}
 for some positive $\eta$ independent of $n$. The above term $(n_{1}n_{2})^{-\eta(n_{1}+n_{2})}$
is bounded by $e{}^{-\eta n\log(n^{2})}$. Equations \eqref{eq: constant sum bipartite}
and \eqref{eq: Asymptotic bipartite} are enough to produce a proof
using the same method in the proof of Remark \ref{rem: given degree sequence!},
and we skip the details.
\begin{rem}
The Theorem 1-1 of \cite{B-H-0/1-matrices} only requires that the
polytope $\mathbb{P}^{D_{1},D_{2}}$ has a non-empty interior. That
gives a proof of Theorem \ref{thm: main theorem-bipartite} without
any extra conditions on the degree sequence like part 2 and 3 of Assumption
\ref{def: nu-strong}. We also believe that one can prove all the
previous results without any extra condition on the degree sequence. \end{rem}


\section{Acknowledgements}

The author would like to thank S. R. Srinivasa Varadhan for reviewing
the manuscript and his useful comments. Thanks to S. Chatterjee for
suggesting the problem. Thanks to A. Barvinok, M. Harel, A. Krishnan,
and A. Munez for fruitful discussions and references. 

\appendix

\section{\label{sec:Concentration-inequality.}A concentration inequality.}

We also need the following concentration theorem for the proof of
Theorem \ref{thm: main theorem} that is inspired by a paper by Janson
\cite{Janson-Poisson-approximation}. This is the generalized version
of Theorem 1 from Janson's paper, and we modified the proof for our
purpose. Therefore, we begin this part with some notations and a theorem.

Consider a set $\left\{ J_{i}\right\} _{i\in Q}$ of independent random
indicator variables and a family $\left\{ Q(\alpha)\right\} _{\alpha\in A}$of
subsets of the index set $Q$, and define $\mathbf{1}_{\alpha}=\prod_{i\in Q(\alpha)}J_{i}$
and $S=\sum_{\alpha\in A}\frac{1}{\omega_{\alpha}}\mathbf{1}_{\alpha}$,
where $\omega_{\alpha}$ are positive numbers. {[}In other words,
$S$ counts the weighted number of the given sets $Q\{\alpha\}$ that
are contained in the random set $\left\{ i\in Q:\, J_{i}=1\right\} $,
with independently appearing elements.{]} We assume, for the sake
of simplicity, that the index set $A$ is finite, but it is easy to
see that the results extend to infinite sums, provided $E[S]<\infty$.

Write $\alpha\sim\beta$ if $Q(\alpha)\cap Q(\beta)\neq\emptyset$
but $\alpha\neq\beta$, and define

\[
p_{\alpha}=E\left[\mathbf{1}_{\alpha}\right],
\]

\[
\lambda=E\left[S\right]=\sum\frac{1}{\omega_{\alpha}}p_{\alpha},
\]

\[
\delta_{1}=\frac{1}{\lambda}\sum_{\alpha}\frac{p_{\alpha}}{\omega_{\alpha}^{2}},
\]

\[
\delta_{2}=\frac{1}{\lambda}\sum_{\alpha}\sum_{\beta\sim\alpha}\frac{1}{\omega_{\alpha}\omega_{\beta}}E\left[\mathbf{1}_{\alpha}\mathbf{1}_{\beta}\right].
\]

\begin{thm}
\label{thm:lower inequality}With notation above and $0\leq\epsilon\leq1$
then 

\[
P(S\leq(1-\epsilon)\lambda)\leq\exp\left[-\frac{\lambda}{\delta_{1}+\delta_{2}}(\epsilon+(1-\epsilon)\log(1-\epsilon))\right].
\]

\end{thm}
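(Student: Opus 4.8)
The plan is to follow Janson's exponential-moment method, adapted to the weighted sum $S=\sum_\alpha \omega_\alpha^{-1}\mathbf 1_\alpha$, and to control the lower tail via an estimate on the Laplace transform $E[e^{-tS}]$ for $t\ge 0$. The standard reduction is: for any $t\ge 0$, Markov's inequality on $e^{-tS}$ gives
\[
P(S\le (1-\epsilon)\lambda)\le e^{t(1-\epsilon)\lambda}\,E[e^{-tS}],
\]
so it suffices to bound $\log E[e^{-tS}]$ from above by something of the form $-t\lambda + (\text{correction})\cdot t^2$ or, more precisely, by the expression that after optimizing in $t$ yields the stated bound with the $\epsilon+(1-\epsilon)\log(1-\epsilon)$ rate. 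The first step is therefore to establish the key inequality
\[
\log E\!\left[e^{-tS}\right]\le \lambda\big(\delta_1+\delta_2\big)\big(e^{-t/(\delta_1+\delta_2)}-1\big)+\ \lambda t\cdot\big(\text{something}\big),
\]
i.e. to show that $-tS$ behaves, in the exponential-moment sense, like a Poisson variable with a suitably rescaled mean; here the $\delta_1+\delta_2$ plays the role of an effective ``clumping'' scale coming from the weights $\omega_\alpha$ and the dependency structure $\alpha\sim\beta$.

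The main technical step is the bound on $E[e^{-tS}]$. I would use the Harris/FKG-type inequality together with a term-by-term expansion: since each $\mathbf 1_\alpha$ is an increasing function of the independent indicators $\{J_i\}$, the variables $e^{-t\omega_\alpha^{-1}\mathbf 1_\alpha}$ are decreasing, hence negatively correlated, which gives
\[
E\!\left[e^{-tS}\right]=E\!\left[\prod_\alpha e^{-t\omega_\alpha^{-1}\mathbf 1_\alpha}\right]\le \prod_\alpha E\!\left[e^{-t\omega_\alpha^{-1}\mathbf 1_\alpha}\right].
\]
Wait --- this naive product bound loses the dependency refinement and would only give a Chernoff bound with $\delta_1$, not $\delta_1+\delta_2$. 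The correct approach, following Janson, is more delicate: one writes $E[e^{-tS}]$ and differentiates/estimates using the identity $\frac{d}{dt}\log E[e^{-tS}] = -E_t[S]$ where $E_t$ is the tilted measure, then controls $E_t[S]$ from below in terms of $\lambda$, $\delta_1$, $\delta_2$ by a second-moment / local-dependency argument. Concretely, one shows $E_t[S]\ge \lambda e^{-t/(\delta_1+\delta_2)}$ by exploiting that conditioning on $\mathbf 1_\alpha=1$ changes the law of the other summands only through the $\beta\sim\alpha$ (sharing an index in $Q(\alpha)$) and that the weights enter quadratically in the $\delta_i$. Integrating this differential inequality in $t$ and then optimizing the free parameter $t$ (the optimal choice is $t=(\delta_1+\delta_2)|\log(1-\epsilon)|$) produces exactly the claimed exponent $-\frac{\lambda}{\delta_1+\delta_2}(\epsilon+(1-\epsilon)\log(1-\epsilon))$.

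I expect the main obstacle to be precisely this lower bound on the tilted mean $E_t[S]$: one must show that the correlation corrections introduced by tilting are dominated by the $\delta_2$ term uniformly in $t\in[0,\infty)$, which requires carefully bounding conditional expectations $E[\mathbf 1_\beta \mid \mathbf 1_\alpha = 1]$ and summing the resulting contributions against the weights $\omega_\alpha^{-1}\omega_\beta^{-1}$. The weights make this slightly more involved than in Janson's unweighted setting, but the structure is the same: the pairwise dependency sum is exactly what $\delta_2$ records, and the ``diagonal'' second-moment contribution is what $\delta_1$ records. Once the differential inequality $\frac{d}{dt}\log E[e^{-tS}]\le -\lambda e^{-t/(\delta_1+\delta_2)}$ is in hand, the remainder is the routine calculus of integrating and optimizing, which I would present compactly. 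A secondary, purely bookkeeping point is to verify the reduction to finite $A$ (handled by the remark in the excerpt that infinite sums are fine when $E[S]<\infty$, via monotone approximation by finite subsums).
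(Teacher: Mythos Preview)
Your approach is essentially the paper's: differentiate $\log E[e^{-tS}]$, bound the derivative below by $\lambda$ times an exponential via FKG (applied conditionally on $\mathbf 1_\alpha=1$, after splitting $S=S'_\alpha+S''_\alpha$ into the $\alpha$-dependent and $\alpha$-independent parts) together with two applications of Jensen, integrate, and optimize. The paper makes the mechanism for getting $\delta_1+\delta_2$ into the exponent explicit: one Jensen in the conditional expectation gives $E[e^{-tS'_\alpha}\mid \mathbf 1_\alpha=1]\ge e^{-tE[S'_\alpha\mid \mathbf 1_\alpha=1]}$, and a second Jensen over the probability weights $p_\alpha/(\omega_\alpha\lambda)$ collapses the sum to $\lambda\exp\!\big(-\tfrac{t}{\lambda}\sum_\alpha \omega_\alpha^{-1}E[S'_\alpha\mathbf 1_\alpha]\big)=\lambda e^{-t(\delta_1+\delta_2)}$.

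One concrete slip to fix: your differential inequality and optimal $t$ have $\delta_1+\delta_2$ on the wrong side. The correct inequality is
\[
\frac{d}{dt}\log E[e^{-tS}]\le -\lambda\,e^{-t(\delta_1+\delta_2)},
\]
which integrates to $\log E[e^{-tS}]\le -\frac{\lambda}{\delta_1+\delta_2}\big(1-e^{-(\delta_1+\delta_2)t}\big)$, and the optimizing choice is $t=\frac{|\log(1-\epsilon)|}{\delta_1+\delta_2}$, not $(\delta_1+\delta_2)|\log(1-\epsilon)|$. With that correction your outline matches the paper's proof.
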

We want to use the Chernoff bound, but first we need an upper bound
for the  moment-generating function. So the proof of Theorem \ref{thm:lower inequality}
follows our next lemma.
\begin{lem}
Using the preceding notations in Theorem \ref{thm:lower inequality}
and for $t\geq0$, we have

\[
E\left[e^{-tS}\right]\leq\exp\left[-\frac{\lambda}{\delta_{1}+\delta_{2}}(1-e^{-(\delta_{1}+\delta_{2})t})\right].
\]

\begin{proof}
Let $\psi(t)=E\left[e^{-tS}\right]$, for $t\geq0$. Then

\[
-\frac{d\psi(t)}{dt}=E\left[Se^{-tS}\right]=\sum_{\alpha}E\left[\frac{1}{\omega_{\alpha}}\mathbf{1}_{\alpha}e^{-tS}\right].
\]

We split $S$ into two parts; the part that is dependent on $\mathbf{1}_{\alpha}$:
$S'_{\alpha}=\frac{1}{\omega_{\alpha}}\mathbf{1}_{\alpha}+\sum_{\alpha\sim\beta}\frac{1}{\omega_{\beta}}\mathbf{1}_{\beta}$
, and $S'_{\alpha}=S-S'_{\alpha}$, which is independent of $\mathbf{1}_{\alpha}$.
Thus, 

\[
E\left[\mathbf{1}_{\alpha}e^{-tS}\right]=p_{\alpha}E\left[e^{-tS'_{\alpha}-tS''_{\alpha}}\big|\mathbf{1}_{\alpha}=1\right].
\]

The event $\mathbf{1}_{\alpha}=1$ fixes $J_{i}:i\in Q(\alpha)$.
Since $e^{-tS'_{\alpha}}$ and $e^{-tS''_{\alpha}}$ are decreasing
functions of the remaining $J_{i}:\, i\in Q$, using the FKG inequality
we get

\begin{eqnarray}
E\left[\mathbf{1}_{\alpha}e^{-tS}\right] & \geq & p_{\alpha}E\left[e^{-tS'_{\alpha}}\big|\mathbf{1}_{\alpha}=1\right]E\left[e^{-tS''_{\alpha}}\big|\mathbf{1}_{\alpha}=1\right]\nonumber \\
 & = & p_{\alpha}E\left[e^{-tS'_{\alpha}}\big|\mathbf{1}_{\alpha}=1\right]E\left[e^{-tS''_{\alpha}}\right]\nonumber \\
 & \geq & p_{\alpha}E\left[e^{-tS'_{\alpha}}\big|\mathbf{1}_{\alpha}=1\right]\psi(t).
\end{eqnarray}

Now summing over $\alpha$ and using Jensen's inequality twice we
have

\begin{eqnarray}
-\frac{d\log(\psi(t))}{dt} & = & \frac{1}{\psi(t)}\sum_{\alpha}\frac{1}{\omega_{\alpha}}E\left[\mathbf{1}_{\alpha}e^{-tS}\right]\geq\sum_{\alpha}\frac{p_{\alpha}}{\omega_{\alpha}}E\left[e^{-tS'_{\alpha}}\big|\mathbf{1}_{\alpha}=1\right]\nonumber \\
 & \geq & \sum_{\alpha}\frac{p_{\alpha}}{\omega_{\alpha}}\exp\left[-tE\left[S'_{\alpha}\big|\mathbf{1}_{\alpha}=1\right]\right]\nonumber \\
 & = & \lambda\sum_{\alpha}\frac{1}{\lambda}\frac{p_{\alpha}}{\omega_{\alpha}}\exp\left[-tE\left[S'_{\alpha}\big|\mathbf{1}_{\alpha}=1\right]\right]\nonumber \\
 & \geq & \lambda\exp\left[-t\left(\sum_{\alpha}\frac{1}{\lambda}\frac{p_{\alpha}}{\omega_{\alpha}}E\left[S'_{\alpha}\big|\mathbf{1}_{\alpha}=1\right]\right)\right]\nonumber \\
 & = & \lambda\exp\left[-\frac{t}{\lambda}\left(\sum_{\alpha}\frac{1}{\omega_{\alpha}}E\left[S'_{\alpha}\mathbf{1}_{\alpha}\right]\right)\right]\nonumber \\
 & = & \lambda\exp\left[-\frac{t}{\lambda}\left(\sum_{\alpha}\frac{1}{\omega_{\alpha}^{2}}E\left[\mathbf{1}_{\alpha}^{2}\right]+\sum_{\alpha\sim\beta}\frac{1}{\omega_{\alpha}\omega_{\beta}}E\left[\mathbf{1}_{\alpha}\mathbf{1}_{\beta}\right]\right)\right]\nonumber \\
 & = & \lambda\exp\left[-t(\delta_{1}+\delta_{2})\right].
\end{eqnarray}

Therefore, ($\psi(0)=1$)

\[
-\log(\psi(t))\geq\int_{0}^{t}\lambda e^{-t(\delta_{1}+\delta_{2})}=\frac{\lambda}{\delta_{1}+\delta_{2}}(1-e^{-(\delta_{1}+\delta_{2})t}).
\]

\end{proof}
\end{lem}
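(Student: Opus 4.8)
The plan is to bound the Laplace transform $\psi(t):=E[e^{-tS}]$ directly by deriving a differential inequality for it and then integrating. Since $S=\sum_{\alpha}\omega_{\alpha}^{-1}\mathbf{1}_{\alpha}$ is a finite nonnegative sum, $\psi$ is smooth on $[0,\infty)$ with $\psi(0)=1$, and differentiating under the expectation gives $-\psi'(t)=E[Se^{-tS}]=\sum_{\alpha}\omega_{\alpha}^{-1}E[\mathbf{1}_{\alpha}e^{-tS}]$. The whole argument rests on producing a good lower bound for each term $E[\mathbf{1}_{\alpha}e^{-tS}]$.

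First I would, for a fixed $\alpha$, split $S=S'_{\alpha}+S''_{\alpha}$, where $S'_{\alpha}:=\omega_{\alpha}^{-1}\mathbf{1}_{\alpha}+\sum_{\beta\sim\alpha}\omega_{\beta}^{-1}\mathbf{1}_{\beta}$ collects exactly the summands depending on some $J_i$ with $i\in Q(\alpha)$, while $S''_{\alpha}:=S-S'_{\alpha}$ is a function of the remaining independent indicators only. Conditioning on $\{\mathbf{1}_{\alpha}=1\}$ fixes $J_i$ for $i\in Q(\alpha)$; as functions of the still-free coordinates, both $e^{-tS'_{\alpha}}$ and $e^{-tS''_{\alpha}}$ are coordinatewise decreasing, so the FKG inequality yields $E[\mathbf{1}_{\alpha}e^{-tS}]\ge p_{\alpha}\,E[e^{-tS'_{\alpha}}\mid\mathbf{1}_{\alpha}=1]\,E[e^{-tS''_{\alpha}}\mid\mathbf{1}_{\alpha}=1]$. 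Because $S''_{\alpha}$ is independent of $\mathbf{1}_{\alpha}$, the last factor equals $E[e^{-tS''_{\alpha}}]$, and since $S''_{\alpha}\le S$ forces $e^{-tS''_{\alpha}}\ge e^{-tS}$, this is at least $\psi(t)$. Hence $E[\mathbf{1}_{\alpha}e^{-tS}]\ge p_{\alpha}\psi(t)\,E[e^{-tS'_{\alpha}}\mid\mathbf{1}_{\alpha}=1]$.

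Next I would divide by $\psi(t)$ and sum over $\alpha$ to get $-\tfrac{d}{dt}\log\psi(t)\ge\sum_{\alpha}\tfrac{p_{\alpha}}{\omega_{\alpha}}E[e^{-tS'_{\alpha}}\mid\mathbf{1}_{\alpha}=1]$, and then apply Jensen twice via the convexity of $x\mapsto e^{-tx}$. The inner step gives $E[e^{-tS'_{\alpha}}\mid\mathbf{1}_{\alpha}=1]\ge\exp(-tE[S'_{\alpha}\mid\mathbf{1}_{\alpha}=1])$. Reading $\tfrac{p_{\alpha}}{\lambda\omega_{\alpha}}$ as a probability distribution (summing to $1$ by definition of $\lambda$), the outer step gives $\sum_{\alpha}\tfrac{p_{\alpha}}{\omega_{\alpha}}\exp(-tE[S'_{\alpha}\mid\mathbf{1}_{\alpha}=1])\ge\lambda\exp\bigl(-\tfrac{t}{\lambda}\sum_{\alpha}\tfrac{p_{\alpha}}{\omega_{\alpha}}E[S'_{\alpha}\mid\mathbf{1}_{\alpha}=1]\bigr)$. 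The remaining bookkeeping uses $p_{\alpha}E[\,\cdot\mid\mathbf{1}_{\alpha}=1]=E[\,\cdot\,\mathbf{1}_{\alpha}]$; expanding $S'_{\alpha}\mathbf{1}_{\alpha}$ and using $\mathbf{1}_{\alpha}^2=\mathbf{1}_{\alpha}$ turns the exponent's sum into $\sum_{\alpha}\tfrac{p_{\alpha}}{\omega_{\alpha}^2}+\sum_{\alpha}\sum_{\beta\sim\alpha}\tfrac{1}{\omega_{\alpha}\omega_{\beta}}E[\mathbf{1}_{\alpha}\mathbf{1}_{\beta}]=\lambda(\delta_1+\delta_2)$. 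Thus $-\tfrac{d}{dt}\log\psi(t)\ge\lambda e^{-t(\delta_1+\delta_2)}$, and integrating from $0$ to $t$ with $\psi(0)=1$ gives $-\log\psi(t)\ge\tfrac{\lambda}{\delta_1+\delta_2}(1-e^{-t(\delta_1+\delta_2)})$, which is exactly the stated bound on $E[e^{-tS}]$.

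The main obstacle I anticipate is the rigorous invocation of the FKG inequality: one must verify that, after conditioning on $\mathbf{1}_{\alpha}=1$, the two factors $e^{-tS'_{\alpha}}$ and $e^{-tS''_{\alpha}}$ are genuinely monotone in the \emph{same} direction with respect to the product measure on the free coordinates, so their correlation has the right sign. Getting the dependency bookkeeping exactly right, namely which $\beta$ are neighbors of $\alpha$ and that $S''_{\alpha}$ is truly independent of $\{J_i:i\in Q(\alpha)\}$, is where the argument is most delicate; the two Jensen steps and the closing combinatorial identity are then routine.
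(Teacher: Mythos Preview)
Your proposal is correct and follows essentially the same approach as the paper's proof: define $\psi(t)=E[e^{-tS}]$, split $S=S'_\alpha+S''_\alpha$, use FKG after conditioning on $\{\mathbf{1}_\alpha=1\}$ together with $S''_\alpha\le S$ to get a lower bound on $-\tfrac{d}{dt}\log\psi(t)$, then apply Jensen's inequality twice and integrate the resulting differential inequality. The identification of the exponent with $\lambda(\delta_1+\delta_2)$ and the final integration are exactly as in the paper.
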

\begin{proof}
[Proof of Theorem \ref{thm:lower inequality}] Now we are ready to
use Chernoff's bound,

\[
P(S\leq(1-\epsilon)\lambda)\leq e^{t(1-\epsilon)\lambda}E\left[e^{-tS}\right]\leq\exp\left[t(1-\epsilon)\lambda-\frac{\lambda}{\delta_{1}+\delta_{2}}(1-e^{-(\delta_{1}+\delta_{2})t})\right].
\]

Optimizing over $t$, we get $t=-(\delta_{1}+\delta_{2})^{-1}\log(1-\epsilon)$.
Thus,

\begin{eqnarray*}
P(S\leq(1-\epsilon)\lambda) & \leq & \exp\left[-(1-\epsilon)\log(1-\epsilon)\frac{\lambda}{\delta_{1}+\delta_{2}}-\frac{\lambda}{\delta_{1}+\delta_{2}}\epsilon\right]\\
 & = & \exp\left[-\frac{\lambda}{\delta_{1}+\delta_{2}}\left[\epsilon+(1-\epsilon)\log(1-\epsilon)\right]\right].
\end{eqnarray*}

This completes the proof.
\end{proof}

\section{Regularity of $r_{i}$s\label{sec:Regularity-of-r_i s.}.}

Recall that the vector $\left(\widetilde{p}_{ij}\right)_{1\leq i\neq j\leq n}\in\mathbb{P}^{D}$
is the minimizer of 
\[
H_{1}(x)=\sum_{i<j}H(x_{ij}),\,\text{where }H(x)=-x\ln(x)-(1-x)\ln(1-x).
\]
 In addition, we have 
\begin{equation}
d_{i}=\sum_{j\in\left[n\right]\backslash\left\{ i\right\} }\widetilde{p}_{ij},\label{eq:again_d_i}
\end{equation}
and as in Eq. \ref{prop: Strict E-G condition}, $\widetilde{p}_{ij}=\frac{r_{i}r_{j}}{1+r_{i}r_{j}}$
that $r_{i}$s are positive numbers, and $1\leq i\leq n$.
\begin{lem}
\label{lem: regularity r_i-1} Suppose that $d_{1}\leq\cdots\leq d_{n}$,
then,
\begin{itemize}
\item [a)]$r_{1}\leq\cdots\leq r_{n}$,
\item [b)]and $r_{1}r_{n}>\frac{1}{n}$.
\item [c)]If $r_{k}\geq1$, for some $1\leq k\leq n$, then $r_{k+1}/r_{k}<n^{4}$.
\item [d)]If $r_{k}>n^{2}$, for some $1\leq k\leq n$, then $\sum d_{i}\leq\frac{1}{2}M$,
where the sum is over $1\leq i\leq n-d_{k}-1$.
\end{itemize}
\end{lem}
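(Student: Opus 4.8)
The plan is to derive all four statements from the single identity $d_i=\sum_{j\neq i}\widetilde p_{ij}$, where $\widetilde p_{ij}=\frac{r_ir_j}{1+r_ir_j}$, together with three elementary facts: the map $x\mapsto\frac{xy}{1+xy}$ is strictly increasing in $x>0$ for each fixed $y>0$; $\frac{x}{1+x}\geq\frac12\min\{1,x\}$; and $\frac{x}{1+x}\geq 1-\frac1x$. Recall also that the $d_i$ are positive integers with $d_1\leq\cdots\leq d_n$. For (a), if $r_i>r_j$ then $\widetilde p_{ik}>\widetilde p_{jk}$ for every $k\notin\{i,j\}$ while $\widetilde p_{ij}=\widetilde p_{ji}$, so $d_i-d_j=\sum_{k\neq i,j}(\widetilde p_{ik}-\widetilde p_{jk})>0$; contraposing and using $d_1\leq\cdots\leq d_n$ gives $r_1\leq\cdots\leq r_n$. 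The same computation shows that for $n\geq 3$ the equality $d_i=d_j$ forces $r_i=r_j$, which I will reuse in (c). For (b), since $d_1\geq 1$ and $\widetilde p_{1j}\leq\widetilde p_{1n}$ for all $j$, one has $1\leq d_1=\sum_{j\neq 1}\widetilde p_{1j}\leq(n-1)\frac{r_1r_n}{1+r_1r_n}$, which rearranges to $r_1r_n\geq\frac1{n-2}>\frac1n$.

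For (c): by the remark just made we may assume $d_{k+1}>d_k$, so $1\leq d_{k+1}-d_k=\sum_{j\neq k,k+1}(\widetilde p_{k+1,j}-\widetilde p_{kj})\leq n-2$. The idea is to bound this middle sum from below by a multiple of $r_{k+1}/r_k$ and then read off the bound from the upper estimate $n-2$. The key pointwise inequality is that whenever $r_j\leq 1/r_{k+1}$ one has $\widetilde p_{k+1,j}\geq\frac{r_{k+1}}{2r_k}\,\widetilde p_{kj}$, hence $n-2\geq\bigl(\frac{r_{k+1}}{2r_k}-1\bigr)\sum_{j:\,r_j\leq 1/r_{k+1}}\widetilde p_{kj}$; it then remains to show that this ``intermediate mass'' is at least of order $n^{-2}$ — which I would extract from $d_1\geq1$, part (b), and $r_k\geq1$ (these rule out an excessively large gap in the $r$-values, which, if present, would violate the Erd\H{o}s--Gallai inequality) — and to treat separately the boundary case in which no index satisfies $r_j\leq 1/r_{k+1}$, where $r_1r_{k+1}>1$ and all the $r_i$ are comparable, so the ratio is trivially $<n^4$.

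For (d): first, for $j>k$ we have $r_kr_j>n^4$, so $\widetilde p_{kj}>1-n^{-4}$ and $d_k\geq\sum_{j>k}\widetilde p_{kj}>(n-k)(1-n^{-4})$; being an integer, $d_k\geq n-k$, so $\{n-d_k,\dots,n\}\supseteq\{k,\dots,n\}$ and therefore $\sum_{i=n-d_k}^n d_i\geq\sum_{i=k}^n d_i$. Since the total is $M$, it suffices to prove $\sum_{i\geq k}d_i\geq M/2$, i.e. $\sum_{i<k}d_i\leq\sum_{i\geq k}d_i$. When $k\leq\lfloor n/2\rfloor+1$ this is immediate from monotonicity of the $d_i$: $\sum_{i\geq k}d_i\geq(n-k+1)d_k\geq(k-1)d_k\geq\sum_{i<k}d_i$. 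For larger $k$ the block $\{k,\dots,n\}$ is small, but its vertices must then have large degree; to capture this I would stratify the indices by the size of $r_i$ (thresholds near $1/n$ and $1/n^2$): edges joining two ``tiny'' indices have $\widetilde p_{ij}<n^{-2}$, so the tiny indices contribute little total degree, whereas every $i$ with $r_i>n^2$ is joined with probability $>\frac{n}{n+1}$ to each index with $r_j\geq 1/n^2$, which forces enough degree onto the top block to conclude $\sum_{i\geq k}d_i\geq M/2$.

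The elementary parts (a)–(b) are routine. The real work — and the step I expect to be the main obstacle — is the quantitative gap estimate in (c), namely showing $\sum_{j:\,r_j\leq 1/r_{k+1}}\widetilde p_{kj}$ is bounded below by a quantity of order $n^{-2}$ in every admissible configuration, together with its analogue in the large-$k$ regime of (d), where one must show that a handful of top-degree vertices already carry half of $M$; both come down to careful bookkeeping with the inequalities $\frac{x}{1+x}\geq\frac12\min\{1,x\}$ and $\frac{x}{1+x}\geq 1-\frac1x$ under the normalization $r_1r_n\geq\frac1{n-2}$ and the integrality $d_i\in\mathbb{Z}_{\geq1}$.
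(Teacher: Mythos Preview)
Your arguments for (a) and (b) are fine and essentially match the paper's. The trouble is in (c) and, to a lesser extent, the large-$k$ case of (d).

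For (c), your route via $d_{k+1}-d_k\leq n-2$ and the pointwise bound $\widetilde p_{k+1,j}\geq\frac{r_{k+1}}{2r_k}\widetilde p_{kj}$ for $r_j\leq 1/r_{k+1}$ is algebraically correct, but the step you yourself flag as ``the main obstacle'' --- a uniform lower bound of order $n^{-2}$ on $\sum_{j:\,r_j\leq 1/r_{k+1}}\widetilde p_{kj}$ --- is a genuine gap, not just missing detail. Under the hypothesis $r_k\geq 1$, $r_{k+1}\geq n^4 r_k$, each term in that sum satisfies $\widetilde p_{kj}\leq r_kr_j\leq r_k/r_{k+1}\leq n^{-4}$, so the total mass you want to bound from below is a priori $\leq n^{-3}$; squeezing a useful lower bound out of $d_1\geq 1$ and part~(b) is far from routine, and I do not see how to do it. Your boundary case is also incorrect as stated: from $r_1r_{k+1}>1$ you cannot conclude that ``all the $r_i$ are comparable'' or that $r_{k+1}/r_k<n^4$; the inequality $r_1r_{k+1}>1$ places no constraint on the ratio $r_{k+1}/r_k$ once $r_k\geq 1$.

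The paper's argument for (c) is quite different and avoids this mass estimate entirely. It proceeds by contradiction: assuming $r_{k+1}/r_k\geq n^4$, one sets $I=\{i:r_i\geq r_{k+1}\}$ and $J=\{j:r_j\leq n^2/r_{k+1}\}$, and shows that the \emph{integer} $U=\sum_{i\in I}d_i-\sum_{j\in J}d_j$ is trapped strictly between $|I|(n-|J|-1)-1$ and $|I|(n-|J|-1)$, which is impossible. The key idea you are missing is this integrality trick: choosing thresholds so that the relevant $\widetilde p$-values are within $n^{-2}$ of $0$ or $1$, and then exploiting that degree sums are integers.

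For (d), your reduction $d_k\geq n-k$ and hence $\sum_{i\leq n-d_k-1}d_i\leq\sum_{i<k}d_i$ is nice, and the monotonicity argument for $k\leq n/2+1$ is clean. But the large-$k$ case is again only a sketch, and the natural way to finish it is exactly the stratification the paper uses from the outset (with no case split on $k$): set $I=\{i:r_i<1/n\}$, observe $d_k\geq (n-|I|-1)\frac{n}{n+1}$ forces $|I|\geq n-d_k-1$, and then show $2\sum_{i\in I}d_i<M+1$ by bounding edges inside $I$ by $n^{-2}$ each --- again concluding via integrality. I would recommend abandoning the case split and adopting this uniform argument.
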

\begin{proof}
\,
\begin{itemize}
\item [a)] We observe that 
\begin{eqnarray*}
d_{j}-d_{i} & = & \sum_{k\neq i,\, j}\frac{r_{j}r_{k}}{1+r_{j}r_{k}}-\frac{r_{i}r_{k}}{1+r_{i}r_{k}}\\
 & = & \left(r_{j}-r_{i}\right)\sum_{k\neq i,\, j}\frac{r_{k}}{\left(1+r_{i}r_{k}\right)\left(1+r_{j}r_{k}\right)}.
\end{eqnarray*}

The $r_{i}$s are positive, as well as the last sum in the above
equation. Hence, the terms $d_{j}-d_{i}$ and $r_{j}-r_{i}$ have
the same sign, and this finishes part a.

\item [b)] Let us see that $\widetilde{p}_{ij}=\frac{r_{i}r_{j}}{1+r_{i}r_{j}}$
are increasing both in $i$ and $j$, because $r_{i}$s are positive
and are increasing by part a, and also $f\left(x\right)=\frac{x}{1+x}$
is increasing in $x$, for $x\geq0.$ therefore, \eqref{eq:again_d_i}
implies

\[
1\leq d_{1}=\sum_{j=2}^{n}\frac{r_{1}r_{j}}{1+r_{1}r_{j}}\leq\left(n-1\right)\frac{r_{1}r_{n}}{1+r_{1}r_{n}}\leq n\left(r_{1}r_{n}\right).
\]
  That is what we want.

\item [c)] We prove the problem using contradiction. So, suppose $r_{k}\geq1$ and
$r_{k+1}/r_{k}\geq n^{4}$. We define $I=\{i\big|r_{k+1}\leq r_{i}\}$
and $J=\{j\big|r_{j}\leq\frac{n^{2}}{r_{k+1}}\}$. Therefore, 
\[
0<\frac{r_{j}r_{l}}{1+r_{j}r_{l}}\leq\frac{\frac{n^{2}}{r_{k+1}}r_{k}}{1+\frac{n^{2}}{r_{k+1}}r_{k}}<\frac{1}{n^{2}},
\]
 for $j\in J$, and $l\in\left[n\right]\backslash I$. In addition,
\[
1>\frac{r_{i}r_{l}}{1+r_{i}r_{l}}>\frac{r_{k+1}\frac{n^{2}}{r_{k+1}}}{1+r_{k+1}\frac{n^{2}}{r_{k+1}}}>1-\frac{1}{n^{2}},
\]
 for $i\in I$ and $l\in\left[n\right]\backslash J$. We observe that,
for the number $U:=\sum_{i\in I}d_{i}-\sum_{j\in J}d_{j}$, 
\begin{eqnarray*}
U & = & \sum_{i\in I}\sum_{l\in\left[n\right]\backslash\left\{ i\right\} }\frac{r_{i}r_{l}}{1+r_{i}r_{l}}-\sum_{j\in J}\sum_{l\in\left[n\right]\backslash\left\{ j\right\} }\frac{r_{j}r_{l}}{1+r_{j}r_{l}}\\
 & = & \sum_{i\in I}\sum_{l\in\left[n\right]\backslash J\cup\left\{ i\right\} }\frac{r_{i}r_{l}}{1+r_{i}r_{l}}-\sum_{j\in J}\sum_{l\in\left[n\right]\backslash I\cup\left\{ j\right\} }\frac{r_{j}r_{l}}{1+r_{j}r_{l}}\\
 & \geq & \left|I\right|\cdot\left(n-\left|J\right|-1\right)\left(1-\frac{1}{n^{2}}\right)\\
 & > & \left|I\right|\cdot\left(n-\left|J\right|-1\right)-1.
\end{eqnarray*}
Moreover, 
\[
\left|I\right|\cdot\left(n-\left|J\right|-1\right)-1<U\leq\sum_{i\in I}\sum_{l\in\left[n\right]\backslash J\cup\left\{ i\right\} }\frac{r_{i}r_{l}}{1+r_{i}r_{l}}<\left|I\right|\cdot\left(n-\left|J\right|-1\right),
\]
 which is impossible since $U$ is an integer.
\item [d)] By part a, $r_{i}$s are increasing in $i$. Let $I=\{i\big|r_{i}<n^{-1}\}$,
then 
\begin{eqnarray*}
d_{k}=\sum_{i\in\left[n\right]\backslash\left\{ k\right\} }\frac{r_{i}r_{k}}{1+r_{i}r_{k}} & \geq & \sum_{i\notin I\cup\{k\}}\frac{r_{i}r_{k}}{1+r_{i}r_{k}}\\
 & \geq & (n-\left|I\right|-1)\frac{n}{1+n}\\
 & > & n-\left|I\right|-2,
\end{eqnarray*}
 since $r_{k}\geq n^{2}$. We note that $d_{k}$ is an integer, so
$d_{k}\geq n-\left|I\right|-1$. Now, there are at most $n^{2}$ pairs
of $i$ and $j$ in $I$, and $\frac{r_{i}r_{j}}{1+r_{i}r_{j}}\leq\frac{1}{n^{2}+1}.$
That implies 
\begin{eqnarray*}
\sum_{i\in I}d_{i} & = & \sum_{i\in I}\left(\sum_{j\in I,\, j\neq i}+\sum_{j\notin I}\right)\frac{r_{i}r_{j}}{1+r_{i}r_{j}}\\
 & < & \frac{n^{2}}{n^{2}+1}+\sum_{i\in I}\sum_{j\notin I}\frac{r_{j}r_{l}}{1+r_{j}r_{l}}\\
 & \leq & \frac{n^{2}}{n^{2}+1}+\sum_{j\notin I}d_{i}\\
 & < & 1+M-\sum_{i\in I}d_{i}.
\end{eqnarray*}
 In addition, $\sum_{i\in I}d_{i}$ is an integer, so $\sum_{i\in I}d_{i}\leq\frac{M}{2}$.
Ultimately, we close this lemma by $\sum_{i=1}^{n-d_{k}-1}d_{i}\leq\sum_{i=1}^{\left|I\right|}d_{i}\leq\frac{M}{2}$
. 
\end{itemize}
\end{proof}
Recall that $\widetilde{\mathcal{G}}\left(D\right)$ is a random graph
with independent Bernoulli random edges with parameters $\widetilde{p}_{ij}$.
\begin{lem}
The following variational problems are equivalent, 
\[
\inf_{\vec{x}\in\mathbb{R}^{n}}F(\vec{x})=\inf_{\vec{x}\in\left(\mathbb{R}^{>0}\right)^{n}}G(\vec{x})=\sup_{p\in\mathbb{P}^{D}}H_{1}(p),
\]
 where 
\[
F(\vec{x})=-\sum_{i=1}^{n}d_{i}x_{i}+\sum_{i<j}\log(1+e^{x_{i}+x_{j}}),
\]
 and 
\[
G(\vec{x})=-\sum_{i=1}^{n}d_{i}\log(x_{i})+\sum_{i<j}\log(1+x_{i}x_{j}).
\]
In addition, the suprimum of $H_{1}$ is equal to $-\log\left(P\left(\widetilde{G}=G\right)\right)$
for any graph $G$ with a degree sequence that is equal to $D$. \end{lem}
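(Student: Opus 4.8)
The statement is a compact "three equivalent variational problems" lemma, so I would prove it by exhibiting two change-of-variables identifications and one convex-duality step, then read off the final probabilistic formula directly from Lemma~\ref{lem: probability-of-graph}. Concretely, first I would pass from $F(\vec x)$ to $G(\vec x)$ by the substitution $r_i=e^{x_i}$, i.e. $x_i=\log r_i$; this is a bijection between $\vec x\in\mathbb R^n$ and $\vec r\in(\mathbb R^{>0})^n$, and term by term $-\sum_i d_i x_i+\sum_{i<j}\log(1+e^{x_i+x_j})$ becomes exactly $-\sum_i d_i\log r_i+\sum_{i<j}\log(1+r_ir_j)=G(\vec r)$. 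Hence $\inf_{\vec x}F=\inf_{\vec r}G$ with no analysis needed. So the content is entirely in the identity $\inf_{\vec x\in\mathbb R^n}F(\vec x)=\sup_{p\in\mathbb P^D}H_1(p)$.

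For that identity I would set up the Lagrangian/Legendre computation that already appears implicitly in the proof of Proposition~\ref{prop: Strict E-G condition}. Using $H(x)=-x\log x-(1-x)\log(1-x)$, one checks the pointwise Fenchel identity $\sup_{0\le x\le1}\bigl(H(x)-x(\lambda_i+\lambda_j)\bigr)=\log\!\bigl(1+e^{-(\lambda_i+\lambda_j)}\bigr)$, the supremum being attained at $x=\tilde p_{ij}=\frac{1}{1+e^{\lambda_i+\lambda_j}}$. Summing over $i<j$ and adding back $\sum_i\lambda_i d_i$ gives, for every $\vec\lambda\in\mathbb R^n$,
\[
\sum_i\lambda_i d_i+\sum_{i<j}\log\!\bigl(1+e^{-(\lambda_i+\lambda_j)}\bigr)\;=\;\sup_{x\in[0,1]^N}\Bigl(H_1(x)+\sum_i\lambda_i\bigl(d_i-\textstyle\sum_{j\ne i}x_{ij}\bigr)\Bigr)\;\ge\;\sup_{p\in\mathbb P^D}H_1(p),
\]
where the last inequality restricts the inner sup to the polytope $\mathbb P^D$ on which the penalty term vanishes. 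Writing $x_i=-\lambda_i$, the left side is precisely $F(\vec x)$, so $\inf_{\vec x}F(\vec x)\ge\sup_{p\in\mathbb P^D}H_1(p)$ for free (weak duality). For the reverse inequality I would invoke that $\mathbb P^D$ has nonempty interior (Proposition~\ref{prop: Strict E-G condition} together with the strict Erd\H os--Gallai hypothesis, which is the standing assumption wherever $\widetilde{\mathcal G}(D)$ is defined): then the maximizer $\widetilde{\mathbf p}$ lies in the interior, the Lagrange-multiplier relation \eqref{eq: lagrange multipilier}--\eqref{eq:P_ij and r_i} holds, and plugging $\lambda_i$ from that relation into the left-hand side above turns every inequality into an equality, giving $F(-\vec\lambda)=H_1(\widetilde{\mathbf p})$ and hence no duality gap.

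Finally, for the probabilistic tail of the statement, I would just combine Lemma~\ref{lem: probability-of-graph} with the identification above. For any graph $G$ with degree sequence exactly $D$, Lemma~\ref{lem: probability-of-graph} gives $P\bigl(\widetilde{\mathcal G}(D)=G\bigr)=\frac{\prod_i r_i^{d_i}}{\prod_{i<j}(1+r_ir_j)}$, where $r_i$ are the maximum-entropy parameters; taking $-\log$ yields exactly $-\sum_i d_i\log r_i+\sum_{i<j}\log(1+r_ir_j)=G(\vec r)=\inf G$, and by the chain of equalities this equals $\sup_{p\in\mathbb P^D}H_1(p)=H_1(\widetilde{\mathbf p})$; in particular it is the same for every such $G$, which is the asserted independence of the choice of graph.

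**Main obstacle.** The only genuinely substantive point is the no-duality-gap (strong duality) step: justifying that the maximizer of $H_1$ on $\mathbb P^D$ is interior so that the first-order conditions are available and the weak-duality inequality is tight. This rests on the strict Erd\H os--Gallai condition via Proposition~\ref{prop: Strict E-G condition}; everything else---the two substitutions and the pointwise Fenchel computation---is bookkeeping. A secondary bit of care is that $H_1$ is defined on the closed cube and $F,G$ are smooth only after one knows the optimum avoids the boundary, so I would phrase the Fenchel identity as a pointwise supremum over $[0,1]$ (attained in the open interval whenever $\lambda_i+\lambda_j$ is finite) to keep the argument clean.
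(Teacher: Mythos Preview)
Your proposal is correct and reaches the same conclusion, but the route differs from the paper's. The paper does not set up a Fenchel/Lagrangian duality at all: it simply observes that $F$ is strictly convex on $\mathbb{R}^n$, checks that $\vec\lambda=(\log r_1,\dots,\log r_n)$ is a critical point (using $\sum_{j\ne i}\widetilde p_{ij}=d_i$), and concludes this is the unique minimizer; then it performs a direct algebraic rewriting of $G(\vec r)=F(\vec\lambda)$ to show it equals $H_1(\widetilde{\mathbf p})$ term by term. Your approach instead proves the pointwise identity $\sup_{x\in[0,1]}(H(x)-x\theta)=\log(1+e^{-\theta})$, gets weak duality $\inf F\ge\sup H_1$ for free, and then closes the gap by plugging in the Lagrange multipliers from \eqref{eq: lagrange multipilier}. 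Both arguments rest on exactly the same structural input---the interior-point/strict Erd\H{o}s--Gallai hypothesis of Proposition~\ref{prop: Strict E-G condition}, which guarantees the $r_i$ exist---and both finish the probabilistic assertion identically via Lemma~\ref{lem: probability-of-graph}. Your duality packaging is more conceptual and makes the ``equivalent variational problems'' phrasing transparent; the paper's direct computation is shorter and avoids invoking any abstract convex-analysis vocabulary.
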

\begin{proof}
In the proof of Proposition \ref{prop: Strict E-G condition}, we
saw that $H_{1}(x)$ takes its maximum $\left(\widetilde{p}_{ij}\right)_{1\leq i\neq j\leq n}$
in the interior of $\mathbb{P}^{D}$. In regard to the function $F(\vec{x})$,
it is strictly convex and, hence, has at most one minimum. Actually,
the minimum is $\vec{\lambda}:=(\log(r_{1}),\cdots,\log(r_{n}))$,
since the gradient of $F(\vec{x})$ at $\vec{\lambda}$ is 
\[
\partial_{i}F(\vec{\lambda})=-d_{i}+\sum_{j\neq i}\frac{e^{\lambda_{i}+\lambda_{j}}}{1+e^{\lambda_{i}+\lambda_{j}}}=-d_{i}+\sum_{j\neq i}\frac{r_{i}r_{j}}{1+r_{i}r_{j}}=0.
\]
Thus, $\vec{\lambda}$ is a critical point and the unique minimum
of $F(\vec{x})$. In addition, by a change of variable we get $G(\vec{x})$
from $F(\vec{x})$. So, $\vec{r}=(r_{1},\cdots,r_{n})$ solves the
infimum problem for function $G(\vec{x})$, or $\inf_{\vec{x}\in\left(\mathbb{R}^{>0}\right)^{n}}G(\vec{x})=G(\vec{r})$. 

Next, we rewrite $F(\vec{\lambda})=G(\vec{r})$ in terms of $\widetilde{p}_{ij}$,
\begin{eqnarray*}
G(\vec{r}) & = & -\sum_{i=1}^{n}d_{i}\log(r_{i})+\sum_{i<j}\log(1+r_{i}r_{j})\\
 & = & -\sum_{i=1}^{n}\sum_{j\neq i}\frac{r_{i}r_{j}}{1+r_{i}r_{j}}\log(r_{i})+\sum_{i<j}\log(1+r_{i}r_{j})\\
 & = & -\sum_{i<j}\frac{r_{i}r_{j}}{1+r_{i}r_{j}}\left[\log(r_{i})+\log(r_{j})-\log(1+r_{i}r_{j})\right]\\
 &  & +\frac{1}{1+r_{i}r_{j}}\log(1+r_{i}r_{j})\\
 & = & -\sum_{i<j}\widetilde{p}_{ij}\log(\widetilde{p}_{ij})-(1-\widetilde{p}_{ij})\log(1-\widetilde{p}_{ij})\\
 & = & H_{1}(\widetilde{\mathfrak{p}}).
\end{eqnarray*}
 This completes the first part of the lemma.

On account of the vector $D$ satisfying the strict Erdös- Gallai
conditions \eqref{eq: strict Erdo=00030Bs-Gallai}, there exists a
graph $G$ with the degree sequence $D(G)$ that is equal to $D$.
Let us use Lemma \ref{lem: probability-of-graph} with the graph $G$,
and the above equation to reach
\begin{eqnarray*}
-\log\left(P\left(\widetilde{\mathcal{G}}\left(D\right)=G\right)\right) & = & -\log\frac{\prod_{i=1}^{n}r_{i}^{d_{i}}}{\prod_{i,\, j}\left(1+r_{i}r_{j}\right)}\\
 & = & -\sum_{i=1}^{n}d_{i}\log(r_{i})+\sum_{i<j}\log(1+r_{i}r_{j})\\
 & = & G(\vec{r}).
\end{eqnarray*}
\end{proof}
\begin{lem}
If $M=\sum_{i=1}^{n}d_{i}\leq\left(\begin{array}{c}
n\\
2
\end{array}\right)$, then for $G\in\mathbb{G}^{D}$, 
\[
M\cdot\log\left(\frac{M}{n\left(n-1\right)}\right)\leq\log\left(P\left(\widetilde{\mathcal{G}}\left(D\right)=G\right)\right).
\]
\end{lem}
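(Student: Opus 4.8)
The plan is to combine the variational identity from the previous lemma with a one-parameter test vector. By Lemma \ref{lem: probability-of-graph}, for a graph $G$ with degree sequence exactly $D$ we have
\[
-\log P\bigl(\widetilde{\mathcal{G}}(D)=G\bigr)=-\sum_{i=1}^{n}d_{i}\log r_{i}+\sum_{i<j}\log(1+r_{i}r_{j})=G(\vec r),
\]
and the preceding lemma identifies $\vec r=(r_1,\dots,r_n)$ as a minimizer of $G$ over $(\mathbb{R}^{>0})^{n}$. Hence $-\log P(\widetilde{\mathcal{G}}(D)=G)=\inf_{\vec x\in(\mathbb{R}^{>0})^{n}}G(\vec x)\le G(t,\dots,t)$ for every $t>0$, and it suffices to exhibit a single $t$ with $G(t,\dots,t)\le M\log\bigl(n(n-1)/M\bigr)$.

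Evaluating the constant test vector, $G(t,\dots,t)=-M\log t+\binom{n}{2}\log(1+t^{2})$ since $\sum_i d_i=M$. I would then take $t$ to be the minimizer of the right-hand side, namely $t^{2}=\frac{M}{n(n-1)-M}$; this is legitimate because the hypothesis $M\le\binom{n}{2}$ forces $n(n-1)-M\ge\binom{n}{2}>0$. Writing $N=n(n-1)$ and $x=M/N\in(0,\tfrac12]$, one has $1+t^{2}=\frac{N}{N-M}$, and a short algebraic simplification (in which the terms involving $\log N$ cancel) gives
\[
G(t,\dots,t)=M\log\frac{N}{M}-\tfrac{N}{2}\,\phi(x),\qquad \phi(x):=-x\log x+(1-x)\log(1-x).
\]

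It then remains to check that $\phi(x)\ge0$ for $x\in(0,\tfrac12]$, which is immediate from concavity: $\phi''(x)=-\frac{1-2x}{x(1-x)}\le0$ on $(0,\tfrac12]$, while $\phi(0^{+})=0$ and $\phi(\tfrac12)=0$, so a concave function vanishing at both endpoints of $[0,\tfrac12]$ is nonnegative there. Since $x=M/N\le\tfrac12$, we get $G(t,\dots,t)\le M\log\frac{n(n-1)}{M}$, hence $-\log P(\widetilde{\mathcal{G}}(D)=G)\le M\log\frac{n(n-1)}{M}$, which is the claimed inequality after multiplying through by $-1$.

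This argument is essentially a computation, so I do not expect a genuine obstacle; the only place needing care is the algebraic simplification of $G(t,\dots,t)$ at the optimal $t$ and recognizing the result as $M\log(N/M)$ minus the nonnegative entropy-type quantity $\tfrac{N}{2}\phi(M/N)$. One mild point of hygiene: the variational identity of the preceding lemma is phrased for strict Erd\H{o}s--Gallai sequences, whereas here we only know $\mathbb{G}^{D}\neq\emptyset$; one should either appeal to the standing assumption or note that the identity $-\log P=\inf_{\vec x\in(\mathbb{R}^{>0})^{n}}G(\vec x)$ (equivalently $=\inf_{\vec x\in\mathbb{R}^{n}}F(\vec x)$) continues to hold in the degenerate case by continuity, since only an \emph{upper} bound on $-\log P$ is needed.
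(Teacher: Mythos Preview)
Your proof is correct. It reaches the same intermediate bound as the paper, namely $-\log P\bigl(\widetilde{\mathcal G}(D)=G\bigr)\le \binom{n}{2}H\!\left(\frac{M}{n(n-1)}\right)$, and then the same final estimate $H(x)\le -2x\log x$ for $x\le\tfrac12$ (which is exactly your $\phi(x)\ge 0$). The difference lies in how the intermediate bound is obtained: the paper works on the primal side, writing $-\log P=H_1(\widetilde p)=\sum_{i<j}H(\widetilde p_{ij})$ and applying Jensen (concavity of $H$) to the average $\frac{1}{\binom{n}{2}}\sum_{i<j}\widetilde p_{ij}=\frac{M}{n(n-1)}$, whereas you work on the dual side, using $-\log P=\inf_{\vec x}G(\vec x)$ from the preceding lemma and plugging in the constant test vector whose optimal $t$ satisfies $\frac{t^2}{1+t^2}=\frac{M}{n(n-1)}$. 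Your route costs a few extra lines of algebra to identify $G(t,\dots,t)$, but has the mild conceptual advantage that the ``uniform edge probability'' comparison point need not lie in $\mathbb{P}^D$, which is why the paper invokes Jensen rather than a direct comparison. The hygiene point you raise about strict Erd\H{o}s--Gallai is fair and applies equally to the paper's argument, since both rely on the identities from the preceding lemma.
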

\begin{proof}
First, the previous lemma provides that $-\log\left(P\left(\widetilde{\mathcal{G}}\left(D\right)=G\right)\right)=H_{1}(\widetilde{p})$,
and moreover, 
\[
\sum_{1\leq i\neq j\leq n}\widetilde{p}_{ij}=\frac{1}{2}\sum_{1\leq i<j\leq n}\widetilde{p}_{ij}=\frac{M}{2}.
\]
 Second, the function $H\left(x\right)=-x\log\left(x\right)-\left(1-x\right)\log\left(1-x\right)$
is a concave function. So,
\[
\begin{aligned} & \frac{2}{n\left(n-1\right)}H_{1}\left(\widetilde{p}\right)\\
 & =\frac{2}{n\left(n-1\right)}\sum_{i<j}-\widetilde{p}_{ij}\log\left(\widetilde{p}_{ij}\right)-\left(1-\widetilde{p}_{ij}\right)\log\left(1-\widetilde{p}_{ij}\right)\\
 & \leq-\frac{M}{n\left(n-1\right)}\log\left(\frac{M}{n\left(n-1\right)}\right)-\left(1-\frac{M}{n\left(n-1\right)}\right)\log\left(1-\frac{M}{n\left(n-1\right)}\right)\\
 & \leq-\frac{2M}{n\left(n-1\right)}\log\left(\frac{M}{n\left(n-1\right)}\right),
\end{aligned}
\]
 and we used the inequality $x\log(x)\leq(1-x)\log(1-x)$ for $x\leq\frac{1}{2}$. \end{proof}

\bibliographystyle{elsarticle-harv}
\bibliography{paper.bib}

\address{Courant Institute of Mathematical Sciences}

\address{New York University}

\address{251 Mercer Street}

\address{New York, NY-10012}

\address{United States of America}

\address{mehrdad@cims.nyu.edu}
\end{document}